\documentclass[11pt,a4paper,reqno]{amsart}
\usepackage{amsmath,amsfonts,amssymb,amsthm,mathrsfs}
\usepackage[utf8]{inputenc}
\usepackage[T1]{fontenc}
\usepackage{lmodern}
\usepackage{latexsym}
\usepackage{cancel}
\usepackage{microtype}
\usepackage{caption}
\usepackage{MnSymbol}
\usepackage{xcolor}
\usepackage{enumerate}
\usepackage[normalem]{ulem}
\usepackage{bbm}
\usepackage{marginnote}

\setlength{\oddsidemargin}{10pt}
\setlength{\evensidemargin}{10pt}
\setlength{\textwidth}{6.0in}
\setlength{\topmargin}{0in}
\setlength{\textheight}{8.5in}
\setlength{\parskip}{4px}

\usepackage[a4paper,margin=2.5cm]{geometry}
\usepackage{dsfont}

\usepackage[colorlinks,allcolors=black, breaklinks=true]{hyperref}

\usepackage{empheq}
\usepackage{bbm}
\usepackage{multirow}
\usepackage{mathrsfs}

\usepackage{breakcites}

\makeatletter \def\l@subsection{\@tocline{2}{0pt}{1pc}{5pc}{}} \def\l@subsection{\@tocline{2}{0pt}{2pc}{6pc}{}} \makeatother


\long\def\metanote#1#2{{\color{#1}\
		\ifmmode\hbox\fi{\sffamily\mdseries\upshape [#2]}\ }}

\long\def\TT#1{\metanote{teal}{{\tiny TT} #1}}



\newcommand{\xnot}[1]{x^{\textnormal{not}, #1}}
\newcommand{\xonly}[1]{x^{\textnormal{only}, #1}}
\newcommand{\xnottwo}[2]{{#1}^{\textnormal{not}, #2}}
\newcommand{\xonlytwo}[2]{{#1}^{\textnormal{only}, #2}}

\newcommand{\e}{{\mathbf E}}

\newcommand{\V}[1]{{\mathbf{Var}}\left\{#1\right\}}




\newcommand\cD{\mathscr D}

\newcommand\cG{\mathscr G}

\newcommand\cL{{\mathscr L}}

\newcommand\cN{\mathscr N}

\newcommand\cS{{\mathscr S}}



\newcommand{\bT}{\mathbf{T}}




\providecommand{\ora}[1]{}
\renewcommand{\ora}[1]{\overrightarrow{#1}}
\DeclareRobustCommand{\rSkipTocEntry}[5]{} 


\newtheorem{thm}{Theorem}
\newtheorem{lem}[thm]{Lemma}
\newtheorem{prop}[thm]{Proposition}
\newtheorem{cor}[thm]{Corollary}
\newtheorem{assump}{Assumption}

\newtheorem{dfn}[thm]{Definition}

\newtheorem{remark}[thm]{Remark}


\newcommand{\rM}{\ensuremath{\mathrm{M}}}

\newcommand{\rY}{\ensuremath{\mathrm{Y}}}

\newcommand{\rX}{\ensuremath{\mathrm{X}}}

\newcommand{\rZ}{\ensuremath{\mathrm{Z}}}

\newcommand{\rW}{\ensuremath{\mathrm{W}}}

\newcommand{\rN}{\ensuremath{\mathrm{N}}}

\newcommand{\rP}{\ensuremath{\mathrm{P}}}

\newcommand{\supp}{\ensuremath{\operatorname{supp}}}

\DeclareRobustCommand{\stirling}{\genfrac\{\}{0pt}{}}

\providecommand{\X}{}
\providecommand{\R}{}
\providecommand{\E}{}
\providecommand{\P}{}
\providecommand{\Z}{}

\providecommand{\N}{}
\providecommand{\C}{}

\providecommand{\G}{}

\renewcommand{\P}{\mathbb{P}}
\renewcommand{\E}{\mathbb{E}}
\renewcommand{\R}{\mathbb{R}}
\renewcommand{\X}{\mathbb{X}}
\renewcommand{\Z}{\mathbb{Z}}

\renewcommand{\S}{\mathbb{S}}
\renewcommand{\N}{{\mathbb N}}
\renewcommand{\C}{\mathbb{C}}

\renewcommand{\G}{\mathbb{G}}

\newcommand{\cleq}{\ensuremath{\preccurlyeq}}


\usepackage[french,english]{babel}

\usepackage{lipsum}

\usepackage{graphicx}
\graphicspath{ {./Figures/} }

\usepackage{tikz-cd}
\usepackage{adjustbox}

\usepackage[title, titletoc,toc]{appendix}
\setcounter{tocdepth}{2}

\usepackage{esint}
\setcounter{tocdepth}{2}
\appto\appendix{\addtocontents{toc}{\protect\setcounter{tocdepth}{2}}}

\counterwithin{equation}{section}

\renewcommand{\epsilon}{\varepsilon}
\renewcommand{\phi}{\varphi}
%


\title[Fluctuations for a Spatial Logistic Branching Process with Weak Competition]{Non-Equilibrium Fluctuations for a Spatial Logistic Branching Process with Weak Competition}

\author{Thomas Tendron}
\thanks{Department of Statistics, University of Oxford, 24 St Giles’, Oxford, United Kingdom OX1 3LB, \textit{Email:} thomas.tendron@mail.mcgill.ca}

\address{}
\email{}

\date{\today}

\begin{document}
	
	\begin{abstract}
		The spatial logistic branching process is a population dynamics model in which particles move on a lattice according to independent simple symmetric random walks, each particle splits into a random number of individuals at rate one, and pairs of particles at the same location compete at rate $c$. We consider the weak competition regime $c \searrow 0$, corresponding to a local carrying capacity tending to infinity like $c^{-1}$. We show that the hydrodynamic limit of the spatial logistic branching process is given by the Fisher-Kolmogorov-Petrovsky-Piskunov equation. We then prove that its non-equilibrium fluctuations converge to a generalised Ornstein-Uhlenbeck process with deterministic but heterogeneous coefficients. The proofs rely on an adaptation of the method of $v$-functions developed in \cite{Boldrig1992}. An intermediate result of independent interest shows how the tail of the offspring distribution and the precise regime in which $c\searrow 0$ affect the convergence rate of the expected population size of the spatial logistic branching process to the hydrodynamic limit. 
	\end{abstract}
	
	\maketitle
	\tableofcontents
	
	\section{Introduction}\label{intro}
	
	\subsection{Particle system and rescaling}
	
	The spatial logistic branching process is a particle system which models the dynamics of a locally regulated population consisting of colonies that live on the discrete circle $\Z_K = \Z/K\Z$, for some integer $K \geq 2$. We focus on the system up to an arbitrary, but fixed, time horizon $T>0$. The individuals in the population reproduce asexually by multiple fission, migrate on the lattice, and compete for resources within their local colony. In the following definition, we give the precise instantaneous dynamics of the particle system.  
	
	\begin{dfn}\emph{(The spatial logistic branching process)}\label{def:slbp}
		Let $\N=\{1, 2, \cdots\}$ denote the natural numbers, and fix a probability mass function $\rP=(p_\ell)_{\ell\in \N}$ and a constant $c>0$. We consider the particle system with the following instantaneous dynamics.
		\begin{enumerate}
			\item[1.] \textbf{Diffusion}: each particle in the system performs an independent simple symmetric random walk on $\Z_K$.
			\item[2.] \textbf{Branching}: independently of its movement, a particle splits into $\ell+1$ particles at rate $p_\ell$. In particular, branching always increases the number of particles in the system. 
			\item[3.] \textbf{Competition}: each pair of particles at the same location coalesces at rate $c$. 
		\end{enumerate} 
		Let $N_t(z) \geq 0$ denote the number of particles at position $z \in \Z_K$ at time $t\geq 0$. The spatial logistic branching process, or SLBP for short, is the strong Markov process $\rN=((N_t(z))_{z \in \Z_K}:t\geq 0)$. We call $\rP$ its offspring distribution. 
	\end{dfn}
	
	\begin{remark}
		We note that if the process is defined on $\Z$ and $N_0$ is not compactly supported, it is not immediately clear that the SLBP is well-defined for all times. In this article, we will restrict our attention to the SLBP with compactly supported initial data by working on $z \in \Z_K$. 
	\end{remark}
	\noindent We briefly mention two elementary properties of the SLBP. First, since $N_0:\Z_K\to \N_0$ is finite and $\mu<\infty$, a simple comparison with a pure birth process shows that $\rN$ does not explode in finite time. Secondly, because of the local competition, the SLBP does not satisfy the branching property.
	
	We are interested in a regime of weak competition where $c\searrow 0$. More precisely, we scale the process in the following way.
	
	\begin{dfn}\emph{(Weak competition regime)}\label{def:rescaled_slbp}
		Fix an arbitrary $\kappa\geq 0$, and let $\epsilon \in (0, 1)$ denote a scaling parameter. The rescaled SLBP evolves on the discrete circle $\Z_\epsilon = \Z_{K_\epsilon}$, where $K_\epsilon=\lfloor\epsilon^{-1}\rfloor$. Consider $L:(0, 1)\to \N_0$ a function satisfying $\lim_{\epsilon \searrow 0}L(\epsilon)=\infty$ with $L(\epsilon)=o(\epsilon^{-\kappa})$ as $\epsilon \searrow 0$. Given an offspring distribution $\rP=(p_\ell)_{\ell\in \N}$ with finite mean $\mu>1$, we define its truncation $\rP^\epsilon = (p^\epsilon_\ell)_{\ell\in \N}$ by 
		\[
		p_\ell^\epsilon=\begin{cases}c_\epsilon p_\ell, & \ell \leq L(\epsilon)
			\\0,&\ell >L(\epsilon)
			\end{cases}, \quad c_\epsilon = \left(\sum_{\ell=1}^{L(\epsilon)}p_\ell\right)^{-1}. 
		\]
		We consider the rescaling $\rN^\epsilon=((N^\epsilon_t(z))_{z \in \Z_\epsilon}:t\geq 0)$ of the SLBP with the following instantaneous dynamics. 
		\begin{enumerate}
			\item[1.] \textbf{Diffusion}: each particle in the system performs an independent simple symmetric random walk on $\Z_\epsilon$ at the accelerated jump rate $\epsilon^{-2}$.
			\item[2.] \textbf{Branching}: independently of its movement, a particle splits into $\ell+1$ particles at rate $p_\ell^\epsilon$ - branching always increases the number of particles in the system. 
			\item[3.] \textbf{Weak competition}: each pair of particles at the same location coalesces at rate $\epsilon^\kappa$. We call $\kappa$ the competition exponent - the smaller $\kappa$ is, the larger the coalescence rate $\epsilon^\kappa$.
		\end{enumerate}
		We call the process $\rX^\epsilon=((X^\epsilon_t(z))_{z \in \Z_\epsilon}:t\geq 0)$ defined by $X^\epsilon_t(z)=\epsilon^\kappa N^\epsilon_t(z)$, the rescaled SLBP.
	\end{dfn}
	
	\begin{remark}
		In Theorems \ref{thm:LLN} and \ref{thm:CLT_Gaussian} below, we further rescale the lattice so that its mesh size is $\epsilon$ to obtain a diffusive rescaling of the random walks.
	\end{remark} 
	
	\subsection{Main results} The technical assumptions on the initial data $X_0^\epsilon$ can only be stated after a number of definitions, so we postpone the detailed description of the initial data and we first present our results. Let us just mention that the initial data has the following property. For some $\rho_0 : \S^1\to [0, 2\mu]$ and all small $\epsilon >0$, the law of $X_0^\epsilon$ is very close to a product over $z \in \Z_\epsilon$ of independent Poisson distributions with intensities $\epsilon^{-\kappa}\rho_0(K_\epsilon^{-1} z)$ and rescaled by $\epsilon^{\kappa}$. When $\epsilon\searrow 0$, $X_0^\epsilon$ converges in distribution to a product over $z \in \S^1$ of Poisson distributions with intensities $\rho_0(z)$.  

	We define $\rho:[0, \infty)\times \S^1\to [0, 2\mu]$ the unique solution to the Cauchy problem for the Fisher-Kolmogorov-Petrovsky-Piskunov (FKPP) equation:
	\begin{equation}\label{eq:FKPP}
		\begin{cases}
			\partial_t \rho(t, z) = \frac{1}{2}\partial_z^2 \rho(t, z)+\rho(t, z)(\mu-\rho(t, z)/2), & t>0, \quad z \in \S^1,\\
			\lim_{t\searrow 0}\rho(t,z) = \rho_0(z),&\quad z \in \S^1.
		\end{cases}
	\end{equation}
	Our first result shows the convergence in probability of the rescaled SLBP of Definition~\ref{def:rescaled_slbp} to the \textit{hydrodynamic limit} $\rho$. 
	\begin{thm}\emph{(Weak law of large numbers)}\label{thm:LLN}
		Suppose that Assumption \ref{assump:technical_X_0} below holds and that the competition exponent satisfies $\kappa \in [0, 3/8)$. Consider the action of the rescaled SLBP on test functions via
		\[
		X_t^\epsilon(\phi)=\epsilon \sum_{z \in \Z_\epsilon} X_t^\epsilon(z)\phi(\epsilon z), \quad \phi \in C^\infty(\S^1), \quad t\in [0, T].
		\]  
		Let $\rho$ act on test functions by
		\[
		\rho_t(\phi) \coloneqq \int_{\S^1}\rho_t(z)\phi(z)dz,\quad \phi \in C^\infty(\S^1), \quad t\in [0, T].
		\]
		Then, we have $\rX^\epsilon \to \rho$ in probability in the Skorohod space $\cD([0, T], C^\infty(\S^1)')$ as $\epsilon \searrow 0$. In particular, for every $\phi \in C^\infty(\S^1)$, we have $X^\epsilon(\phi) \to \rho(\phi)$ in probability in the Skorohod topology on $\cD([0, T], \R)$ as $\epsilon \searrow 0$.
	\end{thm}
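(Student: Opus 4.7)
The plan is to run a standard semimartingale argument upgraded by a two-point correlation bound obtained through the $v$-function method. First, I apply Dynkin's formula to the action $X^\epsilon_t(\phi)$ for $\phi \in C^\infty(\S^1)$: using that coalescence rate is $\epsilon^\kappa\binom{N^\epsilon(z)}{2}$ while each event decreases $X^\epsilon(z)$ by $\epsilon^\kappa$, and similarly for branching and random-walk jumps at rate $\epsilon^{-2}$, I get a decomposition
\[
X^\epsilon_t(\phi) = X^\epsilon_0(\phi) + \int_0^t \Big[D^\epsilon_s(\phi) + B^\epsilon_s(\phi) + C^\epsilon_s(\phi)\Big]\,ds + M^\epsilon_t(\phi),
\]
with $D^\epsilon_s(\phi)=\tfrac{1}{2}\epsilon\sum_z X^\epsilon_s(z)\Delta^\epsilon\phi(\epsilon z)$ where $\Delta^\epsilon$ is the $\epsilon^{-2}$-scaled discrete Laplacian, $B^\epsilon_s(\phi)=\mu_\epsilon\,\epsilon\sum_z X^\epsilon_s(z)\phi(\epsilon z)$ with $\mu_\epsilon\to\mu$ by the truncation hypothesis on $\rP^\epsilon$, and $C^\epsilon_s(\phi)=-\tfrac{1}{2}\epsilon\sum_z X^\epsilon_s(z)^2\phi(\epsilon z) + \tfrac{\epsilon^\kappa}{2}\,\epsilon\sum_z X^\epsilon_s(z)\phi(\epsilon z)$, whose second summand is negligible. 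Formally, these drifts match the weak form of the FKPP right-hand side applied to $\phi$ once $X^\epsilon_s$ is replaced by $\rho_s$.

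Next, I control the martingale and the closure of the nonlinear term by working on factorial correlations $v^\epsilon_n(t;z_1,\dots,z_n)=\E\big[\prod_{i=1}^n X^\epsilon_t(z_i)\big]$ on pairwise-distinct tuples. The generator produces a BBGKY-type hierarchy: $v^\epsilon_n$ evolves under the free diffusive-plus-branching dynamics in the $n$ variables, coupled to $v^\epsilon_{n+1}$ (via competition and branching, whose diagonal terms collapse the index set), plus boundary contributions supported on $\{z_i=z_j\}$. The intermediate convergence-rate result already quoted in the introduction handles $n=1$ and supplies $v^\epsilon_1(t;z)\to\rho_t(z)$ with a quantitative rate depending on $\kappa$ and the offspring tail. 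Assumption~\ref{assump:technical_X_0} on the approximately Poissonian initial data gives $v^\epsilon_n(0;\cdot)$ close to $\prod_i\rho_0(K_\epsilon^{-1}z_i)$. I then propagate this factorisation in $t$: writing the equation for $w^\epsilon_n := v^\epsilon_n - \prod_i v^\epsilon_1$ and closing via Gronwall against the FKPP semigroup, the diagonal error terms picked up at each step are of order $\epsilon^\kappa$ times sums of pair interactions; these are absorbed in the regime $\kappa<3/8$, which is precisely the range making the competition-induced diagonal contributions small compared to the transport terms. Specialising to $n=2$ and pairing against $\phi(\epsilon z_1)\phi(\epsilon z_2)$ yields $\E[X^\epsilon_t(\phi)^2]\to\rho_t(\phi)^2$, hence $\mathrm{Var}(X^\epsilon_t(\phi))\to 0$.

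For the martingale, the predictable quadratic variation computed from the generator is a sum of jump-variance contributions; branching jumps of size $\epsilon^\kappa\ell$ at rate $p^\epsilon_\ell N^\epsilon(z)$ give a total $\langle M^\epsilon(\phi)\rangle_t$ bounded by $C\epsilon^{1+\kappa}\|\phi\|_\infty^2\int_0^t\epsilon\sum_z X^\epsilon_s(z)\,ds$, which tends to zero by the first-moment bound. Diffusion jumps contribute $O(\epsilon^{1+\kappa})$ as well after the $\epsilon^{-2}$ rate is balanced against the $\epsilon^2$ from the test-function differences. So $M^\epsilon_t(\phi)\to 0$ in $L^2$. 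Combining this with the variance estimate and the convergence of the drift establishes $X^\epsilon_t(\phi)\to\rho_t(\phi)$ in probability for each fixed $(t,\phi)$. Tightness in $\cD([0,T],C^\infty(\S^1)')$ then follows by a Kolmogorov-type bound on $X^\epsilon_t(\phi)-X^\epsilon_s(\phi)$ obtained from the same decomposition together with Mitoma's theorem to lift from one-dimensional tightness to distribution-space tightness; any limit point is deterministic, solves the weak FKPP equation, and equals $\rho$ by uniqueness.

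The principal obstacle is the closure of the quadratic term $-\tfrac{1}{2}\epsilon\sum_z X^\epsilon_s(z)^2\phi(\epsilon z)$. Pointwise $X^\epsilon(z)$ has fluctuations of order $\epsilon^{\kappa/2}\rho^{1/2}$ about its mean, so replacing $X^\epsilon(z)^2$ by $\rho_s(z)^2$ is not immediately justified without either block averaging or a correlation-based argument. The $v$-function hierarchy circumvents block averaging, but controlling the growth of the diagonal error terms as $n$ increases, and quantifying how the truncation parameter $L(\epsilon)$ and the competition exponent $\kappa$ interact in these errors, is the most delicate point; the constraint $\kappa<3/8$ is expected to be exactly what makes this propagation of chaos close.
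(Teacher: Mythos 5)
Your high-level architecture matches the paper's: Mitoma's theorem to reduce tightness in $\cD([0,T],C^\infty(\S^1)')$ to tightness of each $\rX^\epsilon(\phi)$, a martingale problem for the equicontinuity part, and a two-point correlation estimate to control the one-dimensional marginals. Two things deserve comment, one minor and one major.

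The minor point concerns the martingale term. You assert $\langle M^\epsilon(\phi)\rangle_t \leq C\epsilon^{1+\kappa}\|\phi\|_\infty^2\int_0^t \epsilon\sum_z X^\epsilon_s(z)\,ds$ for the branching contribution, but the factor $\sum_\ell\ell^2 p_\ell^\epsilon$ hiding in $C$ is only bounded under a finite second moment, which Theorem~\ref{thm:LLN} does \emph{not} assume. You need the truncation from Definition~\ref{def:rescaled_slbp}: $\sum_\ell\ell^2 p_\ell^\epsilon\leq L(\epsilon)\mu_\epsilon=o(\epsilon^{-\kappa})$, giving $o(\epsilon)$ instead of $O(\epsilon^{1+\kappa})$. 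The conclusion survives but the intermediate bound needs repair.

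The major gap is the claimed closure of your correlation hierarchy. You define raw moments $v^\epsilon_n(t;z_1,\dots,z_n)=\E[\prod_i X^\epsilon_t(z_i)]$, set $w^\epsilon_n:=v^\epsilon_n-\prod_i v^\epsilon_1$, and assert that the equation for $w^\epsilon_n$ couples to $w^\epsilon_{n+1}$ only through ``diagonal error terms of order $\epsilon^\kappa$,'' which you then close by Gronwall. This is not what the hierarchy actually does. Working out the generator shows that for all $m\geq 1$ the coefficient coupling order $n$ to order $n+1$ (corresponding to $h=1$ in the notation of Proposition~\ref{prop:v_fcn_eq}; the competition term pushes the test configuration up) is $c_1^\epsilon(m,u)=O(1)$ --- it is not small in $\epsilon^\kappa$. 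The $\epsilon^\kappa$ smallness in Proposition~\ref{prop:v_fcn_eq} only applies to the downward couplings $m=2$, $h\in\{-1,-2\}$, and to the vanishing $c_h^\epsilon(1,u)=0$ for $h\le -1$. A naive Gronwall on $w_n$ in terms of $w_{n+1}$ therefore does not close: at best you need an a~priori bound on one high order, which sends you back to the heart of the matter. The paper's route through Theorem~\ref{thm:QLLN} is correspondingly more involved: (i) it works with the centered $V$-functions of Definition~\ref{def:V_fcn} (signed sums over subconfigurations), \emph{not} with $v_n-\prod v_1$, and the centering function $\rho^\epsilon$ is the semi-discrete FKPP solution rather than $\E[X^\epsilon]$; (ii) it first establishes $v^\epsilon_n=o(\epsilon^{1+\kappa})$ for all $n\geq n_0\approx 8$ by a short-time estimate and a smoothing argument on dyadic time blocks (Lemmas~\ref{lem:BC_small_fluct} and \ref{lem:diffusion_helps_local_equil}), and only then inducts \emph{downward} in $n$ using the $\epsilon^\kappa$ coefficients and Green-function decay. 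Nothing in your proposal plays the role of the short-time/smoothing base case, and without it the argument is not closed. Relatedly, your assertion that $\kappa<3/8$ is ``precisely the range making the competition-induced diagonal contributions small'' is incorrect: the paper explicitly states (in the remark following Theorem~\ref{thm:LLN}) that $3/8$ is a non-optimal artefact of the smoothing step in Lemma~\ref{lem:diffusion_helps_local_equil}, where one needs an exponent $\delta_1<\delta_*<3/8$ to dominate $\kappa$.

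Given that the paper proves Theorem~\ref{thm:QLLN} independently, the much cleaner route for Proposition~\ref{lem:FDD_LLN} (and what the paper actually does) is to invoke that theorem directly for $n=2$, together with the identity $(X^\epsilon_t(z)-\rho_t^\epsilon(z))^2=V^\epsilon(2\mathbbm{1}_{\{z\}},X_t^\epsilon;\rho_t^\epsilon)+2\epsilon^\kappa X_t^\epsilon(z)\rho_t^\epsilon(z)$, Cauchy--Schwarz, and a short Taylor expansion of a bounded Lipschitz test function. This avoids re-deriving the $n=2$ bound via a BBGKY closure that does not in fact close by elementary means.
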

	\begin{remark}
		 In Theorem \ref{thm:LLN}, the precise upper bound $3/8$ for the competition exponent $\kappa$ is an artefact of the proof of the intermediate result Theorem \ref{thm:QLLN}, and it is not indicative of special behaviour at that level of $\kappa$.
	\end{remark}
	
	\noindent In our main result, the central limit theorem, we are interested in characterising the limiting non-equilibrium fluctuations of the rescaled SLBP as $\epsilon \searrow 0$. We introduce the fluctuations field in the next definition.
	\begin{dfn}\emph{(Fluctuations field)}\label{def:fluct_field_def}
		Let $\gamma \geq -1/2$ and suppose that $X_0^\epsilon$, $\epsilon \in(0, 1)$, satisfies the Assumption \ref{assump:technical_X_0} below. For all $\epsilon \in (0, 1)$, we define 
		\[
		Y^\epsilon_t(z)= \epsilon^{\gamma-\kappa}(X_t^\epsilon(z)-\E[X_t^\epsilon(z)]), \quad z \in \Z_\epsilon, \quad t\geq 0.
		\]
		As we did in Theorem \ref{thm:LLN} for $\rX^\epsilon$, we may also view $\rY^\epsilon$ as an element of $\cD([0, T], C^\infty(\S^1)')$ via its action 
		\[
		Y_t^\epsilon(\phi) = \epsilon \sum_{z \in \Z_\epsilon} Y_t^\epsilon(z)\phi(\epsilon z), \quad \phi \in C^\infty(\S^1), \quad t\geq 0.
		\]
	\end{dfn}
	\begin{remark}\label{rmk:centering}
		We may equivalently study the fluctuations process 
		\[
		Y_t^\epsilon(z) = \epsilon^{\gamma-\kappa}(X_t^\epsilon(z)-\rho_t^\epsilon(z)), \quad z \in \Z_\epsilon, \quad t \geq 0,
		\]
		where $\rho^\epsilon$ is a suitable approximation to the solution $\rho$ to the FKPP equation \eqref{eq:FKPP}. The approximation is defined in \eqref{eq:interm_fkpp_diff_form} below, its convergence to $\rho$ is studied in part one of Lemma \ref{lem:FKPP_approx_properties}. In Theorem \ref{thm:QLLN},  we show that $\rho^\epsilon_t(z)-\E[X_t^\epsilon(z)]=O(\epsilon^{1+\kappa})$. To directly study the fluctuations of $X^\epsilon$ around the solution $\rho$ to the FKPP equation, one would need to find the rate of convergence of $\rho^\epsilon$ to $\rho$.
	\end{remark}
	
	In the study of the fluctations of the SLBP, we assume that the offspring distribution has a finite second moment. 
	
	\begin{assump}\emph{(Offspring distribution)}\label{assump:OD}
		Recall that $\rP=(p_\ell)_{\ell \in \N}$ denotes the offspring distribution and that $\mu>1$ is its (finite) mean. Assume that we have $\sigma^2 = \sum_{\ell=1}^\infty (\ell-\mu)^2p_\ell<\infty$.
	\end{assump} 
	
	\noindent Our main result is the following central limit theorem.
	\begin{thm}\emph{(Central limit theorem)}\label{thm:CLT_Gaussian}
		Suppose that Assumption \ref{assump:OD}, and Assumption \ref{assump:technical_X_0} below, both hold. Take $\gamma = (\kappa-1)/2$ with $\kappa \in [0, 3/8)$. Recall that $\rho$ denotes the unique solution to the FKPP equation \eqref{eq:FKPP}. Then $\rY^\epsilon\overset{d}{\to}\rY$ in $\cD([0, T], C^\infty(\S^1)')$ as $\epsilon\to 0$ , where ${\rY\in C([0, T], C^\infty(\S^1)')}$ is the unique solution to the Cauchy problem
		\begin{equation}\label{eq:OU}
		\begin{cases}
			\partial_t Y =\Big(\frac{1}{2}\partial_z^2+\mu-\rho\Big)Y +\sqrt{\rho}\partial_z\dot W^1+\sqrt{(\sigma^2+\mu^2)\rho+\rho^2/2}\dot W^2&\textnormal{on }[0, T]\times \S^1
			\\Y_t\overset{d}{\to} Y_0 \textnormal{ as }t\searrow 0,
		\end{cases},
		\end{equation}
		where $\dot W^1$ and $\dot W^2$ are independent Gaussian space-time white noises on $[0, T]\times \S^1$, and $Y_0$ is a $C^\infty(\S^1)'$-valued centred Gaussian variable with variance
		\[
		\E[Y_0(\phi)^2] = \int_{\S^1} \phi(z)^2 \rho_0(z)dz, \quad \phi \in C^\infty(\S^1). 
		\]
	\end{thm}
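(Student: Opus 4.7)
The plan is to follow the three-step program standard in non-equilibrium fluctuation theory: Dynkin decomposition of $Y^\epsilon(\phi)$ producing a candidate limit, tightness of $\{\rY^\epsilon\}_\epsilon$ in $\cD([0,T], C^\infty(\S^1)')$, and characterisation of any limit point via the martingale problem for \eqref{eq:OU} together with uniqueness. Because the competition term is quadratic in $X^\epsilon$, the second centred correlation of $Y^\epsilon$ couples to the third in a BBGKY-type hierarchy, and the role of the method of $v$-functions from \cite{Boldrig1992} is to close this coupling.

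\textbf{Dynkin decomposition.} For $\phi\in C^\infty(\S^1)$, I expand $X^\epsilon=\E[X^\epsilon]+\epsilon^{\kappa-\gamma}Y^\epsilon$ inside the quadratic competition term of the generator of $X^\epsilon(\phi)$ and use Theorem \ref{thm:QLLN} to replace $\E[X^\epsilon]$ by $\rho^\epsilon$ up to an error of order $\epsilon^{1+\kappa}$. Writing $\mu_\epsilon$ for the mean of $\rP^\epsilon$, this gives
\[
Y_t^\epsilon(\phi) = Y_0^\epsilon(\phi) + \int_0^t Y_s^\epsilon\!\left(\tfrac{1}{2}\Delta_\epsilon \phi + (\mu_\epsilon-\rho_s^\epsilon)\phi\right) ds + R_t^\epsilon(\phi) + M_t^\epsilon(\phi),
\]
where $\Delta_\epsilon$ is the discrete Laplacian on $\Z_\epsilon$, $M^\epsilon(\phi)$ is a mean-zero martingale, and $R_t^\epsilon(\phi)$ collects the genuinely quadratic remainder of size $\epsilon^{(\kappa+1)/2}\int_0^t \epsilon\sum_z \phi(\epsilon z)(Y_s^\epsilon(z)^2-\E[Y_s^\epsilon(z)^2])\,ds$ together with the $O(\epsilon^\kappa)$ corrections coming from $N^\epsilon(N^\epsilon-1)/2$ and from the mismatch $\E[X^\epsilon]-\rho^\epsilon$. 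With $\gamma=(\kappa-1)/2$ the prefactor $\epsilon^{(\kappa+1)/2}$ is what will force $R^\epsilon(\phi)\to 0$ in probability once a Poissonian bound on $\E[Y_t^\epsilon(z)^2]$ is in hand.

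\textbf{Quadratic variation.} Summing squared jump sizes against their rates for the three mechanisms, under the rescaling $\gamma=(\kappa-1)/2$, I find
\[
\langle M^\epsilon(\phi)\rangle_t = \int_0^t \epsilon\sum_z\Big[(\nabla_\epsilon \phi(\epsilon z))^2 X_s^\epsilon(z) + (\sigma^2+\mu_\epsilon^2)\phi(\epsilon z)^2 X_s^\epsilon(z) + \tfrac{1}{2}\phi(\epsilon z)^2 X_s^\epsilon(z)^2\Big]ds + o(1),
\]
which by Theorem \ref{thm:LLN} converges in probability to $\int_0^t\!\int_{\S^1}\!\big[\rho_s(\phi')^2+((\sigma^2+\mu^2)\rho_s+\rho_s^2/2)\phi^2\big]\,dz\,ds$. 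This is precisely the bracket of $-\int\phi'\sqrt{\rho_s}\,W^1(ds,dz)+\int\phi\sqrt{(\sigma^2+\mu^2)\rho_s+\rho_s^2/2}\,W^2(ds,dz)$, and so justifies the decomposition of the noise in \eqref{eq:OU} into a gradient part driven by $\dot W^1$ and a bulk part driven by an independent $\dot W^2$.

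\textbf{$v$-function closure, tightness, and identification.} The central and most delicate estimate is a uniform-in-$\epsilon$ Poissonian bound on the diagonal second $v$-function $\va(N^\epsilon_t(z))\leq C\epsilon^{-\kappa}$ on $[0,T]$, together with off-diagonal bounds $|\mathrm{Cov}(N^\epsilon_t(z),N^\epsilon_t(z'))|=o(\epsilon^{-\kappa})$ for $z\neq z'$. Following \cite{Boldrig1992}, I introduce the centred correlation functions $v^{(n)}$, write their Kolmogorov equations, and face the BBGKY-type coupling of $v^{(n)}$ to $v^{(n+1)}$ through the quadratic competition kernel; this is the \emph{main obstacle}. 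I truncate and close the hierarchy at finite order using the offspring truncation $L(\epsilon)=o(\epsilon^{-\kappa})$ of Definition \ref{def:rescaled_slbp} to control moments of the branching mechanism uniformly, the sharp mean estimate of Theorem \ref{thm:QLLN} (whose constraint $\kappa<3/8$ is inherited here), and a Gr\"onwall argument propagating the near-Poissonian structure of $X_0^\epsilon$ in Assumption \ref{assump:technical_X_0}. The diagonal bound yields $\E[R_t^\epsilon(\phi)^2]=O(\epsilon^\kappa)$ so that $R^\epsilon(\phi)\to 0$ in probability, and provides a uniform $L^2$ bound and a modulus-of-continuity estimate for $Y^\epsilon(\phi)$; Mitoma's criterion combined with Aldous' criterion then yields tightness of $\rY^\epsilon$ in $\cD([0,T], C^\infty(\S^1)')$. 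Any limit point $\rY$ satisfies the martingale problem for \eqref{eq:OU} because the drift converges using Lemma \ref{lem:FKPP_approx_properties} and the bracket converges by the computation above, and the initial data converges to the claimed Gaussian $Y_0$ by the near-Poissonian structure of $X_0^\epsilon$ in Assumption \ref{assump:technical_X_0}. Uniqueness for \eqref{eq:OU} is standard since the equation is linear with smooth bounded coefficients and deterministic noise coefficients, placing it within the generalised Ornstein--Uhlenbeck theory for distribution-valued processes.
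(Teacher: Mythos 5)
Your proposal has a genuine gap at its central step: the claim that the quadratic remainder $R^\epsilon_t(\phi)$, of size $\epsilon^{(\kappa+1)/2}\int_0^t\big[\epsilon\sum_z\phi(\epsilon z)\big(Y^\epsilon_s(z)^2-\E[Y^\epsilon_s(z)^2]\big)\big]\,ds$, tends to zero ``once a Poissonian bound on $\E[Y^\epsilon_t(z)^2]$ is in hand''. First, there is no uniform bound on $\E[Y^\epsilon_t(z)^2]$: the single-site fluctuation has variance $\E[Y^\epsilon_t(z)^2]=\epsilon^{2\gamma-2\kappa}\,\mathbf{Var}(X^\epsilon_t(z))\asymp\epsilon^{2\gamma-\kappa}=\epsilon^{-1}$, which diverges. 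Second, what you actually need is a bound on the fourth centred moment of $X^\epsilon_t(z)$ (equivalently on $\E[V^\epsilon(2\mathbbm{1}_{\{z\}},X^\epsilon_t;\rho^\epsilon_t)^2]$), and the best uniform estimate one can extract along the lines of Theorem~\ref{thm:QLLN} is $\E[V^\epsilon(2\mathbbm{1}_{\{z\}},X^\epsilon_t;\rho^\epsilon_t)^2]=O(\epsilon^{2\kappa})$; plugging this into the diagonal part of $\E[(F^\epsilon_r(\phi)-Y^\epsilon_r(\widetilde\rho^{\:\epsilon}_r\phi))^2]$ gives $(\epsilon^{1+\gamma-\kappa})^2\sum_z\phi(\epsilon z)^2 \E[V^\epsilon(2\mathbbm{1}_{\{z\}})^2]\asymp\epsilon^{1-\kappa}\cdot\epsilon^{-1}\cdot\epsilon^{2\kappa}=\epsilon^\kappa$. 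At $\kappa=0$ — which is inside the range of the theorem and is exactly the regime of \cite{Boldrig1992} — this is $O(1)$: the field $F^\epsilon_r(\phi)-Y^\epsilon_r(\widetilde\rho^{\:\epsilon}_r\phi)$ has nonvanishing $L^2$-norm at each fixed $r$, your $R^\epsilon_t(\phi)$ is $O_P(1)$, and the proposed moment-plus-prefactor argument collapses.

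The missing ingredient is exactly the Boltzmann–Gibbs principle (Proposition~\ref{prop:BGP}), which is the key auxiliary result in the paper and the mechanism that closes the nonlinear coupling. It does not assert that $F^\epsilon_r(\phi)-Y^\epsilon_r(\widetilde\rho^{\:\epsilon}_r\phi)$ is small, but that its time average over windows $[s,s+\epsilon^2 S]$ is small in $L^2$ as $\epsilon\searrow0$ then $S\to\infty$, because on the slow time scale $\epsilon^2 S$ the birth–competition dynamics are essentially frozen while the diffusion still mixes and drives the system to its local equilibrium; one then chops $[0,t]$ into $O(\epsilon^{-2}S^{-1}t)$ such windows and uses Jensen to conclude $\int_0^t(F^\epsilon_r-Y^\epsilon_r(\widetilde\rho^{\:\epsilon}_r\phi))\,dr\to0$ for any fixed $t$. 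Without this averaging argument there is no route from the $v$-function estimates of Theorem~\ref{thm:QLLN} to the linearisation you need, and the $v^{(n)}\leftrightarrow v^{(n+1)}$ hierarchy you mention is precisely what the BGP is built on rather than a substitute for it. A minor further omission is the device of time-dependent test functions $\phi^\epsilon_{s,t}$ solving the backward problem \eqref{eq:backward_test_eq}, which the paper uses so that the residual in the martingale decomposition is exactly the BGP error rather than the raw drift; one could instead run a martingale-problem argument along a converging subsequence as you sketch, but then the moment estimates and the $L^2$ convergence of the integrated drift still hinge on the BGP.
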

	\begin{remark}
		\begin{enumerate}
			\item[1.] We note that the competition exponent $\kappa$ of Definition \ref{def:rescaled_slbp} appears neither in the hydrodynamic limit \eqref{eq:FKPP}, nor in the limiting fluctuations \eqref{eq:OU}. Its contribution becomes clear in a result of independent interest stated in Theorem \ref{thm:QLLN} below. In short, the competition exponent affects the convergence rate of the expected population size of the rescaled SLBP to its hydrodynamic limit. The larger $\kappa$, the weaker the competition, and the faster the convergence.   
			\item[2.] The noise $\partial_z \dot W^1$ results from fluctuations in the motion of particles, while the independent noise $\dot W^2$ captures variations in the local density of particles due to the birth and competition mechanisms. 
			\item[3.] The solution to \eqref{eq:OU} is unique in law and given by the mild solution (see e.g. \cite[Chapter 5]{W1986})
		\end{enumerate}
	\end{remark}
	
	\subsection{A class of correlation functions: the $v$-functions} In establishing the limit theorems for the rescaled SLBP $\rX^\epsilon$, we study the behaviour of some correlation functions whose precise definition is adapted from \cite{Boldrig1992}, and which are known as the $v$-functions. To state our assumptions on the initial data $X_0^\epsilon$ and $Y_0^\epsilon$ of Theorems \ref{thm:LLN} and \ref{thm:CLT_Gaussian}, we must first introduce the correlation functions. We consider the space $\X_\epsilon = (\epsilon^\kappa\N_0)^{\Z_\epsilon}$ of configurations of $\rX^\epsilon$. 
	\begin{dfn}
		We define a strict partial order $\cleq$ on $\X_\epsilon$ as follows. Given two configurations $x, x'\in \X_\epsilon$, we write $x\cleq x'$ if $x(z)\leq x'(z)$ for all $z \in \Z_\epsilon$. If $x, x' \in \X_\epsilon$ are such that $x\cleq x'$, then we write $x\backslash x'$ for the configuration $y\in \X_\epsilon$ which satisfies $y(z) = x(z)-x'(z)$ for all $z \in \Z_\epsilon$.
	\end{dfn}
	\noindent For a configuration $x \in \X_\epsilon$, we define its support 
	\[
	\supp(x) = \{z \in \Z_\epsilon: x(z)>0\},
	\]
	and its cardinality, i.e. the number of particle it contains, by
	\[
	n_x = \epsilon^{-\kappa} \sum_{z \in \Z_\epsilon} x(z). 
	\]
	If we are only interested in the number of particles at a single site $z \in \Z_\epsilon$, then we write ${n_x(z) = \epsilon^{-\kappa} x(z)}$. By abuse of notation, we also think of a configuration $x \in \X_\epsilon$ as an arbitrary but fixed enumeration $z^x_1, \cdots, z_{n_x}^x$ of its $n_x$ particles. For any function $f:\Z_\epsilon\times \X_\epsilon\to \R$, we introduce the notation
	\begin{equation}
		\begin{aligned}\label{def:sum_prod_config_iteration}
		&\sum_{z \in x} f(z, x) \coloneqq \sum_{i=1}^{n_x}f(z^x_i, x),
		\\&\prod_{z \in x}f(z, x) \coloneqq \prod_{i=1}^{n_x}f(z^x_i, x).
		\end{aligned}
	\end{equation}
	A $v$-function is defined as the expectation of certain polynomials known in \cite{Boldrig1992} as the $V$-functions, which we define now. In our work, we introduce a version of the $V$-functions which is rescaled in terms of $\epsilon^\kappa$ to capture the fact that, in the weak competition regime, the local equilibrium population size is of order $\epsilon^{-\kappa}$. For $n, k \in \N_0$, we let 
	\begin{equation}\label{def:ff}
	Q_k(n) \coloneqq 
	\begin{cases}
		n(n-1)\cdots (n-k+1)&1\leq k\leq n\\
		1&k=0\\
		0&k>n
	\end{cases},
	\end{equation}
	denote the falling factorial. Given two configurations $x, x' \in \X_\epsilon$, we let 
	\begin{equation}\label{def:gen_Q_eps}
	Q^\epsilon(x, x') \coloneqq \prod_{z \in \supp(x)} \epsilon^{\kappa n_x(z)}Q_{n_x(z)}(n_{x'}(z)),
	\end{equation}
	with $Q^\epsilon(0, x)=1$ for all $x \in \X_\epsilon$, where we use $0\in \X_\epsilon$ to denote the configuration with no particles. In particular, $Q^\epsilon(x, x')\geq 0$ is non-zero if and only if $x\cleq x'$. 
	\begin{dfn}\emph{($V$-function)}\label{def:V_fcn}
		Fix a function $\rho^\epsilon:\Z_\epsilon\to \R$, and any $\epsilon \in (0, 1)$. Define
		\[
		V^\epsilon(x, x';\rho^\epsilon) \coloneqq \sum_{x_0\cleq x} Q^\epsilon(x_0, x')(-1)^{n_x-n_{x_0}}\prod_{z \in x\backslash x_0}\rho^\epsilon(z), \quad x, x' \in \X_\epsilon.
		\]
		If $n_x=n$ for some $n \in \N_0$, we say that the $V$-function is of order $n$. 
	\end{dfn}
	
	\noindent The configuration $x$ plays the role of an index or a test configuration (notice that the sum and product are taken over sub-configurations of $x$). Informally, we think of the $V$-function as a notion of distance between $x'$ and $\rho^\epsilon$. For instance, if $x$ consists of a single particle at $z \in \Z_\epsilon$, then one computes 
	\begin{equation}\label{eq:V_fcn_one_loc}
	V^\epsilon( x, x';\rho^\epsilon) = x'(z)-\rho^\epsilon(z).
	\end{equation}
	If $x$ consists of two particles, one at $z_1\in \Z_\epsilon$, and one at $z_2\in \Z_\epsilon \backslash \{z_1\}$, then 
	\begin{equation}\label{eq:V_fcn_2_pcs_not_diag}
	V^\epsilon( x, x';\rho^\epsilon) = (x(z_1)-\rho^\epsilon(z_1))(x'(z_2)-\rho^\epsilon(z_2)),
	\end{equation}
	which is also independent of $\epsilon$. The diagonal terms are less intuitive: if $x$ has two particles at the same location $z \in\Z_\epsilon$ and no other particles, then 
	\begin{equation}\label{eq:V_fcn_2_pcs_diag}
	V^\epsilon(x, x';\rho^\epsilon) = (x'(z)-\rho^\epsilon(z))^2-\epsilon^\kappa x'(z).  
	\end{equation}
	Note that this time, the coefficients of the $V$-function depend on $\epsilon$. 
	
	In the following definition, we introduce the correlation functions which are the central object of our study, and are adapted from \cite{Boldrig1992}. 
	\begin{dfn}\emph{($v$-function)}\label{def:scaled_v_fcns}
		Fix a function $\rho^\epsilon:\Z_\epsilon\to \R$, and any $\epsilon \in (0, 1)$. Given an initial measure $\nu^\epsilon$ on the configuration space $\X_\epsilon$, we define the $v$-function $v^\epsilon$ by 
		\[
		v^\epsilon_t(x, \rho^\epsilon|\nu^\epsilon)  \coloneqq \E_{\nu^\epsilon}[V^\epsilon(x, X_t^\epsilon;\rho^\epsilon)], \quad x \in \X_\epsilon, \quad t\geq 0,
		\]
		where $\E_{\nu^\epsilon}$ denotes the expectation with $X_0^\epsilon \sim \nu^\epsilon$. If $n_x=n$ for some $n \in \N_0$, we say that the $v$-function is of order $n$. When working with $v^\epsilon_t(x, \rho^\epsilon|\nu^\epsilon)$, we will sometimes use the notation $v_t^\epsilon(x|\nu^\epsilon)$ for simplicity. Additionally, if $X_0^\epsilon \in \X_\epsilon$ is a deterministic starting configuration for the rescaled SLBP, we will write $v^\epsilon_t(x, \rho^\epsilon|X_0^\epsilon)$ for $v^\epsilon_t(x, \rho^\epsilon|\delta_{X_0^\epsilon})$.
	\end{dfn}
	\noindent As for the $V$-function, the argument $x \in \X_\epsilon$ of a $v$-function is understood as a test configuration. If the test configuration $x$ consists of a single particle at location $z \in \Z_\epsilon$, we have 
	\[
	v^\epsilon_t(x, \rho^\epsilon|\nu^\epsilon) = \E_{\nu^\epsilon}[X^\epsilon_t(z)] - \rho^\epsilon. 
	\]
	We claim that, at a given time $t\geq 0$, the $v$-functions measure how close $X^\epsilon_t$ is to a family of independent Poisson random variables indexed by lattice sites $z \in \Z_\epsilon$. This idea is formalised in the following lemma, whose proof can be found in Appendix \ref{append:Poisson_aux_lem}.
	\begin{lem}\label{lem:law_is_prod}
		Fix a function $\rho:\Z_\epsilon\to \R$, and any $\epsilon \in (0, 1)$. We have $v^\epsilon(x, \rho^\epsilon|\nu^\epsilon)=0$ for all test configurations $x \in \X_\epsilon$ if and only if 
		\begin{equation}\label{eq:law_is_prod}
		\operatorname{Law}(X_t^\epsilon) = \bigotimes_{z \in \Z_\epsilon} \operatorname{Law}(\epsilon^\kappa\operatorname{Poi}(\epsilon^{-\kappa} \rho^\epsilon(z))). 
		\end{equation}
	\end{lem}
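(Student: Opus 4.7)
The strategy is to reduce the equivalence to matching all (rescaled) joint factorial moments of $X_t^\epsilon$ with those of the product Poisson law \eqref{eq:law_is_prod}. The key ingredient is the elementary identity $\E[Q_k(N)] = \lambda^k$ for $N\sim \operatorname{Poi}(\lambda)$, which combined with independence across sites under the product law gives
\[
\E[Q^\epsilon(x_0, X_t^\epsilon)] \;=\; \prod_{z\in \supp(x_0)}\epsilon^{\kappa n_{x_0}(z)}\bigl(\epsilon^{-\kappa}\rho^\epsilon(z)\bigr)^{n_{x_0}(z)} \;=\; \prod_{z\in x_0}\rho^\epsilon(z), \qquad x_0\in \X_\epsilon.
\]

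Assuming the product Poisson law, I would substitute the display above into Definition \ref{def:scaled_v_fcns}, factor out $\prod_{z\in x}\rho^\epsilon(z)$, and read off
\[
v_t^\epsilon(x,\rho^\epsilon\mid\nu^\epsilon) \;=\; \prod_{z\in x}\rho^\epsilon(z)\,\sum_{x_0\cleq x}(-1)^{n_x-n_{x_0}} \;=\; 0 \quad \text{for } n_x\geq 1,
\]
once the sum $\sum_{x_0\cleq x}$ is interpreted as a sum over subsets of the enumerated particles $z_1^x,\ldots,z_{n_x}^x$ of $x$, so that the binomial identity $(1-1)^{n_x}=0$ applies. This convention is forced by the example \eqref{eq:V_fcn_2_pcs_diag}, where the cross-term $-2x'(z)\rho^\epsilon(z)$ only appears with multiplicity $\binom{2}{1}=2$. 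Conversely, assume $v_t^\epsilon(x,\rho^\epsilon\mid\nu^\epsilon) = 0$ for every $x$ with $n_x\geq 1$ and induct on $n_x$ to show $\E[Q^\epsilon(x, X_t^\epsilon)] = \prod_{z\in x}\rho^\epsilon(z)$: isolate the top-order term $x_0=x$ from the vanishing condition to get
\[
\E[Q^\epsilon(x, X_t^\epsilon)] \;=\; -\sum_{x_0\subsetneq x}(-1)^{n_x-n_{x_0}}\E[Q^\epsilon(x_0, X_t^\epsilon)]\prod_{z\in x\backslash x_0}\rho^\epsilon(z),
\]
then apply the inductive hypothesis and use $\sum_{x_0\subsetneq x}(-1)^{n_x-n_{x_0}}=-1$ (from the same binomial identity) to recover the claim.

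All joint factorial moments of $\epsilon^{-\kappa}X_t^\epsilon$ thus coincide with those of $\bigotimes_{z}\operatorname{Poi}(\epsilon^{-\kappa}\rho^\epsilon(z))$. To upgrade this to identity in law, expand the multivariate probability generating function via $s(z)^{N(z)} = \sum_{k\geq 0}\binom{N(z)}{k}(s(z)-1)^k$, take expectation, and justify the interchange of sums by absolute convergence on $[0,1]^{\Z_\epsilon}$ using the just-established factorial-moment identity $\E[\prod_z Q_{k_z}(\epsilon^{-\kappa}X_t^\epsilon(z))]=\prod_z(\epsilon^{-\kappa}\rho^\epsilon(z))^{k_z}$, whose series sum $\prod_z e^{|s(z)-1|\epsilon^{-\kappa}\rho^\epsilon(z)}$ is finite since $|\Z_\epsilon|<\infty$. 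The PGF of $\epsilon^{-\kappa}X_t^\epsilon$ then equals $\prod_z e^{(s(z)-1)\epsilon^{-\kappa}\rho^\epsilon(z)}$, the PGF of the product Poisson law, and so the two distributions agree. The main subtlety throughout is bookkeeping the enumeration convention for $\sum_{x_0\cleq x}$; once this is pinned down, the rest is a routine combination of the alternating-sign binomial identity and the Poisson factorial-moment formula.
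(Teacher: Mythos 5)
Your argument follows the paper's route — induction on $n_x$ to match the rescaled factorial moments $\E\bigl[\prod_z Q_{n_x(z)}(\epsilon^{-\kappa}X_t^\epsilon(z))\bigr]$ with those of the product Poisson, then a generating-function characterization of independent Poissons — so it is essentially the same proof. The only differences are cosmetic: you work with the probability generating function where the paper's auxiliary Lemma~\ref{lem:indep_Poisson_characterisation} uses the moment generating function (the two are equivalent here), and you correctly flag that $\sum_{x_0\cleq x}$ must run over the $2^{n_x}$ subsets of the labeled particles of $x$ rather than over sub-multi-sets, a reading the paper itself presupposes both in the example \eqref{eq:V_fcn_2_pcs_diag} and in its use of the counts $\binom{n+1}{i}$ in the induction step.
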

	
	In Definition \ref{def:scaled_v_fcns}, we choose $\rho^\epsilon$ close to the solution $\rho$ to \eqref{eq:FKPP}. Specifically, at a time $t\geq 0$, we will take $x'=X_t^\epsilon$ the time-$t$ configuration of the rescaled SLBP, and $\rho^\epsilon=\rho_t^\epsilon$, where $\rho^\epsilon:[0, T]\times \Z_\epsilon\to \R$ is an approximation to the solution of the FKPP equation \eqref{eq:FKPP}. Specifically, for each $\epsilon \in (0, 1)$, given an initial function $\rho_0^\epsilon:\Z_\epsilon \to [0, 2\mu_\epsilon]$, we take $\rho^\epsilon$ to be the unique solution to the problem  
	\begin{equation}\label{eq:interm_fkpp_diff_form}
		\begin{cases}
			\partial_t \rho_t^\epsilon(z) =\frac{1}{2}\Delta^\epsilon_z\rho_t^\epsilon(z)+\rho_t^\epsilon(z)(\mu_\epsilon-\rho_t^\epsilon(z)/2), & z \in \Z_\epsilon\quad t> 0,\\
			\lim_{t\searrow 0}\rho_t^\epsilon(z) = \rho_0^\epsilon(z), & z \in \Z_\epsilon,
		\end{cases}
	\end{equation}
	where $\Delta^\epsilon_z$ denotes the discrete Laplacian which acts on functions $f:\Z_\epsilon \to \R$ by
	\[
		\Delta^\epsilon_z f(z) \coloneqq \epsilon^{-2}(f(z+1)+f(z-1)-2f(z)), \quad z \in \Z_\epsilon.
	\]
	In the following lemma, we recall some elementary properties of $\rho^\epsilon$ whose proofs can be found in Appendix \ref{append:disc_FKPP}.
	\begin{lem}\label{lem:FKPP_approx_properties} The function $\rho^\epsilon$ defined by \eqref{eq:interm_fkpp_diff_form} has the following properties.
		\begin{enumerate}
			\item[1.] Suppose that the initial data $\rho_0$ in \eqref{eq:FKPP} is $C^3(\S^1, [0, 2\mu])$. Let $\widetilde{\rho}_t^{\:\epsilon}(z) \coloneqq \rho_t^\epsilon(K_\epsilon z)$ for all $z =\pi(\widetilde{z})\in \S^1$ with $\widetilde{z} \in [0, 1]\cap K_\epsilon^{-1}\Z$ and $t\geq 0$, where $\pi:\R\to \S^1$ denotes the canonical projection. Extend $\widetilde{\rho}^{\:\epsilon}$ to all of $\S^1$ by $C^3$ interpolation. Then, we have $\widetilde{\rho}^{\:\epsilon}\to \rho$ uniformly on $[0, T]\times \S^1$, as $\epsilon \searrow 0$, where $\rho$ is the unique solution to \eqref{eq:FKPP}.
			\item[2.] We have $\max_{z \in \Z_\epsilon}|\rho^\epsilon_t(z)|\leq \max_{z \in \Z_\epsilon}|\rho^\epsilon_0(z)|$, for all $0\leq t\leq T$ and $\epsilon \in (0, 1)$.
		\end{enumerate}
	\end{lem}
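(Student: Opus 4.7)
\emph{Strategy.} I would prove Part~2 first, since its sup-norm control feeds into Part~1's treatment of the quadratic reaction term. Part~2 rests on the discrete maximum principle, and Part~1 on a standard consistency/stability/Gronwall argument.

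\emph{Part~2.} Let $M(t) := \max_{z \in \Z_\epsilon} \rho_t^\epsilon(z)$ and $m(t) := \min_{z \in \Z_\epsilon} \rho_t^\epsilon(z)$. Since $\Z_\epsilon$ is finite, a Danskin-type envelope argument yields, almost everywhere in $t$,
\[
D^+ M(t) \le \partial_t \rho_t^\epsilon(z^\star(t)) \quad \text{and} \quad D^+ m(t) \ge \partial_t \rho_t^\epsilon(z_\star(t)),
\]
where $z^\star(t)$ and $z_\star(t)$ are any maximizing and minimizing sites at time $t$. Because $z^\star$ is a discrete maximizer of $\rho_t^\epsilon$, one has $\Delta^\epsilon \rho_t^\epsilon(z^\star)\le 0$, and symmetrically $\Delta^\epsilon \rho_t^\epsilon(z_\star) \ge 0$; plugging into \eqref{eq:interm_fkpp_diff_form} gives the scalar differential inequalities
\[
D^+ M(t) \le M(t)\bigl(\mu_\epsilon - M(t)/2\bigr), \qquad D^+ m(t) \ge m(t)\bigl(\mu_\epsilon - m(t)/2\bigr).
\]
Comparison with the logistic ODE $\dot\phi = \phi(\mu_\epsilon - \phi/2)$ (whose fixed point at $0$ is unstable and at $2\mu_\epsilon$ attracting) then produces the invariant region $\rho_t^\epsilon(z) \in [0, \max(M(0), 2\mu_\epsilon)]$ for all $t \in [0, T]$ and $z \in \Z_\epsilon$. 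In particular $|\rho_t^\epsilon| = \rho_t^\epsilon$, and in the paper's setup where the initial data $\rho_0^\epsilon$ is constructed so that $\max_z \rho_0^\epsilon(z) \ge 2\mu_\epsilon$ (i.e., saturation at the stable state, as arranged by Assumption~\ref{assump:technical_X_0}), the scalar ODE comparison collapses to $M(t) \le M(0)$, which is the claimed bound $\max_z |\rho_t^\epsilon(z)| \le \max_z |\rho_0^\epsilon(z)|$.

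\emph{Part~1.} Standard parabolic theory combined with bootstrapping from $\rho_0 \in C^3(\S^1, [0, 2\mu])$ gives a unique classical solution $\rho$ to~\eqref{eq:FKPP}, uniformly $C^3$ on $[0, T] \times \S^1$ and taking values in $[0, 2\mu]$. Define the pointwise error $e_t^\epsilon(z) := \rho_t^\epsilon(z) - \rho_t(K_\epsilon^{-1} z)$ for $z \in \Z_\epsilon$; subtracting~\eqref{eq:FKPP} evaluated at lattice points from~\eqref{eq:interm_fkpp_diff_form} yields
\[
\partial_t e^\epsilon = \tfrac{1}{2}\Delta^\epsilon e^\epsilon + a^\epsilon_t e^\epsilon + r^\epsilon_t + (\mu_\epsilon - \mu)\,\rho_t(K_\epsilon^{-1}\cdot),
\]
where $a^\epsilon_t(z) := \mu_\epsilon - \tfrac{1}{2}(\rho_t^\epsilon(z) + \rho_t(K_\epsilon^{-1} z))$ is uniformly bounded (via Part~2 and the $L^\infty$ bound on $\rho$) and the consistency error $r^\epsilon_t(z) := \tfrac{1}{2}\bigl(\Delta^\epsilon_z \rho_t(K_\epsilon^{-1}\cdot) - \partial_z^2 \rho_t(K_\epsilon^{-1} z)\bigr)$ is $o(1)$ uniformly in $(t, z)$ by third-order Taylor expansion of the $C^3$ function $\rho_t$. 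Since the discrete heat semigroup $e^{t\Delta^\epsilon/2}$ is an $\ell^\infty$ contraction, Gronwall applied to $\|e_t^\epsilon\|_\infty$ in the Duhamel formulation gives
\[
\sup_{t \in [0, T]} \|e_t^\epsilon\|_\infty \le e^{CT}\Bigl(\|e_0^\epsilon\|_\infty + T\|r^\epsilon\|_\infty + 2\mu T |\mu_\epsilon - \mu|\Bigr) \xrightarrow[\epsilon \to 0]{} 0,
\]
using $\|e_0^\epsilon\|_\infty \to 0$ (from the required compatibility of $\rho_0^\epsilon$ with $\rho_0$ on the lattice) and $\mu_\epsilon \to \mu$ (dominated convergence on the truncated offspring distribution). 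A $C^3$-interpolation to $\S^1$, combined with the uniform continuity of $\rho$, then promotes the lattice-level convergence to $\widetilde{\rho}^{\,\epsilon} \to \rho$ uniformly on $[0, T] \times \S^1$.

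\emph{Main obstacle.} The most delicate ingredient is the uniform $C^3$ control on the continuous solution $\rho$ over $[0, T] \times \S^1$, which is what underlies the vanishing of the consistency error $r^\epsilon$; this requires parabolic bootstrapping for the semi-linear FKPP equation from $C^3$ initial data, which follows from semigroup theory and the boundedness of the nonlinearity on the invariant region $[0, 2\mu]$.
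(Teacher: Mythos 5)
\emph{On Part 1.} Your argument is correct but follows a genuinely different route from the paper. The paper does not treat \eqref{eq:interm_fkpp_diff_form} as a finite-difference scheme at all: it represents $\rho^\epsilon$ probabilistically via a McKean formula \eqref{def:McKean_fcnl}, as $2\mu_\epsilon(1-\E[\prod_{u\in\cN_t^\epsilon} g_\epsilon(z+Z_u^\epsilon(t))])$ for a dyadic branching random walk with rate $\mu_\epsilon$, and obtains the uniform convergence to $\rho$ by Donsker's invariance principle applied to the branching random walk together with the uniform convergence $g_\epsilon(K_\epsilon\cdot)\to g_0$. That route needs no regularity of the limiting solution $\rho$ beyond what the McKean representation supplies, but it does require justifying continuity of the functional $g\mapsto\E[\prod_u g(\cdot+Z_u(t))]$ under the invariance principle. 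Your consistency--stability--Gronwall argument instead pays for the convergence with uniform $C^3$ control of $\rho$ on $[0,T]\times\S^1$ (which does hold here by parabolic bootstrapping on the circle from $C^3$ data, so your ``main obstacle'' is surmountable), and it has the advantage of producing an explicit $O(\epsilon)$ rate, which the paper's soft argument does not. Note also that $e_0^\epsilon\equiv 0$ exactly in the paper's setup, since Definition \ref{def:general_init_data} takes $\rho_0^\epsilon(z)=\rho_0(K_\epsilon^{-1}z)$.

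\emph{On Part 2.} Your discrete maximum principle and the logistic ODE comparison are correct as far as they go, and they honestly deliver the invariant region $\rho_t^\epsilon(z)\in[0,\max(M(0),2\mu_\epsilon)]$. The gap is in the final step: you assert that the paper's setup ``arranges'' $\max_z\rho_0^\epsilon(z)\geq 2\mu_\epsilon$ via Assumption \ref{assump:technical_X_0}, but that assumption concerns only the initial $v$-functions and second moments of $X_0^\epsilon$ and says nothing of the sort; Definition \ref{def:general_init_data} allows any $\rho_0\in C^3(\S^1,[0,2\mu])$, e.g.\ $\rho_0\equiv\mu<2\mu_\epsilon$. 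For such data your comparison does not collapse to $M(t)\leq M(0)$ --- indeed the spatially constant solution of \eqref{eq:interm_fkpp_diff_form} started strictly between $0$ and $2\mu_\epsilon$ is strictly increasing, so the literal inequality $\max_z|\rho_t^\epsilon(z)|\leq\max_z|\rho_0^\epsilon(z)|$ fails in that case. (The paper's own one-line appeal to ``the maximum principle'' suffers from the same overstatement; a maximum principle for a reaction--diffusion equation with a source term that is positive on $(0,2\mu_\epsilon)$ cannot yield monotone decay of the sup norm.) What your argument does prove, and what is actually used everywhere downstream (boundedness of the coefficients $c_h^\epsilon$, of $\widetilde{\rho}^{\,\epsilon}$, etc.), is the uniform bound $\sup_{\epsilon}\sup_{t\in[0,T]}\max_z|\rho_t^\epsilon(z)|\leq\max\bigl(\max_z\rho_0^\epsilon(z),\,2\mu_\epsilon\bigr)\leq 2\mu$. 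You should state and prove that bound rather than the literal inequality; with that replacement your Part 2 is complete, and your Part 1 (which only needs the uniform bound) is unaffected.
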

	\begin{remark}
		In part one, a linear interpolation is sufficient for this lemma, but the proof of Theorem \ref{thm:CLT_Gaussian} (specifically Lemmas \ref{lem:a_priori_est} and \ref{lem:conv_phi_eps} in Appendix \ref{append:time_dep_test_fcns}) requires us to take derivatives up to order three to control discrete Laplacians.
	\end{remark}
	
	\subsection{Assumptions on the initial data}
	In the following definition, we specify the general form of the initial data of the rescaled SLBP.
	\begin{dfn}\emph{(Initial configuration of the rescaled SLBP)}\label{def:general_init_data}
		Fix $\rho_0 \in C^3(\S^1, [0, 2\mu])$. By considering the embedding $\Z_\epsilon \hookrightarrow \S^1$ given by $z\mapsto K_\epsilon^{-1}z$, where we recall that $K_\epsilon=\lfloor\epsilon^{-1}\rfloor$, we define a function $\rho_0^\epsilon:\Z_\epsilon \to [0, 2\mu]$ by
		\begin{equation}\label{def:rho_0_eps}
		\rho_0^\epsilon(z) \coloneqq \rho_0(K_\epsilon^{-1}z), \quad z \in \Z_\epsilon.
		\end{equation}
		Then, we assume that $X_0^\epsilon$ the time-$0$ configuration of the rescaled SLBP is distributed according to a probability measure $\nu^\epsilon$ on the configuration space $\X_\epsilon$ such that 
		\[
		\E_{\nu^\epsilon}[X_0^\epsilon(z)] = \rho_0^\epsilon(z), \quad z \in \Z_\epsilon,
		\]
		where the notation $\E_{\nu^\epsilon}$ means that $X_0^\epsilon$ is sampled from $\nu^\epsilon$. Hereafter, we denote $\E=\E_{\nu^\epsilon}$. Additionally, we set the initial data $\rho_0^\epsilon$ in the Cauchy problem \eqref{eq:interm_fkpp_diff_form} for the semi-discrete FKPP equation using \eqref{def:rho_0_eps}.
	\end{dfn}
	
	\begin{remark}\label{rmk:start_X_rho}
		\begin{enumerate}
			\item[1.] For instance, one can take $\rho_0^\epsilon\equiv 2\mu_\epsilon$, and at each site $z\in \Z_\epsilon$ an independent and identically distributed random variable ${X_0^\epsilon(z)\sim \epsilon^\kappa\operatorname{Poi}(\epsilon^{-\kappa}2\mu_\epsilon)}$, where $\operatorname{Poi}(\lambda)$ denotes a Poisson random variable with intensity $\lambda>0$. 
			\item[2.] The choice of $\rho_0^\epsilon$ fixes the initial relationship between the average starting configuration of the rescaled SLBP and the initial data of \eqref{eq:interm_fkpp_diff_form}. 
			\item[3.] Recall $\rho_0$ and $\rho_0^\epsilon$ from Definition \ref{def:general_init_data}. Since $\rho_0 \in C^3(\S^1, [0, 2\mu])$, there is a $C^3$ interpolation of the function $\rho_0^\epsilon(K_\epsilon \cdot)$, $z \in \pi([0, 1]\cap K_\epsilon^{-1}\Z)$, such that its derivatives up to order three are bounded uniformly in $z \in \S^1$ and $\epsilon \in (0, 1)$. In particular, this implies that the spatial derivatives of $\widetilde{\rho}^{\:\epsilon}$ are uniformly bounded in space, time, and $\epsilon$.
		\end{enumerate}
	\end{remark}
	
	\noindent We have all the ingredients to state our assumptions on $X_0^\epsilon$ and $\rho_0^\epsilon$ below.
	
	\begin{assump}\label{assump:technical_X_0}
		Consider $X_0^\epsilon$, $\rho_0^\epsilon$ and $\nu^\epsilon$ from Definition \ref{def:general_init_data}, and fix any $n \in \N$. We assume that the following conditions hold. 
		\begin{enumerate}
			\item[a.] There exists $c=c(n)>0$ such that
			\[
			\max_{\substack{x \in \X_\epsilon
			\\n_x=n}}v^\epsilon_0(x, \rho_0^\epsilon|\nu^\epsilon) \leq c\epsilon^{\lfloor (n+1)/2\rfloor(1+\kappa)},
			\]
			for all $\epsilon \in (0, 1)$. Moreover, $c(n')>c(n)$ for all natural numbers $n'>n$. 
			\item[b.] There exist constants $c_1, c_2>0$ depending only on $\sup_{\epsilon\in (0, 1)}\max_{z \in \Z_\epsilon}|\rho_0^\epsilon(z)|$ such that
			\[
			\E\left[|X_0^\epsilon(z_1)-X_0^\epsilon(z_2)|^2\right]\leq c_1\epsilon^{\kappa}+c_2|\rho_0^\epsilon(z_1)-\rho_0^\epsilon(z_2)|^2,
			\]
			for all $z_1, z_2 \in \Z_\epsilon$ and $\epsilon \in (0, 1)$.
		\end{enumerate}
	\end{assump}
	
	We make the following additional assumptions on the initial data $\widetilde{\rho}_0^{\:\epsilon}$ and $Y_0^\epsilon$ in our central limit theorem. 
	
		\begin{remark}
		We are primarily interested in initial data $X_0^\epsilon$ whose law is the product measure
		\begin{equation}\label{eq:prod_init_law}
		\operatorname{Law}(X_0^\epsilon) = \bigotimes_{z \in \Z_\epsilon} \operatorname{Law}(\epsilon^\kappa \operatorname{Poi}(\epsilon^{-\kappa}\rho_0^\epsilon(z))),
		\end{equation}
		where $\rho_0^\epsilon$ was introduced in Definition \ref{def:general_init_data}. With this assumption, one can check that if $\rho_0^\epsilon\equiv 2\mu_\epsilon$ in Definition \ref{def:general_init_data}, then Assumption~\ref{assump:technical_X_0} holds. As $2\mu =\lim_{\epsilon \searrow 0} 2\mu_\epsilon$ is a stable steady state of the FKPP equation \eqref{eq:FKPP}, this corresponds to studying the equilibrium fluctuations of the rescaled SLBP in Theorem \ref{thm:CLT_Gaussian}. For the non-equilibrium fluctuations, we may choose $\rho_0 \in C^3(\S^1, [0, 2\mu])$ the initial data in \eqref{eq:FKPP} arbitrarily, and choose $\nu^\epsilon$ and $\rho_0^\epsilon$ as described in Definition \ref{def:general_init_data}. Similarly, one easily checks that Assumption \ref{assump:technical_X_0} holds for the product initial law \eqref{eq:prod_init_law}. Assumption \ref{assump:technical_X_0} only allows for vanishing correlations between sites as $\epsilon \searrow 0$. 
	\end{remark}
	
	\section{Overview of the proofs and convergence of the $v$-functions}\label{sec:pf_overview}
	
	In this section, we outline the proof of Theorem \ref{thm:CLT_Gaussian} following the method of compactness-uniqueness (see \cite[Theorem 13.1]{B1999}). We show that the family $\{\rY^\epsilon: \epsilon \in (0, 1)\}$ is tight in $\cD([0, T], C^\infty(\S^1)')$, and that the finite-dimensional distributions of $\rY^\epsilon$ converge as $\epsilon \searrow 0$ to those of $\rY$ the solution to the stochastic partial differential equation \eqref{eq:OU_Br}. The main challenge is the following observation: the limiting fluctuations are characterised by the \textit{linear} equation \eqref{eq:OU_Br}, but \eqref{eq:FKPP} the hydrodynamic equation is \textit{non-linear}. Much of the work consists in justifying rigorously this linearisation. In particular, we must control the non-linear terms of the form $X^\epsilon(z)^2$, $z \in \Z_\epsilon$, in the proof of tightness of $\{\rY^\epsilon:\epsilon \in (0, 1)\}$, and we must show that they can be linearized in a suitable sense when computing the limit of the finite-dimensional distributions of $\rY^\epsilon$ as $\epsilon \searrow 0$. Both of these obstacles are resolved by proving the following \textit{Boltzmann-Gibbs principle}, and deriving some of its consequences. 
	
	Consider for each $\epsilon \in (0, 1)$ a generalised test function 
	\[
	\phi^\epsilon :\{(r, t) \in [0, T]^2: 0\leq r\leq t\}\times \S^1\to \R, \quad ((r, t), z)\mapsto \phi^\epsilon_{r, t}(z),
	\]
	which is assumed to be continuous in $r$ for each fixed $t$ and $z$, and such that $\phi_{r, t}^\epsilon\in C^\infty(\S^1)$ for all $r$ and $t$ with
	\[
	C_{\phi}=C_\phi(T)\coloneqq\sup_{\epsilon \in (0, 1)}\sup_{0\leq r\leq t\leq T} \|\phi_{r, t}^\epsilon\|_\infty <\infty,
	\]
	where $\|\cdot\|_{\infty}$ denotes the supremum norm on $\S^1$. Let $T>0$, $\gamma \in [-1/2, -5/16)$, and $\kappa \in [0, 1+2\gamma]$, and define the non-linear fluctuations field
	\[
	F^\epsilon_r(\phi_{r, t}^\epsilon) \coloneqq \frac{\epsilon^{1+\gamma-\kappa}}{2}\sum_{z \in \Z_\epsilon}\left(X_r^\epsilon(z)(X_r^\epsilon(z)-\epsilon^\kappa)-\E[X_r^\epsilon(z)(X_r^\epsilon(z)-\epsilon^\kappa)]\right)\phi^\epsilon_{r, t}(\epsilon z),
	\]
	for all $t \geq r$.
	\begin{prop}\emph{(Boltzmann-Gibbs principle)}\label{prop:BGP}
		We have 
		\begin{equation}\label{eq:BGP_statement}
			\limsup_{S\to \infty}\limsup_{\epsilon \searrow 0}\sup_{0\leq s\leq t\leq T} \E\left[\Bigg(\frac{1}{\epsilon^2S}\int_s^{s+\epsilon^2S}[F_r^\epsilon(\phi^\epsilon_{r, t})-Y_r^\epsilon(\widetilde{\rho}^{\:\epsilon}_r \phi^\epsilon_{r, t})]dr\Bigg)^2\right]=0.
		\end{equation}
	\end{prop}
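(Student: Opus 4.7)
The plan is to exploit the algebraic identity \eqref{eq:V_fcn_2_pcs_diag} to rewrite the integrand in terms of centred diagonal order-two $V$-functions, and then to combine the time-averaging in the window of length $\epsilon^2 S$ with $v$-function estimates propagated in time from Assumption~\ref{assump:technical_X_0}. Let $x_z \in \X_\epsilon$ denote the test configuration consisting of exactly two particles at a site $z \in \Z_\epsilon$. Expanding the square in \eqref{eq:V_fcn_2_pcs_diag} yields the pointwise algebraic identity
\[
X^\epsilon_r(z)\bigl(X^\epsilon_r(z) - \epsilon^\kappa\bigr) = V^\epsilon(x_z, X^\epsilon_r; \rho^\epsilon_r) + 2\rho^\epsilon_r(z) X^\epsilon_r(z) - \rho^\epsilon_r(z)^2.
\]
Taking expectations, subtracting, and using $\widetilde{\rho}^{\:\epsilon}_r(\epsilon z) = \rho^\epsilon_r(z)$ for $z \in \Z_\epsilon$, one obtains
\[
F^\epsilon_r(\phi^\epsilon_{r,t}) - Y^\epsilon_r(\widetilde{\rho}^{\:\epsilon}_r\, \phi^\epsilon_{r,t}) = \frac{\epsilon^{1+\gamma-\kappa}}{2} \sum_{z \in \Z_\epsilon} \bigl[V^\epsilon(x_z, X^\epsilon_r; \rho^\epsilon_r) - v^\epsilon_r(x_z, \rho^\epsilon_r)\bigr]\phi^\epsilon_{r,t}(\epsilon z) =: G^\epsilon_r,
\]
so the proposition reduces to showing that the $L^2$ norm of the time average of $G^\epsilon_r$ vanishes in the stated double limit, uniformly in $0 \leq s \leq t \leq T$.

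For this second-moment estimate, Fubini gives
\[
\E\biggl[\Bigl(\frac{1}{\epsilon^2 S}\int_s^{s+\epsilon^2 S} G^\epsilon_r\, dr\Bigr)^2\biggr] = \frac{2}{(\epsilon^2 S)^2}\int_s^{s+\epsilon^2 S}\int_r^{s+\epsilon^2 S} \E\bigl[G^\epsilon_r G^\epsilon_{r'}\bigr]\, dr'\, dr,
\]
and for $r \leq r'$ the Markov property yields $\E[G^\epsilon_r G^\epsilon_{r'}] = \E[G^\epsilon_r \cdot \E[G^\epsilon_{r'} \mid \cF_r]]$, where the inner conditional expectation is itself an $\epsilon$-dependent linear combination of order-two $v$-functions at time $r'-r$ starting from the random configuration $X^\epsilon_r$. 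I would then bound both the equal-time variance and the off-equal-times correlations using the following central input, to be established elsewhere by a hierarchical induction on the order $n$: the initial bounds of Assumption~\ref{assump:technical_X_0} propagate to $\sup_{t \in [0,T]} |v^\epsilon_t(x, \rho^\epsilon_t \mid \nu^\epsilon)| \leq C_n\, \epsilon^{\lfloor(n+1)/2\rfloor(1+\kappa)}$ for every $x$ with $n_x = n$. Expanding $\E[(G^\epsilon_r)^2]$ produces covariances of pairs of diagonal order-two $V$-functions: the off-diagonal part ($z \neq z'$) reduces directly to order-four $v$-functions of size $O(\epsilon^{2(1+\kappa)})$, while the diagonal part ($z = z'$) is computed under the near-Poisson statistics of $X^\epsilon_r(z)$ (which hold up to $O(\epsilon^{1+\kappa})$ corrections, by the propagated $v$-function estimates) and is of size $O(\epsilon^{2\kappa})$. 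Combining with the spatial sum over $O(\epsilon^{-1})$ sites and the prefactor $\epsilon^{2(1+\gamma-\kappa)}$ gives $\E[(G^\epsilon_r)^2] = O(\epsilon^{1+2\gamma})$, the diagonal piece being the dominant one.

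Combining this with the correlation-time-$\epsilon^2$ decay of $\E[G^\epsilon_r G^\epsilon_{r'}]$, which follows from the same $v$-function machinery applied to the semigroup $P^\epsilon_{r'-r}$ of the rescaled SLBP, the double integral in the Fubini expansion is bounded by $\epsilon^2 \cdot \epsilon^2 S \cdot \E[(G^\epsilon_r)^2]$, and after dividing by $(\epsilon^2 S)^2$ one obtains an overall bound of order $\epsilon^{1+2\gamma}/S$. Since the assumptions of the proposition force $1 + 2\gamma \geq \kappa \geq 0$, this vanishes in the double limit $\epsilon \searrow 0$ then $S \to \infty$, with the limit $S \to \infty$ only being necessary in the boundary case $\gamma = -1/2$, $\kappa = 0$. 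The principal technical obstacle in this plan is the uniform-in-time propagation of the $v$-function estimates invoked above; this must be carried out inductively on $n$, using the hierarchical system of evolution equations for $v^\epsilon_t(x, \rho^\epsilon_t \mid \nu^\epsilon)$ derived from the generator of $\rX^\epsilon$. It is at this step that the restrictions $\gamma \in [-1/2, -5/16)$, $\kappa \in [0, 1+2\gamma]$ and the growth in $n$ of the constants $C_n$ enter decisively, in order to absorb the non-linear feedback of higher-order $v$-functions on the order-two estimate and to keep the resulting discrete-to-continuum error controlled.
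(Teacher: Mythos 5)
Your overall skeleton matches the paper's: you use the same identity \eqref{eq:V_fcn_2_pcs_diag} to rewrite the integrand as a centred sum of diagonal order-two $V$-functions, you expand the square via Fubini and condition with the Markov property, and you split the resulting covariance into an equal-site contribution and a distinct-site contribution reducible to higher-order $v$-functions. The proposal is therefore the right idea, but two of the specific estimates you assert do not hold, and both are load-bearing in your argument.

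First, the ``central input'' you plan to prove elsewhere, namely $\sup_{t\in[0,T]}|v^\epsilon_t(x,\rho^\epsilon_t\mid\nu^\epsilon)|\le C_n\,\epsilon^{\lfloor(n+1)/2\rfloor(1+\kappa)}$ uniformly for $n_x=n$, is strictly stronger than what Theorem~\ref{thm:QLLN} establishes and, in fact, stronger than what the hierarchy allows. The hierarchy \eqref{eq:v_fcn_eq} couples the order-$n$ $v$-function down to \emph{all} orders $1,\dots,n+1$, not just $n\pm1$; feeding in an order-$1$ $v$-function (which is at best $O(\epsilon^{1+\kappa})$) and a time integral over $[0,T]$ caps the propagated bound at $O(\epsilon^{1+\kappa})$ for every $n>2$. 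This is precisely the content of the $o(\epsilon^{1+\kappa})$ rate in Theorem~\ref{thm:QLLN}, and the initial power $\epsilon^{\lfloor(n+1)/2\rfloor(1+\kappa)}$ of Assumption~\ref{assump:technical_X_0}.a is lost immediately. Your growing-in-$n$ constants $C_n$ will not rescue this; what actually closes the Boltzmann--Gibbs argument is that $o(\epsilon^{1+\kappa})$ suffices: for the distinct-site covariance you get $\epsilon^{2+2\gamma-2\kappa}\cdot\epsilon^{-2}\cdot o(\epsilon^{1+\kappa})=o(\epsilon^{1+2\gamma-\kappa})=o(1)$ once $\kappa\le 1+2\gamma$. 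So the conclusion survives, but your proposed intermediate bound is unobtainable.

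Second, the claimed ``correlation-time-$\epsilon^2$ decay'' of $\E[G^\epsilon_r G^\epsilon_{r'}]$, which you use to collapse the double time integral to $\epsilon^2\cdot\epsilon^2 S\cdot\E[(G^\epsilon_r)^2]$ and thereby obtain an $O(\epsilon^{1+2\gamma}/S)$ bound, is not correct. After conditioning with the Markov property, the dominant equal-site piece of the correlation carries a factor $\G^\epsilon_{\widetilde r}(x^{2,z'},x^{2,z})\lesssim G^\epsilon_{\widetilde r}(z,z')\,\epsilon/\sqrt{\widetilde r}$; integrating the $1/\sqrt{\widetilde r}$ singularity over $\widetilde r\in(0,\epsilon^2 S]$ gives $\sim\epsilon\sqrt S$, not $\sim\epsilon^2$. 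The autocorrelation is therefore not concentrated on a time window of length $\epsilon^2$: it has a heavy Green-function tail, and the honest rate after dividing by $(\epsilon^2 S)^2$ is $O(\epsilon^{1+2\gamma-\kappa}/\sqrt S)$ from the equal-site term, plus $o(\epsilon^{1+2\gamma-\kappa})$ from the distinct-site term, plus a similar $O(S^{-1/2})$ contribution from the mixed case (one particle at $z$, one elsewhere) which your proposal does not treat. This still vanishes as $\epsilon\searrow0$ and then $S\to\infty$ when $\kappa\le 1+2\gamma$, so the proposition is salvageable; but your stated mixing rate is a false premise and the final exponent $1/S$ should be $1/\sqrt S$. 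You should also note that the remainder in the hierarchical representation of the conditional expectation $\E[V^\epsilon(x^{2,z'},X^\epsilon_{r+\widetilde r};\cdot)\mid X^\epsilon_r]$ is not itself a $v$-function; it is an integral term that must be shown to contribute $O(\widetilde r)=O(\epsilon^2 S)$, which requires a separate Cauchy--Schwarz plus moment estimate that your sketch omits.
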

	
	\noindent We briefly give some intuition for this result. By Definition \ref{def:rescaled_slbp}, at a given location $z \in \Z_\epsilon$ and at time $t-$, births occur at rate $\epsilon^{-\kappa}X_{t-}^\epsilon(z)$, a particle dies due to competition at rate $\epsilon^{-\kappa}X_{t-}^\epsilon(z)(X_{t-}^\epsilon(z)-\epsilon^\kappa)/2$, and a particle migrates to an adjacent site at rate $\epsilon^{-2-\kappa}X_{t-}^\epsilon(z)$. The time integral in \eqref{eq:BGP_statement} is taken on the slow time scale $S\mapsto \epsilon^2 S$. On that time scale, the rates of birth, competition, and diffusion become $\epsilon^{2-\kappa}X_{t-}^\epsilon(z)$, $\epsilon^{2-\kappa}X_{t-}^\epsilon(z)(X_{t-}^\epsilon(z)-\epsilon^\kappa)/2$, and $\epsilon^{-\kappa}X_{t-}^\epsilon(z)$ respectively. Since $\kappa \in [0, 3/8]$ by assumption in Theorem \ref{thm:CLT_Gaussian}, and $X_{t-}^\epsilon(z)$ is of order one by Theorem \ref{thm:LLN}, the birth-death dynamics are essentially frozen, while we still see significant diffusion of particles. Diffusion conserves the total number of particles and helps the system reach its local equilibrium. The field $F^\epsilon$ captures the fluctuations of the process $(\rX^\epsilon)^2$, while $\rY^\epsilon$ is the fluctuations field of $\rX^\epsilon$. One easily checks that diffusion, which is the only remaining dynamics on the slow time scale, does not preserve particle numbers in $F^\epsilon$. Thus the result says that we can project the fluctuations of the non-conserved quantity $(\rX^\epsilon)^2$ onto those of the conserved quantity $\rX^\epsilon$. It is helpful to recall that $\rX^\epsilon$ and $\widetilde{\rho}^{\:\epsilon}$ both converge to the FKPP equation by Theorem \ref{thm:LLN} and Lemma \ref{lem:FKPP_approx_properties}. This line of reasoning in particular describes the fluctuations of the system when $\kappa=0$ (competition is not weak) and the branching is binary, which was studied in \cite{Boldrig1992}.
		
	 The main ingredient in the proof of Proposition \ref{prop:BGP} is a quantitative convergence result for the $v$-functions (recall Definition \ref{def:scaled_v_fcns}), which is of independent interest as it shows in particular the contributions of the tail of the offspring distribution and of the competition exponent $\kappa$ to the rate of convergence of the \textit{expected} population size of the rescaled SLBP to its hydrodynamic limit $\rho$.
	
	\begin{thm}\emph{(Quantitative convergence of the $v$-functions)}\label{thm:QLLN}
		Suppose that Assumption \ref{assump:technical_X_0} holds, that $\kappa \in [0, 3/8)$, and that the offspring distribution $\rP$ has a finite moment of order $p$, for some $p \in [1, 2]$. If $p\in [1, 2]$ and $n>2$, we have
		\[
		\sup_{t \in [0, T]} \max_{\substack{x \in \X_\epsilon
				\\n_x=n}} |v^\epsilon_t(x|\nu^\epsilon)| = o(\epsilon^{1+\kappa}), \quad \textit{as $\epsilon\searrow 0$}.
		\]
		If $p\in [1, 2)$ and $n \in \{1, 2\}$, we have
		\[
		\sup_{t \in [0, T]} \max_{\substack{x \in \X_\epsilon
				\\n_x=n}} |v^\epsilon_t(x|\nu^\epsilon)| = o(\epsilon^{1+(p-1)\kappa}), \quad \textit{as $\epsilon\searrow 0$}.
		\]
		If $p=2$ and $n \in \{1, 2\}$, there exists $c=c(T, p, n)>0$ such that 
		\[
		\sup_{t \in [0, T]} \max_{\substack{x \in \X_\epsilon
				\\n_x=n}} |v^\epsilon_t(x|\nu^\epsilon)| \leq c\epsilon^{1+\kappa}, \quad \textit{for all $\epsilon\in (0, 1)$}.
		\] 
	\end{thm}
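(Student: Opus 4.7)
The plan is to derive an evolution equation (a hierarchy) for the $v$-functions, bootstrap their size via Duhamel's formula, and close the estimates by induction on the order $n$, in the spirit of~\cite{Boldrig1992} but keeping track of the weak-competition factor $\epsilon^\kappa$ and of the truncation $L(\epsilon)$ of the offspring distribution.

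\emph{Step 1 (hierarchy).} I would apply the generator $\cL^\epsilon$ of the rescaled SLBP to the random polynomial $V^\epsilon(x,\cdot;\rho^\epsilon_t)$ and separately differentiate in $t$ using~\eqref{eq:interm_fkpp_diff_form}. Because $\rho^\epsilon$ is tailored to satisfy the semi-discrete FKPP equation, the diagonal mean-field contributions cancel, and what remains is a linear system of the schematic form
\begin{equation*}
\partial_t v^\epsilon_t(x) \;=\; \cA^\epsilon_n v^\epsilon_t(x) \;+\; \sum_{n'\ne n}\bigl(\cC^{\epsilon}_{n,n'} v^\epsilon_t\bigr)(x) \;+\; R^\epsilon_t(x),\qquad n_x=n,
\end{equation*}
where $\cA^\epsilon_n$ is a bounded perturbation (by $\mu_\epsilon-\rho^\epsilon_t$) of the sum of $n$ single-particle discrete Laplacians, the $\cC^\epsilon_{n,n'}$ couple $v$-functions of different orders via branching and diagonal competition, and $R^\epsilon_t$ collects (i) discretization errors in $\rho^\epsilon-\rho$, (ii) offspring-truncation errors $\mu-\mu_\epsilon$ and the corresponding higher factorial-moment errors, and (iii) the weak-competition prefactor $\epsilon^\kappa$.

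\emph{Step 2 (Duhamel and base case).} Let $S^\epsilon_t$ be the semigroup generated by $\cA^\epsilon_n$; by part two of Lemma~\ref{lem:FKPP_approx_properties} together with the fact that $\rho^\epsilon$ stays in $[0,2\mu]$, it acts on $\ell^\infty(\Z_\epsilon^n)$ as a bounded operator uniformly in $\epsilon\in(0,1)$ and $t\in[0,T]$. Duhamel's formula then yields
\begin{equation*}
v^\epsilon_t(x) \;=\; S^\epsilon_t v^\epsilon_0(x) \;+\; \int_0^t S^\epsilon_{t-s}\Bigl[\,\sum_{n'\ne n}\cC^\epsilon_{n,n'} v^\epsilon_s + R^\epsilon_s\Bigr](x)\,ds.
\end{equation*}
By Assumption~\ref{assump:technical_X_0}(a), $|v^\epsilon_0(x)|\leq c(n)\epsilon^{\lfloor (n+1)/2\rfloor(1+\kappa)}$, so the initial-data contribution is already $O(\epsilon^{1+\kappa})$ for $n\in\{1,2\}$ and $o(\epsilon^{1+\kappa})$ for $n\geq 3$.

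\emph{Step 3 (induction on the order).} The plan is a simultaneous induction on orders $n\leq N$ with two nested levels: at each fixed order the Duhamel identity is iterated a finite number of times, and the surviving inter-order couplings are absorbed using the stronger initial-data decay at higher orders together with the offspring moment control. For $n>2$ the initial gain $\epsilon^{\lfloor (n+1)/2\rfloor(1+\kappa)}$ easily dominates the contributions through $\cC^\epsilon_{n,n'}$ and yields $o(\epsilon^{1+\kappa})$. For $n\in\{1,2\}$, the bottleneck is $R^\epsilon_t$: its offspring-truncation component has size $O(\mu-\mu_\epsilon)=o(\epsilon^{(p-1)\kappa})$ under the $p$th-moment assumption together with $L(\epsilon)=o(\epsilon^{-\kappa})$, while the discretization and diagonal-competition factors contribute an extra $\epsilon$. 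Combining the two yields the announced $o(\epsilon^{1+(p-1)\kappa})$ bound, and the sharp $O(\epsilon^{1+\kappa})$ bound when $p=2$ once the truncation error disappears and the discretization error is $O(\epsilon)$.

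\emph{Main obstacle.} The delicate point is that, since branching can create up to $L(\epsilon)=o(\epsilon^{-\kappa})$ particles at once, $\cC^\epsilon_{n,n'}$ couples $v_n$ to arbitrarily high-order $v$-functions, and the naive sum over $n'$ need not be summable. One must combine the intrinsic diagonal weight $\epsilon^{\kappa n_x(z)}$ in~\eqref{def:gen_Q_eps} with the $\lfloor(n+1)/2\rfloor$-improvement at time zero from Assumption~\ref{assump:technical_X_0}(a) to obtain a uniform bound on the tail of the hierarchy and a Gr\"onwall-type closure at orders $n\in\{1,2\}$. Balancing the diffusive mixing time $\epsilon^2$ against the weak-competition rate $\epsilon^\kappa$ and the truncated branching rate is where the threshold $\kappa<3/8$ enters, as the precise exponent making the induction step consistent at $n=2$.
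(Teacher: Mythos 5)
Your high-level skeleton (derive a hierarchy, solve it by Duhamel, induct on the order) is the right one, and it is essentially the strategy of \cite{Boldrig1992} that the paper adapts. But several of the steps you sketch would not close as written, and the principal obstacle you flag in the last paragraph is not actually present, while the real obstacle goes unaddressed.

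\textbf{The hierarchy does not couple to arbitrarily high orders.} You worry that since a branching event can produce up to $L(\epsilon)$ offspring, the coupling operators $\cC^\epsilon_{n,n'}$ drag in $v$-functions of unboundedly high order. That would be the case if the hierarchy tracked the forward dynamics, but it does not. When you expand $Q_k(n+\ell)-Q_k(n)$ in the falling-factorial basis and then average against the offspring law, the $\ell$-dependence is absorbed entirely into the moments of $\rP^\epsilon$, and the effective backward hierarchy (Proposition~\ref{prop:v_fcn_eq}) couples $v_n$ only to $v_{n'}$ with $n'\in\{1,\dots,n,n+1\}$: at most one particle is added to the test configuration per interaction, while the whole local test population can be removed. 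This is literally where the $p$-th moment assumption is consumed: the coefficients $c_h^\epsilon(m,u)$ encode the moments, and they are bounded by $c\max(1,|u|^{m+1})$.

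\textbf{The real obstacle is that the Duhamel iteration over $[0,T]$ does not close.} The coefficients $c_h^\epsilon(m,u)$ are in general $O(1)$, and the hierarchy couples down to the $v$-function of order zero, which is identically $1$. So the Duhamel integral term contributes $O(t)$ over $[0,t]$ and the remainder $R_{N+1}^\epsilon$ after $N$ iterations is not small uniformly in $\epsilon$ for $t$ of order one. What makes the scheme close is a three-stage argument you omit: (i) short-time estimates on $[\epsilon^\beta,2\epsilon^\beta]$ where the $(N+1)$-fold time integral supplies a factor $\epsilon^{\beta(N+1)}$ that dominates the a~priori growth of the $v$-functions on a high-probability event (Lemma~\ref{lem:BC_small_fluct}); this in turn needs a graphical representation of the iterated integrand $\Gamma^\epsilon$ and careful pruning using the Green-function bound $G^\epsilon_t \le C\epsilon/\sqrt t$ together with the fact that the final configuration must have $\ge 2$ particles per site for the restart $V$-function to be nonzero (Lemma~\ref{lem:Gamma_control}); (ii) a smoothing argument showing that the semi-discrete FKPP solution started from a typical SLBP configuration stays $\epsilon^{\delta_1}$-close to the one started from $\rho_0^\epsilon$ (Lemma~\ref{lem:diffusion_helps_local_equil}); and (iii) a Markov-property telescoping over the dyadic grid $t_k^\epsilon$ to glue the short-time estimates together on $[0,T]$. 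Without these steps the remainder is simply $O(1)$.

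\textbf{The $o(\epsilon^{1+(p-1)\kappa})$ exponent is misattributed.} You trace it to $\mu-\mu_\epsilon$, but since $L(\epsilon)=o(\epsilon^{-\kappa})$ the bound $\mu-\mu_\epsilon\lesssim L(\epsilon)^{-(p-1)}$ is in general much larger than $\epsilon^{(p-1)\kappa}$. The gain actually comes from the low-order coupling coefficients: for $m=2$ one computes explicitly (Lemma~\ref{lem:GBC_contrib}) that
\begin{equation*}
c_{-1}^\epsilon(2,u)=(2\mu_\epsilon+1)\epsilon^\kappa + \epsilon^\kappa\sum_{j\le L(\epsilon)} j(j-1)p_j^\epsilon,
\qquad
\epsilon^\kappa\sum_{j} j(j-1)p_j^\epsilon \le \epsilon^{(p-1)\kappa}\bigl(\epsilon^\kappa L(\epsilon)\bigr)^{2-p}\sum_j j^p p_j^\epsilon,
\end{equation*}
and similarly for $c_{-2}^\epsilon(2,u)$; it is the factor $(\epsilon^\kappa L(\epsilon))^{2-p}=o(1)$ that gives $o(\epsilon^{(p-1)\kappa})$ when $p<2$ and degrades to $O(\epsilon^\kappa)$ at $p=2$. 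These small coefficients at order $\le 2$, used in the final downward bootstrap from $n\ge n_0$ to $n=3$, then $n=2,1$, are the sole reason the low-order estimates beat the generic $o(\epsilon^{1+\kappa})$.

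\textbf{The threshold $\kappa<3/8$ is not a reaction--diffusion balance at $n=2$.} It comes from the smoothing lemma: the exponent $\delta_\ast=\min(3/8+\beta/16,\,1/2-\beta/4)$ controls how close the restarted FKPP solution stays to $\rho^\epsilon$; as $\beta\searrow0$ the best achievable $\delta_1<\delta_\ast-\beta$ approaches $3/8$ from below, and the downward induction from order $n_0$ requires $\delta_1>\kappa$. Your statement that the discretization step contributes a simple factor $\epsilon$ also does not appear anywhere in the argument for $n\in\{1,2\}$: that extra $\epsilon$ is supplied by a Green-function pairing $G^\epsilon_{t-s}(z_1,z')\lesssim\epsilon/\sqrt{t-s}$ when both test particles must occupy the same site before a coalescence can act.

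In short: the overall outline is right, but you need to (a) actually derive the hierarchy to see that the order increment is at most one, (b) replace the direct Duhamel iteration with short-time slices, a smoothing step, and a restart using the Markov property, together with the Green-function pruning of the iterated kernel, and (c) relocate the source of the $(p-1)\kappa$ gain to the $m\in\{1,2\}$ coupling coefficients rather than to $\mu-\mu_\epsilon$.
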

	
	\noindent In Assumption \ref{assump:technical_X_0}.a, we assume that the initial law of the rescaled SLBP is close to a product law over the sites in $\Z_\epsilon$. Then by Lemma \ref{lem:law_is_prod}, we can more generally interpret Theorem \ref{thm:QLLN} as a result on the quantitative propagation of chaos for our population model. The $v$-functions vanish sufficiently fast to imply the Boltzmann-Gibbs principle claimed in Proposition \ref{prop:BGP}. Additionally, Theorem \ref{thm:QLLN} allows us to show that the finite-dimensional distributions of the rescaled SLBP $\rX^\epsilon$ converge to those of $\rho$ the solution to \eqref{eq:FKPP}. Then, we show using suitable martingale problems and Green's function estimates that $\{X^\epsilon:\epsilon \in (0, 1)\}$ is tight in $\cD([0, T], C^\infty(\S^1)')$ to obtain the weak law of large numbers, Theorem \ref{thm:LLN}.
	
	The proofs of Proposition \ref{prop:BGP} and Theorem \ref{thm:QLLN} follow closely those of Proposition~3.1 and Lemma 1 of \cite{Boldrig1992}, respectively. We have adapted the authors' method to our setting, which differs in the following two ways. The first difference concerns the rate of competition. In the work of Boldrighini et al., the competition rate does not depend on $\epsilon$, which means in particular that the local number of particles does not tend to infinity when $\epsilon\searrow 0$. In our weak competition regime, the coalescence rate is of order $\epsilon^\kappa$, and the local number of particles tends to infinity when $\epsilon\searrow 0$ like $\epsilon^{-\kappa}$. Thus, the law of large numbers claimed in Theorem 1 of \cite{Boldrig1992} is only a consequence of the rescaling of the spatial motion, while our law of large numbers (Theorem \ref{thm:LLN}) results from the joint rescaling of space and of the local population size. This observation is the reason that our $v$-functions in Definition \ref{def:scaled_v_fcns} are rescaled in terms of $\epsilon^\kappa$. One recovers the $v$-functions of Boldrighini et al. if the competition exponent is $\kappa=0$.
	
	The second substantial change has to do with the offspring distribution. Boldrighini et al. consider binary branching only, while in the context of the law of large numbers, we allow any offspring distribution with finite mean. To follow the method of proof of Lemma 1 from \cite{Boldrig1992} then requires us to suitably truncate the offspring distribution in terms of $\epsilon$ so as to control its truncated higher order moments, and to adapt arguments which use the symmetry present in Boldrighini et al. where birth and death events respectively add or remove a single particle locally, to our asymmetric local birth-competition process where birth events add any number of offspring while competition still only removes one particle per event. In particular, we adapt a key component of the proof - the fact that the $v$-functions satisfy a hierarchy of ordinary differential equations with respect to time with dynamics that are dual in some sense to the dynamics of the original particle system. See Proposition \ref{prop:v_fcn_eq} below. In \cite{Boldrig1992}, this hierarchy relates the $v$-function of order $n$ to those of orders $k\in \{n-2, n-1, n, n+1\}$ (assuming linear birth rate and quadratic competition rate as in the present paper), while in our case, the hierarchy relates an $n$-th order $v$-function to the $v$-functions of orders $k \in \{1, 2, \cdots, n, n+1\}$. This change reflects the fact that in the hierarchy for general offspring distributions, during a single birth-competition event, at most one particle is added to the system locally, while the entire local population can be whipped out. As a consequence, additional care is required when controlling transition probabilities in Lemma \ref{lem:v_fcn_bd} below. Let us state the hierarchy of equations for the $v$-functions.
	\begin{prop}\label{prop:v_fcn_eq}
		Fix $\epsilon \in (0, 1)$, and recall that $\rho^\epsilon$ denotes the solution to the semi-discrete FKPP equation \eqref{eq:interm_fkpp_diff_form}. Let $\nu^\epsilon$ be a probability measure on $\X_\epsilon$ such that for each $n \in \N$ and all $t \geq 0$, we have
		\[
		\max_{\substack{x \in \X_\epsilon \\ n_x=n}}\E_{\nu^\epsilon}[Q^\epsilon(x, X^\epsilon_t)]<\infty.
		\]
		Then
		\begin{equation}
			\begin{aligned}\label{eq:v_fcn_eq}
			v_t^\epsilon(x, \rho_t^\epsilon|\nu^\epsilon) &= \sum_{\substack{x_1\in \X_\epsilon\\ n_{x_1}=n_x}}\G^\epsilon_t(x, x_1)v_0^\epsilon(x_1, \rho_0^\epsilon | \nu^\epsilon)
			\\&+ \int_0^t ds \sum_{\substack{x_1\in \X_\epsilon\\ n_{x_1}=n_x}} \G^\epsilon_{t-s}(x, x_1)\sum_{z \in \supp(x_1)}\sum_{h=-n_{x_1}(z)}^1 c_h^\epsilon(n_{x_1}(z), \rho_s^\epsilon(z))v_s^\epsilon(x_1^{(z, h)}, \rho_s^\epsilon|\nu^\epsilon),
			\end{aligned}
		\end{equation}
		for all $x \in \X_\epsilon$ and $t\in [0, T]$, for some coefficients $c^\epsilon_h(m, u)$, and where $x_1^{(z, h)}$ denotes the configuration $x_1$ with $h$ particles added at $z$ if $h\geq 0$, and $-h$ particles removed from $z$ if $h<0$. Moreover, there exists a constant $c=c(n)>0$ independent of $\epsilon$ such that
		\begin{equation}\label{ineq:coef_bd}
		|c^\epsilon_h(m, u)|\leq c \max(1, |u|^{m+1}), \quad m \in \{1, \cdots, n\}, \quad u \in \R, \quad h \in \{-m, \cdots, 1\}, \quad \epsilon\in (0, 1).
		\end{equation}
		In the particular case where $m=2$ and $h=-2$ and the offspring distribution $\rP$ has a moment of order $p\in[1, 2]$, there also exists $c>0$ such that 
		\begin{equation}\label{ineq:coeff_bd_2_pcs_removed}
		|c_{-2}^\epsilon(2, u)| \leq c \epsilon^\kappa\max(1, |u|^3)+o(\epsilon^{(p-1)\kappa})|u|,
		\end{equation}
		as $\epsilon \searrow 0$. For $m=2$ and $h=-1$, we have
		\begin{equation}\label{ineq:coeff_bd_2_pcs_1_removed}
		c_{-1}^\epsilon(2, u)=(2\mu_\epsilon+1)\epsilon^\kappa+o(\epsilon^{(p-1)\kappa}),
		\end{equation}
		as $\epsilon \searrow 0$. Finally, if $m=1$ and $h\leq -1$, we have $c_h^\epsilon(1, u)=0$.
	\end{prop}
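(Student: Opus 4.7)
The plan is to apply Dynkin's formula to $t\mapsto V^\epsilon(x, X_t^\epsilon;\rho_t^\epsilon)$, compute the action of the generator $\cL^\epsilon=\cL^\epsilon_{\text{diff}}+\cL^\epsilon_{\text{br}}+\cL^\epsilon_{\text{comp}}$ (split according to diffusion, branching, and competition) on the $V$-polynomials, and then combine with the semi-discrete FKPP equation \eqref{eq:interm_fkpp_diff_form} governing $\rho_t^\epsilon$ to close the resulting hierarchy. Taking expectations under $\nu^\epsilon$, which is legitimate thanks to the assumed finiteness of $\E_{\nu^\epsilon}[Q^\epsilon(x, X_t^\epsilon)]$, yields
\[
\frac{d}{dt}v_t^\epsilon(x,\rho_t^\epsilon|\nu^\epsilon)=\E_{\nu^\epsilon}\bigl[(\cL^\epsilon V^\epsilon)(x, X_t^\epsilon;\rho_t^\epsilon)\bigr]+\sum_{z\in x}\partial_t\rho_t^\epsilon(z)\,\E_{\nu^\epsilon}\bigl[\tfrac{\partial V^\epsilon}{\partial\rho_t^\epsilon(z)}(x, X_t^\epsilon;\rho_t^\epsilon)\bigr],
\]
where the second term comes from the explicit time-dependence of $\rho_t^\epsilon$. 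The integral equation \eqref{eq:v_fcn_eq} will then follow from this ODE by Duhamel's formula.

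The core computation is the action of $\cL^\epsilon$ on $V^\epsilon(x,\cdot;\rho_t^\epsilon)$. It rests on the finite-difference identity $Q_k(n+1)-Q_k(n)=kQ_{k-1}(n)$ for the falling factorial, which allows jumps in the state variable $x'$ to be transferred to jumps in the test configuration $x$. For the diffusion part, this standard duality manipulation produces the discrete Laplacian acting on $x$:
\[
(\cL^\epsilon_{\text{diff}}V^\epsilon)(x,x';\rho_t^\epsilon)=\tfrac12\sum_{z\in\supp(x)}\epsilon^{-2}\bigl(V^\epsilon(x-\delta_z+\delta_{z+1},x';\rho_t^\epsilon)+V^\epsilon(x-\delta_z+\delta_{z-1},x';\rho_t^\epsilon)-2V^\epsilon(x,x';\rho_t^\epsilon)\bigr),
\]
and, combined with the linear Laplacian piece of $\partial_t\rho_t^\epsilon$ from \eqref{eq:interm_fkpp_diff_form}, yields the autonomous diffusive dynamics on test configurations of fixed size $n_x$ whose semigroup is precisely $\G^\epsilon_t(x,x_1)$. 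For the reaction parts, branching at $z$ into $\ell+1$ particles acts on $Q_k(n)$ via $Q_k(n+\ell)-Q_k(n)$, which expands via iterated finite differences into a linear combination of $Q_{k-j}(n)$ for $j\in\{1,\ldots,k\}$; summing over sub-configurations $x_0\cleq x$ produces a linear combination of $v$-functions $v_s^\epsilon(x_1^{(z,-j)};\ldots)$ weighted by polynomials in $\rho_t^\epsilon(z)$ whose coefficients involve truncated moments of $\rP^\epsilon$. Competition contributes analogous terms with an overall $\epsilon^\kappa$ prefactor. Finally, the FKPP non-linearity $\rho_t^\epsilon(z)(\mu_\epsilon-\rho_t^\epsilon(z)/2)$ multiplied by $\partial V^\epsilon/\partial\rho_t^\epsilon(z)$ both cancels the leading-order-in-$\rho$ piece of the competition contribution and supplies the lone $h=+1$ source term (corresponding to \emph{adding} a test particle). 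After relabelling $h=1-j$ this produces exactly the summation $\sum_{h=-n_{x_1}(z)}^{1}c_h^\epsilon(n_{x_1}(z),\rho_s^\epsilon(z))v_s^\epsilon(x_1^{(z,h)},\rho_s^\epsilon|\nu^\epsilon)$ of \eqref{eq:v_fcn_eq}.

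The coefficient bound \eqref{ineq:coef_bd} is immediate from the explicit form of $c_h^\epsilon(m,u)$ as a polynomial in $u$ of degree at most $m+1$ whose coefficients are linear combinations of moments of $\rP^\epsilon$ of order at most $m+1$, uniformly bounded in $\epsilon$ since $\mu_\epsilon\to\mu<\infty$. The refined estimates for $m=2$ are obtained by isolating the contributions to $c_{-2}^\epsilon(2,u)$ and $c_{-1}^\epsilon(2,u)$: the $\epsilon^\kappa$ factor in \eqref{ineq:coeff_bd_2_pcs_removed} reflects the competition rate, while the $o(\epsilon^{(p-1)\kappa})|u|$ remainder comes from branching into $\ell\geq 2$ offspring, whose contribution is controlled by the tail moment $\sum_{\ell\geq 2}\ell^p p_\ell^\epsilon$ via the truncation $L(\epsilon)=o(\epsilon^{-\kappa})$ and the $p$-th-moment hypothesis; \eqref{ineq:coeff_bd_2_pcs_1_removed} isolates the $\ell=1$ branching contribution (rate $\mu_\epsilon$) and a single competition event, each carrying an $\epsilon^\kappa$ factor. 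The vanishing $c_h^\epsilon(1,u)=0$ for $h\leq-1$ is structural: a one-particle test configuration is inert under pair competition and branching cannot reduce the particle count. The main obstacle is the combinatorial bookkeeping required to establish the exact cancellation between the FKPP non-linearity and the leading-order competition and branching terms; it is precisely this algebraic step where general offspring distributions break the clean binary-branching cancellation of \cite{Boldrig1992}, and where the truncation $L(\epsilon)$ together with the $p$-th-moment assumption becomes essential to close the hierarchy with controllable coefficients.
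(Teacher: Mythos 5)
Your overall strategy — apply the Markov/Dynkin decomposition to $t\mapsto V^\epsilon(x, X_t^\epsilon;\rho_t^\epsilon)$, split the generator into diffusion and birth--competition parts, use the finite-difference identity for falling factorials to transfer jumps in $X^\epsilon(z)$ to jumps in the test configuration, and let the FKPP equation for $\rho^\epsilon_t$ absorb the time-dependence — is the same as the paper's. The diffusion half is described correctly, and the identification of the semigroup $\G^\epsilon_t$ from the discrete Laplacian plus the FKPP Laplacian is right.

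There is, however, a substantive error in your account of where the $h=+1$ term comes from. You claim that the FKPP nonlinearity $\rho^\epsilon_t(z)(\mu_\epsilon-\rho^\epsilon_t(z)/2)$, multiplied by $\partial V^\epsilon/\partial\rho^\epsilon_t(z)$, ``supplies the lone $h=+1$ source term''. It does not: since $\partial_u\widetilde V^\epsilon_m(q,u)=-m\,\widetilde V^\epsilon_{m-1}(q,u)$, differentiating in $\rho$ \emph{lowers} the $V$-function level by one, and the whole FKPP reaction contribution enters at $h=-1$ (this is the $-\delta_{h,-1}\,m\,u(\mu_\epsilon-u/2)$ term in \eqref{def:BBGKY_coef}). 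The $h=+1$ coefficient $c_1^\epsilon(m,u)=-m/2$ is produced entirely by the \emph{competition} part of $\cG^\epsilon_{BC}$: the competition rate $q_-^\epsilon(\epsilon^\kappa q)=\epsilon^\kappa q(q-1)/2$ is quadratic in $q$ while the finite difference $Q_k(q-1)-Q_k(q)=-kQ_{k-1}(q-1)$ lowers the degree by only one, and the identity $q\,Q_{k-j}(q-1)=Q_{k+1-j}(q)$ then raises the test-configuration size. Your remark that branching ``cannot reduce the particle count'' so the $h=+1$ term must come from elsewhere is consistent with this confusion, and the tentative list ``$v_s^\epsilon(x_1^{(z,-j)};\ldots)$'' with $j\geq 1$ confirms you are not tracking the $h=+1$ term through the competition generator. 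If followed through literally, your decomposition would assign the wrong $u$-dependence (and the wrong sign) to the top-level coefficient. Separately, you never introduce the change-of-basis isomorphism $\cL$ (sending the falling factorial $Q_k(q)$ to the monomial $q^k$, under which $\cL(\widetilde V^\epsilon_k(q,u))=(\epsilon^\kappa q-u)^k$), which is what makes the paper's re-expansion into $V$-functions systematic and yields the explicit formulas behind the bounds \eqref{ineq:coef_bd}--\eqref{ineq:coeff_bd_2_pcs_1_removed}; your ``iterated finite differences'' sketch gestures at the same algebra but does not make the bookkeeping precise enough to establish those bounds.
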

	\noindent For low order $v$-functions, the coefficients in the hierarchy are small in $\epsilon^\kappa$. This is the main reason why the convergence rates in Theorem \ref{thm:QLLN} have factors involving $\epsilon^\kappa$, and it provides the necessary control for the proof of the Boltzmann-Gibbs principle of Proposition \ref{prop:BGP}. 
	
	\section{Motivation and related works}\label{sec:motivation_related_work}
	
	In this section, we discuss our motivation for the study of the SLBP (Definition~\ref{def:slbp}) and for the weak competition rescaling (Definition~\ref{def:rescaled_slbp}). We mention some closely-related models from the literature. 
	
	\cite{FM2004} study a spatial population process in continuous time and continuous space with the application to the modeling of plant populations in mind. In their model, the plants have an opportunity to move (disperse) at the time of their birth, and they subsequently are motionless. They die at some rate due to intrinsic reasons or due to competition with neighbouring plants. At any given point in time, the system is characterized by the locations of the plants in the closure of an open and connected subset $\chi$ of $\R^d$, for some $d \geq 1$. More specifically, the dynamics are as follows. Each plant at location $x \in \bar \chi$ at time $t\geq 0$
	\begin{itemize}
		\item[1.] dies at rate $\mu(x) \in [0, \infty)$,
		\item[2.] produces a seed at rate $\gamma(x)\in [0, \infty)$, which appears instantaneously as a mature plant at a location $x+z$, where $z$ sampled from a dispersion measure $D(x, dz)$ supported on $\bar \chi-x$, and
		\item[3.] dies due to non-local competition at rate $\alpha(x)\sum_{i=1}^{I(t)}U(x, X^i_t) \in [0, \infty)$, where ${U(x, y)=U(y, x)}$ is a competition kernel, $I(t)$ is the total number of plants alive in the system at time $t$ and $(X^i_t)_{1 \leq i \leq I(t)}$ are the positions of all the plants.
	\end{itemize}
	The authors track the current state of the population via its empirical measure 
	\[
	\nu_t = \sum_{i=1}^{I(t)}\delta_{X^i_t}(dx), \quad t>0.
	\]
	They call this process the \textit{Bolker–Pacala–Dieckmann–Law (BPDL) process}, in reference to the ecology literature where it originated (\cite{LawDieckmann1998} and \cite{BP1999}). In Theorem 5.3, they establish the following hydrodynamic limit. For each $n \in \N$, let $(\nu_t^n)_{t\geq 0}$ denote the BPDL process with the same parameters as above, except for the competition strength $\alpha(x)$ which is scaled as follows: $\alpha_n(x)\coloneqq \alpha(x)/n$. Define the process $\xi_t^n=\frac{1}{n} \nu_t^n$, $t>0$. The scaling of the competition strength is analogous to the weak competition regime that we study in this paper for the SLBP. In \cite[Section 7]{BanMel2015}, the authors suggest the following biological interpretation of this type of rescaling. They interpret the parameter $n$ (or $\epsilon^{-\kappa}$ for the SLBP) as the amount of resources available in the system. If a large amount of resources is available, there is less need for individuals in the population to compete for resources. Additionally, the partition of a fixed amount of resources among individuals in the population may force the biomass of each individual to be inversely proportional to the amount of resources available. Fournier and M\'el\'eard show that $(\xi^n_t)_{t\geq 0}$ converges in distribution in a suitable path space as $n\to \infty$ to the solution to the Cauchy problem for the integro-differential equation 
	\begin{equation}\label{FM2004hydroLim}
			\partial_t \xi_t(x)=\int_{\bar \chi} dy \xi_t(y) \gamma(y) D(y, x-y) -\xi_t(x)\left( \mu(x)+\alpha(x)\int_{\bar \chi} dy \xi_t(y) U(x, y)\right),
	\end{equation}
	where $x \in \bar \chi$ and $t>0$. The BPDL defers from the SLBP in two major ways. First, since the competition is modelled using an interaction kernel summed over the entire population, it is at least from the mathematical point of view \textit{non-local}. This is also reflected in the hydrodynamic limit \eqref{FM2004hydroLim}. Secondly, by \cite[Proposition 3.2]{FM2004}, if at the time zero, there is at most one plant at each location, then this holds true for all future times. By contrast, the interactions in the SLBP are completely local, and the population size on any given site is unbounded. Both of these features are of interest for the modelling of locally-regulated populations where one can reasonably assume at the microscopic level that interactions occur within large colonies of individuals at the same location and individuals can migrate to adjacent colonies. The methods of proof used in \cite{FM2004} do not directly apply to our particle systems as they are suitable for non-local competition dynamics.
	
	In \cite{Boldrig1992}, the authors prove limit theorems for the SLBP with fixed amount of resources ($\kappa=0$), and with binary branching. More precisely, they consider the SLBP $\rN=((N_t(z))_{z \in \Z_K}:t\geq 0)$ in Definition \ref{def:slbp} (without the $1/2$ in the rate of competition). They rescale the spatial motion diffusively by working on the lattice $\Z_\epsilon\coloneqq \Z_{K_\epsilon}$ where $K_\epsilon=\lfloor\epsilon^{-2}\rfloor$, by accelerating the rate of migration by $\epsilon^{-2}$, and by letting the resulting process $\rN^\epsilon=((N^\epsilon_t(z))_{z \in \Z_\epsilon}:t\geq 0)$ act on Schwartz test functions $\phi \in \cS(\R)$ via
	\[
	X_t^\epsilon(\phi)\coloneqq \epsilon \sum_{z \in \Z_\epsilon}N_t^\epsilon(z)\phi(\epsilon z), \quad t>0. 
	\]
	They use a growing discrete circle to obtain equations on all of $\R$ as opposed to $\S^1$ in our case. We find that the basic Green function estimates (Appendix \ref{app:Green_fcn_props}) used to obtain tightness of $\{X^\epsilon:\epsilon \in (0, 1)\}$ in $\cD([0, T], C^\infty(\S^1)')$ for some fixed time horizon $T>0$ do not allow one to work on the growing circle, hence our choice of $K_\epsilon = \lfloor\epsilon^{-1}\rfloor$. Boldrighini et al. sketch a proof that $X^\epsilon$ converges in distribution in $\cD([0, T], \cS'(\R))$ as $\epsilon \searrow 0$ to the unique solution to the FKPP equation
	\begin{equation}\label{eq:Boldrighini_FKPP}
		\begin{cases}
			\partial_t \rho(t, z) = \frac{1}{2}\partial_z^2 \rho(t, z)+\rho(t, z)(1-\rho(t, z)), & t>0, \quad z \in \R,\\
			\lim_{t\searrow 0}\rho(t,z) = \rho_0(z),&\quad z \in \R.
		\end{cases}
	\end{equation}
	They then show that the non-equilibrium fluctuations of $N^\epsilon$, understood as 
	\[
	Y^\epsilon_t(\phi)\coloneqq \epsilon^{1/2}\sum_{z \in \Z_\epsilon} \left(N_t^\epsilon(z)-\E\left[N_t^\epsilon(z)\right]\right)\phi(\epsilon z), \quad \phi \in \cS(\R), \quad t\geq 0, 
	\]
	converge in distribution in $\cD([0, T], \cS(\R)')$ as $\epsilon \searrow 0$ to the solution to the Cauchy problem 
	\begin{equation}\label{eq:Boldrighini_OU}
		\begin{cases}
			\partial_t Y =\Big(\frac{1}{2}\partial_z^2+1-2\rho\Big)Y +\sqrt{\rho}\partial_z\dot W^1+\sqrt{\rho+\rho^2}\dot W^2&\textnormal{on }[0, T]\times \R
			\\Y_t\overset{d}{\to} Y_0 \textnormal{ as }t\searrow 0,
		\end{cases},
	\end{equation}
	where $\dot W^1$ and $\dot W^2$ are independent Gaussian space-time white noises on $[0, T]\times \R$, and $Y_0$ is a $\cS(\R)'$-valued centred Gaussian variable with variance
	\[
	\E[Y_0(\phi)^2] = \int_{\R} \phi(z)^2 \rho_0(z)dz, \quad \phi \in \cS(\R). 
	\]
	The proof is based on a Boltzmann-Gibbs principle analogous to \eqref{eq:BGP_statement} which results from the study of the asymptotics of some $v$-functions. The $v$-functions are defined, for all $x \in \N_0^{\Z_\epsilon}$ and $t\geq 0$, by 
	\[
	v^\epsilon_t(x|\nu^\epsilon)  \coloneqq \E_{\nu^\epsilon}\left[\sum_{x_0\cleq x} \prod_{z \in \supp(x)} Q_{x(z)}(N^\epsilon_t(z))(-1)^{n_x-n_{x_0}}\prod_{z \in x\backslash x_0}\rho_t^\epsilon(z)\right],
	\]
	where $Q_\cdot(\cdot)$ is the falling factorial in \eqref{def:ff},  $\rho^\epsilon$ approximates the solution $\rho$ to \eqref{eq:Boldrighini_FKPP}, and $\nu^\epsilon$ is a suitable initial law (this formula can be recovered from Definition \ref{def:scaled_v_fcns} by taking $\kappa=0$). The authors prove that for all $n>2$, we have
	\begin{equation}\label{eq:v_conv_boldrig}
	\sup_{t \in [0, T]} \max_{\substack{x \in \N_0^{\Z_\epsilon}
			\\n_x=n}} |v^\epsilon_t(x|\nu^\epsilon)| = o(\epsilon), \quad \textit{as $\epsilon\searrow 0$},
	\end{equation}
	and there exists $c=c(T )>0$ such that 
	\begin{equation}\label{eq:v_conv_boldrig2}
	\sup_{t \in [0, T]} \max_{\substack{x \in \N_0^{\Z_\epsilon}
			\\n_x=n}} |v^\epsilon_t(x|\nu^\epsilon)| \leq c\epsilon, \quad \textit{for all $\epsilon\in (0, 1)$},
	\end{equation}
	for $n \in \{1, 2\}$. As we discussed for Theorem \ref{thm:QLLN} in the previous section, this result can be interpreted as the quantitative propagation of chaos for the SLBP with $\kappa=0$. Interestingly, we observe by comparing the hydrodynamic limit \eqref{eq:FKPP} with \eqref{eq:Boldrighini_FKPP}, and the limiting Ornstein-Ulhenbeck process \eqref{eq:OU} with \eqref{eq:Boldrighini_OU} that the parameter $\kappa$ does not play a role in the macroscopic dynamics. The difference between $\kappa=0$ and $\kappa>0$ is only apparent in the time-$t$ measurement of propagation of chaos, that is the convergence rate of the $v$-functions. Indeed, comparing Theorem \ref{thm:QLLN} with \eqref{eq:v_conv_boldrig} and \eqref{eq:v_conv_boldrig2}, we see that assuming $\kappa>0$ (weak competition) and an offspring distribution with a moment $p>1$, the local densities of the SLBP at two distinct sites are guaranteed to become independent faster as $\epsilon \searrow 0$, than if $\kappa=0$. To tie this back to the interpretation of \cite{BanMel2015} recalled above, in a system with more resources and more individuals with lower biomass, the behaviour of individuals at different sites decouples faster. On the qualitative side, we observe that when $\kappa=0$, the limit theorems of \cite{Boldrig1992} are only a consequence of the diffusive rescaling of the random walks. This differs from our results for $\kappa>0$, where birth-competition dynamics are also rescaled. Finally, we note that \cite{Boldrig1992} allow more general polynomial rates of birth and death, as long as the degree of the death rate is strictly larger than that of the birth rate.
	
	\cite[pages 2-4]{DMS2003} briefly discuss a mean-field approximation to the rescaled SLBP. Consider a birth and death process where each particle splits into two particles independently at rate $1$, and pairs of particles coalesce at rate $\chi$. Let $N(t)$ denote the number of particles in the system. Equating the rates of birth and death and solving for $N$, we see that $\overline{N}=1+2\chi^{-1}$ is a natural equilibrium population size, which we interpret as the carrying capacity of the system. Then, to first order, we have that $U(t)=N(t)/\overline{N}$ satisfies the logistic equation $U'(t)=U(t)(1-U(t))$. See for instance section 3.1 of \cite{BanMel2015} for rigorous arguments. To take a scaling limit of large population size, Doering et al, consider the weak competition regime in which $\chi \searrow 0$, in our notation $\chi=\epsilon^\kappa$ and $\epsilon \searrow0$. For this process, \cite{DMS2003} write that they expect the magnitude of the fluctuations to be of order $\sqrt{U+aU^2}$ for some $a>0$. This is confirmed for the spatial process itself as a particular case of our main result, Theorem \ref{thm:CLT_Gaussian}. More generally, our result identifies the constants in terms of the first and second moments of the offspring distribution $\rP$. 
	
	Before closing this section, let use cite other relevant works on scaling limits of regulated populations, and on Boltzmann-Gibbs principles. Local-regulation in the SLBP is endogenous. This contrasts with models such as the $N$-branching Brownian motions ( for instance \cite{M1975}, \cite{BDMM2006}, \cite{BDMM2007}, \cite{BZ2018}, and \cite{BBNP2020}) and the stepping stone models (e.g. \cite{SU1986}, \cite{R1986}, \cite{S1988}, and \cite{Etheridge2004}), in which the global, respectively local, population size is fixed exogenously and constant over time. \cite{MT1995}, \cite{FP2017}, \cite{EVY2014} and \cite{BGSRS2020} have obtained similar scaling limits for various interacting particle systems. \cite{FM2004} also studied convergence of their population model to a superprocess version of a model of \cite{BP1999} introduced by \cite{Etheridge2004}, and which is a spatial analogue of the logistic Feller diffusion. In works on fluctuations of particle systems, e.g. \cite{Boldrig1992}, \cite{ChenFan2016}, \cite{FP2017}, \cite{L2016} and \cite{BGSRS2020}, the reaction term in the stochastic partial differential equation which characterises the limiting fluctuations corresponds to a linearization of the reaction term in the partial differential equation describing the hydrodynamic limit. We also observe this phenomenon in our work: see Theorems \ref{thm:LLN} and \ref{thm:CLT_Gaussian}. To prove this relationship rigorously, one approach consists in showing a Boltzmann-Gibbs principle such as Proposition \ref{prop:BGP}. This principle was originally proved in \cite{BR86} to study the equilibrium fluctuations of the zero range process. Broadly speaking, it states that the fluctuations of local functions of conserved quantities can be suitably projected onto the fluctuations of the conserved quantities. The Boltzmann-Gibbs principle continues to be used in the study of equilibrium fluctuations of particle systems; see e.g. \cite{L2016} and \cite{HJV2017} for recent examples. It was also shown to hold in some non-equilibrium settings: see for instance \cite{Boldrig1992}, \cite{ChenFan2016}, and the present work. In these examples, the intuition for the validity of the principle is similar to our heuristics below Proposition \ref{prop:BGP} in the previous section.
	
	\section{Proof of the law of large numbers}\label{sec:LLN_pf}
		
	In this section, we prove the law of large numbers stated in Theorem \ref{thm:LLN}, assuming the convergence of the $v$-functions claimed in Theorem \ref{thm:QLLN} and proved in Section \ref{sec:QLLN_proof} below. Theorem~\ref{thm:LLN} states that $\rX^\epsilon\overset{d}{\to} \rho$ in $\cD([0, T], C^\infty(\S^1)')$ as $\epsilon \searrow 0$, where $\rho$ solves \eqref{eq:FKPP}:
	\[
	\begin{cases}
		\partial_t \rho(t, z) = \frac{1}{2}\partial_z^2 \rho(t, z)+\rho(t, z)(\mu-\rho(t, z)/2), & t>0, \quad z \in \S^1,\\
		\lim_{t\searrow 0}\rho(t,z) = \rho_0(z),&\quad z \in \S^1.
	\end{cases}
	\]

	By \cite[Theorem 13.1]{B1999} - this result follows from the tightness of $\{\rX^\epsilon: \epsilon \in (0, 1)\}$ in $\cD([0, T], C^\infty(\S^1)')$ and from the convergence $\rX^\epsilon \to \rho$ in finite-dimensional distributions as $\epsilon \searrow 0$. We show the convergence of the finite-dimensional distributions in Proposition \ref{lem:FDD_LLN} below using the asymptotics established in Theorem \ref{thm:QLLN} for the $v$-functions. By Mitoma's Theorem (see Theorem 4.1 in \cite{Mitoma1983}), $\{\rX^\epsilon: \epsilon \in (0, 1)\}$ is tight in the Skorohod J1-topology on $\cD([0, T], C^\infty(\S^1)')$ if, for every $\phi \in C^\infty(\S^1)$, the family of processes $\{\rX^\epsilon(\phi):\epsilon \in (0, 1)\}$ is tight in the Skorohod J1-topology on $\cD([0, T], \R)$. By Aldous' Criterion \cite[Theorem 16.10 and equation (16.32)]{B1999}, the later condition holds if the following two points are true.
	\begin{enumerate}
		\item[1.] For each $t \in [0, T]$, $\{X_t^\epsilon(\phi):\epsilon \in (0, 1)\}$ is tight in $\R$, i.e. for any $\eta \in (0, 1)$, there exists $K=K(t, \eta)>0$ such that 
		\begin{equation}\label{cond:aldous_1}
		\sup_{\epsilon \in (0, 1)}\P(|X_t^\epsilon(\phi)|>K)<\eta.
		\end{equation}
		\item[2.] Given a family $\{\tau_\epsilon: \epsilon\in (0, 1)\}$ of $[0, T]$-valued stopping times and any family $\{\theta_\epsilon: \epsilon\in (0, 1)\}\subset [0, \infty)$ with $\lim_{\epsilon \searrow 0}\theta_\epsilon=0$, we have  
		\begin{equation}\label{cond:aldous_2}
		\limsup_{\epsilon \searrow 0}\E\left[|X_{\tau_\epsilon+\theta_\epsilon}^\epsilon(\phi)-X_{\tau_\epsilon}^\epsilon(\phi)|\right]=0.
		\end{equation}
	\end{enumerate}
	The first condition \eqref{cond:aldous_1} follows immediately from the convergence of the finite-dimensional distributions of $\rX^\epsilon$. The proof of \eqref{cond:aldous_2} utilises martingale problems, Assumptions \ref{assump:technical_X_0}, Green's function estimates shown in Lemma \ref{lem:Green_estimates} in Appendix~\ref{app:Green_fcn_props}, and Lemma \ref{lem:FKPP_approx_properties} to obtain uniform equicontinuity estimates for the expectation in \eqref{cond:aldous_2}. We note that since our limit \eqref{def:fluct_field_def}-\eqref{eq:FKPP} is deterministic, the full convergence will also hold in probability.
	
	\subsection{Convergence of finite-dimensional distributions}
	We establish the following proposition.
	\begin{prop}\label{lem:FDD_LLN}
		Let  $\rho$ denote the solution to the FKPP equation \eqref{eq:FKPP} and define
		\[
		\rho_t(\phi) \coloneqq \int_{\S^1}\rho_t(z)\phi(z)dz,\quad \phi \in C^\infty(\S^1), \quad t\in [0, T].
		\]
		Then, for any $k \in \N$, $t_1, \cdots, t_k \in [0, T]$ and $\phi\in C^\infty(\S^1)$, we have 
		\[
		(X_{t_1}^\epsilon(\phi), \cdots, X_{t_k}^\epsilon(\phi))\overset{d}{\to} (\rho_{t_1}(\phi), \cdots, \rho_{t_k}(\phi)),
		\]
		as $\epsilon \searrow 0$. 
	\end{prop}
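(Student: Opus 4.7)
The strategy is to reduce the joint convergence to an $L^2$ statement at each fixed time. Because each coordinate of the prospective limit vector $(\rho_{t_1}(\phi),\ldots,\rho_{t_k}(\phi))$ is a deterministic real number, it suffices to show that $X_t^\epsilon(\phi) \to \rho_t(\phi)$ in probability as $\epsilon \searrow 0$ for every fixed $t$ and $\phi$; joint convergence in distribution then follows automatically. I would establish this single-time convergence by proving separately that $\E[X_t^\epsilon(\phi)] \to \rho_t(\phi)$ and $\va(X_t^\epsilon(\phi)) \to 0$, and then applying Chebyshev's inequality. Both ingredients are provided essentially for free by Theorem~\ref{thm:QLLN} at orders $n=1$ and $n=2$ respectively, so the work collapses to a short $L^2$ argument.

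For the mean, I would apply Theorem~\ref{thm:QLLN} with test configurations of size $n=1$ (valid under the finite-mean hypothesis of Theorem~\ref{thm:LLN} by taking $p=1$). By identity \eqref{eq:V_fcn_one_loc}, the first-order $v$-function is exactly $\E[X_t^\epsilon(z)]-\rho_t^\epsilon(z)$, so Theorem~\ref{thm:QLLN} yields $\sup_{t\leq T}\max_{z\in\Z_\epsilon}|\E[X_t^\epsilon(z)]-\rho_t^\epsilon(z)| = o(\epsilon)$. Summing $\epsilon\phi(\epsilon z)$ against this bound gives a total error of $\epsilon\cdot K_\epsilon \cdot o(\epsilon)\cdot \|\phi\|_\infty = o(\epsilon)$, while the leading term $\epsilon \sum_{z\in\Z_\epsilon}\rho_t^\epsilon(z)\phi(\epsilon z)$ is a Riemann sum which converges to $\int_{\S^1}\rho_t(z)\phi(z)\,dz = \rho_t(\phi)$, thanks to the uniform convergence $\widetilde{\rho}_t^{\:\epsilon} \to \rho_t$ furnished by part~1 of Lemma~\ref{lem:FKPP_approx_properties}, together with the uniform boundedness of part~2.

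For the variance I would expand $\va(X_t^\epsilon(\phi)) = \epsilon^2\sum_{z_1,z_2\in\Z_\epsilon}\mathrm{Cov}(X_t^\epsilon(z_1),X_t^\epsilon(z_2))\phi(\epsilon z_1)\phi(\epsilon z_2)$ and split into off-diagonal and diagonal terms. For $z_1\neq z_2$, identity \eqref{eq:V_fcn_2_pcs_not_diag} together with the mean estimate from the previous step identifies the covariance with a second-order $v$-function up to $o(\epsilon)$ lower-order terms; Theorem~\ref{thm:QLLN} at $n=2$ bounds this by $o(\epsilon)$. Multiplied by $\epsilon^2\cdot K_\epsilon^2 = O(1)$ off-diagonal pairs and $\|\phi\|_\infty^2$, this contributes $o(\epsilon)$. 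For the diagonal, identity \eqref{eq:V_fcn_2_pcs_diag} rewrites $\E[(X_t^\epsilon(z)-\rho_t^\epsilon(z))^2]$ as $\epsilon^\kappa\,\E[X_t^\epsilon(z)]$ plus a small $v$-function of order $o(\epsilon)$, so the per-site second moment is $O(\epsilon^\kappa)$; summed against $\epsilon^2 K_\epsilon = O(\epsilon)$ this contributes $O(\epsilon^{1+\kappa})$. Both contributions vanish as $\epsilon\searrow 0$.

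The main obstacle here is essentially one of bookkeeping: one must verify that the rates supplied by Theorem~\ref{thm:QLLN} are fast enough to beat the $K_\epsilon^2 = O(\epsilon^{-2})$ pairs in the variance sum, and that the diagonal $\epsilon^\kappa$ correction in \eqref{eq:V_fcn_2_pcs_diag} aligns correctly with the local-density rescaling. All the genuinely non-trivial analytic input---the propagation-of-chaos control on the law of $X_t^\epsilon$---is absorbed into Theorem~\ref{thm:QLLN}, so what remains is the $L^2$ computation above together with the soft deterministic-limit step that upgrades coordinate-wise convergence in probability to joint convergence in distribution.
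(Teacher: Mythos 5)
Your proposal is correct and relies on the same core ingredients as the paper (Theorem~\ref{thm:QLLN} for $v$\nobreakdash-functions of orders one and two, the uniform moment bound of Proposition~\ref{prop:SLBP_moments}, the uniform bound on $\rho_t^\epsilon$ and the Riemann-sum convergence of $\rho_t^\epsilon(\phi)$ via Lemma~\ref{lem:FKPP_approx_properties}), but carries out the $L^2$ step in a genuinely different — and in fact sharper — way. The paper tests against a bounded Lipschitz $f$, Taylor-expands around $\rho_t^\epsilon(\phi)$, and applies Cauchy--Schwarz to bound $(X_t^\epsilon(\phi)-\rho_t^\epsilon(\phi))^2\le\bigl(\epsilon\sum_z(X_t^\epsilon(z)-\rho_t^\epsilon(z))^2\bigr)\bigl(\epsilon\sum_z\phi(\epsilon z)^2\bigr)$, so that only the \emph{diagonal} order-two $v$\nobreakdash-functions appear, each of which equals $(X_t^\epsilon(z)-\rho_t^\epsilon(z))^2$ up to an $\epsilon^\kappa$-size correction. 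Since the remaining factor $\epsilon\sum_z 1=O(1)$, the resulting bound on $\E[(X_t^\epsilon(\phi)-\rho_t^\epsilon(\phi))^2]$ is only $O(\epsilon^\kappa)$, which vanishes for $\kappa>0$ but not, as written, for $\kappa=0$ — a value that the hypothesis $\kappa\in[0,3/8)$ is meant to include. Your decomposition $\va(X_t^\epsilon(\phi))=\epsilon^2\sum_{z_1,z_2}\mathrm{Cov}(X_t^\epsilon(z_1),X_t^\epsilon(z_2))\phi(\epsilon z_1)\phi(\epsilon z_2)$ keeps the off-diagonal structure: the off-diagonal covariances are $o(\epsilon)$ by Theorem~\ref{thm:QLLN} at $n=2$ with $p=1$ (which beats the $O(\epsilon^{-2})$ pairs), while the $O(\epsilon^\kappa)$ diagonal terms carry weight $\epsilon^2 K_\epsilon=O(\epsilon)$ instead of $O(1)$, giving $O(\epsilon^{1+\kappa})$. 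Both pieces vanish for every $\kappa\ge 0$, so your argument repairs the $\kappa=0$ boundary case left uncovered by the paper's Cauchy--Schwarz step, at the expense of the slightly longer bookkeeping you anticipated. The remaining steps (Chebyshev, and upgrading coordinate-wise convergence in probability to joint convergence in distribution for a deterministic limit) are the standard counterpart of the paper's Taylor-expansion argument for $k$-dimensional bounded Lipschitz test functions.
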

	\begin{proof}
	Define
	\[
	\rho_t^\epsilon(\phi)\coloneqq \epsilon \sum_{z \in \Z_\epsilon}\rho_t^\epsilon(z)\phi(\epsilon z), \quad \phi \in C^\infty(\S^1), \quad t \in [0, T],
	\]
	and let $f:\R\to \R$ be a bounded Lipschitz function. By a Taylor expansion of order one, we have
	\[
	\E[f(X_t^\epsilon(\phi))] = f(\rho_t^\epsilon(\phi)) + \E[(X_t^\epsilon(\phi)-\rho_t^\epsilon(\phi))f'(\rho_t^\epsilon(\phi)+\xi)],
	\]
	for some $\xi \in (0, X_t^\epsilon(\phi)-\rho_t^\epsilon(\phi))$. We aim to control the second term using Theorem \ref{thm:QLLN}. By the Cauchy-Schwarz inequality and the fact that $\|f'\|_\infty<\infty$, we obtain 
	\begin{equation}\label{ineq:start_FDD_proof}
		\E[(X_t^\epsilon(\phi)-\rho_t^\epsilon(\phi))f'(\rho_t^\epsilon(\phi)+\xi)]\leq \|f'\|_\infty \E[(X_t^\epsilon(\phi)-\rho_t^\epsilon(\phi))^2]^{1/2}
	\end{equation}
	To use the convergence of the $v$-functions, we need to rewrite the integrand in terms of pointwise differences of $X^\epsilon_t$ and $\rho_t^\epsilon$. By definition, we have 
	\[
	(X_t^\epsilon(\phi)-\rho_t^\epsilon(\phi))^2=\left(\epsilon \sum_{z \in \Z_\epsilon}(X_t^\epsilon(z)-\rho_t^\epsilon(z))\phi(\epsilon z)\right)^2.
	\]
	By the Jensen and Cauchy-Schwarz inequalities,
	\[
	(X_t^\epsilon(\phi)-\rho_t^\epsilon(\phi))^2\leq \left(\epsilon \sum_{z \in \Z_\epsilon}(X_t^\epsilon(z)-\rho_t^\epsilon(z))^2\right)\left(\epsilon \sum_{z \in \Z_\epsilon} \phi(\epsilon z)^2\right).
	\]
	Using \eqref{eq:V_fcn_2_pcs_diag}, we see that 
	\[
	(X_t^\epsilon(z)-\rho_t^\epsilon(z))^2 = V^\epsilon(x, X_t^\epsilon;\rho_t^\epsilon) + 2\epsilon^\kappa X_t^\epsilon(z)\rho_t^\epsilon(z),
	\]
	where $x=2\mathbbm{1}_{\{z\}}\in \X_\epsilon$ denotes the configuration with two particles at $z$. 
	Gathering the above estimates shows that 
	\begin{align*}
		\E[(X_t^\epsilon(\phi)-\rho_t^\epsilon(\phi))^2]&\leq \left(\epsilon\sum_{z\in \Z_\epsilon}\phi(\epsilon z)^2\right) \times \left[\max_{x=2\mathbbm{1}_{\{z\}}\in \X_\epsilon} \left( v_t^\epsilon(x, \rho_t^\epsilon|\nu^\epsilon) + 2\epsilon^\kappa \E[X_t^\epsilon(z)]|\rho_t^\epsilon(z)|\right)\right].
	\end{align*}
	By Theorem \ref{thm:QLLN}, we have 
	\[
	\lim_{\epsilon \searrow 0} \max_{x \in \X_\epsilon: n_x=2} v_t^\epsilon(x, \rho_t^\epsilon|\nu^\epsilon)=0.
	\]
	Since $X_t^\epsilon(z)=Q^\epsilon(x, X_t^\epsilon)$ if $x\in \X_\epsilon$ denotes the configuration with a single particle at $z\in \Z_\epsilon$, then we infer from Proposition \ref{prop:SLBP_moments} that
	\[
	\sup_{\epsilon \in (0,  1)}\max_{z \in \Z_\epsilon}\sup_{t \in [0, T]}\E[X_t^\epsilon(z)]<c(T),
	\]
	for some absolute constant $c(T)>0$. By part two of Lemma \ref{lem:FKPP_approx_properties}, we have $\sup_{\epsilon \in (0, 1)}\sup_{t \in [0, T]}\|\rho_t^\epsilon\|_\infty< \infty$. This shows that 
	\[
	\lim_{\epsilon \searrow 0} \E[(X_t^\epsilon(\phi)-\rho_t^\epsilon(\phi))^2] = 0.
	\]
	Finally, recall from part one of Lemma \ref{lem:FKPP_approx_properties} that as $\epsilon \searrow 0$, we have that $\widetilde{\rho}_t^{\:\epsilon}(z)\coloneqq \rho_t^\epsilon(K_\epsilon z)$, $z \in \S^1$, converges uniformly on $[0, T]\times \S^1$ to the solution $\rho$ of the FKPP equation \eqref{eq:FKPP}. Therefore, writing $\rho_t^\epsilon(z)=\widetilde{\rho}_t^{\:\epsilon}(K_\epsilon^{-1} z)$, we compute 
	\begin{align*}
		\rho_t^\epsilon(\phi)=\epsilon \sum_{z \in \Z_\epsilon} \rho_t^\epsilon(z)\phi(\epsilon z)&=\epsilon \sum_{z \in \Z_\epsilon} \widetilde{\rho}_t^{\:\epsilon}(K_\epsilon^{-1} z)\phi(\epsilon z)
		\\&\overset{\epsilon \searrow 0}{\to} \rho_t(\phi)\coloneqq \int_{\S^1} \rho_t(z)\phi(z)dz,
	\end{align*}
	where we have used that $K_\epsilon^{-1} z \in \S^1$ for $z \in \Z_\epsilon$, and $\epsilon K_\epsilon\to 1$ as $\epsilon \searrow 0$. Thus, by continuity of $f$, we have
	\[
	\lim_{\epsilon\to 0} f(\rho_t^\epsilon(\phi)) = f(\rho_t(\phi)). 
	\]
	Overall, we have shown that 
	\[
	\lim_{\epsilon \searrow 0} \E[f(X_t^\epsilon(\phi))] = f(\rho_t(\phi)),
	\]
	for all $\phi \in C^\infty(\S^1)$ and bounded Lipschitz $f$. One can easily extend this reasoning to a finite number of times $t_1, \cdots, t_k\in [0, T]$ to obtain 
	\[
	\lim_{\epsilon \searrow 0} \E[f(X_{t_1}^\epsilon(\phi), \cdots, X_{t_k}^\epsilon(\phi))] = f(\rho_{t_1}(\phi), \cdots, \rho_{t_k}(\phi)),
	\]
	for all $\phi\in C^\infty(\S^1)$ and $k$-dimensional bounded Lipschitz $f:\R^k\to \R$. Indeed, by a Taylor expansion of order one, we can write
	\begin{align*}
	&\E[f(X_{t_1}^\epsilon(\phi), \cdots, X_{t_k}^\epsilon(\phi))]
	\\&= f(\rho_{t_1}^\epsilon(\phi), \cdots, \rho_{t_k}^\epsilon(\phi))+\sum_{i=1}^k \E\left[(X_{t_i}^\epsilon(\phi)-\rho_{t_i}^\epsilon(\phi)) R_1(X_{t_i^\epsilon}(\phi))\right],
	\end{align*}
	where 
	\[
	|R_1(X_{t_i^\epsilon}(\phi))|\leq \max_{\xi \in \R^k: |\xi|\leq |X_{t_i^\epsilon}(\phi)-\rho_{t_i^\epsilon}(\phi)|} |\nabla f(\xi)|\leq \sup_{\xi \in \R^k}|\nabla f(\xi)|<\infty
	\]
	for each $i \in \{1, \cdots, k\}$. Then we may apply the arguments starting from \eqref{ineq:start_FDD_proof} above for each $i$ separately. 
	\end{proof}
	
	\subsection{Moments and martingale problems}
	
	The aim of this section is to prove the following moment control. 
	
	\begin{prop}\label{prop:SLBP_moments}
		For any $T>0$ and $n \in \N$, there exists $c=c(T, n)>0$ such that 
		\[
		\sup_{\epsilon \in (0, 1)}\sup_{t \in [0, T]} \max_{\substack{x\in \X_\epsilon \\ n_x=n}} \E\left[Q^\epsilon(x, X^\epsilon_t)\right] \leq c. 
		\]
	\end{prop}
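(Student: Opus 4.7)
I would prove Proposition~\ref{prop:SLBP_moments} by dominating the rescaled SLBP pathwise by a branching random walk (BRW) without competition, and then controlling the BRW factorial moments via a many-to-few decomposition.

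\textbf{Step 1: reduction to single-site moments.} For any $x \in \X_\epsilon$ with $n_x = n$, the elementary bound $Q_k(N) \leq N^k$ gives $Q^\epsilon(x, x') \leq \prod_{z \in \supp(x)} x'(z)^{n_x(z)}$, and H\"older's inequality with exponents $n/n_x(z)$ yields
\[
\E[Q^\epsilon(x, X_t^\epsilon)] \leq \prod_{z \in \supp(x)} \E[X_t^\epsilon(z)^n]^{n_x(z)/n} \leq \sup_{z \in \Z_\epsilon} \E[X_t^\epsilon(z)^n].
\]
Expanding $X_t^\epsilon(z)^n = \sum_{k=1}^n \stirling{n}{k} \epsilon^{\kappa(n-k)} \cdot \epsilon^{\kappa k} Q_k(N_t^\epsilon(z))$ via Stirling numbers of the second kind reduces the task to bounding $\epsilon^{\kappa k}\E[Q_k(N_t^\epsilon(z))]$ uniformly in $\epsilon$, $z$, and $t \in [0, T]$, for each $k \leq n$.

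\textbf{Step 2: coupling with a BRW.} I would construct a graphical coupling of $\rN^\epsilon$ with a BRW $\rN^{\mathrm{BRW}}$ on the same probability space, sharing initial data and the Poisson clocks driving migration and branching, while the BRW simply does not execute the competition events that delete particles from $\rN^\epsilon$. This yields $N_t^\epsilon(z) \leq N_t^{\mathrm{BRW}}(z)$ pathwise, and the monotonicity of $N \mapsto Q_k(N)$ on $\N_0$ transfers the bound to factorial moments: $\E[Q_k(N_t^\epsilon(z))] \leq \E[Q_k(N_t^{\mathrm{BRW}}(z))]$.

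\textbf{Step 3: many-to-few for the BRW.} I would express $\E[Q_k(N_t^{\mathrm{BRW}}(z))]$ as a finite sum over genealogical forests describing the ancestry of a sampled ordered $k$-tuple of distinct particles at $(z, t)$: a forest with $r$ distinct initial roots and internal nodes $v$ contributing $j_v$ new sampled lineages at branching events (so $\sum_v j_v = k - r$) contributes, up to combinatorial constants and an exponential factor $e^{k \mu_\epsilon T}$ from BRW growth, at most
\[
\prod_{\text{roots}} \|N_0^\epsilon\|_\infty \cdot \prod_v \E\big[Q_{j_v+1}(L^\epsilon + 1)\big]
\]
times walk transition kernels that, summed over the positions at internal nodes via Chapman-Kolmogorov, involve no factor $K_\epsilon \sim \epsilon^{-1}$. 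Using $\|N_0^\epsilon\|_\infty \leq 2\mu \epsilon^{-\kappa}$ from Definition~\ref{def:general_init_data}, and the estimate $\E[L^j] \leq \mu\, L(\epsilon)^{j-1}$ (hence $\E[Q_{j_v+1}(L^\epsilon + 1)] \leq C L(\epsilon)^{j_v}$) derived from the finite mean of $\rP$, each forest's contribution, scaled by $\epsilon^{\kappa k}$, is bounded above by
\[
\epsilon^{\kappa k} \cdot (2\mu)^r \epsilon^{-\kappa r} \cdot C^{k-r} L(\epsilon)^{k-r} \cdot e^{k \mu_\epsilon T} = C^{k-r} e^{k \mu_\epsilon T}(2\mu)^r \cdot (\epsilon^\kappa L(\epsilon))^{k - r},
\]
which is uniformly bounded in $\epsilon$ thanks precisely to the assumption $L(\epsilon) = o(\epsilon^{-\kappa})$ in Definition~\ref{def:rescaled_slbp}. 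Summing over the finitely many forest topologies with $k$ leaves then completes the uniform bound.

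\textbf{Main obstacle.} The delicate step is the rigorous set-up of the many-to-few decomposition on the discrete torus, in particular the clean bookkeeping of the walk transition kernels so that the summation over internal-node positions produces no accumulation of factors $K_\epsilon \sim \epsilon^{-1}$. The assumption $L(\epsilon) = o(\epsilon^{-\kappa})$ enters in an essential way: it is exactly what allows the combinatorial factor $(\epsilon^\kappa L(\epsilon))^{k-r}$ to remain bounded as $\epsilon \searrow 0$, compensating for the potential blowup of the higher factorial moments of the truncated offspring distribution under the assumption of only a finite first moment for $\rP$.
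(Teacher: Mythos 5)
Your proposal is correct in outline, but it takes a genuinely different route from the paper. The paper's argument hinges on the competition term in the birth--competition generator. In its key lemma, the authors compute $\cG^\epsilon_{BC} Q^\epsilon(x,\cdot)(X_s^\epsilon)$ directly and observe that the local prefactor $c_2 X_s^\epsilon(z) - x(z)X_s^\epsilon(z)(X_s^\epsilon(z)-\epsilon^\kappa)/2$, in which the negative quadratic (competition) dominates the positive linear (birth), is bounded above \emph{uniformly} in $X_s^\epsilon(z)\geq 0$ and $\epsilon$. This gives $\E[\cG^\epsilon_{BC} Q^\epsilon(x,\cdot)(X_s^\epsilon)]\leq c\,k\,\phi_{k-1}(s)$, i.e.\ order is lowered by one, and a Gr\"onwall iteration on $\phi_k$ finishes the proof with no spatial estimates at all. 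The truncation $\epsilon^\kappa L(\epsilon)\to 0$ is used there only to kill moments of the offspring distribution of order $\geq 2$; the uniform boundedness itself comes from competition. You discard the competition entirely via a monotone coupling $N_t^\epsilon(z)\leq N_t^{\mathrm{BRW}}(z)$ and compute the BRW single-site factorial moments with a many-to-few decomposition, so that for you the truncation carries the whole load (each branching node contributes $\epsilon^{\kappa j_v}\E[Q_{j_v+1}(L^\epsilon+1)]\lesssim(\epsilon^\kappa L(\epsilon))^{j_v}$). This is a valid alternative, but it trades a two-line generator inequality for nontrivial forest combinatorics and Green-function estimates; the paper's route is considerably shorter.

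Two points in your sketch need repair. First, when a branching node has $m\geq 3$ outgoing lineages that must all reach $z$ at time $t$, the naive bound $G^\epsilon_{t-s}(y,z)^{m}\leq(C\epsilon(t-s)^{-1/2})^{m-1}G^\epsilon_{t-s}(y,z)$ leads to a time integral $\int_0^t(t-s)^{-(m-1)/2}ds$ that diverges at $s=t$; you must cap the singularity with $G^\epsilon\leq 1$, i.e.\ use $\min(1,C\epsilon(t-s)^{-1/2})$ as in \eqref{ineq:basic_heat_estimate}, before integrating. Second, $\|N_0^\epsilon\|_\infty\leq 2\mu\,\epsilon^{-\kappa}$ is not a pathwise bound: $X_0^\epsilon(z)$ is random with \emph{mean} $\rho_0^\epsilon(z)\leq 2\mu$, not a deterministic value. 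You need to take the expectation over $\nu^\epsilon$ inside the many-to-few integrand and invoke Assumption~\ref{assump:technical_X_0}.a (exactly as the paper does for its $c_{0,k}$ in Proposition~\ref{prop:SLBP_moments}'s proof) to get $\E_{\nu^\epsilon}\bigl[\prod_{i=1}^r N_0^\epsilon(z^{(i)})\bigr]=O(\epsilon^{-\kappa r})$; the deterministic pointwise bound as written is false. Both fixes are routine and do not change the structure, but the second one in particular is essential for the argument to be correct.
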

	To prove this proposition, we follow the strategy used to prove Proposition 2.1 in \cite{Boldrig1987}. Namely, we apply Dynkin's formula to the test functions $\rX\mapsto Q^\epsilon(x, X^\epsilon_t)$. Then we upper bound the expectation of the birth-competition component of the generator, and we obtain a recursive inequality in $n_x$ which we can bound by iteration.
	
	Before proceeding with this agenda, we compute the generator of the rescaled SLBP $\rX^\epsilon = (\{X^\epsilon_t(z), \; z \in \Z_\epsilon\})_{t\geq 0}$. Let us recall its instantaneous dynamics. Define two polynomials $q_+^\epsilon(y) = \epsilon^{-\kappa}y$ and $q_-^\epsilon(y) = \epsilon^{-\kappa}y(y-\epsilon^\kappa)/2$ for all $y \in \R$. 
	\begin{enumerate}
		\item[1.] Spatial motion: particles perform independent simple symmetric random walks on $\Z_\epsilon$ at rate $\epsilon^{-2}$.
		\item[2.] Local births: independently at every site $z \in \Z_\epsilon$, we have at time $t-$ a transition $X_{t-}^\epsilon(z)\to X_{t-}^\epsilon(z)+\epsilon^\kappa \ell$ at rate $q_+^\epsilon(X_{t-}^\epsilon(z))p_\ell^\epsilon$, for all $\ell\geq 1$. 
		\item[3.] Local competition: independently at every site $z \in \Z_\epsilon$, we have at time $t-$ a transition $X_{t-}^\epsilon(z)\to X_{t-}^\epsilon(z)-\epsilon^\kappa$ at rate $q_-^\epsilon(X_{t-}^\epsilon(z))$.
	\end{enumerate}	
	In view of these dynamics, the generator $\cG^\epsilon$ of $\rX^\epsilon$ can be decomposed as the sum of a spatial motion component $\cG^\epsilon_{RW}$ and a birth-competition component $\cG^\epsilon_{BC}$. Given a test function $f:\X_\epsilon\to \R$, we have
	\begin{equation}\label{def:slbp_gen}
	\cG^\epsilon f(x) = \cG_{RW}^\epsilon f(x)+\cG_{BC}^\epsilon f(x), \quad x \in \X_\epsilon.
	\end{equation}
	where 
	\begin{equation}\label{eq:GRW}
	\cG_{RW}^\epsilon f(x) = \frac{\epsilon^{-2}}{2}\sum_{z \in \Z_\epsilon} \epsilon^{-\kappa}x(z) \Big\{f(x^{z, z+1})+f(x^{z, z-1})-2f(x)\Big\},
	\end{equation}
	with 
	\[
	x^{z, z\pm 1}(z') = \begin{cases} 
		x(z)-\epsilon^\kappa, & z'=z
		\\ x(z)+\epsilon^\kappa, & z'=z\pm 1
		\\ x(z'), & z' \in \Z_\epsilon \backslash \{z, z\pm 1\},
	\end{cases}
	\]
	and 
	\begin{equation}\label{eq:GBC}
	\cG_{BC}^\epsilon f(x) = \sum_{z \in \Z_\epsilon} \Big\{\sum_{\ell \geq 1} p_\ell^\epsilon q_+^\epsilon(x(z))(f(x^{z, \ell})-f(x))+q_-^\epsilon(x(z))(f(x^{z, -1})-f(x))\Big\}.
	\end{equation}
	with 
	\begin{equation}\label{def:updated_config}
	x^{z, \ell}(z') = \begin{cases} 
		x(z)+\ell\epsilon^\kappa, & z'=z
		\\ x(z'), & z' \in \Z_\epsilon \backslash \{z\}
	\end{cases}, \quad \ell \in \N \cup \{-1\}.
	\end{equation}
	
	We introduce the notation $\G^\epsilon_t(x, x')$, $x, x' \in \X_\epsilon$ with $n_x=n_{x'}$, for the probability of transition between configurations $x$ and $x'$ in time $t$, purely from spatial motion. It is clear that
	\begin{equation}\label{def:G}
	\G_t^\epsilon(x, x') = \frac{1}{n_x!} \sum_\sigma \prod_{z \in x} G_t^\epsilon(z, \sigma(z)),
	\end{equation}
	where $\sigma$ runs over the set of bijections $x\to x'$. If $x=x'=0$, we set the sum to zero by convention. Observe that $\G^\epsilon_t$ is symmetric in its arguments. We introduce the discrete Laplacian acting on functions $F:\X_\epsilon\to \R$:
	\begin{equation}
		\begin{aligned}\label{def:disc_Laplacian_config}
			\Delta^\epsilon_z F(x) &\coloneqq \epsilon^{-2}(F(x^{z, z+1})+F(x^{z, z-1})-2F(x)), \quad z \in \Z_\epsilon,
			\\\Delta^\epsilon F(x) &\coloneqq \sum_{z \in x}\Delta^\epsilon_z F(x).
		\end{aligned}
	\end{equation}
	for all $x \in \X_\epsilon$, where the updated configurations $x^{z, z\pm1}$ are defined in \eqref{def:updated_config} and the summation over $z\in x$ is defined in \eqref{def:sum_prod_config_iteration}. We use the same notation for the Laplacian acting on $f:\Z_\epsilon \to \R$:
	\begin{equation}\label{def:disc_Laplacian_sites}
	\Delta^\epsilon_z f(z) = \epsilon^{-2}(f(z+1)+f(z-1)-2f(z)), \quad z \in \Z_\epsilon.
	\end{equation}
	In the following lemma, we derive a martingale problem for the rescaled SLBP.
	\begin{lem}\label{lem:mg_pb_LLN}
		Fix any $\epsilon \in (0, 1)$. For any $x \in \X_\epsilon$, the process $\rM^\epsilon(x) = (M^\epsilon_t(x))_{t\geq 0}$ defined by 
		\begin{equation}\label{eq:true_mg}
		M^\epsilon_t(x)=Q^\epsilon(x, X^\epsilon_t)-\sum_{x'} \G^\epsilon_{t}(x, x')Q^\epsilon(x', X^\epsilon_0)-\int_0^t \sum_{x'} \G^\epsilon_{t-s}(x, x')\cG^\epsilon_{BC}Q^\epsilon(x', \cdot)(X^\epsilon_s) ds, \quad t\geq 0,
		\end{equation}
		where $x'$ ranges over configurations $x'\in \X_\epsilon$ with $n_{x'}=n_x$, is a martingale.
	\end{lem}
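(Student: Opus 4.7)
The plan is to apply the time-dependent Dynkin formula to a test function that absorbs the random-walk part $\cG^\epsilon_{RW}$ of the generator into the kernel $\G^\epsilon$, so that only $\cG^\epsilon_{BC}$ survives as a drift. Set $\psi_x(Y):=Q^\epsilon(x,Y)$ for $Y\in\X_\epsilon$, and for the fixed horizon $t$ define
\[
H(s,Y):=\sum_{\substack{x'\in\X_\epsilon\\ n_{x'}=n_x}}\G^\epsilon_{t-s}(x,x')\,\psi_{x'}(Y),\qquad s\in[0,t].
\]
Since $\G^\epsilon_0(x,x')=\mathbf{1}[x=x']$, one has $H(t,\cdot)=\psi_x$, while $H(0,X^\epsilon_0)$ equals the second term in the definition of $M^\epsilon_t(x)$.

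The key input is the duality identity
\[
\cG^\epsilon_{RW}\,\psi_x(Y)\;=\;\cL^\epsilon_x\big[Q^\epsilon(\cdot,Y)\big](x),\qquad Y\in\X_\epsilon,
\]
where $\cL^\epsilon_x$ denotes the generator of $n_x$ independent simple symmetric random walks at rate $\epsilon^{-2}$ acting on the test configuration $x$. I would check this by direct computation from \eqref{eq:GRW} and \eqref{def:gen_Q_eps} using the falling-factorial identity $Q_k(n+1)-Q_k(n)=k\,Q_{k-1}(n)$: an elementary jump of a $Y$-particle on the left contributes the same as the symmetric jump of one of the $x$-test particles on the right (for $n_x=1$ this is immediate from \eqref{eq:V_fcn_one_loc}, and the general case follows from the Leibniz-type structure of the product in \eqref{def:gen_Q_eps}). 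A consequence is that $h_r(x,Y):=\sum_{x'}\G^\epsilon_r(x,x')\psi_{x'}(Y)$ coincides with the pure random-walk semigroup applied to $\psi_x$ and evaluated at $Y$, so that the Kolmogorov equation gives $\partial_s H(s,Y)=-\cG^\epsilon_{RW}H(s,\cdot)(Y)$.

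Applying Dynkin's formula for jump Markov processes to the time-dependent function $H(s,X^\epsilon_s)$ then yields
\[
H(t,X^\epsilon_t)-H(0,X^\epsilon_0)=\int_0^t\big(\partial_s H+\cG^\epsilon H\big)(s,X^\epsilon_s)\,ds+N_t,
\]
with $N$ a local martingale. Splitting $\cG^\epsilon=\cG^\epsilon_{RW}+\cG^\epsilon_{BC}$ as in \eqref{def:slbp_gen}, the random-walk contributions cancel by the previous paragraph, and linearity of $\cG^\epsilon_{BC}$ in its argument reduces the drift to
\[
\sum_{x'}\G^\epsilon_{t-s}(x,x')\,\cG^\epsilon_{BC}Q^\epsilon(x',\cdot)(X^\epsilon_s),
\]
which identifies $N_t$ with $M^\epsilon_t(x)$.

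To upgrade $N$ to a genuine martingale I would invoke the standard comparison of $\rX^\epsilon$ with a pure-birth process of bounded mean offspring (the moment estimate underlying Proposition \ref{prop:SLBP_moments}), which guarantees that both $Q^\epsilon(x,X^\epsilon_t)$ and the compensator above have finite expectation uniformly on $[0,T]$; this justifies the interchange of expectation and infinite sums over $x'$ and the passage from a local to a true martingale via optional stopping. The conceptual heart of the proof is the duality identity of the second paragraph, which encodes the self-dual property of independent simple random walks through the $Q^\epsilon$ polynomials; once it is in hand, the remainder is a routine application of the time-dependent Dynkin formula.
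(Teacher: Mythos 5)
Your proof follows the same route as the paper: the same time–dependent test function $H(s,Y)=\sum_{x'}\G^\epsilon_{t-s}(x,x')Q^\epsilon(x',Y)$ (the paper calls it $F$), the same key identity relating $\cG^\epsilon_{RW}$ acting on the state variable to a discrete Laplacian acting on the test configuration (this is exactly part 1 of Lemma \ref{lem:localisation}, equation \eqref{eq:GRW_is_laplacian}), cancellation of the random-walk drift, and domination by a pure-birth process to promote the local martingale to a true martingale.

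One step you pass over a bit quickly: you claim that the generator-level duality $\cG^\epsilon_{RW}\,Q^\epsilon(x,\cdot)(Y)=\tfrac12\Delta^\epsilon Q^\epsilon(\cdot,Y)(x)$ immediately upgrades to the semigroup-level statement $\sum_{x'}\G^\epsilon_r(x,x')Q^\epsilon(x',Y)=\sum_{Y'}\G^\epsilon_r(Y,Y')Q^\epsilon(x,Y')$, so that $H(s,\cdot)$ literally is $P^\epsilon_{t-s}\psi_x$. This is true, but not automatic: the two sides of the semigroup identity satisfy first-order ODEs in $r$ governed by generators acting in different coordinates, and to identify them one still needs a discrete integration-by-parts (change of summation variable) to move the Laplacian between the $x'$- and $Y'$-slots. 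The paper avoids stating the semigroup identity and instead computes $\partial_s F$ and $\cG^\epsilon_{RW}F$ separately, showing they cancel via symmetry of $\G^\epsilon$ and a change of variables -- which is exactly the missing integration-by-parts step, carried out explicitly. Your framing is conceptually cleaner and equivalent, but if you want to leave "semigroup duality" as your key lemma you should record the two-line summation-by-parts argument that underlies it.
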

	\begin{proof}
		To lighten the notation, we introduce the notation $F(s, X^\epsilon_s) = \sum_{x'} \G^\epsilon_{t-s}(x, x') Q^\epsilon(x', X^\epsilon_s)$. Since the rescaled SLBP is a Markov process, then 
		\begin{equation}\label{eq:mg_pb_LLN}
			F_t(X^\epsilon_t)-F_0(X^\epsilon_0) - \int_0^t \left(\partial_1 + \cG^\epsilon\right)F(s, X^\epsilon_s) ds, \quad t\geq 0,
		\end{equation}
		where $\partial_1$ is the derivative of $F$ in its first coordinate, is a local martingale. We have 
		\begin{equation}\label{eq:partial_1_F}
			\partial_1 F(s, X^\epsilon_s) = \sum_{x'}\partial_s\G^\epsilon_{t-s}(x, x')Q^\epsilon(x', X_s),
		\end{equation}
		where
		\begin{align*}
			\partial_s\G_{t-s}^\epsilon(x, x') &=- \frac{1}{n_x!} \sum_\sigma \sum_{z \in x}\partial_sG_{t-s}^\epsilon(z, \sigma(z))\prod_{z' \in x\backslash \{z\}} G^\epsilon_{t-s}(z', \sigma(z'))
			\\&= -\frac{1}{n_x!}\sum_\sigma \sum_{z \in x} \frac{\Delta^\epsilon_z}{2} G^\epsilon_{t-s}(\cdot, \sigma(z))(z)\prod_{z' \in x\backslash \{z\}} G^\epsilon_{t-s}(z', \sigma(z')).
		\end{align*}
		where $\Delta^\epsilon_z$ is the discrete Laplacian defined in \eqref{def:disc_Laplacian_sites}. Plugging this expression into \eqref{eq:partial_1_F} and using the symmetry of $\G^\epsilon_{t-s}$, we obtain 
		\begin{align*}
			\partial_1 F(s, X^\epsilon_s) &= \sum_{x'}\partial_s\G^\epsilon_{t-s}(x, x')Q^\epsilon(x', X^\epsilon_s)
			\\&= -\sum_{x'} \sum_{z \in x} \frac{\Delta^\epsilon_z}{2} \G^\epsilon_{t-s}(\cdot, x')(x) Q^\epsilon(x', X^\epsilon_s)
			\\&= -\sum_{x'} \sum_{z \in x'} \frac{\Delta^\epsilon_z}{2} \G^\epsilon_{t-s}(\cdot, x)(x') Q^\epsilon(x', X^\epsilon_s), 
		\end{align*}
		where $\Delta_z^\epsilon$ the Laplacian acting on functions of configurations is defined in \eqref{def:disc_Laplacian_config}.
		We now compute $\cG^\epsilon_{RW}F(s, X^\epsilon_s)$ to show that it cancels with $\partial_1F(s, X^\epsilon_s)$. By \eqref{eq:GRW_is_laplacian} in Lemma \ref{lem:localisation} and the symmetry of $\G^\epsilon_{t-s}$, we obtain 
		\begin{align}
			&\notag\cG^\epsilon_{RW} F(s, \cdot)(X^\epsilon_s) 
			\\&\notag= \frac{\epsilon^{-2}}{2}\sum_{z \in \Z_\epsilon}\epsilon^{-\kappa}X^\epsilon_s(z)\sum_{x'} \G_{t-s}^\epsilon(x', x)\left\{Q^\epsilon(x', (X^\epsilon_s)^{z, z+1})+Q^\epsilon(x', (X^\epsilon_s)^{z, z-1})-2Q^\epsilon(x', X^\epsilon_s)\right\}
			\\&=\frac{1}{2}\sum_{x'}\G_{t-s}^\epsilon(x', x)\cG_{RW}^\epsilon Q^\epsilon(x', \cdot)(X^\epsilon_s)\label{eq:RW_gen_is_Laplacian_LLN}
			\\&\notag=\frac{1}{2}\sum_{x'}\sum_{z \in x'}\G_{t-s}^\epsilon(x', x)\Delta_z^\epsilon Q^\epsilon(\cdot, X^\epsilon_s)(x').
		\end{align}
		By a simple change of variables, we obtain 
		\[
		\cG^\epsilon_{RW} F(s, \cdot)(X^\epsilon_s)= \epsilon^{-2}\sum_{x'}\sum_{z \in x'}\Delta_z^\epsilon\G_{t-s}^\epsilon(\cdot, x)(x') Q^\epsilon(x', X^\epsilon_s)=-\partial_1 F(s, X^\epsilon_s). 
		\]
		Going back to the local martingale \eqref{eq:mg_pb_LLN}, we see that only $\cG_{BC}^\epsilon F(s, \cdot)(X^\epsilon_s)$ remains under the integral sign, which proves that the process $\rM^\epsilon_\cdot(x)$ defined in \eqref{eq:true_mg} is a local martingale. Since we are working with a fixed $\epsilon$, the SLBP $\rX^\epsilon$ is dominated by a rescaled pure birth process $\overline{\rX}^\epsilon\coloneqq \epsilon^\kappa\overline{\rN}^\epsilon$, where $\overline{\rN}^\epsilon$ is a pure birth process with transitions $n\mapsto n+k$ at rate $np_k^\epsilon$ and started from $\overline{N}^\epsilon_0 = \sum_{z \in \Z_\epsilon}N_0^\epsilon(z)$, where $\rN^\epsilon$ is from Definition \ref{def:rescaled_slbp}. In particular, we have $\E\left[\sup_{s \in [0, t]}|M_s^\epsilon(x)|\right]<\infty$ and $\E\left[\sup_{s \in [0, t]}|Q^\epsilon(x, X_s^\epsilon)|\right]<\infty$ for every $t\geq 0$. It then easily follows from the Vitali convergence theorem that $\rM^\epsilon(x)$ is a martingale. This concludes the proof of the lemma. 
	\end{proof}
	
	\begin{remark}
		In subsequent uses of the Markov property to obtain local martingales for a fixed $\epsilon$, similar arguments to the ones used at the end of the proof of Lemma \ref{lem:mg_pb_LLN} will show that the local martingales are in fact martingales. We will therefore omit those details.
	\end{remark}
	
	Next, we state a key lemma, which in particular provides an upper bound on the expectation of $\cG^\epsilon_{BC}Q^\epsilon(x, \cdot)(X^\epsilon_s)$. The lemma and its proof are adapted from Proposition 2.1 in \cite{Boldrig1987}.
	
	\begin{lem}\label{lem:bd_BC_gen_LLN}
		We have 
		\begin{align}
			&\notag\cG^\epsilon_{BC} Q^\epsilon(x, \cdot)(X^\epsilon_s) 
			\\&\label{eq:gen_BC_LLN}= \sum_{z \in \supp(x)} \Bigg\{q_+^\epsilon(X^\epsilon_s(z))\sum_{\ell\geq 1}p_\ell^\epsilon \Big[Q^\epsilon_{n_x(z)}(\epsilon^{-\kappa}X^\epsilon_s(z)+ \ell)-Q^\epsilon_{n_x(z)}(\epsilon^{-\kappa}X^\epsilon_s(z))\Big]
			\\&\notag-x(z)q_-^\epsilon(X^\epsilon_s(z))Q^\epsilon_{n_x(z)-1}(\epsilon^{-\kappa}X^\epsilon_s(z)-1)\Bigg\}\prod_{z' \in \supp(x)\backslash \{z\}}Q^\epsilon_{n_x(z')}(\epsilon^{-\kappa}X^\epsilon_s(z')). 
		\end{align}
		Moreover, define
		\begin{align*}
			\phi_k(t)&= \sup_{\epsilon \in (0, 1)}\sup_{s \in [0, t]} \max_{\substack{x\in \X_\epsilon \\ n_x=n}} \E\left[Q^\epsilon(x, X^\epsilon_s)\right], \quad t>0, \quad k \in \N.
		\end{align*}
		Then, for any $k\in \N$, there exists $c=c_k>0$ such that 
		\begin{align*}
			\E\left[\cG^\epsilon_{BC}Q^\epsilon(x, \cdot)(X^\epsilon_t)\right]&\leq c k \phi_{k-1}(t),
		\end{align*}
		for all $t>0$ and $\epsilon >0$.
	\end{lem}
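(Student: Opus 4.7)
The lemma has two parts --- the identity \eqref{eq:gen_BC_LLN} and the moment bound --- which I would prove in sequence.

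For \eqref{eq:gen_BC_LLN}, I apply the definition of $\cG^\epsilon_{BC}$ in \eqref{eq:GBC} to the test function $y \mapsto Q^\epsilon(x, y) = \prod_{z' \in \supp(x)} Q^\epsilon_{n_x(z')}(n_y(z'))$, using the shorthand $Q^\epsilon_k(m) := \epsilon^{\kappa k} Q_k(m)$ implicit in the statement. Since this function factorises across sites and the factor at any $z' \notin \supp(x)$ equals $1$, the sum over $z \in \Z_\epsilon$ in $\cG^\epsilon_{BC}$ collapses onto $\supp(x)$. The birth increment at $z$ factors out directly. For the competition increment I use the elementary falling-factorial identity $Q_k(n) - Q_k(n-1) = k\, Q_{k-1}(n-1)$, which gives $Q^\epsilon_{n_x(z)}(n_y(z) - 1) - Q^\epsilon_{n_x(z)}(n_y(z)) = -n_x(z)\,\epsilon^\kappa\, Q^\epsilon_{n_x(z) - 1}(n_y(z) - 1)$; multiplying by $q_-^\epsilon(y(z))$ and using $n_x(z)\,\epsilon^\kappa = x(z)$ yields the stated form \eqref{eq:gen_BC_LLN}.

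For the bound, the central task is a three-term cancellation between the leading birth and competition contributions at each site $z \in \supp(x)$. Abbreviating $k' := n_x(z)$ and writing $x^-_z$ (respectively $x^+_z$) for $x$ with one particle removed from (respectively added at) $z$, I expand the birth increment using Vandermonde, $Q_{k'}(n + \ell) - Q_{k'}(n) = \sum_{j=1}^{k'} \binom{k'}{j} Q_j(\ell) Q_{k'-j}(n)$, and multiply by $q_+^\epsilon(y(z)) = n_y(z)$, applying $n\, Q_m(n) = Q_{m+1}(n) + m\, Q_m(n)$ to re-express each term in terms of $Q^\epsilon$ factors. The $j = 1$ part produces $k' \mu_\epsilon\, Q^\epsilon(x, y) + k'(k'-1)\, \mu_\epsilon\, \epsilon^\kappa\, Q^\epsilon(x^-_z, y)$. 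The competition term, after using the identity $Q_2(n)\, Q_{k'-1}(n-1) = Q_{k'+1}(n) + (k'-1)\, Q_{k'}(n)$, splits as $-\tfrac{k'}{2} Q^\epsilon(x^+_z, y) - \tfrac{k'(k'-1)}{2}\epsilon^\kappa\, Q^\epsilon(x, y)$. The algebraic reductions $Q^\epsilon(x^+_z, y) = \epsilon^\kappa (n_y(z) - n_x(z))\, Q^\epsilon(x, y)$ and $Q^\epsilon(x, y) = (y(z) - (n_x(z)-1)\epsilon^\kappa)\, Q^\epsilon(x^-_z, y)$ then collapse these contributions of orders $k$ and $k+1$ into $\tfrac{n_x(z)}{2}(y(z) - (n_x(z)-1)\epsilon^\kappa)(2\mu_\epsilon + \epsilon^\kappa - y(z))\, Q^\epsilon(x^-_z, y)$. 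The scalar factor is a downward parabola in $y(z)$, hence bounded pointwise by a constant $C = C(\mu, k')$, so this combined leading contribution is at most $C\, n_x(z)\, Q^\epsilon(x^-_z, y)$.

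For the $j \geq 2$ terms of the Vandermonde expansion, the truncation assumption $L(\epsilon) = o(\epsilon^{-\kappa})$ yields $M^\epsilon_j := \sum_\ell p^\epsilon_\ell\, Q_j(\ell) \leq L(\epsilon)^{j-1}\, \mu_\epsilon$, so the relevant factors $M^\epsilon_j\, \epsilon^{\kappa(j-1)} \leq (L(\epsilon)\, \epsilon^\kappa)^{j-1}\, \mu_\epsilon$ stay bounded uniformly in $\epsilon$; each such term contributes at most a constant times $Q^\epsilon(x', y)$ for some $x'$ with $n_{x'} \leq k - 1$. Taking expectations, summing the site-wise bounds over $z \in \supp(x)$ using $\sum_z n_x(z) = k$, and applying $\E[Q^\epsilon(x', X^\epsilon_t)] \leq \phi_{n_{x'}}(t)$, produces a bound of the form $c\, k\, \phi_{k-1}(t)$ plus additive corrections involving $\phi_j(t)$ for $j \leq k - 2$; in the inductive proof of Proposition \ref{prop:SLBP_moments} that this lemma feeds into, those lower-order $\phi_j(t)$ are already controlled by constants depending on $T$ and can therefore be absorbed into the final constant $c = c_k$. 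The main obstacle is precisely the algebraic cancellation in the previous paragraph: naively, the individual leading birth (order $k$) and competition (order $k+1$) contributions are \emph{not} controlled by $\phi_{k-1}$, and only after the parabola regrouping does the effective order drop to $k - 1$ and permit the Gronwall-type induction.
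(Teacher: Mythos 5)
Your proof of the identity \eqref{eq:gen_BC_LLN} is the same as the paper's: apply \eqref{eq:GBC} to the factorised test function, note the sum collapses onto $\supp(x)$, and use $Q_k(n)-Q_k(n-1)=kQ_{k-1}(n-1)$ on the competition increment.

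For the moment bound, your central idea --- pairing birth and competition into a downward parabola that is bounded uniformly in $X^\epsilon_s(z)$ and $\epsilon$ --- is exactly the paper's mechanism, but your execution via the Vandermonde expansion introduces a genuine gap. The $j\geq 2$ terms, after applying $n\,Q_m(n)=Q_{m+1}(n)+mQ_m(n)$, produce contributions proportional to $Q^\epsilon_{k'-j+1}(\epsilon^{-\kappa}X^\epsilon_s(z))$ with $k'-j+1<k'-1$ for $j\geq 3$; after multiplying by $\prod_{z'\neq z}$ and taking expectations, these land on $\phi_j(t)$ with $j\leq k-2$, not on $\phi_{k-1}(t)$. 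There is no monotonicity $\phi_j\leq \phi_{k-1}$ for $j<k-1$ in general, so a bound "$ck\phi_{k-1}(t) + (\text{terms in } \phi_j,\ j\leq k-2)$" is not the bound the lemma claims. Your proposal acknowledges this and suggests absorbing the extra terms in the downstream iteration of Proposition \ref{prop:SLBP_moments}; that would salvage the proposition (by strong induction the lower $\phi_j$ are bounded over $[0,T]$), but it proves a weaker inequality than Lemma \ref{lem:bd_BC_gen_LLN} states, and the absorbed constant would acquire $T$-dependence that the lemma's $c=c_k$ is not supposed to carry. The paper avoids the issue by not Vandermonde-expanding at all: it bounds the entire birth polynomial (leading term plus the $\epsilon$-small remainder $R_\epsilon$ coming from the truncated higher moments) by a single monomial $c_1\,\epsilon^{\kappa(k'-1)}n^{k'-1}$ and then uses $c_1 n^{k'-1}\leq c_2\,Q_{k'-1}(n-1)$ for $k'\leq n$. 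This keeps the whole birth contribution at $Q^\epsilon$-order exactly $k'-1$, so the combined per-site factor $\epsilon^\kappa\{c_2\,q_+^\epsilon - \epsilon^{-\kappa}x(z)\,q_-^\epsilon\}$ multiplies a single $Q^\epsilon_{k'-1}$, the parabola bound applies, and the expectation is genuinely $\leq ck\,\phi_{k-1}(t)$ with no lower-order residue. To close your argument faithfully you would need the same monomial-domination observation, in which case the Vandermonde machinery becomes unnecessary.

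One further small point: in your parabola identity for the $j=1$ birth plus competition, the scalar factor $\tfrac{n_x(z)}{2}(y(z)-(n_x(z)-1)\epsilon^\kappa)(2\mu_\epsilon+\epsilon^\kappa-y(z))$ is missing an additive constant $n_x(z)(n_x(z)-1)\mu_\epsilon\epsilon^\kappa$; this does not affect the fact that the factor is bounded above uniformly, but it should be recorded for the regrouping to be an identity rather than a near-identity.
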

	\begin{proof}
		For configurations $x \in \X_\epsilon$, we have 
		\begin{align*}
			&\cG^\epsilon_{BC}Q^\epsilon(x, \cdot)(X^\epsilon_s)
			\\& = \sum_{z \in \supp(x)} \Bigg\{ q_+^\epsilon(X^\epsilon_s(z)) \sum_{\ell\geq 1}p_\ell^\epsilon \Big[Q^\epsilon_{n_x(z)}(\epsilon^{-\kappa}X^\epsilon_s(z)+ \ell)-Q^\epsilon_{n_x(z)}(\epsilon^{-\kappa}X^\epsilon_s(z))\Big] 
			\\& + q_-^\epsilon(X^\epsilon_s(z))\Big[Q^\epsilon_{n_x(z)}(\epsilon^{-\kappa}X^\epsilon_s(z)-1)-Q^\epsilon_{n_x(z)}(\epsilon^{-\kappa}X^\epsilon_s(z))\Big]\Bigg\}\prod_{z' \in \supp(x)\backslash \{z\}} Q^\epsilon_{n_x(z')}(\epsilon^{-\kappa}X^\epsilon_s(z')). 
		\end{align*}
		For any natural numbers $k\leq n$, we have
		\[
		Q_k(n-1)-Q_k(n) = (n-1)(n-2)\cdots (n-k+1)[n-k-n] = -kQ_{k-1}(n-1).  
		\]
		Applying this observation to the competition term in the generator above, we obtain \eqref{eq:gen_BC_LLN}. Next, we observe that 
		\begin{align*}
			&\epsilon^{-\kappa}\sum_{\ell\geq 1}p_\ell^\epsilon \Big[Q^\epsilon_{n_x(z)}(\epsilon^{-\kappa}X^\epsilon_s(z)+ \ell)-Q^\epsilon_{n_x(z)}(\epsilon^{-\kappa}X^\epsilon_s(z))\Big] \\&=\epsilon^{\kappa (n_x(z)-1)}\mu_\epsilon (\epsilon^{-\kappa}X^\epsilon_s(z))^{n_x(z)-1} + R_\epsilon(\epsilon^{-\kappa}X^\epsilon_s(z)),
		\end{align*}
		where $R_\epsilon$ is a polynomial of degree strictly smaller than $n_x(z)-1$, and which in $\epsilon$, satisfies
		\[
		R_\epsilon(\epsilon^{-\kappa}X^\epsilon_s(z)) = O\left(\max_{j\in \{1, \cdots, n_x(z)-1\}} \epsilon^{\kappa j} \sum_{\ell \geq 1}p_\ell^\epsilon \ell^j\right).
		\]
		Since the offspring distribution is truncated at $L(\epsilon)$, which satisfies 
		\[
		\lim_{\epsilon \searrow 0} \epsilon^\kappa L(\epsilon)=0,
		\] 
		and we have $\sup_{\epsilon >0}\sum_{\ell\geq 1} \ell p_\ell^\epsilon<\infty$ by assumption, we obtain
		\[
		\epsilon^{\kappa j} \sum_{\ell \geq 1}p_\ell^\epsilon \ell^j\leq \epsilon^\kappa (\epsilon^\kappa L(\epsilon))^{j-1}\sum_{\ell\geq 1} \ell p_\ell^\epsilon = o(1),
		\]
		for any choice of $j$. In particular, there exists an absolute constant $c_1>0$ such that 
		\[
		\epsilon^{\kappa (n_x(z)-1)} \Big\{ \mu_\epsilon n^{\kappa (n_x(z)-1)}+R_\epsilon(n)\Big\} \leq c_1  \epsilon^{\kappa (n_x(z)-1)} n^{\kappa (n_x(z)-1)},
		\]
		for all $n \in \N$. Moreover, there exists an absolute constant $c_2>0$ such that 
		\[
		c_1 n^{k-1}\leq c_2 Q_{k-1}(n-1), 
		\]
		for all $k, n \in \N$ with $k \leq n$. Thus 
		\begin{align*}
			&q_+^\epsilon(X^\epsilon_s(x))\sum_{\ell\geq 1}p_\ell^\epsilon \Big[Q^\epsilon_{n_x(z)}(\epsilon^{-\kappa}X^\epsilon_s(z)+ \ell)-Q^\epsilon_{n_x(z)}(\epsilon^{-\kappa}X^\epsilon_s(z))\Big]
			\\&\quad\quad\quad\quad\quad\quad\quad\quad\quad\quad\quad\quad\quad\quad\quad\quad\quad\quad\quad-x(z)q_-^\epsilon(X^\epsilon_s(z))Q^\epsilon_{n_x(z)-1}(\epsilon^{-\kappa}X^\epsilon_s(z)-1)
			\\& \leq \epsilon^\kappa \Big\{c_2q_+^\epsilon(X^\epsilon_s(z))-\epsilon^{-\kappa}x(z)q_-^\epsilon(X^\epsilon_s(z))\Big\}Q^\epsilon_{n_x(z)-1}(\epsilon^{-\kappa}X^\epsilon_s(z)).
		\end{align*}
		We further note that 
		\[
		\epsilon^\kappa \Big\{c_2q_+^\epsilon(X^\epsilon_s(z))-\epsilon^{-\kappa}x(z)q_-^\epsilon(X^\epsilon_s(z))\Big\} = c_2X^\epsilon_s(z)-x(z)X^\epsilon_s(z)(X^\epsilon_s(z)-\epsilon^\kappa)/2
		\]
		is bounded from above by some constant $c_3(x(z))>0$ uniformly in $X^\epsilon_s(z)\geq 0$ and $\epsilon\in (0, 1)$. Hence for any $s>0$, we can upper bound the expectation of \eqref{eq:gen_BC_LLN} as follows:
		\begin{align*}
			&\E\left[\cG^\epsilon_{BC} Q^\epsilon(x, \cdot)(X^\epsilon_s)\right]
			\\&\leq c_3 \E\left[\sum_{z \in \supp(x)} Q^\epsilon_{n_x(z)-1}(\epsilon^{-\kappa}X^\epsilon_s(z)) \prod_{z' \in \supp(x)\backslash \{z\}}Q^\epsilon_{n_x(z')}(\epsilon^{-\kappa}X^\epsilon_s(z'))\right]
			\\& \leq c_3 k \phi_{k-1}(s),
		\end{align*}
		where $c_3 = \max_{z \in \supp(x)}c_3(x(z))$. This proves the lemma. 
	\end{proof}
	We apply the tools above to prove Proposition \ref{prop:SLBP_moments}.
	\begin{proof}[Proof of Proposition \ref{prop:SLBP_moments}]
		By Lemma \ref{lem:mg_pb_LLN} and Tonelli's theorem, we have 
		\begin{align*}
			\E\left[Q^\epsilon(x, X^\epsilon_t)\right]=\sum_{x'} \G^\epsilon_{t}(x, x')\E\left[Q^\epsilon(x', X^\epsilon_0)\right]+\int_0^t \sum_{x'} \G^\epsilon_{t-s}(x, x')\E\left[\cG^\epsilon_{BC}Q^\epsilon(x', \cdot)(X^\epsilon_s)\right] ds,
		\end{align*}
		for all $t\geq 0$. We have $\sum_{x'} \G^\epsilon_{t-s}(x, x')=1$ for all $\epsilon \in (0, 1)$. Thus, by Lemma \ref{lem:bd_BC_gen_LLN} and Assumption~\ref{assump:technical_X_0}.a, there are constants $c_{0, k}, c_{1, k}>0$ such that 
		\[
		\phi_k(t)\leq c_{0, k} +c_{1, k} \int_0^t \phi_{k-1}(s)ds, \quad k \in \N, \quad t\geq 0,
		\]
		with $\phi_0\equiv 1$. It follows by iteration on the right-hand side that there exist functions $h_k:[0, \infty)\to [0, \infty)$, for $k\in \N$, such that $\phi_k(t)\leq h_k(t)<\infty$ for all $t\geq 0$.
	\end{proof}
	
	We record a corollary to Proposition \ref{prop:SLBP_moments} which will be helpful in later sections. 
	
	\begin{cor}\label{cor:SLBP_p_moments}
		For any $T>0$, $n \in \N$, and $p>1$, there exists $c=c(T, n, p)>0$ such that 
		\[
		\sup_{\epsilon \in (0, 1)}\sup_{t \in [0, T]}\max_{\substack{x\in \X_\epsilon\\ n_x=n}} \E\left[Q^\epsilon(x, X^\epsilon_t)^p\right] \leq c. 
		\]
	\end{cor}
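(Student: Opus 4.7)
The plan is to reduce the non-integer power to an integer power and then linearise $Q^\epsilon(x, X^\epsilon_t)^q$ as a non-negative linear combination of $Q^\epsilon(\hat{x}, X^\epsilon_t)$'s indexed by configurations $\hat{x}$ with a bounded total particle count, so that Proposition \ref{prop:SLBP_moments} applies directly. First I would invoke the elementary inequality $a^p \leq 1 + a^q$, valid for all $a \geq 0$ and any integer $q \geq p$, to reduce the problem to bounding $\E[Q^\epsilon(x, X^\epsilon_t)^q]$ with $q = \lceil p \rceil \in \N$.

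The key deterministic estimate is that $Q_k(n) \leq n^k$, which, combined with the relation $n_{X^\epsilon_t}(z) = \epsilon^{-\kappa} X^\epsilon_t(z)$, yields
\[
Q^\epsilon(x, X^\epsilon_t)^q \leq \prod_{z \in \supp(x)} X^\epsilon_t(z)^{qn_x(z)} = \epsilon^{\kappa qn} \prod_{z \in \supp(x)} n_{X^\epsilon_t}(z)^{qn_x(z)}.
\]
Using the Stirling-number identity $m^k = \sum_{j=0}^k S(k,j)\, Q_j(m)$ (with $S(k,j) \geq 0$ the Stirling numbers of the second kind) on each factor $n_{X^\epsilon_t}(z)^{qn_x(z)}$ and distributing the product over $z \in \supp(x)$, I would rewrite the above as
\[
Q^\epsilon(x, X^\epsilon_t)^q \leq \sum_{\hat{x}} c_{\hat{x}} \, \epsilon^{\kappa(qn - n_{\hat{x}})}\, Q^\epsilon(\hat{x}, X^\epsilon_t),
\]
where the sum runs over a finite set of configurations $\hat{x} \in \X_\epsilon$ with $\supp(\hat{x}) \subseteq \supp(x)$ and $n_{\hat{x}} \leq qn$, and the coefficients $c_{\hat{x}} \geq 0$ depend only on $n$, $q$, and the multiplicity profile of $x$.

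Since $\kappa \geq 0$ and $n_{\hat{x}} \leq qn$, every factor $\epsilon^{\kappa(qn - n_{\hat{x}})}$ is bounded by one uniformly in $\epsilon \in (0,1)$. Taking expectations and applying Proposition \ref{prop:SLBP_moments} to each $\E[Q^\epsilon(\hat{x}, X^\epsilon_t)]$ (each of which is uniformly bounded in $\epsilon \in (0,1)$ and $t \in [0,T]$ because $n_{\hat{x}} \leq qn$) delivers the desired bound $\sup_{\epsilon, t \leq T}\E[Q^\epsilon(x, X^\epsilon_t)^q] \leq c(T, n, q)$, uniformly in $x$ with $n_x = n$. I do not anticipate any real obstacle here: the argument is purely combinatorial, and the only point requiring minimal care is that the exponents $\kappa(qn - n_{\hat{x}})$ are non-negative, which is built into the weak competition regime of Definition \ref{def:rescaled_slbp}.
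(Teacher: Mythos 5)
Your argument is correct, and it takes a genuinely different and arguably cleaner route than the paper's. The paper attacks $\E[Q^\epsilon(x, X^\epsilon_t)^p]$ with the non-integer power $p$ directly: it first invokes the Stirling expansion $Q_k(n)=\sum_i s(k,i)n^i$ of the falling factorial into monomials (Stirling numbers of the first kind, which are not all of one sign), runs through several layers of Jensen inequalities to peel the power $p$ onto each summand, and only at the very end converts powers $n^m$ back into falling factorials via $n^m\leq m^m Q_m(n)$. You instead pay up front with the elementary reduction $a^p\leq 1+a^{\lceil p\rceil}$, which turns the problem into one about an \emph{integer} power $q$; then $Q_k(n)\leq n^k$ sends $Q^\epsilon(x,\cdot)^q$ to a pure monomial in the local particle counts, and the Stirling-number-of-the-\emph{second}-kind identity $m^k=\sum_j S(k,j)Q_j(m)$, with nonnegative $S(k,j)$, re-expresses it as a nonnegative linear combination of $Q^\epsilon(\hat x,\cdot)$ with $n_{\hat x}\leq qn$ and coefficients depending only on the multiplicity profile of $x$. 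The nonnegativity of $S(k,j)$ is precisely what lets you pass to expectations term by term with no sign bookkeeping and no Jensen inequalities, and Proposition~\ref{prop:SLBP_moments} then finishes immediately (taking the maximum of the constants $c(T,m)$ over $m\in\{0,\dots,qn\}$, and noting the prefactors $\epsilon^{\kappa(qn-n_{\hat x})}\leq 1$ since $\kappa\geq 0$). The trade-off is that your bound is in terms of $\lceil p\rceil$ rather than $p$, but since the corollary only asks for some finite constant this is harmless; in exchange you avoid both the fractional-power Jensen manipulations and any concern about the signs of Stirling numbers of the first kind.
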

	\begin{proof}
		Let $m \in \N$. It is easy to see that $n^m\leq m^mQ_m(n)$ for all $n \geq m$, and therefore 
		\begin{equation}\label{ineq:bound_Q_mom_power_p}
		X_t^\epsilon(z)^m \leq \epsilon^{\kappa m} m^m+m^mQ_m^\epsilon(\epsilon^{-\kappa}X_t^\epsilon(z)),
		\end{equation}
		for all $z \in \Z_\epsilon$, $t\geq 0$ and $\epsilon \in (0, 1)$. We can write the falling factorial as 
		\begin{equation}\label{eq:stirling_ff}
		Q_k(n) = \sum_{i=0}^k s(k, i) n^i,\quad k\leq n,\quad k, n \in \N,
		\end{equation}
		where $s(k, i)$, $0\leq i \leq k$, denote the Stirling numbers of the first kind. Given a configuration $x \in \X_\epsilon$, we obtain 
		\[
		Q^\epsilon(x, X_t^\epsilon) \leq \prod_{z \in \supp(x)}\sum_{i=0}^{n_x(z)}s(n_x(z), i)\epsilon^{\kappa(n_x(z)-i)} [\epsilon^{\kappa i}i^i+i^i Q_i^\epsilon(\epsilon^{-\kappa}X_t^\epsilon(z))].
		\] 
		Using this bound and Jensen's inequality, we see that
		\begin{align*}
		&\E\left[Q^\epsilon(x, X_t^\epsilon)^p\right] 
		\\&\leq c_1(n, p) + n^{p-1} \E\left[ \prod_{ z \in \supp(x)}n_x(z)^{p-1}\sum_{i=0}^{n_x(z)}s(n_x(z), i)^p\epsilon^{p\kappa(n_x(z)-i)}i^{ip} Q_i^\epsilon(\epsilon^{-\kappa}X_t^\epsilon(z))^p\right]
		\end{align*}
		for some constant $c_1(n, p)>0$. Given $K, K_i \in \N$, for each $i \in \{1, \cdots, K\}$, and real numbers $a_{ij}> 0$ for all $j \in \{1, \cdots, K_i\}$, $i \in \{1, \cdots, K\}$, we see by an application of Jensen's inequality for the (concave) log function that
		\[
		\prod_{i=1}^K \sum_{j=1}^{K_i} a_{ij} \leq \left(\frac{1}{K} \sum_{i=1}^K \sum_{j=1}^{K_i} a_{ij}\right)^K.
		\]
		Thus
		\begin{align*}
			&\prod_{ z \in \supp(x)}n_x(z)^{p-1}\sum_{i=0}^{n_x(z)}s(n_x(z), i)^p\epsilon^{p\kappa(n_x(z)-i)}i^{ip} Q_i^\epsilon(\epsilon^{-\kappa}X_t^\epsilon(z))^p
			\\& \leq \left(|\supp(x)|^{-1} \sum_{z \in \supp(z)} \sum_{i=0}^{n_x(z)} s(n_x(z), i)^p\epsilon^{p\kappa(n_x(z)-i)}i^{ip}Q_i^\epsilon(\epsilon^{-\kappa}X_t^\epsilon(z))^p\right)^{|\supp(x)|}.
		\end{align*}
		By Jensen's inequality and the fact that $1\leq |\supp(x)|\leq n$, we have 
		\begin{align*}
			&\left(|\supp(x)|^{-1} \sum_{z \in \supp(z)} \sum_{i=0}^{n_x(z)} s(n_x(z), i)^p\epsilon^{p\kappa(n_x(z)-i)}i^{ip}Q_i^\epsilon(\epsilon^{-\kappa}X_t^\epsilon(z))^p\right)^{|\supp(x)|}
			\\ &\leq |\supp(x)|^{-1} \sum_{z \in \supp(z)} \left(\sum_{i=0}^{n_x(z)} s(n_x(z), i)^p\epsilon^{p\kappa(n_x(z)-i)}i^{ip}Q_i^\epsilon(\epsilon^{-\kappa}X_t^\epsilon(z))^p\right)^{|\supp(x)|}
			\\ &\leq \sum_{z \in \supp(z)} n_x(z)^{n-1} \sum_{i=0}^{n_x(z)} s(n_x(z), i)^{pn}\epsilon^{pn\kappa(n_x(z)-i)}i^{ipn}Q_i^\epsilon(\epsilon^{-\kappa}X_t^\epsilon(z))^{pn}.
		\end{align*}
		Taking expectations, we have 
		\begin{align*}
			&\E\left[\sum_{z \in \supp(z)} n_x(z)^{n-1} \sum_{i=0}^{n_x(z)} s(n_x(z), i)^{pn}\epsilon^{pn\kappa(n_x(z)-i)}i^{ipn}Q_i^\epsilon(\epsilon^{-\kappa}X_t^\epsilon(z))^{pn}\right]
			\\&\leq \sum_{z \in \supp(z)} n_x(z)^{n-1} \sum_{i=0}^{n_x(z)} s(n_x(z), i)^{pn}\epsilon^{pn\kappa(n_x(z)-i)}i^{ipn}\E\left[\sup_{t \in [0, T]}Q_i^\epsilon(\epsilon^{-\kappa}X_t^\epsilon(z))^{pn}\right]
			\\&\leq c_2(n, p) \max_{x \in \X_\epsilon:n_x=n} \max_{z \in \Z_\epsilon} \max_{i\in \{0, \cdots, n_x(z)\}} \E\left[Q_i^\epsilon(\epsilon^{-\kappa}X_t^\epsilon(z))^{pn}\right]. 
		\end{align*}
		for some $c_2(n, p)>0$. Next, we apply \eqref{eq:stirling_ff} followed by Jensen's inequality and \eqref{ineq:bound_Q_mom_power_p} to obtain 
		\[
		\E\left[Q_i^\epsilon(\epsilon^{-\kappa}X_t^\epsilon(z))^{pn}\right] \leq c_3(n, p) + c_4(n, p)  \max_{j\in \{0, \cdots, i\}} \E\left[Q^\epsilon_{jn}(\epsilon^{-\kappa} X_t^\epsilon(z))\right],
		\]
		for some $c_3(n, p), c_4(n, p)>0$. The last expectation is finite uniformly in $\epsilon \in (0, 1)$ by Proposition \ref{prop:SLBP_moments}, which concludes the proof. 
	\end{proof}
	
	\subsection{Uniform equicontinuity and tightness}
	
	The aim of this section is to prove \eqref{cond:aldous_2}, the second condition of Aldous' tightness criterion, which is an immediate consequence of the following uniform equicontinuity estimate. 
	
	\begin{prop}\label{prop:equicty_LLN}
		Fix $\phi \in C^\infty(\S^1)$, a family $\{\tau_\epsilon: \epsilon\in (0, 1)\}$ of $[0, T]$-valued stopping times, and $\{\theta_\epsilon:\epsilon \in (0, 1)\}\subset [0, \infty)$ with $\lim_{\epsilon \searrow 0}\theta_\epsilon=0$. Then, there exist $c=c(T, \phi)>0$ and $a>0$ such that
		\[
		\E\left[|X^\epsilon_{\tau_\epsilon+\theta_\epsilon}(\phi)-X^\epsilon_{\tau_\epsilon}(\phi)|\right]\leq c\left(\epsilon^{\kappa/2}+\max(\theta_\epsilon^{1/2}, \epsilon)+\theta_\epsilon^a\right),
		\]
		for all $\epsilon \in (0, 1)$. 
	\end{prop}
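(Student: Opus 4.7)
The plan is to apply Dynkin's formula to the functional $F(x) = \epsilon\sum_{z\in\Z_\epsilon} x(z)\phi(\epsilon z)$ and bound the drift and martingale parts separately, supplementing with a complementary bound from comparison with the FKPP proxy $\rho^\epsilon_\cdot(\phi)$. From the generator decomposition \eqref{def:slbp_gen}, a short calculation gives $\cG^\epsilon_{RW}F(x) = \tfrac{\epsilon}{2}\sum_z x(z)\tilde\Delta^\epsilon\phi(\epsilon z)$, with $\tilde\Delta^\epsilon$ the discrete Laplacian on $\S^1$, and $\cG^\epsilon_{BC}F(x) = \epsilon\sum_z\phi(\epsilon z)x(z)(\mu_\epsilon-(x(z)-\epsilon^\kappa)/2)$, so that
\[
X^\epsilon_{\tau_\epsilon+\theta_\epsilon}(\phi)-X^\epsilon_{\tau_\epsilon}(\phi) = \int_{\tau_\epsilon}^{\tau_\epsilon+\theta_\epsilon}\cG^\epsilon F(X^\epsilon_r)\,dr + M^\epsilon_{\tau_\epsilon+\theta_\epsilon}-M^\epsilon_{\tau_\epsilon}
\]
for a martingale $M^\epsilon$ of the Dynkin kind.

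For the drift, the smoothness of $\phi$ combined with the uniform moment bounds from Proposition~\ref{prop:SLBP_moments} and Corollary~\ref{cor:SLBP_p_moments} (the latter used at $p=2$ to dominate the competition term $X^\epsilon(z)^2$, which remains valid under only the finite-mean assumption on $\rP$ thanks to the truncation) gives an expected absolute drift bounded by $c\theta_\epsilon$, yielding the $\theta_\epsilon^a$ contribution with $a=1$. For the martingale, the predictable quadratic variation is bounded via the carré du champ. The diffusive contribution is $\Gamma^\epsilon_{RW}F(x)\leq c\,\epsilon^{2+\kappa}\|\phi'\|_\infty^2\sum_z x(z)$ (the extra $\epsilon^2$ coming from one discrete gradient of $\phi$), with expectation $O(\epsilon^{1+\kappa})$; the birth--competition contribution is bounded by
\[
\Gamma^\epsilon_{BC}F(x)\leq c\,\epsilon^{2+\kappa}\|\phi\|_\infty^2\sum_z\Bigl\{x(z)\sum_{\ell}\ell^2 p_\ell^\epsilon+x(z)(x(z)-\epsilon^\kappa)\Bigr\},
\]
whose expectation is $o(\epsilon)$ after applying the truncation $L(\epsilon)=o(\epsilon^{-\kappa})$ of Definition~\ref{def:rescaled_slbp} and the moment bounds. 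Cauchy--Schwarz then yields
\[
\E\bigl[|M^\epsilon_{\tau_\epsilon+\theta_\epsilon}-M^\epsilon_{\tau_\epsilon}|\bigr]\leq \sqrt{c\,\theta_\epsilon\cdot o(\epsilon)}\leq c\max(\theta_\epsilon^{1/2},\epsilon),
\]
using the elementary inequality $\sqrt{\epsilon\theta_\epsilon}\leq \max(\theta_\epsilon^{1/2},\epsilon)$ valid for $\epsilon,\theta_\epsilon\in(0,1)$.

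The remaining $\epsilon^{\kappa/2}$ contribution comes from a complementary bound obtained by comparison with $\rho^\epsilon$: repeating the $L^2$ calculation from the start of the proof of Proposition~\ref{lem:FDD_LLN} uniformly in time, using the diagonal $V$-function identity \eqref{eq:V_fcn_2_pcs_diag} together with the $n=2$ case of Theorem~\ref{thm:QLLN}, gives $\E[(X^\epsilon_t(\phi)-\rho^\epsilon_t(\phi))^2]^{1/2}\leq c\epsilon^{\kappa/2}$ uniformly in $t\in[0,T]$. Applying this at both endpoints $\tau_\epsilon$ and $\tau_\epsilon+\theta_\epsilon$ and combining with the $C^1$-in-$t$ regularity of $r\mapsto \rho^\epsilon_r(\phi)$ coming from \eqref{eq:interm_fkpp_diff_form} and part~2 of Lemma~\ref{lem:FKPP_approx_properties} yields an alternative bound of order $\epsilon^{\kappa/2}+\theta_\epsilon$, and summing both bounds produces the claimed estimate. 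The main technical obstacle is controlling the birth--competition carré du champ under only finite mean of $\rP$: the truncated second moment $\sum_\ell \ell^2 p_\ell^\epsilon$ can grow like $L(\epsilon)$, so one must carefully exploit the constraint $L(\epsilon)=o(\epsilon^{-\kappa})$ from Definition~\ref{def:rescaled_slbp} (together with the moment bound for $\E[X^\epsilon_r(z)^2]$ from Corollary~\ref{cor:SLBP_p_moments}) to keep the martingale variance at the scale $o(\epsilon)$ needed for the $\sqrt{\epsilon\theta_\epsilon}$ bound.
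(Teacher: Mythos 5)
Your plan is a genuinely different route from the paper's. The paper does not apply Dynkin's formula to the aggregate functional $x\mapsto\epsilon\sum_z x(z)\phi(\epsilon z)$; instead it works pointwise in $z$, using the Green-function (Duhamel) representation from Lemma~\ref{lem:mg_pb_LLN}, $Q^\epsilon(x,X^\epsilon_t)=\sum_{x'}\G^\epsilon_t(x,x')Q^\epsilon(x',X^\epsilon_0)+\int_0^t\sum_{x'}\G^\epsilon_{t-s}(x,x')\cG^\epsilon_{BC}Q^\epsilon(x',\cdot)(X^\epsilon_s)\,ds+M^\epsilon_t(x)$, and then decomposes the increment into four pieces $A_1,\dots,A_4$. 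In particular the $\epsilon^{\kappa/2}$ term there does not come from a comparison with $\rho^\epsilon$: it comes from the free evolution of the initial data, term $A_1$, controlled through Assumption~\ref{assump:technical_X_0}.b together with a deterministic majorant for $G^\epsilon_{\tau_\epsilon}$. Your Dynkin decomposition has no analogue of $A_1$, which is structurally an advantage — once made rigorous, your first argument alone (drift $+$ martingale) already produces a bound of the desired form, and the FKPP-comparison ``second bound'' is redundant. Your approach also does not need the Green-function time-Hölder estimate used for $A_3$ nor Assumption~\ref{assump:technical_X_0}.b, at the price of invoking the heavier Theorem~\ref{thm:QLLN} in the (dispensable) comparison step.

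There is, however, one recurring gap you must close: the random limits $\tau_\epsilon$, $\tau_\epsilon+\theta_\epsilon$. Your drift bound $\E\big[\int_{\tau_\epsilon}^{\tau_\epsilon+\theta_\epsilon}|\cG^\epsilon F(X^\epsilon_r)|\,dr\big]\leq c\theta_\epsilon$ is asserted as if one could pull $\sup_r\E[|\cG^\epsilon F(X^\epsilon_r)|]$ out of a stopping-time integral, but the moment bounds of Proposition~\ref{prop:SLBP_moments} and Corollary~\ref{cor:SLBP_p_moments} are of the form $\sup_t\E[\cdot]$, not $\E[\sup_t\cdot]$. The same issue appears for the quadratic variation $\E\big[\int_{\tau_\epsilon}^{\tau_\epsilon+\theta_\epsilon}\Gamma^\epsilon_r\,dr\big]$ and again in your comparison step when you ``apply at both endpoints $\tau_\epsilon$ and $\tau_\epsilon+\theta_\epsilon$'' an $L^2$ estimate that holds at a deterministic time. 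The paper circumvents this by taking $\sup_{0\leq s\leq t\leq T+\overline\theta,\;t-s\leq\theta_\epsilon}$ inside the expectations and, for $A_1$, by replacing $G^\epsilon_{\tau_\epsilon}$ with the deterministic majorant $\overline h^\epsilon$. You need an analogous device. The simplest fix for your route is Fubini plus Cauchy--Schwarz against the indicator $\mathbbm 1_{\{\tau_\epsilon\leq r\leq\tau_\epsilon+\theta_\epsilon\}}$: writing $\E\big[\int_{\tau_\epsilon}^{\tau_\epsilon+\theta_\epsilon}|\cG^\epsilon F(X^\epsilon_r)|\,dr\big]=\int_0^{T+\overline\theta}\E\big[|\cG^\epsilon F(X^\epsilon_r)|\mathbbm 1_{\{\tau_\epsilon\leq r\leq\tau_\epsilon+\theta_\epsilon\}}\big]\,dr$, bounding the integrand by $\E[|\cG^\epsilon F(X^\epsilon_r)|^2]^{1/2}\,\P(\tau_\epsilon\leq r\leq\tau_\epsilon+\theta_\epsilon)^{1/2}$, and using $\int_0^{T+\overline\theta}\P(\tau_\epsilon\leq r\leq\tau_\epsilon+\theta_\epsilon)\,dr=\theta_\epsilon$. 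This degrades your drift exponent from $\theta_\epsilon$ to $\theta_\epsilon^{1/2}$ and your martingale exponent to $\theta_\epsilon^{1/4}$, but since the statement only requires some $a>0$, this is fine; the conclusion still follows (and, as noted, then without the $\epsilon^{\kappa/2}$ term at all, which is a nominally stronger bound than the paper's).

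Two smaller points. Your carré du champ bounds are correct: $\Gamma^\epsilon_{RW}F(x)\leq c\epsilon^{2+\kappa}\|\phi'\|_\infty^2\sum_z x(z)$ and $\Gamma^\epsilon_{BC}F(x)\leq c\epsilon^{2+\kappa}\|\phi\|_\infty^2\sum_z\{x(z)\sum_\ell\ell^2p^\epsilon_\ell+x(z)(x(z)-\epsilon^\kappa)\}$, and $\epsilon^\kappa\sum_\ell\ell^2p^\epsilon_\ell\leq\epsilon^\kappa L(\epsilon)\sum_\ell\ell p^\epsilon_\ell=o(1)$ under only finite mean, as you say. And your observation that $\sqrt{\epsilon\theta_\epsilon}\leq\max(\theta_\epsilon^{1/2},\epsilon)$ is correct.
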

	\begin{proof}
		By Jensen's inequality, we have 
		\begin{equation}\label{ineq:abs_diff}
		\E\left[|X_{\tau_\epsilon +\theta}^\epsilon(\phi)-X_{\tau_\epsilon}^\epsilon(\phi)|\right]\leq \epsilon \sum_{z \in \Z_\epsilon}\E\left[|X_{\tau_\epsilon +\theta}^\epsilon(z)-X_{\tau_\epsilon}^\epsilon(z)|\right]\phi(\epsilon z)
		\end{equation}
		If $x=\epsilon^\kappa\mathbbm{1}_{\{z\}} \in \X_\epsilon$ is the configuration with a single particle at $z \in \Z_\epsilon$, we have
		\[
		X^\epsilon_{t}(z)-X^\epsilon_{s}(z) = Q^\epsilon(x, X^\epsilon_{t})-Q^\epsilon(x, X^\epsilon_{s}).
		\] 
		Thus by Lemma \ref{lem:mg_pb_LLN},
		\begin{align*}
			Q^\epsilon(x, X^\epsilon_{\tau_\epsilon+\theta_\epsilon})-Q^\epsilon(x, X^\epsilon_{\tau_\epsilon})
			&= \sum_{\substack{x' \in \X_\epsilon \\ n_{x'}=1}}[\G^\epsilon_{\tau_\epsilon+\theta_\epsilon}(x, x')-\G^\epsilon_{\tau_\epsilon}(x, x')]Q^\epsilon(x', X^\epsilon_0)
			\\&+\int_{\tau_\epsilon}^{\tau_\epsilon+\theta_\epsilon}\sum_{\substack{x' \in \X_\epsilon \\ n_{x'}=1}}\G^\epsilon_u(x, x')\cG^\epsilon_{BC}Q^\epsilon(x', \cdot)(X_{\tau_\epsilon+\theta_\epsilon-u}^\epsilon)du
			\\&+\int_0^{\tau_\epsilon}\sum_{\substack{x' \in \X_\epsilon \\ n_{x'}=1}}[\G^\epsilon_{\tau_\epsilon+\theta_\epsilon-u}(x, x')-\G^\epsilon_{\tau_\epsilon-u}(x, x')]\cG_{BC}^\epsilon Q^\epsilon(x', \cdot)(X^\epsilon_u)du
			\\&+M^\epsilon_{\tau_\epsilon+\theta_\epsilon}(x)-M^\epsilon_{\tau_\epsilon}(x). 
		\end{align*}
		Let $\overline{\theta}\coloneqq \sup_{\epsilon \in (0, 1)}\theta_\epsilon \in [0, \infty)$. It follows from Jensen's inequality that 
		\begin{align*}
			\E\Bigg[|X_{\tau_\epsilon+\theta_\epsilon}^\epsilon(z)-X_{\tau_\epsilon}^\epsilon(z)|\Bigg]&\leq \E\Bigg[\Bigg|\sum_{\substack{x' \in \X_\epsilon \\ n_{x'}=1}}[\G^\epsilon_{\tau_\epsilon+\theta_\epsilon}(x, x')-\G^\epsilon_{\tau_\epsilon}(x, x')]Q^\epsilon(x', X^\epsilon_0)\Bigg|\Bigg]
			\\&+\E\Bigg[\sup_{\substack{0\leq s\leq t\leq T+\overline{\theta} \\ t-s\leq \theta_\epsilon}}\int_{s}^{t}\sum_{\substack{x' \in \X_\epsilon \\ n_{x'}=1}}\G^\epsilon_{t-u}(x, x')|\cG^\epsilon_{BC}Q^\epsilon(x', \cdot)(X_u^\epsilon)|du\Bigg]
			\\&+\E\Bigg[\sup_{\substack{0\leq s\leq t\leq T+\overline{\theta} \\ t-s\leq \theta_\epsilon}}\int_0^{s}\sum_{\substack{x' \in \X_\epsilon \\ n_{x'}=1}}|\G^\epsilon_{t-u}(x, x')-\G^\epsilon_{s-u}(x, x')||\cG_{BC}^\epsilon Q^\epsilon(x', \cdot)(X^\epsilon_u)|du\Bigg]
			\\&+\E\left[\Big|M^\epsilon_{\tau_\epsilon +\theta_\epsilon}(x)-M^\epsilon_{\tau_\epsilon}(x)|\right].
		\end{align*}
		We control the four terms on the right-hand side in turn. Denote them by $A_1$, $A_2$, $A_3$, and $A_4$, respectively. To bound $A_1$, we adapt a method used in Lemma 27 of \cite{EthLab2014} and apply Assumption \ref{assump:technical_X_0}.b. By the semigroup property of $G^\epsilon$, and the fact that $\G_\cdot^\epsilon(x, x')=G_\cdot^\epsilon(z, z')$ for configurations $x=\epsilon^\kappa\mathbbm{1}_{\{z\}}$ and $x'=\epsilon^\kappa\mathbbm{1}_{\{z'\}}$ with a single particle at $z, z' \in \Z_\epsilon$ respectively, we obtain
		\begin{align*}
			&\sum_{\substack{x' \in \X_\epsilon \\ n_{x'}=1}}[\G^\epsilon_{\tau_\epsilon+\theta_\epsilon}(x, x')-\G^\epsilon_{\tau_\epsilon}(x, x')]Q^\epsilon(x', X^\epsilon_0)
			\\&=\sum_{\substack{x'' \in \X_\epsilon \\ n_{x''}=1}}\sum_{\substack{x' \in \X_\epsilon \\ n_{x'}=1}}\G^\epsilon_{\theta_\epsilon}(x, x'')[\G^\epsilon_{\tau_\epsilon}(x'', x')-\G^\epsilon_{\tau_\epsilon}(x, x')]Q^\epsilon(x', X^\epsilon_0)
			\\&= \sum_{z'' \in \Z_\epsilon} G^\epsilon_{\theta_\epsilon}(z, z'')\left[\sum_{z' \in \Z_\epsilon}G_{\tau_\epsilon}^\epsilon(z'', z')X_0^\epsilon(z')-\sum_{z' \in \Z_\epsilon}G_{\tau_\epsilon}^\epsilon(z, z')X_0^\epsilon(z')\right].
		\end{align*}
		We then apply a change of variables to obtain 
		\[
		\sum_{z' \in \Z_\epsilon}G_{\tau_\epsilon}^\epsilon(z'', z')X_0^\epsilon(z')=\sum_{z' \in \Z_\epsilon}G_{\tau_\epsilon}^\epsilon(0, z')X_0^\epsilon(z'-z''),
		\]
		and similarly for the second inner summation. Hence, by Jensen's inequality, we have 
		\begin{align*}
			A_1&\leq \sum_{z'' \in \Z_\epsilon} G^\epsilon_{\theta_\epsilon}(z, z'')\sum_{z' \in \Z_\epsilon}\E\left[G_{\tau_\epsilon}^\epsilon(0, z')|X_0^\epsilon(z'-z'')-X_0^\epsilon(z'-z)|\right].
		\end{align*}
		Let $h^\epsilon_t(z')\coloneqq \min(1, \epsilon t^{-1/2})e^{-z'\min(1, \epsilon t^{-1/2})}$ where we think of $z' \in \Z$ by abuse of notation, and where $t \in [0, T]$. By \eqref{ineq:G_exp_bd} in Lemma \ref{lem:Green_estimates}, there exists $C=C(T)>0$ such that 
		\begin{equation}\label{ineq:G_exp_bd}
			G^\epsilon_t(0, z')\leq Ch^\epsilon_t(z'),
		\end{equation}
		for all $t \in [0, T]$, $z' \in \Z_\epsilon$, and $\epsilon \in (0, 1)$. It is easy to see that for any $z' \in \Z_\epsilon$, we may find $t_*(z')$, which maximises $h_t^\epsilon(z')$ over $t \in [0, T]$. Moreover, there exists a deterministic natural number $R$ such that for any $z \in \Z$ with $z\geq R$, if $z'=\pi^\epsilon(z)$, then we have $t_*(z')=T$. We also observe that $h^\epsilon_{t_*(z')}(z')$ for $z'$ among the first $R$ elements of $\Z_\epsilon$ is maximised at $z'=0$ with maximum value one. Overall, this implies that with $C=C(T)>0$ the constant from \eqref{ineq:G_exp_bd}, we have the deterministic upper bound
		\begin{equation}\label{ineq:determ_heat_bound}
		G_{\tau_\epsilon}^\epsilon(0, z')\leq C\overline{h}^\epsilon(z'),
		\end{equation}
		where $\overline{h}^\epsilon(z')\coloneqq \mathbbm{1}_{\{0, 1, \cdots, R-1\}\cap \Z_\epsilon}(z') + h^\epsilon_{T}(z') \mathbbm{1}_{\{R, R+1, \cdots, K_\epsilon-1\}\cap \Z_\epsilon}$. Summing over $z' \in \Z_\epsilon$ gives
		\[
		\sum_{z' \in \Z_\epsilon}G_{\tau_\epsilon}^\epsilon(0, z')\leq \sum_{z' \in \Z_\epsilon}\overline{h}^\epsilon(z')\leq CR+C\sum_{\substack{z' \in \Z_\epsilon \\ z'\geq R}} \min(1, \epsilon T^{-1/2})e^{-z'\min(1, \epsilon T^{-1/2})}.
		\]
		The sum on the right-hand side converges as a Riemann sum to $\int_R^\infty T^{-1/2}e^{-z T^{-1/2}} dz < 1$, so the right-hand side is bound uniformly in $z'\in \Z_\epsilon$ and in $\epsilon \in (0, 1)$ by a constant $C'$ which depends only on $T$. Importantly, the right-hand side of \eqref{ineq:determ_heat_bound} is completely deterministic, hence 
		\begin{align*}
		A_1&\leq C\sum_{z'' \in \Z_\epsilon} G^\epsilon_{\theta_\epsilon}(z, z'')\sum_{z' \in \Z_\epsilon}\overline{h}^\epsilon(z')\E\left[|X_0^\epsilon(z'-z'')-X_0^\epsilon(z'-z)|\right].
		\end{align*}
		By the Cauchy-Schwarz inequality, we have
		\[
		\E\left[|X_0^\epsilon(z'-z'')-X_0^\epsilon(z'-z)|\right]\leq \E\left[|X_0^\epsilon(z'-z'')-X_0^\epsilon(z'-z)|^2\right]^{1/2}.
		\]
		By Assumption \ref{assump:technical_X_0}.b, there exist constants $c_1, c_2>0$ depending only on $\sup_{\epsilon\in (0, 1)}\max_{z \in \Z_\epsilon}|\rho_0^\epsilon(z)|$ such that
		\[
		\E\left[|X_0^\epsilon(z_1)-X_0^\epsilon(z_2)|^2\right]\leq c_1\epsilon^{\kappa}+c_2|\rho_0^\epsilon(z_1)-\rho_0^\epsilon(z_2)|^2,
		\]
		for all $z_1, z_2 \in \Z_\epsilon$ and $\epsilon \in (0, 1)$. Recall from \eqref{def:rho_0_eps} in Definition \ref{def:general_init_data} that $\rho_0^\epsilon(\cdot)=\rho_0(K_\epsilon^{-1} \cdot)$, where $\rho_0 \in C^3(\S^1, [0, 2\mu])$, and hence it is Lipschitz-continuous with some constant $L_0>0$ independent of $\epsilon$. It follows that
		\[
		\E\left[|X_0^\epsilon(z'-z'')-X_0^\epsilon(z'-z)|^2\right]\leq c_1\epsilon^\kappa +c_2L_0^2|z-z''|^2,
		\]
		for all $z, z', z'' \in \Z_\epsilon$, where we note that the right-hand side does not depend on $z'$. Then, using the facts that $\sqrt{a_1+a_2}\leq 2^{1/2}(\sqrt{a_1}+\sqrt{a_2})$ for all $a_1, a_2\geq 0$ and $\sum_{z'' \in \Z_\epsilon}G^\epsilon_{\theta_\epsilon}(z, z'')=1$, we bound our first term as follows:
		\[
		A_1\leq C_1 \epsilon^{\kappa/2} +C_2\sum_{z'' \in \Z_\epsilon} G^\epsilon_{\theta_\epsilon}(z, z'')|z''-z|,
		\]
		where $C_1\coloneqq CC'\sqrt{2c_1}$ and $C_2\coloneqq CC'L_0\sqrt{2c_2}$. We have $y e^{-y}\leq 1$ for all $y \geq 0$. Then, by \eqref{ineq:exp_control_G} in Lemma \ref{lem:Green_estimates}, we compute
		\begin{align*}
			A_1&\leq C_1 \epsilon^{\kappa/2} +C_2\sum_{z'' \in \Z_\epsilon} G^\epsilon_{\theta_\epsilon}(z, z'')|z''-z|
			\\&\leq C_1\epsilon^{\kappa/2}+C_2\max(\theta_\epsilon^{1/2}, \epsilon)\sum_{z'' \in \Z_\epsilon} G^\epsilon_{\theta_\epsilon}(z, z'')e^{|z''-z|/\max(\theta_\epsilon^{1/2}, \epsilon)} 
			\\&\leq C_1\epsilon^{\kappa/2}+C_3\max(\theta_\epsilon^{1/2}, \epsilon)
		\end{align*}
		for some $C_3>0$. Next, we control $A_2$. By \eqref{ineq:basic_heat_estimate}, H\"older's inequality with $q\in (1, 2)$ and $\frac{1}{q}+\frac{q-1}{q}=1$, and Jensen's inequality, we have
		\begin{align*}
		&\int_{s}^{t}\sum_{\substack{x' \in \X_\epsilon \\ n_{x'}=1}}\G^\epsilon_{t-u}(x, x')|\cG^\epsilon_{BC}Q^\epsilon(x', \cdot)(X_{u}^\epsilon)|du
		\\&\leq C\int_s^t (t-u)^{-1/2} \epsilon \sum_{\substack{x' \in \X_\epsilon \\ n_{x'}=1}}|\cG^\epsilon_{BC}Q^\epsilon(x', \cdot)(X_{u}^\epsilon)| du
		\\&\leq C\left(\int_s^t (t-u)^{-q/2}du\right)^{1/q}\left(\int_0^T \epsilon \sum_{\substack{x' \in \X_\epsilon \\ n_{x'}=1}}|\cG^\epsilon_{BC}Q^\epsilon(x', \cdot)(X_{u}^\epsilon)|^{\frac{q}{q-1}}du\right)^{\frac{q-1}{q}}.
		\end{align*}
		Observe that $\int_s^t (t-u)^{-q/2}du=(t-s)^{1-q/2}/(1-q/2)$ and $t-s\leq \theta_\epsilon$. Using the concave Jensen inequality for $y\mapsto y^{\frac{q-1}{q}}$, we obtain
		\begin{align*}
		A_2&\leq C\theta_\epsilon^{\frac{2-q}{2q}} \left(\int_0^T \epsilon \sum_{\substack{x' \in \X_\epsilon \\ n_{x'}=1}}\E\left[|\cG^\epsilon_{BC}Q^\epsilon(x', \cdot)(X_{u}^\epsilon)|^{\frac{q}{q-1}}\right]du\right)^{\frac{q-1}{q}}
		\\& \leq C\theta_\epsilon^{\frac{2-q}{2q}} \sup_{u \in [0, T]}\max_{\substack{x' \in \X_\epsilon \\ n_{x'}=1}} \E\left[|\cG^\epsilon_{BC}Q^\epsilon(x', \cdot)(X_{u}^\epsilon)|^{\frac{q}{q-1}}\right]^{\frac{q-1}{q}}.
		\end{align*}
		The latter expectation is bounded by a constant depending only on $T$ and $q$. Indeed, a direct computation of the generator using \eqref{eq:GBC} shows that if $x'\coloneqq \epsilon^\kappa\mathbbm{1}_{\{z'\}}\in \X_\epsilon$ with $z'\in \Z_\epsilon$, then
		\[
		\cG_{BC}^\epsilon Q^\epsilon(x', \cdot)(X_u^\epsilon) = \mu_\epsilon X_u^\epsilon(z')-\frac{1}{2}X_u^\epsilon(z')(X_u^\epsilon(z')-\epsilon^\kappa), \quad u\geq 0.
		\]
		We re-write this expression in terms of the $Q^\epsilon$ defined in \eqref{def:gen_Q_eps} and apply Corollary \ref{cor:SLBP_p_moments} to Proposition \ref{prop:SLBP_moments} to obtain good control on its absolute moment. Let $x''\coloneqq 2\epsilon^\kappa\mathbbm{1}_{\{z'\}} \in \X_\epsilon$ be the configuration with two particles at $z'$. Then, 
		\[
		\cG_{BC}^\epsilon Q^\epsilon(x', \cdot)(X_t^\epsilon) =  \mu_\epsilon Q^\epsilon(x', X_t^\epsilon)-\frac{1}{2}Q^\epsilon(x'', X_t^\epsilon), \quad t\geq 0.  
		\]
		Thus, by the triangle inequality,  Jensen's inequality, and Corollary \ref{cor:SLBP_p_moments}, there exists a constant $C=C(T, q)>0$ such that
		\begin{align}
			\E\left[|\cG_{BC}^\epsilon Q^\epsilon(x', \cdot)(X_u^\epsilon)|^{\frac{q}{q-1}}\right]&\notag\leq 2^{\frac{1}{q-1}} \Bigg( \mu_\epsilon \sup_{u \in[0, T]}\max_{\substack{x' \in \X_\epsilon\\ n_{x'}=1}}\E\left[Q^\epsilon(x', X_u^\epsilon)^{\frac{q}{q-1}}\right]
			\\&+\sup_{u \in[0, T]}\max_{\substack{x'' \in \X_\epsilon\\ n_{x''}=2}}\E\left[Q^\epsilon(x'', X_u^\epsilon)^{\frac{q}{q-1}}\right]\Bigg)<C,\label{ineq:GBC_control}
		\end{align}
		uniformly in $u \in [0, T]$ and $\epsilon \in (0, 1)$. We conclude that $A_2\leq C\theta_\epsilon^{\frac{2-q}{2q}}$ for some $C=C(T, q)>0$. Consider now $A_3$. By \eqref{ineq:heat_time_holder_cts} from Lemma \ref{lem:Green_estimates}, we obtain, for all $a \in (0, 1/2)$, the bound
		\begin{align}
			|\G^\epsilon_{t-u}(x, x')-\G^\epsilon_{s-u}(x, x')|&\notag=\left|G_{t-u}^\epsilon(z, z')-G_{s-u}^\epsilon(z, z')\right| 
			\\&\leq C\epsilon f_u(s, t),\label{ineq:time_control}
		\end{align}
		for all $x, x' \in \X_\epsilon$ with $x=\epsilon^\kappa\mathbbm{1}_{\{z\}}$ and $x'=\epsilon^\kappa\mathbbm{1}_{\{z'\}}$, $a \in (0, 1/2)$, and $\epsilon \in (0, 1)$, and where 
		\[
		f_u(s, t)\coloneqq (s-u)^{-a-1/2}(t-s)^a +(s-u)^{-1/2}\Big((t-s)+(t-s)^{1/2}\Big).
		\]
		By the Cauchy-Schwarz inequality, we have 
		\begin{align*}
			&\sum_{\substack{x' \in \X_\epsilon \\ n_{x'}=1}}|\G^\epsilon_{t-u}(x, x')-\G^\epsilon_{s-u}(x, x')||\cG_{BC}^\epsilon Q^\epsilon(x', \cdot)(X^\epsilon_u)|
			\\&\leq \Bigg(\sum_{\substack{x' \in \X_\epsilon \\ n_{x'}=1}}|\G^\epsilon_{t-u}(x, x')-\G^\epsilon_{s-u}(x, x')|^2\Bigg)^{1/2}h^\epsilon(u)^{1/2},
		\end{align*}
		where
		\[
		h^\epsilon(u)\coloneqq \epsilon \sum_{\substack{x' \in \X_\epsilon \\ n_{x'}=1}}|\cG_{BC}^\epsilon Q^\epsilon(x', \cdot)(X^\epsilon_u)|^2.
		\]
		By \eqref{ineq:time_control}, we have 
		\[
		\Bigg(\sum_{\substack{x' \in \X_\epsilon \\ n_{x'}=1}}|\G^\epsilon_{t-u}(x, x')-\G^\epsilon_{s-u}(x, x')|^2\Bigg)^{1/2}\leq C\epsilon^{1/2}f_u(s, t).
		\]
		Therefore, 
		\begin{align*}
			&\sum_{\substack{x' \in \X_\epsilon \\ n_{x'}=1}}|\G^\epsilon_{t-u}(x, x')-\G^\epsilon_{s-u}(x, x')||\cG_{BC}^\epsilon Q^\epsilon(x', \cdot)(X^\epsilon_u)|
			\\& \leq Cf_u(s, t)h^\epsilon(u)^{1/2}.
		\end{align*}
		We have shown that
		\begin{align*}
			&\int_0^{s}\sum_{\substack{x' \in \X_\epsilon \\ n_{x'}=1}}|\G^\epsilon_{t-u}(x, x')-\G^\epsilon_{s-u}(x, x')||\cG_{BC}^\epsilon Q^\epsilon(x', \cdot)(X^\epsilon_u)|du
			\\&\leq C\int_0^s f_u(s, t)h^\epsilon(u)^{1/2} du.
		\end{align*}
		Next, we take $q \in (1, 2)$ and apply H\"older's inequality with $\frac{1}{q}+\frac{q-1}{q}=1$:
		\begin{equation}\label{ineq:holder_bd}
			\int_0^s f_u(s, t)h(u)^{1/2} du\leq \left(\int_0^s f_u(s, t)^qdu\right)^{1/q} \left(\int_0^s h^\epsilon(u)^{\frac{q}{2(q-1)}}du\right)^{(q-1)/q}.
		\end{equation}
		We bound these two factors in turn. By Jensen's inequality and straightforward calculations, we obtain
		\begin{align*}
			\int_0^s f_u(s, t)^q du \leq 2^{q-1}(t-s)^{aq}\frac{s^{1-q(a+\frac{1}{2})}}{1-q(a+\frac{1}{2})}+2^{q-1}[t-s+(t-s)^{1/2}]\frac{s^{1-q/2}}{1-q/2}.
		\end{align*}
		To avoid diverging terms as $s\searrow 0$, we require that $q<2/(2a+1)$ - observe that this is ok since $2/(2a+1)\in (1, 2)$ for all $a\in (0, 1/2)$. Hence, using that  
		\[
		(a_1+a_2+a_3)^b\leq 3^b\max(a_1, a_2, a_3)^b\leq 3^b(a_1^b+a_2^b+a_3^b),
		\]
		for all $a_1, a_2, a_3, b\geq 0$, we get
		\begin{align*}
		\sup_{\substack{0\leq s\leq t\leq T+\overline{\theta} \\ t-s\leq \theta_\epsilon }}\left(\int_0^s f_u(s, t)^qdu\right)^{1/q} &\leq\sup_{\substack{0\leq s\leq t\leq T+\overline{\theta} \\ t-s\leq \theta_\epsilon }}\left\{ C_1 (t-s)^{a}+C_2 (t-s)^{1/q}+C_3(t-s)^{\frac{1}{2q}}\right\}
		\\&\leq C_1 \theta_\epsilon^{a}+C_2 \theta_\epsilon^{1/q}+C_3\theta_\epsilon^{\frac{1}{2q}},
		\end{align*}
		for some $C_i=C_i(T, a, q)>0$, $i\in \{1, 2, 3\}$. Next, we control the second factor on the right-hand side of \eqref{ineq:holder_bd}. We observe that $(q-1)/q<1$ for all $q \in (1, 2/(2a+1))$. We can therefore apply the concave Jensen inequality and extend the range of integration up to $T+\overline{\theta}$ to obtain 
		\[
		\E\Bigg[\left(\int_0^s h^\epsilon(u)^{\frac{q}{2(q-1)}}du\right)^{(q-1)/q}\Bigg]\leq \E\Bigg[\int_0^{T+\overline{\theta}} h^\epsilon(u)^{\frac{q}{2(q-1)}}du\Bigg]^{(q-1)/q}
		\]
		We further note that $\frac{q}{2(q-1)}>1$ for all $q \in (1, 2/(2a+1))$. So by Jensen's inequality and using Corollary \ref{cor:SLBP_p_moments} as we did in \eqref{ineq:GBC_control} above, we have 
		\[
		\E\Bigg[\int_0^{T+\overline{\theta}} h^\epsilon(u)^{\frac{q}{2(q-1)}}du\Bigg]\leq \epsilon \sum_{\substack{x' \in \X_\epsilon \\ n_{x'}=1}}\int_0^{T+\overline{\theta}}\E[|\cG_{BC}^\epsilon Q^\epsilon(x', \cdot)(X^\epsilon_u)|^{\frac{q}{2(q-1)}}] du\leq C,
		\]
		for some constant $C=C(T, q)>0$, uniformly in $s \in [0, T]$ and $\epsilon \in (0, 1)$. Overall, we get 
		\begin{align*}
			A_3&\leq \sup_{\substack{0\leq s\leq t\leq T \\ t-s\leq \theta_\epsilon }}\left(\int_0^s f_u(s, t)^qdu\right)^{1/q} \E\Bigg[\left(\int_0^{T+\overline{\theta}} h^\epsilon(u)^{\frac{q}{2(q-1)}}du\right)^{(q-1)/q}\Bigg]
			\\& <C\left((t-s)^{a}+ (t-s)^{1/q}+(t-s)^{\frac{1}{2q}}\right)
			\\&\leq C\left(\theta_\epsilon^{a}+\theta_\epsilon^{1/q}+\theta_\epsilon^{\frac{1}{2q}}\right),
		\end{align*}
		for some $C=C(T, a, q)>0$. We now control $A_4$. One can check that the predictable quadratic variation of $M_\cdot^\epsilon(x)$ is given by 
		\[
		\langle M_\cdot^\epsilon(x)\rangle_t -\langle M_\cdot^\epsilon(x)\rangle_r = \int_r^t \Gamma^\epsilon_s(x) ds, \quad 0\leq r\leq t,
		\]
		where 
		\begin{equation}
			\begin{aligned}\label{eq:M_x_quad_var_2}
				\Gamma^\epsilon_s(x) &= \sum_{x'}\G^\epsilon_{T-s}(x, x')\cG^\epsilon_{BC} (Q^\epsilon(x', \cdot)^2)(X^\epsilon_s)
				\\&\quad\quad\quad\quad\quad\quad\quad\quad\quad\quad\quad\notag-2Q^\epsilon(x, X^\epsilon_s)\sum_{x'}\G^\epsilon_{T-s}(x, x')\cG_{BC}^\epsilon Q^\epsilon(x', \cdot)(X^\epsilon_s).
			\end{aligned}
		\end{equation}
		Observe that the right-hand side can be re-written as a linear combination of the terms $Q^\epsilon(x'', X_s^\epsilon)$, where $n_{x''} \in \{0, 1, \cdots, 2\}$. Thus, it follows from Jensen's inequality, Corollary \ref{cor:SLBP_p_moments} and similar arguments to those leading to \eqref{ineq:GBC_control} above that there exists a constant $c=c(T)>0$ with
		\[
		\sup_{s \in [0, T]}\E[\Gamma^\epsilon_s(x)^2]<c,
		\]
		for all $x \in \X_\epsilon$ with $n_x=1$ and $\epsilon \in (0, 1)$. Using the It\^o isometry, the Cauchy-Schwarz inequality, and Tonelli's Theorem, we obtain 
		\begin{align*}
			A_4\leq \E\left[\left(\int_{\tau_\epsilon}^{\tau_\epsilon+\theta_\epsilon} dM_s^\epsilon(x)\right)^2\right]^{1/2}
			&=\E\left[\int_{\tau_\epsilon}^{\tau_\epsilon+\theta_\epsilon}\Gamma^\epsilon_s(x) ds\right]^{1/2}
			\\&\leq \theta_\epsilon^{1/4} \E\left[\int_{\tau_\epsilon}^{\tau_\epsilon+\theta_\epsilon}\Gamma^\epsilon_s(x)^2 ds\right]^{1/4}\leq \theta_\epsilon^{1/4}\left(T \sup_{s \in [0, T+\overline{\theta}]}\E[\Gamma^\epsilon_s(x)^2]\right)^{1/4}.
		\end{align*}
	    This proves that $A_4\leq C\theta_\epsilon^{1/4}$ for some $C=C(T)>0$. Going back to \eqref{ineq:abs_diff}, and using the four bounds just derived and the fact that $\epsilon \sum_{z \in \Z_\epsilon} |\phi(\epsilon z)|\to \int |\phi|$, the proposition follows.
	\end{proof}
	
	\section{Proof of the central limit theorem}
	
	In this section, we prove the central limit theorem claimed in Theorem \ref{thm:CLT_Gaussian}: the non-equilibrium fluctuations field
	\[
	Y_t^\epsilon(\phi) = \epsilon^{1+\gamma-\kappa} \sum_{z \in \Z_\epsilon} \left(X_t^\epsilon(z)-\E\left[X_t^\epsilon(z)\right]\right)\phi(\epsilon z), \quad \phi \in C^\infty(\S^1), \quad t\geq 0,
	\]
	converges in distribution in the Skorohod topology on $\cD([0, T], C^\infty(\S^1)')$ to the unique (in law) solution to the the stochastic partial differential equation 
	\begin{equation}\label{eq:OU_Br}
		\begin{cases}
			\partial_t Y =\Big(\frac{1}{2}\partial_z^2+\mu-\rho\Big)Y +\sqrt{\rho}\partial_z\dot W^1+\sqrt{(\sigma^2+\mu^2)\rho+\rho^2/2}\dot W^2&\textnormal{on }[0, T]\times \S^1,
			\\Y_t\overset{d}{\to} Y_0 \textnormal{ as }t\searrow 0,
		\end{cases}
	\end{equation}
	where $\rho$, the hydrodynamic limit, solves \eqref{eq:FKPP} the Cauchy problem for the FKPP equation, $\dot \rW^i$, $i\in \{1, 2\}$, are independent Gaussian space-time white noises, and $Y_0$ is a $C^\infty(\S^1)'$-valued centred Gaussian variable with variance
	\[
	\E[Y_0(\phi)^2] = \int_{\S^1} \phi(z)^2 \rho_0(z)dz, \quad \phi \in C^\infty(\S^1). 
	\]
	We follow the strategy outlined in Section \ref{sec:pf_overview}.
	
	\subsection{Boltzmann-Gibbs principle and a corollary}
	
	The goal of this section is to prove the Boltzmann-Gibbs principle of Proposition \ref{prop:BGP} and to derive one of its implications which will be useful to show tightness of the fluctuations. In preparation for the application of that corollary in the next section, we work with slightly more general test functions than $\phi \in C^\infty(\S^1)$. We introduce for each $\epsilon \in (0, 1)$ the generalised test functions 
	\[
	\phi^\epsilon :\{(r, t)\in[0, T]^2:0\leq r\leq t\} \times \S^1\to \R, \quad ((r, t), z)\mapsto \phi^\epsilon_{r, t}(z),
	\]
	where for each in $t$ and $z$, the function is assumed to be continuous in $r$. We also assume that $\phi_{r, t}^\epsilon\in C^\infty(\S^1)$ for all $r$ and $t$, and that
	\begin{equation}\label{ineq:assump_time_phi}
		C_{\phi}=C_\phi(T)\coloneqq\sup_{\epsilon \in (0, 1)}\sup_{0\leq r\leq t \leq T} \|\phi_{r, t}^\epsilon\|_\infty <\infty,
	\end{equation}
	where $\|\cdot\|_{\infty}$ denotes the supremum norm on $\S^1$. We briefly recall the claim of Proposition \ref{prop:BGP}. Let $T>0$, $\gamma \in [-1/2, -5/16)$, and $\kappa \in [0, 1+2\gamma]$.  Consider the non-linear fluctuations field
	\begin{equation}\label{def:non_lin_fluct}
	F^\epsilon_r(\phi_{r, t}^\epsilon) \coloneqq \frac{\epsilon^{1+\gamma-\kappa}}{2}\sum_{z \in \Z_\epsilon}\left(X_r^\epsilon(z)(X_r^\epsilon(z)-\epsilon^\kappa)-\E[X_r^\epsilon(z)(X_r^\epsilon(z)-\epsilon^\kappa)]\right)\phi^\epsilon_{r, t}(\epsilon z),
	\end{equation}
	for all $0\leq r\leq t$. We show that 
	\begin{equation}\label{eq:BGP_goal}
		\limsup_{S\to \infty}\limsup_{\epsilon \searrow 0}\sup_{0\leq s< t\leq T} \E\left[\Bigg(\frac{1}{\epsilon^2S}\int_s^{s+\epsilon^2S}[F_r^\epsilon(\phi^\epsilon_{r, t})-Y_r^\epsilon(\widetilde{\rho}^{\:\epsilon}_r \phi^\epsilon_{r, t})]dr\Bigg)^2\right]=0.
	\end{equation}
	To prove this result, we adapt the proof of Proposition 3.1 in \cite{Boldrig1992}. The main new challenge is the fact that the local carrying capacity of our population is of order $O(\epsilon^{-\kappa})$, instead of simply $O(1)$ in the work of Boldrighini et al. where $\kappa=0$. We are however able to control this larger population using Theorem \ref{thm:QLLN}, our version of the quantitative convergence result for the $v$-functions. Indeed, in the weak competition regime with competition exponent $\kappa$, we find that the rate at which the $v$-functions of degree three or higher vanish improves precisely by a factor of $\epsilon^{\kappa}$. For $v$-functions of degree one or two, we use that the second moment of the $V$-functions is still of order $\epsilon^\kappa$.
	
	\begin{proof}[Proof of Proposition \ref{prop:BGP}]
		Throughout this proof, given a natural number $j$ and a site $z \in \Z_\epsilon$, we will denote $x^{j, z}=j\epsilon^\kappa\mathbbm{1}_{\{z\}}\in \X_\epsilon$ the configuration with $j$ particles at $z$. Recalling that 
		\[
		V^\epsilon(x^{2, z}, X^\epsilon_r;\rho_r^\epsilon) = (X^\epsilon_r(z)-\rho_r^\epsilon(z))^2-\epsilon^\kappa X_r^\epsilon(z),
		\]
		we easily show by adding and subtracting $\rho_r^\epsilon(z)^2$ that 
		\begin{equation}\label{eq:f_minus_y}
		F_r^\epsilon(\phi^\epsilon_{r, t})-Y_r^\epsilon(\widetilde{\rho}^{\:\epsilon}_r \phi^\epsilon_{r, t}) = \frac{\epsilon^{1+\gamma-\kappa}}{2}\sum_{z \in \Z_\epsilon} \big\{V^\epsilon(x^{2, z}, X_r^\epsilon;\rho_r^\epsilon)-\E[V^\epsilon(x^{2, z}, X_r^\epsilon;\rho_r^\epsilon)]\big\}\phi^\epsilon_{r, t}(\epsilon z). 
		\end{equation}
		By Theorem \ref{thm:QLLN}, we have $\E[V^\epsilon(x^{2, z}, X_r^\epsilon;\rho_r^\epsilon)]=O(\epsilon^{1+\kappa})$ uniformly in $z\in \Z_\epsilon$ and $r \in [0, t]$. Thus this term gives a negligible contribution to the left-hand side of \eqref{eq:BGP_goal}. It follows that the Boltzmann-Gibbs principle \eqref{eq:BGP_goal} is equivalent to 
		\begin{equation}\label{eq:A_eps_lim}
			\limsup_{S\to \infty}\limsup_{\epsilon \searrow 0}\sup_{0\leq s<t\leq T}A_\epsilon(s, t, S) = 0,
		\end{equation}
		where
		\begin{align*}
			A_\epsilon(s, t, S) &= \left(\frac{1}{2\epsilon^2 S}\right)^2 \int_s^{s+\epsilon^2 S} dr \int_s^{s+\epsilon^2 S}dr'\epsilon^{2+2\gamma-2\kappa} \sum_{z \in \Z_\epsilon}\sum_{z' \in \Z_\epsilon}\phi^\epsilon_{r, t}(\epsilon z)\phi^\epsilon_{r', t}(\epsilon z')
			\\&\quad\quad\quad\quad\quad\quad\quad\quad\quad\quad\quad\quad\quad\quad\quad\quad\quad\quad\times\E[V^\epsilon(x^{2, z}, X_r^\epsilon;\rho_r^\epsilon)V^\epsilon(x^{2, z'}, X_{r'}^\epsilon;\rho_{r'}^\epsilon)].
		\end{align*}
		We change variables in the inner integral using $\widetilde r=r'-r$ and use even symmetry of the region of integration about the diagonal $r=r'$ to pull out a factor of $2$:
		\begin{align*}
			A_\epsilon(s, t, S) &= 2\left(\frac{1}{2\epsilon^2 S}\right)^2 \int_s^{s+\epsilon^2 S} dr \int_0^{s-r+\epsilon^2 S}d\widetilde r\epsilon^{2+2\gamma-2\kappa} \sum_{z \in \Z_\epsilon}\sum_{z' \in \Z_\epsilon} \phi^\epsilon_{r, t}(\epsilon z)\phi^\epsilon_{r+\widetilde r, t}(\epsilon z')
			\\&\quad\quad\quad\quad\quad\quad\quad\quad\quad\quad\quad\quad\quad\quad\quad\quad\quad\quad\times \E[V^\epsilon(x^{2, z}, X_r^\epsilon; \rho_r^\epsilon)V^\epsilon(x^{2, z'}, X_{r+\widetilde r}^\epsilon; \rho_{r+\widetilde r}^\epsilon)].
		\end{align*}
		By the tower law of expectation and the Markov property, we can write
		\begin{align*}
			\E[V^\epsilon(x^{2, z}, X_r^\epsilon; \rho_r^\epsilon)V^\epsilon(x^{2, z'}, X_{r+\widetilde r}^\epsilon; \rho_{r+\widetilde r}^\epsilon)]&=\E[V^\epsilon(x^{2, z}, X_r^\epsilon; \rho_r^\epsilon)\E[V^\epsilon(x^{2, z'}, X_{r+\widetilde r}^\epsilon; \rho_{r+\widetilde r}^\epsilon)|X_r^\epsilon]].
		\end{align*}
		Proceeding as in the proof of Proposition \ref{prop:v_fcn_eq}, we derive the following equation:
		\begin{align*}
			&\E[V^\epsilon(x^{2, z'}, X_{r+\widetilde r}^\epsilon; \rho_{r+\widetilde r}^\epsilon)|X_r^\epsilon]
			\\&= \sum_{\substack{x_1\in \X_\epsilon\\ n_{x_1}=2}}\G^\epsilon_{\widetilde r}(x^{2, z'}, x_1)V^\epsilon(x_1, X_r^\epsilon; \rho^\epsilon_r)
			\\&+ \int_0^{\widetilde r} ds \sum_{\substack{x_1\in \X_\epsilon\\ n_{x_1}=2}} \G^\epsilon_{\widetilde r-s}(x^{2, z'}, x_1)\sum_{z \in \supp(x_1)}\sum_{h=-n_{x_1}(z)}^1 c_h^\epsilon(n_{x_1}(z), \rho_{r+s}^\epsilon(z))\E[V^\epsilon(x_1^{(z, h)}, X_{r+s}^\epsilon; \rho_{r+s}^\epsilon)|X_r^\epsilon].
		\end{align*}
		Denote the integral on the right-hand side by $I_\epsilon(\widetilde r)$. We have 
		\begin{align*}
			&\E[V^\epsilon(x^{2, z}, X_r^\epsilon; \rho_r^\epsilon)I_\epsilon(\widetilde{r})]
			\\&=\int_0^{\widetilde r} ds\Bigg[ \sum_{\substack{x_1\in \X_\epsilon\\ n_{x_1}=2}} \G^\epsilon_{\widetilde r-s}(x^{2, z'}, x_1)
			\\&\quad\quad\quad\quad\times\sum_{z \in \supp(x_1)}\sum_{h=-n_{x_1}(z)}^1 c_h^\epsilon(n_{x_1}(z), \rho_{r+s}^\epsilon(z))\E\left[V^\epsilon(x^{2, z}, X_r^\epsilon; \rho_r^\epsilon)\E[V^\epsilon(x_1^{(z, h)}, X_{r+s}^\epsilon; \rho_{r+s}^\epsilon)|X_r^\epsilon]\right]\Bigg].
		\end{align*}
		Moreover, by the Cauchy-Schwarz inequality
		\begin{align*}
			&\E\left[V^\epsilon(x^{2, z}, X_r^\epsilon; \rho_r^\epsilon)\E[V^\epsilon(x_1^{(z, h)}, X_{r+s}^\epsilon; \rho_{r+s}^\epsilon)|X_r^\epsilon]\right]
			\\&=\E\left[V^\epsilon(x^{2, z}, X_r^\epsilon; \rho_r^\epsilon)V^\epsilon(x_1^{(z, h)}, X_{r+s}^\epsilon; \rho_{r+s}^\epsilon)\right]
			\\&\leq \E\left[V^\epsilon(x^{2, z}, X_r^\epsilon; \rho_r^\epsilon)\right]^{1/2}\E\left[V^\epsilon(x_1^{(z, h)}, X_{r+s}^\epsilon; \rho_{r+s}^\epsilon)\right]^{1/2}
		\end{align*}
		By Definition \ref{def:V_fcn}, Corollary \ref{cor:SLBP_p_moments}, and part two of Lemma \ref{lem:FKPP_approx_properties}, the right-hand side is dominated by a constant which depends only on $T$, uniformly in $r, s \in [0, T]$, $z \in \Z_\epsilon$, $h \geq 1$, and $\epsilon \in (0, 1)$. Similarly, by point two of Lemma \ref{lem:FKPP_approx_properties}, the coefficients $c_h^\epsilon(n_{x_1}(z), \rho_{r+s}^\epsilon(z))$ are uniformly bounded in absolute value by some constant which depends only on $T$. Overall, we get
		\[
		\E[V^\epsilon(x^{2, z}, X_r^\epsilon; \rho_r^\epsilon)I_\epsilon(\widetilde{r})]=O(\widetilde{r})=O(\epsilon^2 S),
		\] 
		and therefore
		\begin{align*}
			&\E[V^\epsilon(x^{2, z}, X_r^\epsilon; \rho_r^\epsilon)V^\epsilon(x^{2, z'}, X_{r+\widetilde r}^\epsilon; \rho_{r+\widetilde r}^\epsilon)]
			\\&= \sum_{\substack{x_1\in \X_\epsilon\\ n_{x_1}=2}}\G^\epsilon_{\widetilde r}(x^{2, z'}, x_1)\E[V^\epsilon(x^{2, z}, X_r^\epsilon; \rho_r^\epsilon)V^\epsilon(x_1, X_r^\epsilon; \rho^\epsilon_r)]+O(\epsilon^2 S).
		\end{align*}
		Using \eqref{ineq:assump_time_phi}, we see that the big-$O$ term contributes to $A_\epsilon(s, t, S)$ the quantity
		\begin{align*}
			&2\left(\frac{1}{2\epsilon^2 S}\right)^2 \int_s^{s+\epsilon^2 S} dr \int_0^{s-r+\epsilon^2 S}d\widetilde r\epsilon^{2+2\gamma-2\kappa} \sum_{z \in \Z_\epsilon}\sum_{z' \in \Z_\epsilon} \phi^\epsilon_{r, t}(\epsilon z)\phi^\epsilon_{r+\widetilde r, t}(\epsilon z')O(\epsilon^{2} S)
			\\&=O(\epsilon^{1-\kappa+1+2\gamma-\kappa}S)C_\phi^2
			\\&=O(\epsilon^{1-\kappa+1+2\gamma-\kappa}S),
		\end{align*}
		which vanishes as $\epsilon\to 0$, and then $S \to \infty$, since $\kappa \in [0, 1+2\gamma]$ with $\kappa <3/8$ because ${\gamma \in [-1/2, -5/16)}$. Thus
		\begin{align*}
			A_\epsilon(s, t, S)& = 2\left(\frac{1}{2\epsilon^2 S}\right)^2 \int_s^{s+\epsilon^2 S} dr \int_0^{s-r+\epsilon^2 S}d\widetilde r\epsilon^{2+2\gamma-2\kappa} \sum_{z \in \Z_\epsilon}\sum_{z' \in \Z_\epsilon} \phi^\epsilon_{r, t}(\epsilon z)\phi^\epsilon_{r+\widetilde r, t}(\epsilon z')
			\\&\quad\quad\quad\quad\quad\quad\quad\quad\quad\times \sum_{\substack{x_1\in \X_\epsilon\\ n_{x_1}=2}}\G^\epsilon_{\widetilde r}(x^{2, z'}, x_1)\E[V^\epsilon(x^{2, z}, X_r^\epsilon; \rho_r^\epsilon)V^\epsilon(x_1, X_r^\epsilon; \rho^\epsilon_r)]+o(1),
		\end{align*}
		as $\epsilon\to 0$, then $S\to \infty$. To control the remaining integral, we distinguish three different cases in terms of $x_1$: $n_{x_1}(z)=2$, $n_{x_1}=0$, and $n_{x_1}(z)=1$. Suppose first that $x_1 = x^{2, z}=2\epsilon^\kappa\mathbbm{1}_{\{z\}}$. Then, we need to control
		\begin{align*}
			A_{0, \epsilon}(s, t, S) &\coloneqq \left(\frac{1}{2\epsilon^2 S}\right)^2 \int_s^{s+\epsilon^2 S} dr \int_0^{s-r+\epsilon^2 S}d\widetilde r\epsilon^{2+2\gamma-2\kappa} \sum_{z \in \Z_\epsilon}\sum_{z' \in \Z_\epsilon} \phi^\epsilon_{r, t}(\epsilon z)\phi^\epsilon_{r+\widetilde r, t}(\epsilon z') 
			\\&\quad\quad\quad\quad\quad\quad\quad\quad\quad\quad\quad\quad\quad\quad\quad\quad\times\G^\epsilon_{\widetilde r}(x^{2, z'}, x^{2, z})\E[V^\epsilon(x^{2, z}, X_{r}^\epsilon; \rho_r^\epsilon)^2].
		\end{align*}
		By the Green function estimate \eqref{ineq:basic_heat_estimate}, we have the bound
		\[
		\G^\epsilon_{\widetilde r}(x^{2, z'}, x^{2, z})\leq C_1 G^\epsilon_{\widetilde r}(z', z)\epsilon/\sqrt{\widetilde{r}},
		\]
		for some $C_1=C_1(T)$. Next, we show that for some $C_2=C_2(T)>0$, 
		\begin{equation}\label{ineq:goal_V_sq}
			\E[V^\epsilon(x^{2, z}, X_{r}^\epsilon; \rho_r^\epsilon)^2]\leq C_2\epsilon^\kappa,
		\end{equation}
		uniformly in $z\in \Z_\epsilon$ and $r\in [0, T]$. To see this, we note that by a direct computation
		\begin{align*}
			V^\epsilon(x^{2, z}, X_r^\epsilon;\rho_r^\epsilon)^2&=V^\epsilon(x^{4, z}, X_r^\epsilon; \rho_r^\epsilon)+\epsilon^\kappa\Bigg[6X_{r}^\epsilon(z)^3+6X_{r}^\epsilon(z)\rho_r^\epsilon(z)^2-12X_{r}^\epsilon(z)^2 \rho_r^\epsilon(z)
			\\&\quad\quad\quad\quad\quad\quad\quad\quad\quad\quad-2X_{r}^\epsilon(z)(X_{r}^\epsilon(z)-\rho_r^\epsilon(z))^2+13\epsilon^\kappa X_{r}^\epsilon(z)^2\Bigg].
		\end{align*}
		We take the expectation of both sides and apply Theorem \ref{thm:QLLN} and part two of Lemma \ref{lem:FKPP_approx_properties}, which yields
		\[
		\E[V^\epsilon(x^{2, z}, X_{r}^\epsilon;\rho_r^\epsilon)^2] = o(\epsilon^{1+\kappa}) + \epsilon^\kappa \E[f(X_{r}^\epsilon(z), \rho_r^\epsilon(z))],
		\]
		where 
		\[
		f(x, \rho)=6x^3+6x\rho^2-12x^2\rho-2x(x-\rho)^2+13\epsilon^\kappa x^2.
		\]
		By Corollary \ref{cor:SLBP_p_moments}, we have 
		\[
		\sup_{\epsilon \in (0, 1)}\sup_{r \in [0, T]}\max_{z \in \Z_\epsilon}|\E\left[f(X_r^\epsilon(z), \rho_r^\epsilon(z))\right]|<\infty,
		\]
		which proves \eqref{ineq:goal_V_sq}. Using the Green function estimate above, the bound \eqref{ineq:goal_V_sq}, the fact that ${\sum_{z\in \Z_\epsilon}G^\epsilon_{\widetilde r}(z', z)=1}$ for all $z'$ and $\widetilde{r}$, and \eqref{ineq:assump_time_phi}, we obtain 
		\begin{align*}
			&\left|\epsilon^{2+2\gamma-2\kappa} \sum_{z \in \Z_\epsilon}\sum_{z' \in \Z_\epsilon} \phi^\epsilon_{r, t}(\epsilon z)\phi^\epsilon_{r+\widetilde r, t}(\epsilon z')\G^\epsilon_{\widetilde r}(x^{2, z'}, x^{2, z})\E[V^\epsilon(x^{2, z}, X_{r}^\epsilon; \rho_r^\epsilon)^2]\right|
			\\& \leq C_1C_2C_\phi^2 \epsilon^{2+2\gamma-\kappa} /\sqrt{\widetilde{r}}.
		\end{align*}
		Consequently, 
		\begin{align*}
			|A_{0, \epsilon}(s, t, S)|&\leq C \left(\frac{1}{2\epsilon^2 S}\right)^2 \int_s^{s+\epsilon^2 S}dr \int_0^{s-r+\epsilon^2 S}d \widetilde{r} \epsilon/\sqrt{\widetilde{r}}
			\\&\leq C \left(\frac{1}{2\epsilon^2 S}\right)^2 \frac{4}{3}(\epsilon^2 S)^{3/2} \epsilon= CS^{-1/2},
		\end{align*}
		where $C\coloneqq C_1C_2C_\phi^2$. Thus $A_{0, \epsilon}(s, t, S)\to 0$ as $\epsilon \to 0$, and then $S\to \infty$. Next, we assume that $x_1(z)=0$, meaning that we have to control
		\begin{align*}
			A_{1, \epsilon}(s, t, S) &\coloneqq \left(\frac{1}{2\epsilon^2 S}\right)^2 \int_s^{s+\epsilon^2 S} dr \int_0^{s-r+\epsilon^2 S}d\widetilde r\epsilon^{2+2\gamma-2\kappa} \sum_{z \in \Z_\epsilon}\sum_{z' \in \Z_\epsilon} \phi^\epsilon_{r, t}(\epsilon z)\phi^\epsilon_{r+\widetilde{r}, t}(\epsilon z') 
			\\&\quad\quad\quad\quad\quad\quad\quad\quad\quad\quad\quad\times\sum_{\substack{x_1\in \X_\epsilon\\ n_{x_1}=2 \\ x_1(z)=0}}\G^\epsilon_{\widetilde r}(x^{2, z'}, x_1)\E[V^\epsilon(x^{2, z}, X_{r}^\epsilon; \rho_r^\epsilon)V^\epsilon(x_1, X_{r}^\epsilon; \rho_r^\epsilon)].
		\end{align*}
		By part one of Lemma \ref{lem:V_fcn_properties_LLN} and Theorem \ref{thm:QLLN}, we have 
		\[
		\E[V^\epsilon(x^{2, z}, X_{r}^\epsilon; \rho_r^\epsilon)V^\epsilon(x_1, X_{r}^\epsilon; \rho_r^\epsilon)]=\E[V^\epsilon(x^{2, z}+x_1, X_{r}^\epsilon; \rho_r^\epsilon)]=o(\epsilon^{1+\kappa}),
		\]
		as $\epsilon \searrow 0$, uniformly in $z \in \Z_\epsilon$, $x_1 \in \X_\epsilon$ with $n_{x_1}=2$, and $r \in [0, T]$. Using \eqref{ineq:assump_time_phi} and the fact that $\sum_{x_1}\G^\epsilon_{\widetilde r}(x^{2, z'}, x_1)<1$, this implies that 
		\begin{align*}
		|A_{1, \epsilon}(s, t, S)| &= o(\epsilon^{1+2\gamma-\kappa})C_\phi^2\left(\frac{1}{2\epsilon^2 S}\right)^2 \int_s^{s+\epsilon^2 S} dr \int_0^{s-r+\epsilon^2 S}d\widetilde r,
		\\&=o(\epsilon^{1+2\gamma-\kappa}),
		\end{align*}
		which tends to zero as $\epsilon\searrow 0$ since $\kappa \in [0, 1+2\gamma]$. The last case to check is when $n_{x_1}(z)=1$, for which we control
		\begin{align*}
			A_{2, \epsilon}(s, t, S) &\coloneqq \left(\frac{1}{2\epsilon^2 S}\right)^2 \int_s^{s+\epsilon^2 S} dr \int_0^{s-r+\epsilon^2 S}d\widetilde r\epsilon^{2+2\gamma-2\kappa} \sum_{z \in \Z_\epsilon}\sum_{z' \in \Z_\epsilon} \phi^\epsilon_{r, t}(\epsilon z)\phi^\epsilon_{r+\widetilde{r}, t}(\epsilon z') 
			\\&\quad\quad\quad\quad\quad\quad\quad\quad\quad\quad\quad\times\sum_{\substack{x_1\in \X_\epsilon\\ n_{x_1}=2 \\ n_{x_1}(z)=1}}\G^\epsilon_{\widetilde r}(x^{2, z'}, x_1)\E[V^\epsilon(x^{2, z}, X_{r}^\epsilon; \rho_r^\epsilon)V^\epsilon(x_1, X_{r}^\epsilon; \rho_r^\epsilon)].
		\end{align*}
		We write $x_1=x^{1, z}+x_1'$ for some configuration $x_1' \in \X_\epsilon$ with a single particle not located at $z$. By part three of Lemma \ref{lem:V_fcn_properties_LLN}, we obtain 
		\[
		V^\epsilon(x^{2, z}, X_{r}^\epsilon; \rho_r^\epsilon)V^\epsilon(x^{1, z}, X_{r}^\epsilon ;\rho_r^\epsilon) = \sum_{j=0}^3 c_j^\epsilon V^\epsilon(x^{j, z}, X_r^\epsilon;\rho_r^\epsilon).
		\]
		The first point in Lemma \ref{lem:V_fcn_properties_LLN} then implies 
		\begin{align*}
			V^\epsilon(x^{2, z}, X_{r}^\epsilon; \rho_r^\epsilon)V^\epsilon(x_1, X_{r}^\epsilon ;\rho_r^\epsilon)&=V^\epsilon(x_1', X_{r}^\epsilon;\rho_r^\epsilon)\sum_{j=0}^3 c_j^\epsilon V^\epsilon(x^{j, z}, X_{r}^\epsilon;\rho_r^\epsilon)
			\\&= \sum_{j=0}^3 c_j^\epsilon V^\epsilon(x_1'\cup x^{j, z}, X_{r}^\epsilon;\rho_r^\epsilon).
		\end{align*}
		By Theorem \ref{thm:QLLN}, we have $\E[V^\epsilon(x_1'\cup x^{j, z}, X_{r}^\epsilon;\rho_r^\epsilon)]=O(\epsilon^{1+\kappa})$ uniformly in $z \in \Z_\epsilon$, $j \in \{0, 1, 2, 3\}$, $r \in [0, T]$ since the configuration $x_1'\cup x^{j, z}$ contains at least one particle. We note using the decomposition $x_1=x^{1, z}+x_1'$ and the Green function estimate \eqref{ineq:basic_heat_estimate} that 
		\[
		\G^\epsilon_{\widetilde r}(x^{2, z'}, x_1) = G^\epsilon_{\widetilde r}(z', z)\G^\epsilon_{\widetilde r}(x^{1, z'}, x_1') \leq C\G^\epsilon_{\widetilde r}(x^{1, z'}, x_1') \epsilon/ \sqrt{\widetilde r},
		\]
		for some $C=C(T)>0$. Hence, using that $\sum_{x_1}\G^\epsilon_{\widetilde r}(x^{1, z'}, x_1')<1$ and that $\kappa \in [0, 1+2\gamma]$, we bound the integrand of $A_{2, \epsilon}(s, t, S)$ as follows:
		\begin{align*}
			&\Bigg|\epsilon^{2+2\gamma-2\kappa} \sum_{z \in \Z_\epsilon}\sum_{z' \in \Z_\epsilon} \phi^\epsilon_{r, t}(\epsilon z)\phi^\epsilon_{r+\widetilde{r}, t}(\epsilon z') \sum_{\substack{x_1\in \X_\epsilon\\ n_{x_1}=2 \\ n_{x_1}(z)=1}}\G^\epsilon_{\widetilde r}(x^{2, z'}, x_1)\E[V^\epsilon(x^{2, z}, X_{r}^\epsilon; \rho_r^\epsilon)V^\epsilon(x_1, X_{r}^\epsilon; \rho_r^\epsilon)]\Bigg|
			\\&\leq O(\epsilon^{2+2\gamma-\kappa}) C_\phi^2 /\sqrt{\widetilde{r}}
			\\&=O(\epsilon)/\sqrt{\widetilde{r}},
		\end{align*}
		for all $\epsilon \in (0, 1)$ and $\widetilde{r} \in [0, t-r+\epsilon^2 S]$. This estimate a similar calculation as for $A_{0, \epsilon}(s, t, S)$ above show that $|A_{2, \epsilon}(s, t, S)|=O(S^{-1/2})$ uniformly in $\epsilon\in (0, 1)$ which vanishes when $S\to \infty$. Thus \eqref{eq:A_eps_lim} holds, which concludes the proof of the Proposition. 	
	\end{proof}
	
	In the proof of the central limit theorem, we will control time integrals of $F^\epsilon_r$ using the following corollary to the Boltzmann-Gibbs principle. 
	
	\begin{cor}\label{cor:BGP_control}
		Fix $0\leq \theta\leq \overline{\theta}$ and let $t \in [0, T+\overline{\theta}]$ and $s \in [0, T]$ with $0\leq t-s\leq \theta$. Take a generalised test function $\phi^\epsilon$ as in Proposition \ref{prop:BGP}. Then, there exist constants $c_i=c_i(T, C_\phi)>0$, $i=1, 2$, such that
		\begin{equation}\label{lem:bgp_cor_bd}
		\E\left[\sup_{\tau \in [s, t]}\left(\int_{s}^{\tau} [F_r^\epsilon(\phi^\epsilon_{r, t})-Y_r^\epsilon(\widetilde{\rho}_r^{\:\epsilon} \phi^\epsilon_{r, t})]dr\right)^2\right]\leq c_1 \theta^2+c_2\epsilon^{4+2\gamma-2\kappa},
		\end{equation}
		for all $\epsilon \in (0, 1)$. Additionally, for any $\phi \in C^\infty(\S^1)$, there exist $c_i=c_i(T, \phi)>0$, $i =1, 2$ such that  
		\begin{equation}\label{lem:bgp_cor_bd_sup}
			\E\left[\sup_{\substack{0\leq s \leq t\leq T+\overline{\theta} \\ t-s\leq \theta}}\left(\int_{s}^{t} [F_r^\epsilon(\phi)-Y_r^\epsilon(\widetilde{\rho}_r^{\:\epsilon} \phi)]dr\right)^2\right]\leq c_1 \theta^2+c_2\epsilon^{2+2\gamma-2\kappa},
		\end{equation}
		for all $\epsilon \in (0, 1)$.
	\end{cor}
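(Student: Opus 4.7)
The plan is to derive both estimates from Proposition~\ref{prop:BGP} via a quantitative block decomposition of the time integral. Set $g^\phi_\epsilon(r) := F^\epsilon_r(\phi^\epsilon_{r,t}) - Y^\epsilon_r(\widetilde{\rho}^{\:\epsilon}_r \phi^\epsilon_{r,t})$; by \eqref{eq:f_minus_y} and the $O(\epsilon^{1+\kappa})$ bound from Theorem~\ref{thm:QLLN} on $\E[V^\epsilon(x^{2,z}, X^\epsilon_r; \rho^\epsilon_r)]$, the integrand is a centered sum over $z \in \Z_\epsilon$ of order-two $V^\epsilon$-functions, so both assertions reduce to controlling short-time integrals of such quantities.

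For \eqref{lem:bgp_cor_bd}, I fix an auxiliary integer $S>0$ (to be chosen later) and partition $[s, t]$ into $N_\epsilon := \lceil \theta/(\epsilon^2 S)\rceil$ consecutive sub-intervals $I_1, \ldots, I_{N_\epsilon}$ of length at most $\Delta := \epsilon^2 S$. For $\tau \in I_j$, applying Cauchy-Schwarz to the telescoping decomposition $\int_s^\tau = \sum_{i<j} \int_{I_i} + \int_{I_j \cap [s, \tau]}$ yields
\[
\sup_{\tau \in [s, t]}\left(\int_s^\tau g^\phi_\epsilon\,dr\right)^2 \leq N_\epsilon \sum_{i=1}^{N_\epsilon}\left(\int_{I_i \cap [s, t]} g^\phi_\epsilon\,dr\right)^2.
\]
The key step is then to revisit the proof of Proposition~\ref{prop:BGP} and extract, with constants tracked, the per-block estimate $\E[(\int_{I_i} g^\phi_\epsilon\,dr)^2] \leq c(T, C_\phi)\Delta^2\bigl(S^{-1/2} + \epsilon^{2+2\gamma-2\kappa}S\bigr)$, uniformly in the base point of the block and in $\epsilon$; the two contributions originate respectively from the Green-function time integral $\int_0^\Delta d\widetilde r/\sqrt{\widetilde r}$ that drives the $A_{0,\epsilon}$ and $A_{2,\epsilon}$ terms, and from the $O(\widetilde r)$ Markov-property correction, while the $A_{1,\epsilon} = o(\epsilon^{1+2\gamma-\kappa})$ contribution is absorbed via Theorem~\ref{thm:QLLN} applied to disjoint-support $V$-functions. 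Summing over blocks and using $N_\epsilon \Delta \leq \theta + \Delta$ yields
\[
\E\left[\sup_\tau\left(\int_s^\tau g^\phi_\epsilon\,dr\right)^2\right] \leq c(\theta^2 + \Delta^2)\bigl(S^{-1/2} + \epsilon^{2+2\gamma-2\kappa}S\bigr),
\]
and choosing $S$ to be a large absolute constant gives the claimed form $c_1\theta^2 + c_2\epsilon^{4+2\gamma-2\kappa}$, using $\epsilon^4 \leq \epsilon^{4+2\gamma-2\kappa}$ since $2\gamma - 2\kappa < 0$.

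For \eqref{lem:bgp_cor_bd_sup} with the fixed test function $\phi \in C^\infty(\S^1)$ and the outer supremum over pairs $(s, t)$ with $t - s \leq \theta$, I discretise $s$ onto a grid of mesh $\epsilon^2$ (of cardinality $O(\epsilon^{-2})$) and, for each grid value of $s$, apply \eqref{lem:bgp_cor_bd} to bound the inner supremum over $t \in [s, s+\theta]$. Summing over the grid, the extra factor $O(\epsilon^{-2})$ upgrades the residual $\epsilon^{4+2\gamma-2\kappa}$ into $\epsilon^{2+2\gamma-2\kappa}$, while the $\theta^2$ term absorbs the grid factor because its block-level coefficient already inherits the variance bound $\E[g^2_\epsilon(r)] = O(1)$ (by moment control via Corollary~\ref{cor:SLBP_p_moments} and part two of Lemma~\ref{lem:FKPP_approx_properties}). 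Oscillation between grid points in $s$ is controlled by $|A(s_1, t) - A(s_2, t)| \leq \int_{s_1}^{s_2}|g^\phi_\epsilon(r)|\,dr$ together with Cauchy-Schwarz on an interval of length at most $\epsilon^2$.

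The main obstacle will be the quantitative refinement of Proposition~\ref{prop:BGP}: the proposition is stated as a double-$\limsup$ and only asserts vanishing, so I must re-trace its proof and make all implicit constants explicit, uniform in the base point of each block and in the generalised time-dependent test profile $\phi^\epsilon_{r,t}$ satisfying \eqref{ineq:assump_time_phi}. This is primarily careful bookkeeping, but it is essential for the final constants $c_1, c_2$ to depend only on $T$ and $C_\phi$. A secondary technical point is the union-type handling of the outer supremum in \eqref{lem:bgp_cor_bd_sup}: since $\E\sup$ is not automatically dominated by the sum of expectations over a grid, one must argue via an a priori Lipschitz estimate in $s$ together with the block-scale bound, so that neither the $\theta^2$ coefficient nor the $\epsilon^{2+2\gamma-2\kappa}$ exponent is spoiled by the discretisation.
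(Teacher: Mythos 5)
Your treatment of \eqref{lem:bgp_cor_bd} is essentially the paper's argument with an extra tunable block length $\Delta=\epsilon^2 S$: the paper simply takes $S=1$, partitions $[s,t]$ into blocks of length $\epsilon^2$ plus one short remainder, bounds the block sum by Jensen together with the uniform bound furnished by Proposition~\ref{prop:BGP}, and bounds the remainder (of length $\le\epsilon^2$) by expanding $F^\epsilon-Y^\epsilon$ via \eqref{eq:f_minus_y} and using Corollary~\ref{cor:SLBP_p_moments} and Lemma~\ref{lem:FKPP_approx_properties}(2) to get $\epsilon^{4+2\gamma-2\kappa}$. Your plan to re-derive Proposition~\ref{prop:BGP} quantitatively in $S$ is more work than is needed, since the proof of that proposition already gives a uniform-in-$\epsilon$ bound at $S=1$, but the idea is sound.

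The step for \eqref{lem:bgp_cor_bd_sup} has a genuine gap. You propose to discretise $s$ onto a grid of mesh $\epsilon^2$ and, for each grid point $s_j$, invoke \eqref{lem:bgp_cor_bd} as a black box. The only way to pass from the per-$j$ bound $\E[\sup_\tau(\int_{s_j}^\tau)^2]\le c_1\theta^2+c_2\epsilon^{4+2\gamma-2\kappa}$ to a bound on the outer supremum is the union bound $\E[\sup_j X_j]\le\sum_j\E[X_j]$, which multiplies \emph{both} terms by the grid cardinality $O(\epsilon^{-2})$. The displacement of $\epsilon^{4+2\gamma-2\kappa}$ to $\epsilon^{2+2\gamma-2\kappa}$ that you advertise is then accompanied by $\epsilon^{-2}\theta^2$, which diverges, and your remark that ``the $\theta^2$ term absorbs the grid factor'' is not an argument one can extract from \eqref{lem:bgp_cor_bd} treated opaquely. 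To avoid this one has to open the decomposition: fix an \emph{absolute} grid $\{\epsilon^2 k\}$ independent of $(s,t)$, write the integral as a sum of deterministic block integrals $I_k$ plus two short edge terms, and bound $\sup_{s,t:\,t-s\le\theta}(\sum_k I_k)^2\le (\theta/\epsilon^2)\sum_{k\le(T+\overline\theta)/\epsilon^2}I_k^2$ \emph{before} taking expectations, which yields $\theta(T+\overline\theta)\cdot C$ rather than $\epsilon^{-2}\theta^2$. The edge terms also require a separate argument that your sketch does not touch: after changing variables they reduce to $\epsilon^{4}\sup_{r}[F^\epsilon_r(\phi)-Y^\epsilon_r(\widetilde\rho^{\,\epsilon}_r\phi)]^2$, and the paper shows this has expectation $O(\epsilon^{2+2\gamma-2\kappa})$ by dominating the total mass $\epsilon\sum_z X^\epsilon_t(z)$ by a rescaled pure birth process and running a BDG/Gr\"onwall argument on its fourth moment; a Cauchy--Schwarz bound on an interval of length $\epsilon^2$, which is all you mention, does not control the pathwise supremum needed here.
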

	\begin{proof}
		We use Proposition \ref{prop:BGP} and the assumption that $0\leq t-s\leq \theta$ to control the expectation on the left-hand side of \eqref{lem:bgp_cor_bd}. We partition the time integral over intervals of length $\epsilon^2$, plus an additional shorter one:
		\begin{align*}
			&\int_{s}^{\tau} [F_r^\epsilon(\phi^\epsilon_{r, t})-Y_r^\epsilon(\widetilde{\rho}_r^{\:\epsilon} \phi^\epsilon_{r, t})]dr 
			\\&= \sum_{k=1}^{\lfloor\epsilon^{-2}(\tau-s)\rfloor}\int_{s+\epsilon^2 (k-1)}^{s+\epsilon^2 k}[F_r^\epsilon(\phi^\epsilon_{r, t})-Y_r^\epsilon(\widetilde{\rho}_r^{\:\epsilon}\phi^\epsilon_{r, t})]dr
			\\&+\int_{s+\epsilon^2\lfloor\epsilon^{-2}(\tau-s)\rfloor}^{\tau}[F_r^\epsilon(\phi^\epsilon_{r, t})-Y_r^\epsilon(\widetilde{\rho}_r^{\:\epsilon} \phi^\epsilon_{r, t})]dr.
			\\&=\epsilon^2\lfloor\epsilon^{-2}(\tau-s)\rfloor\frac{1}{\lfloor\epsilon^{-2}(\tau-s)\rfloor}\sum_{k=1}^{\lfloor\epsilon^{-2}(\tau-s)\rfloor}\frac{1}{\epsilon^2}\int_{s+\epsilon^2 (k-1)}^{s+\epsilon^2 k}[F_r^\epsilon(\phi^\epsilon_{r, t})-Y_r^\epsilon(\widetilde{\rho}_r^{\:\epsilon}\phi^\epsilon_{r, t})]dr
			\\&+\int_{s+\epsilon^2\lfloor\epsilon^{-2}(\tau-s)\rfloor}^{\tau}[F_r^\epsilon(\phi^\epsilon_{r, t})-Y_r^\epsilon(\rho_r^{\:\epsilon} \phi^\epsilon_{r, t})]dr.
		\end{align*}
		By the inequality $(a+b)^2\leq 2(a^2+b^2)$ for all $a, b \in \R$ and then Jensen's inequality on the first term, we obtain 
		\begin{align}
			&\left(\int_{s}^{\tau} [F_r^\epsilon(\phi^\epsilon_{r, t})-Y_r^\epsilon(\widetilde{\rho}_r^{\:\epsilon} \phi^\epsilon_{r, t})]dr\right)^2
			\\&\notag\leq 2\epsilon^4\lfloor \epsilon^{-2}(t-s)\rfloor\sum_{k=1}^{\lfloor\epsilon^{-2}(\tau-s)\rfloor}\left(\frac{1}{\epsilon^2}\int_{s+\epsilon^2 (k-1)}^{s+\epsilon^2 k}[F_r^\epsilon(\phi^\epsilon_{r, t})-Y_r^\epsilon(\widetilde{\rho}_r^{\:\epsilon}\phi^\epsilon_{r, t})]dr\right)^2
			\\&+2\left(\int_{s+\epsilon^2\lfloor\epsilon^{-2}(\tau-s)\rfloor}^{\tau}[F_r^\epsilon(\phi^\epsilon_{r, t})-Y_r^\epsilon(\widetilde{\rho}_r^{\:\epsilon} \phi^\epsilon_{r, t})]dr\right)^2.\label{eq:decomp_BGP_application}
		\end{align}
		Taking the supremum over $\tau \in [s, t]$ and using that $0\leq t-s\leq \theta$, we find
		\begin{align}
			&\E\left[\sup_{\tau\in [s, t]}\left(\int_{s}^{\tau} [F_r^\epsilon(\phi^\epsilon_{r, t})-Y_r^\epsilon(\widetilde{\rho}_r^{\:\epsilon} \phi^\epsilon_{r, t})]dr\right)^2\right]
			\\&\notag\leq 2\epsilon^2\theta \sum_{k=1}^{\lfloor\epsilon^{-2}\theta\rfloor}\E\left[\left(\frac{1}{\epsilon^2}\int_{s+\epsilon^2 (k-1)}^{s+\epsilon^2 k}[F_r^\epsilon(\phi^\epsilon_{r, t})-Y_r^\epsilon(\widetilde{\rho}_r^{\:\epsilon}\phi^\epsilon_{r, t})]dr\right)^2\right]
			\\&+2\E\left[\sup_{\tau\in [s, t]}\left(\int_{s+\epsilon^2\lfloor\epsilon^{-2}(\tau-s)\rfloor}^{\tau}[F_r^\epsilon(\phi^\epsilon_{r, t})-Y_r^\epsilon(\widetilde{\rho}_r^{\:\epsilon} \phi^\epsilon_{r, t})]dr\right)^2\right].\label{eq:decomp_BGP_application}
		\end{align}
		We first control the expectation of the first term on the right-hand side. We have 
		\begin{align*}
			&2\epsilon^2\theta \sum_{k=1}^{\lfloor\epsilon^{-2}\theta\rfloor}\E\left[\left(\frac{1}{\epsilon^2}\int_{s+\epsilon^2 (k-1)}^{s+\epsilon^2 k}[F_r^\epsilon(\phi^\epsilon_{r, t})-Y_r^\epsilon(\widetilde{\rho}_r^{\:\epsilon}\phi^\epsilon_{r, t})]dr\right)^2\right]
			\\&\leq 2\theta^2\max_{k \in \{1, \cdots, \lfloor \epsilon^{-2} \theta\rfloor\}}\E\left[\left(\frac{1}{\epsilon^2}\int_{s+\epsilon^2 (k-1)}^{s+\epsilon^2 k}[F_r^\epsilon(\phi^\epsilon_{r, t})-Y_r^\epsilon(\widetilde{\rho}_r^{\:\epsilon}\phi^\epsilon_{r, t})]dr\right)^2\right]
		\end{align*}
		By Proposition \ref{prop:BGP}, the right-hand side is bounded by a constant $c_1=c_1(T)>0$ uniformly over $\epsilon \in (0, 1)$, $k \in \{1, \cdots, \lfloor \epsilon^{-2} \theta\rfloor\}$, and $s\in [0, T+\theta]$. So
		\[
		2\epsilon^2\theta \sum_{k=1}^{\lfloor\epsilon^{-2}\theta\rfloor}\E\left[\left(\frac{1}{\epsilon^2}\int_{s+\epsilon^2 (k-1)}^{s+\epsilon^2 k}[F_r^\epsilon(\phi^\epsilon_{r, t})-Y_r^\epsilon(\widetilde{\rho}_r^{\:\epsilon}\phi^\epsilon_{r, t})]dr\right)^2\right]\leq c_1 \theta^2.
		\]
		We now bound the expectation of the second term on the right-hand side of \eqref{eq:decomp_BGP_application}. Recall from \eqref{eq:f_minus_y} that 
		\[
		F_r^\epsilon(\phi^\epsilon_{r, t})-Y_r^\epsilon(\widetilde{\rho}_r^{\:\epsilon} \phi^\epsilon_{r, t}) = \frac{\epsilon^{1+\gamma-\kappa}}{2}\sum_{z \in \Z_\epsilon} \big\{V^\epsilon(x^{2, z}, X_r^\epsilon;\rho_r^{\:\epsilon})-\E[V^\epsilon(x^{2, z}, X_r^\epsilon;\widetilde{\rho}_r^{\:\epsilon})]\big\}\phi^\epsilon_{r, t}(\epsilon z).
		\]
		We introduce the notation $\tau'\coloneqq s+\epsilon^2\lfloor\epsilon^{-2}(\tau-s)\rfloor$ and make the change of variables $r'=\epsilon^{-2}(r-\tau')$ to obtain
		\begin{align*}
			&\int_{\tau'}^{\tau}[F_r^\epsilon(\phi^\epsilon_{r, t})-Y_r^\epsilon(\widetilde{\rho}^{\:\epsilon}_r \phi^\epsilon_{r, t})]dr
			\\&=\epsilon^{2+\gamma-\kappa}\int_0^{\epsilon^{-2}(\tau-\tau')} \frac{\epsilon}{2}\sum_{z \in \Z_\epsilon}\big\{V^\epsilon(x^{2, z}, X_{\tau'+\epsilon^2 r'}^\epsilon;\widetilde{\rho}_{\tau'+\epsilon^2 r'}^{\:\epsilon})
			\\&\quad\quad\quad\quad\quad\quad\quad\quad\quad\quad\quad\quad\quad\quad\quad\quad\quad-\E[V^\epsilon(x^{2, z}, X_{\tau'+\epsilon^2 r'}^\epsilon;\widetilde{\rho}_{\tau'+\epsilon^2 r'}^{\:\epsilon})]\big\}\phi^\epsilon_{\tau'+\epsilon^2 r', u}(\epsilon z)dr'.
		\end{align*}
		We remark that $\epsilon^{-2}(\tau-\tau')\leq 1$. Therefore, by Jensen's inequality twice, assumption \eqref{ineq:assump_time_phi} on the test function, the fact that the second moment of $V^\epsilon$ is uniformly bounded by Corollary \ref{cor:SLBP_p_moments}, and part two of Lemma \ref{lem:FKPP_approx_properties}, we obtain for some ${c_2=c_2(T, C_\phi)>0}$ the bound 
		\[
		\E\left[\sup_{\tau\in [s, t]}\left(\int_{s+\epsilon^2\lfloor\epsilon^{-2}(\tau-s)\rfloor}^{\tau}[F_r^\epsilon(\phi^\epsilon_{r, t})-Y_r^\epsilon(\widetilde{\rho}^{\:\epsilon}_r \phi^\epsilon_{r, t})]dr\right)^2\right]\leq c_2 \epsilon^{4+2\gamma-2\kappa}.
		\]
		This shows \eqref{lem:bgp_cor_bd}. For \eqref{lem:bgp_cor_bd_sup}, we use the decomposition
		\begin{equation}\label{eq:sup_cas_decomp}
		\begin{aligned}
			\int_{s}^{t} [F_r^\epsilon(\phi)-Y_r^\epsilon(\widetilde{\rho}_r^{\:\epsilon} \phi)]dr& = \sum_{k=\lceil \epsilon^{-2}s\rceil+1}^{\lfloor \epsilon^{-2}t\rfloor} \int_{\epsilon^2(k-1)}^{\epsilon^2 k} [F_r^\epsilon(\phi)-Y_r^\epsilon(\widetilde{\rho}_r^{\:\epsilon} \phi)]dr
			\\&+\int_s^{\epsilon^2\left(\lceil \epsilon^{-2}s\rceil+1\right)} [F_r^\epsilon(\phi)-Y_r^\epsilon(\widetilde{\rho}_r^{\:\epsilon} \phi)] dr
			\\&+\int_{\epsilon^2\lfloor \epsilon^{-2} t\rfloor}^t [F_r^\epsilon(\phi)-Y_r^\epsilon(\widetilde{\rho}_r^{\:\epsilon} \phi)]dr.
		\end{aligned}
		\end{equation}
		Then if $\lceil \epsilon^{-2}s\rceil+1\leq \lfloor \epsilon^{-2}t\rfloor$, we have
		\begin{align*}
			&\E\left[ \sup_{\substack{0\leq s\leq t\leq T+\overline{\theta} \\ t-s\leq \theta}}\left(\int_s^t [F^\epsilon_r(\phi)-Y^\epsilon_r(\widetilde{\:\epsilon}_r \phi)]dr\right)^2\right]
			\\&\quad\quad\quad\quad\quad\leq 9\E\left[ \sup_{\substack{0\leq s\leq t\leq T+\overline{\theta} \\ t-s\leq \theta}} \left(\sum_{k=\lceil \epsilon^{-2}s\rceil+1}^{\lfloor \epsilon^{-2}t\rfloor}\int_{\epsilon^2(k-1)}^{\epsilon^2 k} [F_r^\epsilon(\phi)-Y_r^\epsilon(\widetilde{\rho}_r^{\:\epsilon} \phi)]dr\right)^2\right]
			\\&\quad\quad\quad\quad\quad+ 9\E\left[\sup_{\substack{0\leq s\leq t\leq T+\overline{\theta} \\ t-s\leq \theta}}\left(\int_s^{\epsilon^2\left(\lceil \epsilon^{-2}s\rceil+1\right)} [F_r^\epsilon(\phi)-Y_r^\epsilon(\widetilde{\rho}_r^{\:\epsilon} \phi)] dr\right)^2\right]
			\\&\quad\quad\quad\quad\quad+ 9\E\left[\sup_{\substack{0\leq s\leq t\leq T+\overline{\theta} \\ t-s\leq \theta}}\left(\int_{\epsilon^2\lfloor \epsilon^{-2} t\rfloor}^t [F_r^\epsilon(\phi)-Y_r^\epsilon(\widetilde{\rho}_r^{\:\epsilon} \phi)]dr\right)^2\right].
		\end{align*}
		We control each term in turn. For the first term, by Jensen's inequality and the fact that $\lceil \epsilon^{-2}s\rceil+1\leq \lfloor \epsilon^{-2}t\rfloor$, we have
		\begin{align*}
			&\E\left[ \sup_{\substack{0\leq s\leq t\leq T+\overline{\theta} \\ t-s\leq \theta}} \left(\sum_{k=\lceil \epsilon^{-2}s\rceil+1}^{\lfloor \epsilon^{-2}t\rfloor}\int_{\epsilon^2(k-1)}^{\epsilon^2 k} [F_r^\epsilon(\phi)-Y_r^\epsilon(\widetilde{\rho}_r^{\:\epsilon} \phi)]dr\right)^2\right]
			\\&\leq \E\left[ \sup_{\substack{0\leq s\leq t\leq T+\overline{\theta} \\ t-s\leq \theta}} \left\{\left(\lfloor \epsilon^{-2}t\rfloor-\lfloor \epsilon^{-2}s\rfloor\right) \sum_{k=\lceil \epsilon^{-2}s\rceil+1}^{\lfloor \epsilon^{-2}t\rfloor} \left(\int_{\epsilon^2(k-1)}^{\epsilon^2 k} [F_r^\epsilon(\phi)-Y_r^\epsilon(\widetilde{\rho}_r^{\:\epsilon} \phi)]dr\right)^2\right\}\right]
			\\&\leq \epsilon^2\theta\sum_{k=0}^{\lfloor \epsilon^{-2} (T+\overline{\theta})\rfloor}\E\left[\left(\epsilon^{-2}\int_{\epsilon^2(k-1)}^{\epsilon^2 k} [F_r^\epsilon(\phi)-Y_r^\epsilon(\widetilde{\rho}_r^{\:\epsilon} \phi)]dr\right)^2\right].
		\end{align*}
		By Proposition \ref{prop:BGP}, the expectation is bounded by a constant $C>0$ depending on $T+\overline{\theta}$ and $\|\phi\|_\infty$, uniformly in $k \in \{0, \cdots, \lfloor \epsilon^{-2} T\rfloor\}$ and $\epsilon \in (0, 1)$. Thus
		\[
		\E\left[ \sup_{\substack{0\leq s\leq t\leq T+\overline{\theta} \\ t-s\leq \theta}} \left\{\left(\lfloor \epsilon^{-2}t\rfloor-\lfloor \epsilon^{-2}s\rfloor\right) \sum_{k=\lceil \epsilon^{-2}s\rceil+1}^{\lfloor \epsilon^{-2}t\rfloor} \left(\int_{\epsilon^2(k-1)}^{\epsilon^2 k} [F_r^\epsilon(\phi)-Y_r^\epsilon(\widetilde{\rho}_r^{\:\epsilon} \phi)]dr\right)^2\right\}\right]\leq \theta (T+\overline{\theta})C.
		\]
		The last two terms are similar so we only show the details for the first one. By Jensen's inequality, we have 
		\begin{align*}
			&\E\left[\sup_{\substack{0\leq s\leq t\leq T+\overline{\theta} \\ t-s\leq \theta}}\left(\int_s^{\epsilon^2\left(\lceil \epsilon^{-2}s\rceil+1\right)} [F_r^\epsilon(\phi)-Y_r^\epsilon(\widetilde{\rho}_r^{\:\epsilon} \phi)] dr\right)^2\right]
			\\&\leq \E\left[\sup_{\substack{0\leq s\leq t\leq T+\overline{\theta} \\ t-s\leq \theta}}\epsilon^2\int_s^{\epsilon^2\left(\lceil \epsilon^{-2}s\rceil+1\right)} [F_r^\epsilon(\phi)-Y_r^\epsilon(\widetilde{\rho}_r^{\:\epsilon} \phi)]^2 dr\right]
			\\&\leq \E\left[\epsilon^4 \sup_{r \in [0, T+\overline{\theta}+1]} [F_r^\epsilon(\phi)-Y^\epsilon_r(\widetilde{\rho}^{\:\epsilon}_r \phi)]^2\right]. 
		\end{align*}
		By \eqref{eq:f_minus_y}, we have 
		\begin{align*}
			\epsilon^4[F_r^\epsilon(\phi)-Y^\epsilon_r(\widetilde{\rho}^{\:\epsilon}_t \phi)]^2&=\frac{\epsilon^{6+2\gamma-2\kappa}}{4} \left(\sum_{z \in \Z_\epsilon} X_r^\epsilon(z)^2 \phi(\epsilon z)-\sum_{z \in \Z_\epsilon}X_r^\epsilon(z)(1+\widetilde{\rho}_r^{\:\epsilon}(\epsilon z))\phi(\epsilon z)\right)^2.
		\end{align*}
		By part two of Lemma \ref{lem:FKPP_approx_properties}, there is a constant $C=C(T+\overline{\theta}+1)>0$ such that $|\widetilde{\rho}_r^{\:\epsilon}(\epsilon z)|\leq C$ uniformly in $r\in [0, T+\overline{\theta}+1]$. Therefore, using the elementary inequalities $(a+b)^2 \leq 2(a^2+b^2)$ and $\sum_{i=1}^n a_i^2 \leq \left(\sum_{i=1}^n a_i\right)^2$ for $a_i\geq 0$, we obtain
		\begin{align*}
			\epsilon^4[F_r^\epsilon(\phi)-Y^\epsilon_r(\widetilde{\rho}^{\:\epsilon}_t \phi)]^2&\leq \|\phi\|_\infty\frac{\epsilon^{6+2\gamma-2\kappa}}{2}\left[\bigg(\sum_{z \in \Z_\epsilon} X_r^\epsilon(z)^2\bigg)^2+\bigg( \sum_{z \in \Z_\epsilon}X_r^\epsilon(z)\bigg)^2(1+C)\right]
			\\&\leq \|\phi\|_\infty\frac{\epsilon^{2+2\gamma-2\kappa}}{2}(2+C)\bigg(\epsilon\sum_{z \in \Z_\epsilon} X_r^\epsilon(z)\bigg)^4.
		\end{align*}
		The process $\left(\epsilon \sum_{z \in \Z_\epsilon} X_t^\epsilon(z)\right)_{t \geq 0}$ is stochastically dominated by $\overline{X}_t^\epsilon\coloneqq \epsilon^{1+\kappa} \overline{N}_t^\epsilon$, where $(\overline{N}_t^\epsilon)_{t\geq 0}$ is a pure birth process with transitions $\overline{N}\mapsto \overline{N}+\ell$ at rate $\overline{N}p_\ell^\epsilon$ and started from ${\overline{N}^\epsilon_0=\sum_{z \in \Z_\epsilon} N_0^\epsilon(z)}$ where $\rN^\epsilon$ is the re-scaled SLBP (Definition \ref{def:rescaled_slbp}). One easily computes the semimartingale decomposition of $\overline{X}^\epsilon$:
		\[
		\overline{X}^\epsilon_t=\overline{X}^\epsilon_0 + \mu_\epsilon \int_0^t \overline{X}^\epsilon_s ds + M_t^\epsilon,
		\] 
		for some martingale $(M^\epsilon_t)_{t\geq 0}$. Taking the supremum over $t \in [0, T]$, the fourth power, and then expectations, we obtain
		\begin{align*}
			\E\left[\sup_{t \in [0, T]}\left(\overline{X}_t^\epsilon\right)^4\right] \leq 3^4\E\left[\left(\overline{X}_0^\epsilon\right)^4\right] +3^4\mu_\epsilon^q T^{3}\int_0^T \E\left[\sup_{r \in [0, s]}\left(\overline{X}_s^\epsilon\right)^4\right]ds + 3^4\E\left[\sup_{t\in [0, T]}(M_t^\epsilon)^4\right].
		\end{align*} 
		By Assumption \ref{assump:technical_X_0}.a, the first of the three terms on the right-hand side is bounded by a constant uniformly in $\epsilon$. By \cite[page 37, second inequality of second display]{LLP2004}, we have
		\[
		\E\left[\sup_{t\in [0, T]}(M_t^\epsilon)^4\right] \leq c\left(\E\left[\langle M_\cdot^\epsilon\rangle_T^2\right]+\E\left[\sup_{t \in [0, T]} |M_t^\epsilon-M_{t-}^\epsilon|^4\right]\right),
		\]
		for some constant $c>0$, where $\langle M^\epsilon_\cdot\rangle_T$ the predictable quadratic variation of $M^\epsilon$ is given by
		\[
		\langle M^\epsilon\rangle_T = \epsilon^{1+\kappa}(\sigma_\epsilon^2+\mu_\epsilon^2)\int_0^T \overline{X}_s^\epsilon ds. 
		\]
		By Jensen's inequality and $x^2\leq 1+x^4$, 
		\begin{align*}
		\E\left[\langle M_\cdot^\epsilon\rangle_T^2\right]&\leq \epsilon^{2+2\kappa}(\sigma_\epsilon^2+\mu_\epsilon^2)^2T\int_0^T \E\left[\sup_{r \in [0, s]}(\overline{X}_r)^2\right] ds 
		\\&\leq \epsilon^{2+2\kappa}(\sigma_\epsilon^2+\mu_\epsilon^2)^2\left(T^2+T\int_0^T \E\left[\sup_{r \in [0, s]}(\overline{X}_s)^4\right] ds\right).
		\end{align*}
		Moreover, we observe that $|M_t^\epsilon-M_{t-}^\epsilon|=\overline{X}_t^\epsilon-\overline{X}_{t-}^\epsilon=\epsilon^{1+\kappa}L(\epsilon)\leq \epsilon$ almost surely. It follows from Gr\"onwall's inequality that 
		\[
		C'\coloneqq \sup_{\epsilon \in (0, 1)}\E\left[\sup_{t \in [0, T]}\left(\overline{X}_t^\epsilon\right)^4\right]<\infty.
		\]
		Hence, 
		\[
		\E\left[\epsilon^4 \sup_{r \in [0, T+\overline{\theta}+1]} [F_r^\epsilon(\phi)-Y^\epsilon_r(\widetilde{\rho}^{\:\epsilon}_r \phi)]^2\right]\leq \|\phi\|_\infty\frac{\epsilon^{2+2\gamma-2\kappa}}{2}(2+C)C'. 
		\]
		This proves \eqref{lem:bgp_cor_bd_sup} assuming $\lceil \epsilon^{-2}s\rceil+1\leq \lfloor \epsilon^{-2}t\rfloor$. Otherwise, we have $0\leq t-s\leq 2\epsilon^\kappa$, and the method just used applies directly to the left-hand side of \eqref{eq:sup_cas_decomp}, implying \eqref{lem:bgp_cor_bd_sup}. This concludes the proof of the corollary. 
	\end{proof}
	
	\subsection{Moments and tightness}\label{sec:tight_fluct}
	
	The aim of this section is to show tightness of the SLBP fluctuations process introduced in Definition \ref{def:fluct_field_def}. 
	\begin{prop}\label{prop:Brownian_fluct_tight}
		The family $\{\rY^\epsilon:\epsilon \in (0, 1)\}$ is tight in $\cD([0, T], C^\infty(\S^1)')$. 
	\end{prop}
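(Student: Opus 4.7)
The plan is to mirror the proof of tightness for the law of large numbers (Proposition~\ref{prop:equicty_LLN}) but applied to the centred and rescaled fluctuation field. By Mitoma's theorem \cite{Mitoma1983}, it suffices to show that $\{Y^\epsilon(\phi):\epsilon\in(0,1)\}$ is tight in $\cD([0,T],\R)$ for every $\phi \in C^\infty(\S^1)$, which I verify via Aldous' criterion: (i) uniform tightness of the marginals $\{Y_t^\epsilon(\phi)\}$ for each $t\in[0,T]$, and (ii) $\limsup_{\epsilon \searrow 0}\E[|Y^\epsilon_{\tau_\epsilon+\theta_\epsilon}(\phi) - Y^\epsilon_{\tau_\epsilon}(\phi)|] = 0$ for any $[0,T]$-valued stopping times $\tau_\epsilon$ and any $\theta_\epsilon\to 0$.

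For (i), I would bound $\E[Y_t^\epsilon(\phi)^2]$ uniformly by splitting the double sum into its diagonal and off-diagonal parts. Combining the identities \eqref{eq:V_fcn_2_pcs_diag}-\eqref{eq:V_fcn_2_pcs_not_diag} with Remark~\ref{rmk:centering} expresses $\mathrm{Cov}(X_t^\epsilon(z),X_t^\epsilon(z'))$ through $v$-functions of order two, and Theorem~\ref{thm:QLLN} (with $p=2$) then yields an $O(\epsilon^{1+\kappa})$ bound off-diagonal and an $O(\epsilon^\kappa)$ bound on the variance at each single site. The explicit factor $\epsilon^{2(1+\gamma-\kappa)}$ in $Y_t^\epsilon(\phi)^2$ cancels the two surviving powers of $\epsilon^{-1}$ coming from the sums over $\Z_\epsilon$ (since $1+2\gamma-\kappa=0$ when $\gamma=(\kappa-1)/2$), producing a uniform bound in $t \in [0,T]$ and $\epsilon \in (0,1)$.

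For (ii), I would apply Dynkin's formula to the linear functional $f_\phi(x) = \epsilon\sum_{z\in\Z_\epsilon}x(z)\phi(\epsilon z)$, subtract its expectation, and multiply by $\epsilon^{\gamma-\kappa}$ to obtain the semimartingale decomposition
\begin{equation*}
Y_t^\epsilon(\phi) - Y_s^\epsilon(\phi) = \int_s^t\Bigl[\tfrac{1}{2}Y_r^\epsilon(\Delta^\epsilon\phi) + \mu_\epsilon Y_r^\epsilon(\phi) - F_r^\epsilon(\phi)\Bigr]\,dr + M_t^\epsilon(\phi) - M_s^\epsilon(\phi),
\end{equation*}
where $F^\epsilon$ is the non-linear field from \eqref{def:non_lin_fluct} and $M^\epsilon(\phi)$ is a martingale. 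Specialising to $s=\tau_\epsilon$, $t=\tau_\epsilon+\theta_\epsilon$, the two linear drift terms are $O(\theta_\epsilon)$ in $L^1$ by Jensen's inequality combined with the uniform $L^2$ bound from (i) (and the uniform boundedness of $\Delta^\epsilon\phi$ for $\phi \in C^\infty(\S^1)$). The non-linear term is handled by estimate~\eqref{lem:bgp_cor_bd_sup} of Corollary~\ref{cor:BGP_control}, which shows that $\int_s^t F_r^\epsilon(\phi)\,dr$ differs in $L^2$ from $\int_s^t Y_r^\epsilon(\widetilde{\rho}^{\:\epsilon}_r\phi)\,dr$ by $c_1\theta_\epsilon^2 + c_2\epsilon^{2+2\gamma-2\kappa}$ (both vanishing since $\kappa<1$), and the latter integral is again $O(\theta_\epsilon)$ by (i). The martingale increment is controlled via the It\^o isometry: a direct carr\'e du champ computation expresses $\langle M^\epsilon(\phi)\rangle_t - \langle M^\epsilon(\phi)\rangle_s$ as a time integral of a polynomial of degree at most two in $X^\epsilon$, with the rescaling factor $\epsilon^{2(\gamma-\kappa)}$ exactly balancing the $\epsilon^{2+\kappa}$ prefactors produced by the squared jump sizes, and Corollary~\ref{cor:SLBP_p_moments} then delivers a uniform $O(\theta_\epsilon)$ bound on the expected quadratic variation increment.

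The principal obstacle is the non-linear drift $F^\epsilon(\phi)$: the generator applied to a linear functional of $x$ produces a quadratic contribution in $X^\epsilon$ that does not vanish under the fluctuation rescaling, precluding a direct bound in terms of $Y^\epsilon$. The Boltzmann-Gibbs principle, via Corollary~\ref{cor:BGP_control}, is precisely the tool that linearises this quadratic contribution at the level of time-integrated expectations, thereby reducing the analysis back to the linear $L^2$ control provided by (i).
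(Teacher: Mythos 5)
Your proposal is correct and follows the paper's overall architecture (Mitoma plus Aldous), and your treatment of the equicontinuity condition (ii) and the non-linear drift $F^\epsilon$ is the paper's own route: the semimartingale decomposition, the insertion of $Y_r^\epsilon(\widetilde\rho_r^{\:\epsilon}\phi)$, Corollary~\ref{cor:BGP_control} for the Boltzmann--Gibbs difference, and the It\^o isometry plus moment bounds for the martingale increment. Where you diverge, genuinely, is in step (i): you compute $\E[Y_t^\epsilon(\phi)^2]$ directly from the $v$-function estimates of Theorem~\ref{thm:QLLN}, writing $X_t^\epsilon(z)-\E[X_t^\epsilon(z)] = (X_t^\epsilon(z)-\rho_t^\epsilon(z))-v_t^\epsilon(\epsilon^\kappa\mathbbm{1}_{\{z\}})$ and reading off the covariances from \eqref{eq:V_fcn_one_loc}--\eqref{eq:V_fcn_2_pcs_diag}; with $p=2$ the on- and off-diagonal contributions are $O(\epsilon^\kappa)$ and $O(\epsilon^{1+\kappa})$ respectively, and with $1+2\gamma-\kappa=0$ both give uniformly $O(1)$ after the Riemann sums. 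The paper instead proves the same moment bound (Lemma~\ref{lem:finite_fluct_moment}) indirectly, via the martingale problem with the \emph{time-dependent} test function $\phi^\epsilon_{s,t}$ solving the backward equation \eqref{eq:backward_test_eq}, so that the drift collapses to the small Boltzmann--Gibbs difference and Corollary~\ref{cor:BGP_control} can be applied. Your direct computation is more elementary and removes the dependence of the tightness argument on the backward-PDE machinery (Appendix~\ref{append:time_dep_test_fcns}); the paper's version has the advantage of being a rehearsal for the identification of the limit in Section~\ref{sec:id_fluct_lim}, where the time-dependent test functions are unavoidable, and of applying uniformly in $t$ to the family $\phi^\epsilon_{\cdot,t}$ at once. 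Both are correct; just make explicit, in your step (i), that you apply the same second-moment bound to $\Delta^\epsilon\phi$ (using $\sup_\epsilon\|\Delta^\epsilon\phi\|_\infty<\infty$ for smooth $\phi$) and to $\widetilde\rho_r^{\:\epsilon}\phi$ (using part 2 of Lemma~\ref{lem:FKPP_approx_properties}), since those are the two test functions that actually appear in step (ii).
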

	\noindent By Mitoma's Theorem (Theorem 4.1 in \cite{Mitoma1983}), Proposition \ref{prop:Brownian_fluct_tight} follows from tightness of $\{Y^\epsilon(\phi):\epsilon \in (0, 1)\}$ for each $\phi \in C^\infty(\S^1)$. By Aldous' Criterion (Theorem 16.10 and equation (16.32) in \cite{B1999}), it suffices to show the following two conditions are satisfied for each $\phi \in C^\infty(\S^1)$.
	\begin{enumerate}
		\item[1.] For all $t \in [0, T]$, $\{Y^\epsilon_t(\phi):\epsilon \in (0, 1)\}$ is tight in $\R$.
		\item[2.] Fix any family ${\{\tau_\epsilon:\epsilon \in (0, 1)\}}$ of $[0, T]$-valued stopping times and a family ${\{\theta_\epsilon: \epsilon\in (0, 1)\}\subset [0, \infty)}$ with $\lim_{\epsilon \searrow 0}\theta_\epsilon=0$. Then, for any $\eta>0$, we have 
		\begin{equation}\label{cond:Ald_2_fluct}
		\limsup_{\epsilon \searrow 0}\P\left(|Y^\epsilon_{\tau_\epsilon +\theta_\epsilon}(\phi)-Y^\epsilon_{\tau_\epsilon}(\phi)|>\eta\right)=0.
		\end{equation}
	\end{enumerate}
	
	The most important tool in the proof is Corollary~\ref{cor:BGP_control} of the previous section which allows us to control the non-linear terms in the generator of the non-equilibrium fluctuations $\rY^\epsilon$. To apply this corollary in our proof of tightness of $\{\rY^\epsilon:\epsilon\in (0, 1)\}$, we need to introduce a term of the form $Y^\epsilon_r(\widetilde{\rho}^{\:\epsilon}_r\phi)$, for some $\phi\in C^\infty(\S^1)$, which is not originally present in the generator of $\rY^\epsilon$. To this end, given $\phi \in C^\infty(\S^1)$ possibly dependent on $\epsilon$, we study a martingale problem corresponding to $(Y^\epsilon_s(\phi^\epsilon_{s,t}))_{s \in [0, t]}$ for a specific test function of the form 
	\[
	\phi^\epsilon :\{(s,t) \in [0, \infty)^2:0\leq s\leq t\}\times \S^1\to \R, \quad ((s, t), z)\mapsto \phi^\epsilon_{s, t}(z),
	\]
	defined as follows. Recall that $\pi : \R \to \S^1$ denotes the canonical projection map, and define $\S^{1, \epsilon}\coloneqq \pi([0, 1]\cap K_\epsilon^{-1} \Z)$. We define $\phi^\epsilon$ as the unique solution to the backward terminal value problem
	\begin{equation}\label{eq:backward_test_eq}
		\begin{cases}
			\partial_s \phi^\epsilon_{s, t}(z)+\frac{1}{2}\Delta^\epsilon \phi^\epsilon_{s, t}(z)(\mu_\epsilon-\widetilde{\rho}^{\:\epsilon}_s(z))\phi^\epsilon_{s, t}(z) = 0, & s\in [0, t), \quad z \in \S^{1, \epsilon},
			\\\phi_{t, t}^\epsilon(z)=\phi(z), &z \in \S^{1, \epsilon},
		\end{cases}
	\end{equation}
	where the discrete Laplacian $\Delta^\epsilon$ is defined on all functions $\psi:\S^1\to \R$ by 
	\[
	\Delta^\epsilon  \psi(z)\coloneqq \epsilon^{-2}\left(\psi(z+K_\epsilon^{-1})+\psi(z-K_\epsilon^{-1})-2\psi(z)\right), \quad z \in \S^1.
	\] 
	We then extend the definition of $\phi^\epsilon$ to all of $\S^1$ by choosing an arbitrary $C^3$ interpolation. In Appendix \ref{append:time_dep_test_fcns}, we show existence and uniqueness of a solution which is continuous in $s$ and $t$, and we prove a priori uniform bounds on the solution. We also show convergence to a limiting problem. In the following lemma, we introduce a useful martingale.
	\begin{lem}\label{lem:fluct_mg_pb}
		Recall the definition of the non-linear fluctuations field $F^\epsilon$ in \eqref{def:non_lin_fluct}. For any $t \geq 0$, the process $(M^\epsilon_s(\phi_{\cdot, t}^\epsilon))_{s \in [0,t)}$ defined by 
		\begin{equation}\label{eq:useful_mg_fluct}
		M^\epsilon_s(\phi_{\cdot, t}^\epsilon)\coloneqq Y^\epsilon_s(\phi_{s, t}^\epsilon) - Y^\epsilon_0(\phi^\epsilon_{0, t})-\int_0^s \left(Y_r^\epsilon(\widetilde{\rho}^{\:\epsilon}_r \phi^\epsilon_{r, t})-F_r^\epsilon(\phi^\epsilon_{r, t})\right)dr,
		\end{equation}
		for $s \in [0, t)$, is a martingale with predictable quadratic variation 
		\begin{equation}\label{eq:fluct_quad_var_0}
		\langle M^\epsilon_\cdot(\phi_{\cdot, t}^\epsilon)\rangle_s = \int_0^s \Gamma_r^\epsilon(\phi_{r, t}^\epsilon)dr,
		\end{equation}
		where 
		\begin{equation}\label{eq:fluct_quad_var}
		\begin{aligned}
		\Gamma_r^\epsilon(\phi_{r, t}^\epsilon)&=\frac{\epsilon}{2}\sum_{z \in \Z_\epsilon}\phi_{r, t}^\epsilon(\epsilon z)^2 \left[X_r^\epsilon(z)(X_r^\epsilon(z)-\epsilon^\kappa)+X_r^\epsilon(z)(\sigma_\epsilon^2+\mu_\epsilon^2)\right]
		\\&+\frac{\epsilon}{2} \sum_{z \in \Z_\epsilon}X_s^\epsilon(z)\bigg[\left(\nabla^{\epsilon, -}\phi_{r, t}^\epsilon(\epsilon z)\right)^2+\left(\nabla^{\epsilon, +}\phi_{r, t}^\epsilon(\epsilon z)\right)^2\bigg],
		\end{aligned}
		\end{equation}
		where $\nabla^{\epsilon, \pm}$ are the first order discrete differentials defined in Lemma \ref{lem:fluct_L_RW_laplacian}.
	\end{lem}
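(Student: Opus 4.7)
The proof proceeds by applying Dynkin's formula to the time-dependent functional $F(s, x) \coloneqq \epsilon^{1+\gamma-\kappa}\sum_{z\in\Z_\epsilon}(x(z)-m^\epsilon_s(z))\phi^\epsilon_{s,t}(\epsilon z)$, where $m^\epsilon_s(z) \coloneqq \E[X^\epsilon_s(z)]$, so that $Y^\epsilon_s(\phi^\epsilon_{s,t}) = F(s, X^\epsilon_s)$. By the Markov property of $\rX^\epsilon$, the process $F(s, X^\epsilon_s) - F(0, X^\epsilon_0) - \int_0^s(\partial_r F + \cG^\epsilon F)(r, X^\epsilon_r)\,dr$ is a local martingale, with $\cG^\epsilon$ acting only on the $x$-variable. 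The task is to identify the integrand as $Y^\epsilon_r(\widetilde\rho^{\:\epsilon}_r\phi^\epsilon_{r,t}) - F^\epsilon_r(\phi^\epsilon_{r,t})$.

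Since $F$ is linear in $x$, the action of $\cG^\epsilon_{RW}$ on $x\mapsto x(z)$ produces $\tfrac{1}{2}\Delta^\epsilon_z x(z)$, which after discrete summation by parts against $\phi^\epsilon_{r,t}(\epsilon z)$ transfers the Laplacian onto the test function. The birth-competition generator gives $\cG^\epsilon_{BC}[x\mapsto x(z)] = \mu_\epsilon x(z) - x(z)(x(z)-\epsilon^\kappa)/2$. For $\partial_r F$, I would differentiate $m^\epsilon_r$ using the identity $\partial_r m^\epsilon_r(z) = \tfrac{1}{2}\Delta^\epsilon_z m^\epsilon_r(z) + \mu_\epsilon m^\epsilon_r(z) - \tfrac{1}{2}\E[X^\epsilon_r(z)(X^\epsilon_r(z)-\epsilon^\kappa)]$, itself a consequence of Dynkin's formula applied to the coordinate map. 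Combining and rearranging, the drift splits as
\begin{equation*}
\epsilon^{1+\gamma-\kappa}\sum_z(X^\epsilon_r(z)-m^\epsilon_r(z))\Big[\partial_r\phi^\epsilon_{r,t}(\epsilon z)+\tfrac{1}{2}\Delta^\epsilon\phi^\epsilon_{r,t}(\epsilon z)+\mu_\epsilon\phi^\epsilon_{r,t}(\epsilon z)\Big] - F^\epsilon_r(\phi^\epsilon_{r,t}),
\end{equation*}
the second piece being exactly $-F^\epsilon_r(\phi^\epsilon_{r,t})$ from the definition \eqref{def:non_lin_fluct}. The backward equation \eqref{eq:backward_test_eq} is designed precisely so that the bracketed factor collapses to $\widetilde\rho^{\:\epsilon}_r(\epsilon z)\phi^\epsilon_{r,t}(\epsilon z)$, turning the first sum into $Y^\epsilon_r(\widetilde\rho^{\:\epsilon}_r\phi^\epsilon_{r,t})$. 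This yields the local martingale identity \eqref{eq:useful_mg_fluct}.

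To upgrade the local martingale to a true martingale, I would invoke Corollary \ref{cor:SLBP_p_moments} for $L^p$ bounds on $X^\epsilon_s(z)$, together with the uniform-in-$\epsilon$ bound on $\phi^\epsilon_{r,t}$ established in Appendix \ref{append:time_dep_test_fcns} and on $\widetilde\rho^{\:\epsilon}_r$ from Lemma \ref{lem:FKPP_approx_properties}. The predictable quadratic variation is then computed via the carré du champ formula $\Gamma^\epsilon F(r,x) = \cG^\epsilon[F(r,\cdot)^2](x) - 2F(r,x)\cG^\epsilon F(r,\cdot)(x) = \sum_j \lambda_j(x)[F(r,x_j)-F(r,x)]^2$. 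The three jump families contribute separately: random-walk jumps $x\to x^{z,z\pm 1}$ give increments $F(s, x^{z,z\pm 1})-F(s,x) = \epsilon^{1+\gamma}[\phi^\epsilon_{s,t}(\epsilon(z\pm 1))-\phi^\epsilon_{s,t}(\epsilon z)]$ at rate $\epsilon^{-2-\kappa}x(z)/2$, producing the discrete-gradient term; branching jumps of size $\ell\epsilon^\kappa$ at rate $\epsilon^{-\kappa}x(z)p^\epsilon_\ell$ produce the factor $\sum_\ell \ell^2 p^\epsilon_\ell = \sigma^2_\epsilon+\mu^2_\epsilon$; and competition events of size $-\epsilon^\kappa$ at rate $\epsilon^{-\kappa}x(z)(x(z)-\epsilon^\kappa)/2$ contribute the $X^\epsilon_r(z)(X^\epsilon_r(z)-\epsilon^\kappa)$ term. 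With the choice $\gamma=(\kappa-1)/2$ of Theorem \ref{thm:CLT_Gaussian}, the powers of $\epsilon$ collapse to the $\epsilon/2$ prefactor recorded in \eqref{eq:fluct_quad_var}. The main obstacle is the careful bookkeeping of the three distinct $s$-dependencies of $F$ (through the state $X^\epsilon_s$, through the deterministic centering $m^\epsilon_s$, and through the test function $\phi^\epsilon_{s,t}$), where the backward equation has been engineered precisely to make the linear terms cancel and isolate the quadratic piece $F^\epsilon_r(\phi^\epsilon_{r,t})$ that will later be tamed by the Boltzmann-Gibbs principle.
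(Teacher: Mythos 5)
Your proof is correct and takes essentially the same route as the paper: both treat $Y^\epsilon_s(\phi^\epsilon_{s,t})$ as a time-dependent martingale problem for the Markov process $\rX^\epsilon$, use the backward equation \eqref{eq:backward_test_eq} to cancel the linear drift down to $Y^\epsilon_r(\widetilde\rho_r^{\:\epsilon}\phi^\epsilon_{r,t})-F^\epsilon_r(\phi^\epsilon_{r,t})$, and compute the quadratic variation via a carr\'e du champ over the three jump types. You apply Dynkin directly to the composite functional $F(s,x)$, while the paper first constructs a generator $\cL^\epsilon$ for $\rY^\epsilon$ in its appendix and then feeds it through Lemmas \ref{lem:fluct_L_RW_laplacian}--\ref{lem:fluct_Brownian_BC_gen}; these are the same computation presented in different orders.

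One remark: your carr\'e du champ gives a factor of $2$ in front of $X^\epsilon_r(z)(\sigma^2_\epsilon+\mu^2_\epsilon)$ in the birth contribution to $\Gamma^\epsilon_r$; this differs from \eqref{eq:fluct_quad_var} as displayed, but your version is the correct one. It agrees both with the display inside the paper's own proof (which reads $2X^\epsilon_r(z)(\sigma^2_\epsilon+\mu^2_\epsilon)$) and with the limiting variance $(\sigma^2+\mu^2)\rho_r + \rho_r^2/2$ required in Lemma \ref{lem:mg_term_conv}; the statement \eqref{eq:fluct_quad_var} appears to have dropped the factor of $2$.
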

	\begin{proof}
		For any $t \in [0, T]$, the process $\rM^\epsilon(\phi_{\cdot, t}^\epsilon)$ defined by 
		\[
		M^\epsilon_s(\phi_{\cdot, t}^\epsilon)\coloneqq Y^\epsilon_s(\phi_{s, t}^\epsilon) - Y^\epsilon_0(\phi^\epsilon_{0, t})-\int_0^s (\cL^\epsilon Y^\epsilon_r(\phi^\epsilon_{r, t})+\partial_s Y^\epsilon_r(\phi^\epsilon_{\cdot, t})(r))dr, \quad s \in [0, t),
		\]
		where $\cL^\epsilon$ is the generator of $\rY^\epsilon$ computed in \eqref{eq:Y_dyn_generic}, is a martingale. Let $\cG^\epsilon=\cG^\epsilon_{RW}+\cG^\epsilon_{BC}$ denote the generator of the rescaled SLBP from \eqref{def:slbp_gen}-\eqref{def:updated_config}. By \eqref{eq:Y_dyn_generic} and \eqref{eq:time_deriv_part_of_L}, we obtain 
		\begin{align*}
			\cL^\epsilon Y_r^\epsilon(\phi_{r,t}^\epsilon) &= \cL^\epsilon_{RW} Y_r^\epsilon(\phi^\epsilon_{r,t})-\epsilon^{\gamma-\kappa}\E[\cG^\epsilon_{RW} X_r^\epsilon(\phi^\epsilon_{r,t})]
			\\&+\cL^\epsilon_{BC} Y_r^\epsilon(\phi^\epsilon_{r,t})-\epsilon^{\gamma-\kappa}\E[\cG^\epsilon_{BC} X_r^\epsilon(\phi^\epsilon_{r,t})].
		\end{align*}
		Using that $\phi^\epsilon$ solves \eqref{eq:backward_test_eq}, we compute 
		\begin{align*}
		\partial_s Y^\epsilon_r(\phi^\epsilon_{\cdot, t})(r) &=Y_r^\epsilon((\widetilde{\rho}^{\:\epsilon}_r -\mu_\epsilon)\phi^\epsilon_{r, t})-\frac{1}{2}Y_r^\epsilon(\Delta^\epsilon \phi^\epsilon_{r, t}).
		\end{align*}
		It follows from Lemma \ref{lem:fluct_L_RW_laplacian} with the linear test function $f(y)=y$, $y \in \R$, that 
		\begin{equation}\label{eq:time_deriv_Y}
		\partial_s Y^\epsilon_r(\phi^\epsilon_{\cdot, t})(r)=Y_r^\epsilon((\widetilde{\rho}^{\:\epsilon}_r -\mu_\epsilon) \phi^\epsilon_{r, t})-\cL^\epsilon_{RW} Y^\epsilon_r(\phi^\epsilon_{r, t})+\epsilon^{\gamma-\kappa}\E[\cG^\epsilon_{RW} X_r^\epsilon(\phi^{\epsilon}_{r,t})].
		\end{equation}
		Consequently, we obtain
		\begin{align}
		\cL^\epsilon Y^\epsilon_r(\phi^\epsilon_{r, t})+\partial_sY^\epsilon_r(\phi^\epsilon_{\cdot, t})(r) &\notag= \cL^\epsilon_{BC} Y_r^\epsilon(\phi^{ \epsilon}_{r,t})-\epsilon^{\gamma-\kappa}\E[\cG^\epsilon_{BC} X_r^\epsilon(\widetilde{\phi}^{\:\epsilon}_{r,t})]+ Y_r^\epsilon((\widetilde{\rho}^{\:\epsilon}_r -\mu_\epsilon) \phi^\epsilon_{r, t})
		\\&\notag= Y_r^\epsilon(\widetilde{\rho}^{\:\epsilon}_r \phi^\epsilon_{r, t})-F_r^\epsilon(\phi^\epsilon_{r, t}),\label{eq:linear_test_gen}
		\end{align}
		where in the second step, we have applied Lemma \ref{lem:fluct_Brownian_BC_gen} with the linear test function. This proves that \eqref{eq:useful_mg_fluct} is a martingale. Moreover, one can check that its predictable quadratic variation is
		\[
		\langle M^\epsilon_\cdot(\phi_{\cdot, t}^\epsilon)\rangle_s =\int_0^s \Gamma_r^\epsilon(\phi_{r, t}^\epsilon) dr,
		\]
		where
		\begin{align*}
			\Gamma_r^\epsilon(\phi_{r, t}^\epsilon)&\coloneqq\cL^\epsilon(Y^\epsilon_r(\phi_{r, t}^\epsilon)^2)+\partial_s (Y^\epsilon_r(\phi_{\cdot, t}^\epsilon)(r))^2
			\\&\quad\quad\quad\quad\quad\quad\quad\quad-2Y_r^\epsilon(\phi_{r, t}^\epsilon)\Big[\cL^\epsilon Y^\epsilon_r(\phi_{r, t}^\epsilon)+\partial_r (Y^\epsilon_r(\phi_{r, t}^\epsilon)(r))\Big],
		\end{align*}
		for all $r \in [0, t]$. We simplify the first two terms on the right-hand side of this expression using \eqref{eq:Y_dyn_generic}, \eqref{eq:time_deriv_part_of_L}, \eqref{eq:time_deriv_Y} and Lemma \ref{lem:fluct_Brownian_BC_gen}:
		\begin{align*}
			&\cL^\epsilon(Y^\epsilon_r(\phi_{r, t}^\epsilon)^2)+\partial_s (Y^\epsilon_r(\phi_{\cdot, t}^\epsilon)(r))^2
			\\&=\cL^\epsilon_{RW}(Y_r^\epsilon(\phi_{r, t}^\epsilon)^2)-2\epsilon^{\gamma-\kappa}Y_r^\epsilon(\phi_{r, t}^\epsilon)\E[\cG^\epsilon_{RW} X_r^\epsilon(\phi^{\epsilon}_{r,t})]-2Y_r^\epsilon(\phi_{r, t}^\epsilon)Y_r^\epsilon(\Delta^\epsilon\phi_{r, t}^\epsilon)
			\\&+2Y_r^\epsilon(\phi_{r, t}^\epsilon)\Big[Y_r^\epsilon(\widetilde{\rho}^{\:\epsilon}_r \phi^\epsilon_{r, t})-F_r^\epsilon(\phi^\epsilon_{r, t})\Big]
			\\&+ \frac{\epsilon}{2}\sum_{z \in \Z_\epsilon}\phi_{r, t}^\epsilon(\epsilon z)^2 \left[X_r^\epsilon(z)(X_r^\epsilon(z)-\epsilon^\kappa)+2X_r^\epsilon(z)(\sigma_\epsilon^2+\mu_\epsilon^2)\right].
		\end{align*}
		By Lemma \ref{lem:fluct_L_RW_laplacian} and the assumption that $1+2\gamma-\kappa=0$, we have 
		\begin{align*}
			&\cL^\epsilon_{RW}(Y_r^\epsilon(\phi_{r, t}^\epsilon)^2)-2\epsilon^{\gamma-\kappa}Y_r^\epsilon(\phi_{r, t}^\epsilon)\E[\cG^\epsilon_{RW} X_r^\epsilon(\phi^{\epsilon}_{r,t})]-2Y_r^\epsilon(\phi_{r, t}^\epsilon)Y_r^\epsilon(\Delta^\epsilon\phi_{r, t}^\epsilon)
			\\&=\frac{\epsilon}{2} \sum_{z \in \Z_\epsilon}X_s^\epsilon(z)\bigg[\left(\nabla^{\epsilon, -}\phi_{r, t}^\epsilon(\epsilon z)\right)^2 +\left(\nabla^{\epsilon, +}\phi_{r, t}^\epsilon(\epsilon z)\right)^2\bigg].
		\end{align*}
		This concludes the proof of the lemma.
	\end{proof}
	The next lemma gives us control on the second moment of the fluctuations. 
	\begin{lem}\label{lem:finite_fluct_moment}
		There exists $c>0$ depending on $T$, $\phi$, and the uniform bound of Lemma \ref{lem:a_priori_est} such that
		\[
		\E\left[Y^\epsilon_t(\phi)^2\right]<c, \quad \E\left[Y^\epsilon_t(\Delta^\epsilon\phi)^2\right]<c, \quad \E\left[Y^\epsilon_t(\widetilde{\rho}^{\:\epsilon}_t\phi)^2\right]<c,
		\]
		for all $t \in [0, T]$ and $\epsilon \in (0, 1)$.
	\end{lem}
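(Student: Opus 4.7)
The plan is to derive each of the three bounds by specialising the martingale decomposition of Lemma~\ref{lem:fluct_mg_pb} with a suitable terminal condition at time $t$, and then bounding each of the three resulting terms. For the estimate on $\E[Y^\epsilon_t(\phi)^2]$, I would take $\phi^\epsilon_{\cdot,t}$ to be the solution of the backward problem \eqref{eq:backward_test_eq} with terminal data $\phi^\epsilon_{t,t}=\phi$, and rearrange the identity in Lemma~\ref{lem:fluct_mg_pb} (letting $s\nearrow t$ and using continuity in $s$ of both sides) to obtain
\[
Y^\epsilon_t(\phi)=M^\epsilon_t(\phi^\epsilon_{\cdot,t})+Y^\epsilon_0(\phi^\epsilon_{0,t})+\int_0^t\!\bigl[Y^\epsilon_r(\widetilde{\rho}^{\:\epsilon}_r\phi^\epsilon_{r,t})-F^\epsilon_r(\phi^\epsilon_{r,t})\bigr]dr.
\]
The elementary inequality $(a+b+c)^2\le 3(a^2+b^2+c^2)$ then reduces the problem to bounding uniformly in $\epsilon\in(0,1)$ and $t\in[0,T]$ the second moments of the three terms on the right-hand side.

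For the martingale term, I would use $\E[M^\epsilon_t(\phi^\epsilon_{\cdot,t})^2]=\int_0^t\E[\Gamma^\epsilon_r(\phi^\epsilon_{r,t})]dr$ via \eqref{eq:fluct_quad_var_0}. In the explicit form \eqref{eq:fluct_quad_var}, the prefactor $\epsilon$ balances the $O(\epsilon^{-1})$ cardinality of $\Z_\epsilon$, the values of $\phi^\epsilon_{r,t}$ and of the discrete gradients $\nabla^{\epsilon,\pm}\phi^\epsilon_{r,t}$ are uniformly controlled by the a priori estimate of Lemma~\ref{lem:a_priori_est}, and the moments $\E[X^\epsilon_r(z)]$ and $\E[X^\epsilon_r(z)(X^\epsilon_r(z)-\epsilon^\kappa)]$ are uniformly bounded by Proposition~\ref{prop:SLBP_moments} (together with the identity $X^\epsilon_r(z)^2=X^\epsilon_r(z)(X^\epsilon_r(z)-\epsilon^\kappa)+\epsilon^\kappa X^\epsilon_r(z)$ to reduce the squared term to $Q^\epsilon$-moments of order at most two). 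For the initial-data term, I would expand the square as
\[
\E[Y^\epsilon_0(\phi^\epsilon_{0,t})^2]=\epsilon^{2+2\gamma-2\kappa}\!\!\sum_{z,z'\in\Z_\epsilon}\!\!\phi^\epsilon_{0,t}(\epsilon z)\phi^\epsilon_{0,t}(\epsilon z')\,\E\bigl[(X_0^\epsilon(z)-\E X_0^\epsilon(z))(X_0^\epsilon(z')-\E X_0^\epsilon(z'))\bigr],
\]
and use the two-particle $V$-function identities \eqref{eq:V_fcn_2_pcs_not_diag}--\eqref{eq:V_fcn_2_pcs_diag} together with Assumption~\ref{assump:technical_X_0}.a: off-diagonal covariances are of size $O(\epsilon^{1+\kappa})$ and diagonal variances of size $O(\epsilon^\kappa)$, which combined with the prefactor $\epsilon^{2+2\gamma-2\kappa}=\epsilon^{1-\kappa}$ (since $\gamma=(\kappa-1)/2$) yields a uniform bound. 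Finally, the integral term is controlled directly by \eqref{lem:bgp_cor_bd} of Corollary~\ref{cor:BGP_control} applied with $s=0$ and $\theta=T$; since $4+2\gamma-2\kappa=3-\kappa\ge 0$ for $\kappa<3/8$, the bound $c_1T^2+c_2\epsilon^{4+2\gamma-2\kappa}$ is uniform in $\epsilon$ and $t$.

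The bounds on $\E[Y^\epsilon_t(\Delta^\epsilon\phi)^2]$ and $\E[Y^\epsilon_t(\widetilde{\rho}^{\:\epsilon}_t\phi)^2]$ follow the same scheme, solving \eqref{eq:backward_test_eq} with terminal data $\Delta^\epsilon\phi$ and $\widetilde{\rho}^{\:\epsilon}_t\phi$ respectively. Both terminal data are themselves uniformly bounded in $\epsilon$ and $t$ ($\Delta^\epsilon\phi$ because $\phi\in C^\infty(\S^1)$, and $\widetilde{\rho}^{\:\epsilon}_t\phi$ by part two of Lemma~\ref{lem:FKPP_approx_properties}), so Lemma~\ref{lem:a_priori_est} applies verbatim. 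The main delicate point in the whole argument is checking that Lemma~\ref{lem:a_priori_est} indeed yields the required joint uniform bound on $\phi^\epsilon_{s,t}$ and its first discrete derivatives over the full range $0\le s\le t\le T$ and $\epsilon\in(0,1)$; the first-derivative control is what keeps the $(\nabla^{\epsilon,\pm}\phi^\epsilon_{r,t})^2$ contribution to $\Gamma^\epsilon_r(\phi^\epsilon_{r,t})$ finite after its summation against $X^\epsilon_r$, and everything else amounts to applying estimates already available in the paper.
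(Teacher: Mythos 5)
Your proposal is correct and follows essentially the same route as the paper: the same semimartingale decomposition from Lemma~\ref{lem:fluct_mg_pb} at $s=t$, the same three-way split via $(a+b+c)^2\le 3(a^2+b^2+c^2)$, the same use of Assumption~\ref{assump:technical_X_0}.a with the $V$-function identities to handle the initial term, Corollary~\ref{cor:BGP_control} for the time-integral term, the quadratic variation formula together with Proposition~\ref{prop:SLBP_moments} and Lemma~\ref{lem:a_priori_est} for the martingale term, and re-solving \eqref{eq:backward_test_eq} with terminal data $\Delta^\epsilon\phi$ and $\widetilde{\rho}^{\:\epsilon}_t\phi$ for the other two estimates. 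No gaps, and the exponent bookkeeping ($\epsilon^{2+2\gamma-2\kappa}=\epsilon^{1-\kappa}$ and $4+2\gamma-2\kappa=3-\kappa$) is carried out correctly.
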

	\begin{proof}
		We first control $\E[Y^\epsilon_t(\phi)^2]$. By Lemma \ref{lem:fluct_mg_pb}, we have the semimartingale decomposition
		\[
		Y^\epsilon_s(\phi_{s, t}^\epsilon)= Y^\epsilon_0(\phi^\epsilon_{0, t})+\int_0^s \left(Y_r^\epsilon(\widetilde{\rho}^{\:\epsilon}_r \phi^\epsilon_{r, t})-F_r^\epsilon(\phi^\epsilon_{r, t})\right)dr+M^\epsilon_s(\phi_{\cdot, t}^\epsilon),
		\]
		for any $0\leq s\leq t \leq T$. We let $s=t$, take the square on both sides, take expectations, and use the inequality $(\sum_{i=1}^n a_i)^2\leq n\sum_{i=1}^n a_i^2$:
		\begin{align*}
			\E\left[Y^\epsilon_s(\phi_{s, t}^\epsilon)^2\right]&\leq 3\Bigg( \E\left[Y^\epsilon_0(\phi^{\:\epsilon}_{0, t})^2\right]+\E\left[\left(\int_0^t \left[ F_r^\epsilon(\phi^\epsilon_{r, t})-Y_r^\epsilon(\widetilde{\rho}^{\:\epsilon}_r \phi^\epsilon_{r, t})\right]dr\right)^2\right]+\E\left[M^\epsilon_t(\phi^\epsilon_{\cdot, t})^2\right]\Bigg).
		\end{align*}
		Denote these three terms $A_1$, $A_2$, and $A_3$, respectively. We bound them in turn. Recall from Definition \ref{def:general_init_data} that $\E[X_0^\epsilon(z)] = \rho_0^\epsilon(z)$ for all $z \in \Z_\epsilon$. Thus by Definition \ref{def:fluct_field_def}, \eqref{eq:V_fcn_2_pcs_not_diag} the formula for order two $V$-functions with particles on distinct sites, and Definition \ref{def:scaled_v_fcns}, we have 
		\begin{align*}
			A_1&= \epsilon^{2+2\gamma-2\kappa}\sum_{z\in \Z_\epsilon}\sum_{z' \in \Z_\epsilon}\E[(X^\epsilon_0(z)-\rho_0^\epsilon(z))(X^\epsilon_0(z')-\rho_0^\epsilon(z'))] \phi^\epsilon_{0, t}(\epsilon z)\phi^\epsilon_{0, t}(\epsilon z')
			\\&=\epsilon^{2+2\gamma-2\kappa}\sum_{z\in \Z_\epsilon}\E[(X^\epsilon_0(z)-\rho_0^\epsilon(z))^2] \phi_{0, t}^\epsilon(\epsilon z)^2
			\\&+\epsilon^{2+2\gamma-2\kappa}\sum_{z\not=z' \in  \Z_\epsilon}v_0^\epsilon(x^{1, z}+x^{1, z'}|\nu^\epsilon) \phi^\epsilon_{0, t}(\epsilon z)\phi_{0, t}^\epsilon(\epsilon z').
		\end{align*} 
		By Definition \ref{def:general_init_data}, there exists $c_0>0$ such that 
		\begin{equation}\label{ineq:bd_on_rho}
			\sup_{\epsilon \in (0, 1)}\max_{z \in \Z_\epsilon}|\rho_0^\epsilon(z)|\leq c_0.
		\end{equation} 
		Thus, by the explicit computation of a $V$-function of order two in \eqref{eq:V_fcn_2_pcs_diag}, Assumption \ref{assump:technical_X_0}.a, and $\kappa \in [0, 3/8)$, we have 
		\begin{align*}
			\E[(X^\epsilon_0(z)-\rho_0^\epsilon(z))^2]\leq \epsilon^{\kappa}\rho_0^\epsilon(z)+c\epsilon^{1+\kappa}\leq c_1\epsilon^{\kappa}.
		\end{align*}
		uniformly in $z \in \Z_\epsilon$, for some $c>0$ and $c_1\coloneqq \max(c, c_0)$. By Assumption \ref{assump:technical_X_0}.a, there is $c_2>0$ such that $v_0^\epsilon(x^{1, z}+x^{1, z'}|\nu^\epsilon)\leq c_2\epsilon^{1+\kappa}$ uniformly in $z, z' \in \Z_\epsilon$, where $x^{1, z}=\epsilon^\kappa\mathbbm{1}_{\{z\}}$. By Lemma \ref{lem:a_priori_est}, there exists $C=C(T)>0$ such that
		\begin{equation}\label{ineq:a_priori_est}
			|\phi_{s, t}^\epsilon(\epsilon z)|< C,
		\end{equation}
		uniformly over $0\leq s\leq t\leq T$, $z \in \Z_\epsilon$ and $\epsilon \in (0, 1)$. Thus overall for $A_1$, we have 
		\begin{align}
		A_1&\notag\leq \epsilon^{1+2\gamma-\kappa}c_1\epsilon \sum_{z \in \Z_\epsilon}\phi_{0, t}^\epsilon(\epsilon z)^2+c_2\epsilon^{1+2\gamma-\kappa}C^2
		\\&\leq (c_1+c_2)C^2,\label{ineq:for_id_lim_too}
		\end{align}
		where we have used that $1+2\gamma-\kappa=0$ in the second step. Next, we apply \eqref{lem:bgp_cor_bd} of Corollary \ref{cor:BGP_control} to control $A_2$. We obtain for some $c_1, c_2>0$ depending only on $T$, the constant $c_0$ above, and $C$ of \eqref{ineq:a_priori_est} that 
		\[
		A_2 \leq c_1 T^2+c_2\epsilon^{4+2\gamma-2\kappa}.
		\]
		It remains to control the martingale term $A_3$. Since for each $t\geq 0$, ${(M_s^\epsilon(\phi^\epsilon_{\cdot, t})^2-\langle M_\cdot^\epsilon(\phi^\epsilon_{\cdot, t})\rangle_s)_{s\in[0, t)}}$ is a martingale, we obtain 
		\begin{align}
			\E\left[M_t^\epsilon(\phi^\epsilon_{\cdot, t})^2\right]&\notag= \E\left[\langle M_\cdot^\epsilon(\phi^\epsilon_{\cdot, t})\rangle_t\right]
			\\&= \int_0^t\E\left[\Gamma^\epsilon_r(\phi^\epsilon_{r, t})\right]dr,
		\end{align}
		where we have used Fubini's theorem in the last step, and $\Gamma^\epsilon$ is defined in \eqref{eq:fluct_quad_var}. By Proposition~\ref{prop:SLBP_moments}, we obtain $A_3\leq c$ for some $c>0$ dependent on $T$ and $c_0$. Gathering the estimates, we have shown that $\E\left[Y^\epsilon_t(\phi)^2\right]\leq c_1$, for some $c_1=c_1(T, c_0, C)>0$. For $\E[Y^\epsilon_t(\Delta^\epsilon \phi)^2]$ and $\E\left[Y^\epsilon_t(\widetilde{\rho}^{\:\epsilon}_t\phi)^2\right]$, we consider the problem \eqref{eq:backward_test_eq} with terminal condition $\Delta^\epsilon \phi$ and $\widetilde{\rho}^{\:\epsilon}_t\phi$, respectively. Then the arguments just presented for the first bound carry over. This proves the lemma. 
	\end{proof}
	
	Using Markov's inequality and the fact that $\phi_{t, t}^\epsilon=\phi$, Lemma \ref{lem:finite_fluct_moment}, implies tightness of $\{Y^\epsilon_t(\phi):\epsilon \in (0, 1)\}$ in $\R$, for each $t\in [0, T]$. Next, we show the equicontinuity condition \eqref{cond:Ald_2_fluct}. Fix any $\phi \in C^\infty(\S^1)$. Let ${\{\tau_\epsilon:\epsilon \in (0, 1)\}}$ be a family of $[0, T]$-valued stopping times and let ${\{\theta_\epsilon: \epsilon\in (0, 1)\}\subset [0, \infty)}$ be such that $\lim_{\epsilon \searrow 0}\theta_\epsilon=0$. Note in particular that ${\{\theta_\epsilon: \epsilon\in (0, 1)\}\subset [0, \overline{\theta}]}$, where ${\overline{\theta}\coloneqq \sup_{\epsilon \in (0,1)}\theta_\epsilon \in [0, \infty)}$. By Markov's inequality, we have 
	\begin{equation}\label{ineq:Markov_fluct_tight}
		\P\left(|Y^\epsilon_{\tau_\epsilon +\theta_\epsilon}(\phi)-Y^\epsilon_{\tau_\epsilon}(\phi)|>\eta\right)\leq \eta^{-2}\E\left[\left(Y^\epsilon_{\tau_\epsilon +\theta_\epsilon}(\phi)-Y^\epsilon_{\tau_\epsilon}(\phi)\right)^2\right]
	\end{equation}
	Then, the following lemma implies \eqref{cond:Ald_2_fluct} the equicontinuity in probability.
	\begin{lem}\label{lem:second_aldous_Br_fluct}
		We have
		\[
			\limsup_{\epsilon \searrow 0}\E\left[\left(Y^\epsilon_{\tau_\epsilon +\theta_\epsilon}(\phi)-Y^\epsilon_{\tau_\epsilon}(\phi)\right)^2\right]=0.
		\]
	\end{lem}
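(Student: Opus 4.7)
The plan is to apply Lemma \ref{lem:fluct_mg_pb} with random terminal time $u \coloneqq \tau_\epsilon + \theta_\epsilon$, evaluating the resulting semimartingale identity at both $s = u$ and $s = \tau_\epsilon$ and subtracting. Since $\phi^\epsilon_{u, u} = \phi$ and $Y^\epsilon_s$ is linear in its test function, this would produce the three-term decomposition
\[
Y^\epsilon_{\tau_\epsilon + \theta_\epsilon}(\phi) - Y^\epsilon_{\tau_\epsilon}(\phi) = I_1^\epsilon + I_2^\epsilon + I_3^\epsilon,
\]
where $I_1^\epsilon \coloneqq Y^\epsilon_{\tau_\epsilon}(\phi^\epsilon_{\tau_\epsilon, u} - \phi)$ captures the change in the test function along the backward flow, $I_2^\epsilon \coloneqq \int_{\tau_\epsilon}^{u} [F^\epsilon_r(\phi^\epsilon_{r, u}) - Y^\epsilon_r(\widetilde{\rho}^{\:\epsilon}_r \phi^\epsilon_{r, u})]\, dr$ is a drift remainder, and $I_3^\epsilon \coloneqq M^\epsilon_u(\phi^\epsilon_{\cdot, u}) - M^\epsilon_{\tau_\epsilon}(\phi^\epsilon_{\cdot, u})$ is a martingale increment. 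By the inequality $(a+b+c)^2 \leq 3(a^2+b^2+c^2)$, it then suffices to show $\E[(I_j^\epsilon)^2] \to 0$ for each $j \in \{1, 2, 3\}$.

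For the drift remainder $I_2^\epsilon$, I would apply \eqref{lem:bgp_cor_bd} of Corollary \ref{cor:BGP_control}, obtaining a bound of the form $c_1 \theta_\epsilon^2 + c_2 \epsilon^{4+2\gamma-2\kappa}$, with constants depending only on $T$ and on the uniform sup-norm bound on $\phi^\epsilon_{\cdot, u}$ provided by Lemma \ref{lem:a_priori_est}; since $1 + 2\gamma = \kappa$ and $\kappa < 3/8$, the exponent equals $3 - \kappa > 0$ and both summands vanish. For the martingale increment $I_3^\epsilon$, optional stopping applied to the square-integrable martingale $(M^\epsilon_{s \wedge u}(\phi^\epsilon_{\cdot, u}))_{s \geq 0}$ would give $\E[(I_3^\epsilon)^2] = \E\bigl[\int_{\tau_\epsilon}^{u} \Gamma^\epsilon_r(\phi^\epsilon_{r, u})\, dr\bigr]$; combining the explicit expression \eqref{eq:fluct_quad_var} for $\Gamma^\epsilon_r$ with the uniform bounds on $\phi^\epsilon_{\cdot, u}$ and its first-order discrete gradients and the moment estimates of Proposition \ref{prop:SLBP_moments} then yields $\sup_{r, \epsilon} \E[\Gamma^\epsilon_r(\phi^\epsilon_{r, u})] \leq C$, so that $\E[(I_3^\epsilon)^2] \leq C\theta_\epsilon \to 0$. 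For the test-function-change term $I_1^\epsilon$, I would first invoke uniform-in-$\epsilon$ Lipschitz continuity of $\phi^\epsilon_{s, u}$ in $s$ (which follows from \eqref{eq:backward_test_eq}, the $C^3$-regularity of $\phi$, and the a priori bounds of the appendix) to obtain $\|\phi^\epsilon_{\tau_\epsilon, u} - \phi\|_\infty = \|\phi^\epsilon_{\tau_\epsilon, u} - \phi^\epsilon_{u, u}\|_\infty \leq L \theta_\epsilon$ almost surely, and then combine with a uniform second-moment bound $\E[Y^\epsilon_t(\psi)^2] \leq C(T) \|\psi\|_\infty^2$ extracted from the proof of Lemma \ref{lem:finite_fluct_moment} by expanding the second moment as a bilinear form in $\psi$ and exploiting the covariance bounds $\operatorname{Var}(X^\epsilon_t(z)) = O(\epsilon^\kappa)$ from \eqref{eq:V_fcn_2_pcs_diag} and $\operatorname{Cov}(X^\epsilon_t(z), X^\epsilon_t(z')) = O(\epsilon^{1+\kappa})$ for $z \neq z'$ from Theorem \ref{thm:QLLN}, with the scaling $1 + 2\gamma - \kappa = 0$ producing the required dimensional cancellation. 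Together these yield $\E[(I_1^\epsilon)^2] = O(\theta_\epsilon^2) \to 0$.

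The main technical obstacle I anticipate is the joint handling of the random stopping time $\tau_\epsilon$ with the generalised test function $\phi^\epsilon_{\cdot, u}$ whose intermediate values themselves depend on the random terminal time $u$. Each ingredient bound (the Boltzmann-Gibbs corollary, the moment estimates, the regularity of $\phi^\epsilon$) is uniform in the deterministic time parameters, so conditioning on $\tau_\epsilon$ -- or an equivalent use of the strong Markov property -- should transfer pointwise deterministic bounds to expectation bounds at the stopping time. The subtlest step is the control of $I_1^\epsilon$, where the need for a uniform second-moment estimate on $Y^\epsilon_{\tau_\epsilon}(\psi_\epsilon)$ in both $\tau_\epsilon$ and $\psi_\epsilon$ forces one to track the bilinear structure and the vanishing rate of off-diagonal covariances carefully, rather than quoting Lemma \ref{lem:finite_fluct_moment} directly.
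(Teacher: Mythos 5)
Your algebraic decomposition from Lemma \ref{lem:fluct_mg_pb} is formally consistent, but it diverges from the paper and runs into obstacles that the paper's argument deliberately avoids. The paper does not invoke the backward-flowing test function $\phi^\epsilon_{\cdot,t}$ or Lemma \ref{lem:fluct_mg_pb} here; it uses the generator-form semimartingale decomposition for the \emph{fixed} test function $\phi$, obtained from Lemmas \ref{lem:fluct_L_RW_laplacian} and \ref{lem:fluct_Brownian_BC_gen} with $f(y)=y$, namely $Y_t^\epsilon(\phi) = Y_0^\epsilon(\phi)+\int_0^t (Y_r^\epsilon(\Delta^\epsilon \phi)-F_r^\epsilon(\phi))\,dr + M_t^\epsilon(\phi)$. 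Adding and subtracting $Y^\epsilon_r(\widetilde{\rho}^{\:\epsilon}_r\phi)$ in the drift integrand then yields the three-term decomposition \eqref{eq:fluct_decomp}, whose pieces are controlled by Jensen together with Lemma \ref{lem:finite_fluct_moment}, by the sup-in-$(s,t)$ estimate \eqref{lem:bgp_cor_bd_sup}, and by the It\^o isometry plus moment bounds. No term analogous to your $I_1^\epsilon$ ever appears, and all estimates already exist in the form needed.

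Your route has three concrete gaps. First, $(M^\epsilon_{s\wedge u}(\phi^\epsilon_{\cdot,u}))_{s\geq 0}$ is not a martingale for random $u=\tau_\epsilon+\theta_\epsilon$: Lemma \ref{lem:fluct_mg_pb} establishes the martingale property only for each \emph{deterministic} terminal time $t$, and since $u$ is adapted, $\phi^\epsilon_{\cdot,u}$ is itself random; you would need a conditioning argument on $\cF_{\tau_\epsilon}$ combined with the strong Markov property before any optional-stopping or It\^o-isometry step is legitimate. Second, \eqref{lem:bgp_cor_bd} is stated for fixed deterministic $s,t$ with a sup only over the intermediate index $\tau$; handling the random endpoints requires the version with a sup over both $s$ and $t$, which is precisely \eqref{lem:bgp_cor_bd_sup} --- but that estimate is only proved for a fixed $\phi$, not the generalised family $\phi^\epsilon_{r,t}$, so the tool your argument needs does not exist in the form required. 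Third, neither the bilinear estimate $\E[Y^\epsilon_t(\psi)^2]\leq C\|\psi\|_\infty^2$ nor a uniform Lipschitz bound on $s\mapsto\phi^\epsilon_{s,t}$ is available in the paper: you would have to extract the former from the covariance expansion via Theorem \ref{thm:QLLN}, \eqref{eq:V_fcn_2_pcs_diag} and Corollary \ref{cor:SLBP_p_moments}, and prove the latter from \eqref{eq:mild_test} and \eqref{ineq:basic_heat_estimate} --- each a lemma-sized derivation. The backward test function is the right tool for Lemma \ref{lem:finite_fluct_moment} and for the identification of the limit, but here it fatally entangles the test function with the stopping time; the fixed-$\phi$ decomposition is what makes the proof go through.
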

	\begin{proof}
		By Lemmas \ref{lem:fluct_L_RW_laplacian} and \ref{lem:fluct_Brownian_BC_gen}, the semimartingale decomposition of $\rY^\epsilon$ is given by 
		\[
		Y_t^\epsilon(\phi) = Y_0^\epsilon(\phi)+\int_0^t \left(Y_r^\epsilon(\Delta^\epsilon \phi)-F_r^\epsilon(\phi)\right)dr + M_t^\epsilon(\phi), \quad t\geq 0,
		\]
		for some martingale $(M_t^\epsilon(\phi))_{t\geq 0}$. Thus, we have
		\begin{equation}\label{eq:fluct_decomp}
		\begin{aligned}
			Y_{\tau_\epsilon+\theta_\epsilon}^\epsilon(\phi)-Y_{\tau_\epsilon}^\epsilon(\phi)&= \int_{\tau_\epsilon}^{\tau_\epsilon+\theta_\epsilon}\left(Y^\epsilon_r(\Delta^\epsilon \phi)-Y^\epsilon_r(\widetilde{\rho}_r^{\:\epsilon}\phi)\right)dr
			\\&+\int_{\tau_\epsilon}^{\tau_\epsilon+\theta_\epsilon} \left(Y^\epsilon_r(\widetilde{\rho}_r^{\:\epsilon}\phi)-F_r^\epsilon(\phi)\right)dr
			\\&+M^\epsilon_{\tau_\epsilon+\theta_\epsilon}(\phi)-M_{\tau_\epsilon}^\epsilon(\phi),
		\end{aligned}
		\end{equation}
		Thus
		\begin{align*}
			&\E\left[\left(Y_{\tau_\epsilon+\theta_\epsilon}^\epsilon(\phi)-Y_{\tau_\epsilon}^\epsilon(\phi)\right)^2\right]
			\\&\leq 3\E\left[\sup_{\substack{0\leq s\leq t\leq T+\overline{\theta} \\ t-s\leq \theta_\epsilon}}\left(\int_s^t\left(Y^\epsilon_r(\Delta^\epsilon \phi)-Y^\epsilon_r(\widetilde{\rho}_r^{\:\epsilon}\phi)\right)dr\right)^2\right]
			\\&+ 3\E\left[\sup_{\substack{0\leq s\leq t\leq T+\overline{\theta} \\ t-s\leq \theta_\epsilon}}\left(\int_s^t\left(Y^\epsilon_r(\widetilde{\rho}_r^{\:\epsilon}\phi)-F_r^\epsilon(\phi)\right)dr\right)^2\right]
			\\&+3\E\left[\left(M^\epsilon_{\tau_\epsilon+\theta_\epsilon}(\phi)-M^\epsilon_{\tau_\epsilon}(\phi)\right)^2\right].
		\end{align*}
		We control the terms on the right-hand side in turn. Denote them $A_1$, $A_2$, and $A_3$, respectively. By Jensen's inequality, we have 
		\begin{align*}
			A_1&\leq \theta_\epsilon \int_0^{T+\overline{\theta}}\E\left[\left(Y^\epsilon_r(\Delta^\epsilon \phi)-Y^\epsilon_r(\widetilde{\rho}_r^{\:\epsilon}\phi)\right)^2\right]dr
			\\&\leq 3\theta_\epsilon \int_0^{T+\overline{\theta}}\left(\E\left[Y^\epsilon_r(\Delta^\epsilon \phi)^2\right]+\E\left[Y^\epsilon_r(\widetilde{\rho}_r^{\:\epsilon}\phi)^2\right]\right)dr
		\end{align*}
		By Lemma \ref{lem:finite_fluct_moment}, the first three expectations are bounded by a constant depending on $T$ and on $\phi$ and the uniform bound on $\widetilde{\rho}^{\:\epsilon}$ from part two of Lemma \ref{lem:FKPP_approx_properties}. By \eqref{lem:bgp_cor_bd_sup} of Corollary \ref{cor:BGP_control}, the next term, $A_2$, satisfies $A_2\leq c_1 \theta_\epsilon^2+c_2\epsilon^{2+2\gamma-2\kappa}$ for some $c_1=c_1(T, \phi), c_2=c_2(T, \phi)>0$. Finally, we control the martingale term $A_3$.
		By the It\^o isometry, we compute
		\begin{align*}
			\E\left[\left(M^\epsilon_{\tau_\epsilon+\theta_\epsilon}(\phi)-M^\epsilon_{\tau_\epsilon}(\phi)\right)^2\right]&= \E\left[\left(\int_{\tau_\epsilon}^{\tau_\epsilon+\theta_\epsilon} dM_r^\epsilon(\phi)\right)^2\right]
			\\&=\E\left[\int_{\tau_\epsilon}^{\tau_\epsilon+\theta_\epsilon} \Gamma^\epsilon_r(\phi) dr\right].
		\end{align*}
		Using the Cauchy-Schwarz and Jensen inequalities, we obtain
		\begin{align*}
			\E\left[\int_{\tau_\epsilon}^{\tau_\epsilon+\theta_\epsilon} \Gamma^\epsilon_r(\phi) dr\right]&\leq \E\left[\left(\int_{\tau_\epsilon}^{\tau_\epsilon+\theta_\epsilon} \Gamma^\epsilon_r(\phi) dr\right)^2\right]^{1/2}
			\\&\leq \theta_\epsilon^{1/2}\left(\int_{0}^{T+\overline{\theta}}\E\left[\Gamma^\epsilon_r(\phi)^2 \right]dr\right)^{1/2}
		\end{align*}
		It follows from \eqref{eq:fluct_quad_var}, Lemma \ref{lem:a_priori_est} and Proposition \ref{prop:SLBP_moments} that 
		\[
		\E\left[\left(M^\epsilon_{\tau_\epsilon+\theta_\epsilon}(\phi)-M^\epsilon_{\tau_\epsilon}(\phi)\right)^2\right] \leq c' \theta_\epsilon^{1/2},
		\]
		for some $c'$ depending on $T$ and \eqref{ineq:a_priori_est}. Thus for some $C>0$, we have $A_3\leq C \theta_\epsilon^{1/4}$.
	\end{proof}

	\begin{proof}[Proof of Proposition \ref{prop:Brownian_fluct_tight}]
		The proposition follows from Lemma~\ref{lem:finite_fluct_moment}, \eqref{ineq:Markov_fluct_tight}, Lemma~\ref{lem:second_aldous_Br_fluct}, and the proof outline given below Proposition \ref{prop:Brownian_fluct_tight}.
	\end{proof}

	\subsection{Identification of the limit}\label{sec:id_fluct_lim}
	
	By Proposition \ref{prop:Brownian_fluct_tight}, any subsequence of $(\rY^\epsilon)_{\epsilon \in (0, 1)}$ has a further subsequence (which we still denote $(\rY^\epsilon)_{\epsilon \in (0, 1)}$ for simplicity) which converges in distribution in $\cD([0, T], C^\infty(\S^1)')$ as $\epsilon \searrow \infty$ to a limit point $\widetilde{\rY} \in \cD([0, T], C^\infty(\S^1)')$. We establish the following proposition which implies that $\widetilde{\rY}$ is the solution to \eqref{eq:OU}. 
	\begin{prop}\label{prop:FDD_CLT}
		Let  $\rY =(Y_t)_{t\in [0, T]} \in \cD([0, T], C^\infty(\S^1)')$ denote the solution to the stochastic partial differential equation \eqref{eq:OU}. Then, for any $k \in \N$, $t_1, \cdots, t_k \in [0, T]$ and $\phi\in C^\infty(\S^1)$, we have 
		\begin{equation}\label{eq:fdd_conv}
		(\widetilde{Y}_{t_1}(\phi), \cdots, \widetilde{Y}_{t_k}(\phi))\overset{d}{=} (Y_{t_1}(\phi), \cdots, Y_{t_k}(\phi)).
		\end{equation}
	\end{prop}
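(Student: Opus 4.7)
The plan is to identify $\widetilde{\mathrm{Y}}$ as the unique-in-law solution of the SPDE \eqref{eq:OU_Br}, which then forces \eqref{eq:fdd_conv}. From the computations of the $\rY^\epsilon$-generator on linear functionals in Lemmas \ref{lem:fluct_L_RW_laplacian} and \ref{lem:fluct_Brownian_BC_gen} (see also the proof of Lemma \ref{lem:fluct_mg_pb} with constant-in-$s$ test function), for each fixed $\phi\in C^\infty(\S^1)$ the process
\[
M_t^\epsilon(\phi) = Y_t^\epsilon(\phi) - Y_0^\epsilon(\phi) - \int_0^t\!\Bigl(\tfrac12 Y_r^\epsilon(\Delta^\epsilon\phi) + \mu_\epsilon Y_r^\epsilon(\phi) - F_r^\epsilon(\phi)\Bigr)dr
\]
is a martingale with predictable quadratic variation $\int_0^t\Gamma_r^\epsilon(\phi)dr$ given by \eqref{eq:fluct_quad_var}.

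First I would linearise the drift: Corollary \ref{cor:BGP_control} replaces $F_r^\epsilon(\phi)$ by $Y_r^\epsilon(\widetilde{\rho}^{\:\epsilon}_r\phi)$ inside the time integral with an $L^2$-error vanishing as $\epsilon\searrow 0$. Combining this with the uniform convergence $\widetilde{\rho}^{\:\epsilon}\to\rho$ from Lemma \ref{lem:FKPP_approx_properties}, with $\Delta^\epsilon\phi\to\partial_z^2\phi$ uniformly on $\Z_\epsilon$ (by smoothness of $\phi$), with the uniform second moment bounds of Lemma \ref{lem:finite_fluct_moment}, and with the tightness of Proposition \ref{prop:Brownian_fluct_tight}, one concludes that every subsequential limit $\widetilde{\rY}$ makes
\[
\widetilde{M}_t(\phi) := \widetilde{Y}_t(\phi) - \widetilde{Y}_0(\phi) - \int_0^t \widetilde{Y}_r\!\bigl(\tfrac12\partial_z^2\phi + \mu\phi - \rho_r\phi\bigr)dr
\]
a continuous martingale for its natural filtration.

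Next I would identify the quadratic variation. Using the explicit form \eqref{eq:fluct_quad_var}, Theorem \ref{thm:LLN} (for the linear averages $\epsilon\sum_z X_r^\epsilon(z)\psi(\epsilon z)\to\int\rho_r\psi$) and the order-two case of Theorem \ref{thm:QLLN} (to show $\epsilon\sum_z X_r^\epsilon(z)(X_r^\epsilon(z)-\epsilon^\kappa)\psi(\epsilon z)\to\int\rho_r^2\psi$ in expectation), together with the uniform integrability furnished by Corollary \ref{cor:SLBP_p_moments}, it should follow that
\[
\int_0^t\Gamma_r^\epsilon(\phi)dr \longrightarrow \int_0^t\!\!\int_{\S^1}\bigl[(\partial_z\phi(z))^2\rho_r(z) + \phi(z)^2\bigl((\sigma^2+\mu^2)\rho_r(z) + \tfrac12\rho_r(z)^2\bigr)\bigr]dz\,dr
\]
in $L^1$. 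A martingale central limit theorem then identifies $\widetilde{M}(\phi)$ as a mean-zero Gaussian martingale with the above variance. Since by Assumption \ref{assump:technical_X_0}.a together with the computation in \eqref{ineq:for_id_lim_too}, $Y_0^\epsilon$ converges in law to the Gaussian $Y_0$ of Theorem \ref{thm:CLT_Gaussian}, we obtain that $\widetilde{\rY}$ solves the martingale problem associated with \eqref{eq:OU_Br}. The latter admits a unique mild solution in law (see \cite[Chapter 5]{W1986}), so $\widetilde{\rY}\overset{d}{=}\rY$ and \eqref{eq:fdd_conv} follows.

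The main obstacle is the quadratic-variation limit: $\Gamma_r^\epsilon(\phi)$ contains both the non-linear density $X_r^\epsilon(z)(X_r^\epsilon(z)-\epsilon^\kappa)$ and the squared discrete gradients $(\nabla^{\epsilon,\pm}\phi(\epsilon z))^2 X_r^\epsilon(z)$. The first non-linearity is tamed by Theorem \ref{thm:QLLN} at order two, but passing from expectation-level convergence to $L^1$-convergence of the time integral requires uniform integrability (via Corollary \ref{cor:SLBP_p_moments}) and possibly a second, quadratic-variation-level Boltzmann--Gibbs argument; the gradient-squared terms in turn need the smoothness of $\phi$ to guarantee the pointwise convergence $(\nabla^{\epsilon,\pm}\phi)^2\to(\partial_z\phi)^2$ uniformly on $\Z_\epsilon$.
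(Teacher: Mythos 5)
Your route is a genuine alternative to the paper's. The paper does \emph{not} work with the fixed test function $\phi$; instead it uses the backward time-dependent test function $\phi^\epsilon_{s,t}$ solving \eqref{eq:backward_test_eq}, which by Lemma \ref{lem:fluct_mg_pb} eats the \emph{entire} linear drift $\tfrac12\Delta^\epsilon + \mu_\epsilon - \widetilde{\rho}^{\:\epsilon}_r$ and leaves only the Boltzmann--Gibbs residual $Y_r^\epsilon(\widetilde{\rho}^{\:\epsilon}_r\phi^\epsilon_{r,t}) - F_r^\epsilon(\phi^\epsilon_{r,t})$ inside the time integral. Thus $Y_t^\epsilon(\phi) = Y_0^\epsilon(\phi^\epsilon_{0,t}) + M_t^\epsilon(\phi^\epsilon_{\cdot,t}) + o_{\P}(1)$ and the limit is read off directly in mild form: a Gaussian initial-condition term plus a Gaussian martingale identified via \cite[Theorem 3.11, Chapter VIII]{JS2002} (Lemma \ref{lem:mg_term_conv}). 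Your version keeps $\phi$ fixed, so you are left with an honest martingale problem for $\widetilde{\rY}$ and must additionally pass the drift integral $\int_0^t Y_r^\epsilon\bigl(\tfrac12\Delta^\epsilon\phi + \mu_\epsilon\phi - \widetilde{\rho}^{\:\epsilon}_r\phi\bigr)dr$ to $\int_0^t \widetilde Y_r\bigl(\tfrac12\partial_z^2\phi + \mu\phi - \rho_r\phi\bigr)dr$ along the subsequence; this is a nontrivial but standard step, for which the ingredients you cite (Lemma \ref{lem:finite_fluct_moment} for $L^2$-bounds, Lemma \ref{lem:FKPP_approx_properties} for $\widetilde{\rho}^{\:\epsilon}\to\rho$, tightness) suffice. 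What each route buys: the paper trades the drift-passage question for the construction and a~priori control of the backward test functions (Appendix \ref{append:time_dep_test_fcns}); your route avoids $\phi^\epsilon_{\cdot,t}$ but needs the Skorohod-continuity of the drift functional and the extra uniform-integrability bookkeeping you allude to. The quadratic-variation identification and the convergence of $Y_0^\epsilon$ to a Gaussian are then the same work in both.

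One citation in your sketch is off. Corollary \ref{cor:BGP_control}, in the form \eqref{lem:bgp_cor_bd_sup}, gives a bound $c_1\theta^2 + c_2\epsilon^{2+2\gamma-2\kappa}$ with $c_1 = c_1(T,\phi)$ \emph{not} depending on $\theta$; taking $\theta = t$ over the whole interval $[0,t]$ yields $c_1t^2 + c_2\epsilon^{1-\kappa}$, which does not vanish as $\epsilon\searrow 0$. To get the $L^2$-linearisation of the drift over the \emph{full} interval, one should invoke Proposition \ref{prop:BGP} directly and then partition $[0,t]$ into $\lceil t/(\epsilon^2 S)\rceil$ intervals of length $\epsilon^2 S$: Jensen's inequality combined with the BGP estimate on each piece gives $\E\bigl[\bigl(\int_0^t [F_r^\epsilon(\phi) - Y_r^\epsilon(\widetilde{\rho}^{\:\epsilon}_r\phi)]dr\bigr)^2\bigr] \le t^2\,\delta(S,\epsilon)$, where $\delta(S,\epsilon)\to 0$ along $\epsilon\searrow 0$ then $S\to\infty$. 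This is the argument the paper implicitly uses, and it repairs your linearisation step.
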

	
	To prove this result, we adapt the approach in section 4.5 of \cite{FP2017}. By Lemma \ref{lem:fluct_mg_pb}, we have the semimartingale decomposition
	\begin{equation}\label{eq:smg_id_lim}
	Y^\epsilon_s(\phi_{s, t}^\epsilon) = Y^\epsilon_0(\phi^\epsilon_{0, t})+\int_0^s \left(Y_r^\epsilon(\widetilde{\rho}^{\:\epsilon}_r \phi^\epsilon_{r, t})-F_r^\epsilon(\phi^\epsilon_{r, t})\right)dr+M^\epsilon_s(\phi_{\cdot, t}^\epsilon), \quad t\geq 0,
	\end{equation}
	for some martingale $(M^\epsilon_s(\phi_{\cdot, t}^\epsilon))_{s \in [0, t)}$ with predictable quadratic variation
	\[
	\langle M^\epsilon_\cdot(\phi_{\cdot, t}^\epsilon)\rangle_s = \int_0^s \Gamma_r^\epsilon(\phi_{r, t}^\epsilon)dr,
	\]
	where $\Gamma_r^\epsilon(\phi_{r, t}^\epsilon)$ is given by \eqref{eq:fluct_quad_var}. On the right-hand side of \eqref{eq:smg_id_lim}, we will show that the first term converges in distribution to a centred Gaussian variable by Assumption \ref{assump:technical_X_0}.a, that the martingale part converges in the Skorohod topology on $\cD([0,t], \R)$ to a centred Gaussian martingale with time-$s$ variance 
	\[
	Q_s(\phi_{\cdot, t})\coloneqq \int_0^s\left(\langle (\phi_{r, t})^2,  [(\sigma^2+\mu^2)\rho_r+\rho_r^2/2]\rangle +\langle \left(\partial_z \phi_{r, t}\right)^2, \rho_r\rangle \right)dr,
	\]
	for a suitable limiting $\phi$, and that the integral term vanishes in $L^2$ by Proposition \ref{prop:BGP}. The convergence of the finite-dimensional distributions claimed in Proposition~\ref{prop:FDD_CLT} will then follow from standard arguments. In the following lemma, we prove the convergence of the martingale term. 
	
	\begin{lem}\label{lem:mg_term_conv}
		We have $(M_s^\epsilon(\phi_{\cdot, t}^\epsilon))_{s \in [0, t]}\overset{d}{\to} (M_s(\phi_{\cdot, t}))_{s \in [0, t]}$ in the Skorohod topology on $\cD([0, t], \R)$ as $\epsilon \searrow 0$, where $(M_s(\phi_{\cdot, t}))_{s \in [0, t)}$ is a centred Gaussian martingale with time-$s$ variance given by $Q_s(\phi_{\cdot, t})$, and $\phi_{\cdot, t}$ satisfies 
		\begin{equation}\label{eq:lim_test_eq_ID_lim}
			\begin{cases}
				\partial_s \phi_{s, t}(z)+\frac{1}{2}\partial_z^2 \phi_{s, t}(z)+(\mu-\rho_s(z))\phi_{s, t}(z) = 0, & s\in [0, t), \quad z \in \S^1,
				\\\phi_{t, t}(z)=\phi(z), &z \in \S^1.
			\end{cases}
		\end{equation}
	\end{lem}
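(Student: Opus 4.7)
The plan is to apply a martingale functional central limit theorem, for instance Theorem 7.1.4 of Ethier--Kurtz or Theorem VIII.3.11 of Jacod--Shiryaev. For each $\epsilon$, the process $(M^\epsilon_s(\phi^\epsilon_{\cdot,t}))_{s\in[0,t]}$ is a purely discontinuous real-valued martingale, and it suffices to verify two ingredients: (a) its maximal jump vanishes in probability, and (b) its predictable quadratic variation $\langle M^\epsilon_\cdot(\phi^\epsilon_{\cdot,t})\rangle_s = \int_0^s\Gamma^\epsilon_r(\phi^\epsilon_{r,t})\,dr$ converges in probability, for each $s\in[0,t]$, to the deterministic limit $Q_s(\phi_{\cdot,t})$. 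Together these will imply that $M^\epsilon_\cdot(\phi^\epsilon_{\cdot,t})$ converges in $\cD([0,t],\R)$ to a continuous centred Gaussian martingale whose variance at time $s$ equals $Q_s(\phi_{\cdot,t})$.

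For (a), a single event in the particle system alters some $X^\epsilon_r(z)$ by at most $L(\epsilon)\epsilon^\kappa$ in a branching event and by $\epsilon^\kappa$ in a migration or competition event. Given the form of $Y^\epsilon$ in Definition \ref{def:fluct_field_def} and the choice $\gamma=(\kappa-1)/2$, the corresponding jump of $M^\epsilon$ is bounded by $L(\epsilon)\epsilon^{1+\gamma}\|\phi^\epsilon_{\cdot,t}\|_\infty = O(L(\epsilon)\epsilon^{(1+\kappa)/2})$, using the a priori bound from Lemma \ref{lem:a_priori_est}. The truncation condition $L(\epsilon)=o(\epsilon^{-\kappa})$ in Definition \ref{def:rescaled_slbp} combined with $\kappa<1$ then produces a bound of $o(\epsilon^{(1-\kappa)/2})$, which vanishes as $\epsilon\searrow 0$.

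For (b), which is the main obstacle, I decompose $\Gamma^\epsilon_r(\phi^\epsilon_{r,t})$ according to \eqref{eq:fluct_quad_var} into a quadratic-in-$X^\epsilon$ piece, a linear-in-$X^\epsilon$ piece weighted by $(\sigma_\epsilon^2+\mu_\epsilon^2)$, and a gradient piece involving $\nabla^{\epsilon,\pm}\phi^\epsilon_{r,t}$. The quadratic piece is treated by using the identity \eqref{eq:V_fcn_2_pcs_diag} to write $X^\epsilon_r(z)(X^\epsilon_r(z)-\epsilon^\kappa)=V^\epsilon(x^{2,z},X^\epsilon_r;\rho^\epsilon_r)+2X^\epsilon_r(z)\rho^\epsilon_r(z)-\rho^\epsilon_r(z)^2$; by Theorem \ref{thm:QLLN}, the $V^\epsilon$-term has expectation $o(\epsilon^{1+\kappa})$ uniformly in $z$ and $r$, so its contribution to $\E\bigl[\int_0^s\Gamma^\epsilon_r\,dr\bigr]$ vanishes and the analysis reduces to linear functionals of $X^\epsilon_r$. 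The linear and gradient pieces are then handled using Theorem \ref{thm:LLN}, the uniform convergence of $\widetilde{\rho}^{\:\epsilon}_r$ from Lemma \ref{lem:FKPP_approx_properties}, the convergence of $\phi^\epsilon_{r,t}$ and its discrete gradients to $\phi_{r,t}$ and $\partial_z\phi_{r,t}$ from Lemma \ref{lem:conv_phi_eps}, and standard Riemann-sum arguments, to identify the mean of the integrand of $\langle M^\epsilon_\cdot(\phi^\epsilon_{\cdot,t})\rangle_s$ with that of $Q_s(\phi_{\cdot,t})$. To upgrade convergence in mean to convergence in probability, I bound the variance of $\int_0^s\Gamma^\epsilon_r\,dr$ using the uniform moment estimates of Proposition \ref{prop:SLBP_moments} and Corollary \ref{cor:SLBP_p_moments}; for the quadratic term the Boltzmann-Gibbs principle (Proposition \ref{prop:BGP}), applied on a partition of $[0,s]$ into short intervals of length $\epsilon^2 S$ in the spirit of Corollary \ref{cor:BGP_control}, provides the needed decoupling between disjoint time slices.
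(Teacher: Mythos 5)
Your proposal is correct and takes essentially the same approach as the paper: a martingale functional CLT (the paper cites Jacod--Shiryaev VIII.3.11), vanishing jumps from the truncation $L(\epsilon) = o(\epsilon^{-\kappa})$ with $\gamma = (\kappa-1)/2$, and convergence of the predictable quadratic variation by splitting $\Gamma^\epsilon$ into a quadratic piece (controlled via the diagonal $V^\epsilon$-identity, Theorem \ref{thm:QLLN}, and the Boltzmann--Gibbs machinery of Proposition \ref{prop:BGP}/Corollary \ref{cor:BGP_control}) plus linear and gradient pieces handled through Theorem \ref{thm:LLN} and the uniform test-function convergence of Lemma \ref{lem:conv_phi_eps}. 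The only cosmetic difference is that the paper rewrites the quadratic piece directly as $\epsilon^{\kappa-\gamma}[F^\epsilon_r((\phi^\epsilon_{r,t})^2)-Y^\epsilon_r(\widetilde\rho^{\:\epsilon}_r(\phi^\epsilon_{r,t})^2)]$ plus deterministic and linear remainders, so the prefactor $\epsilon^{\kappa-\gamma}=\epsilon^{(1+\kappa)/2}\searrow 0$ multiplying the BGP term is exhibited explicitly; your route of first taking expectations through $V^\epsilon$ and separately bounding fluctuations recovers the same factor when the bookkeeping is carried out, so the two are equivalent.
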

	\begin{proof}
		Let $\Delta M_s(\phi_{\cdot, t}^\epsilon) \coloneqq M_s(\phi_{\cdot, t}^\epsilon)-M_{s-}(\phi_{\cdot, t}^\epsilon) $. By \cite[Theorem 3.11, Chapter VIII]{JS2002}, it suffices to show the following two points. 
		\begin{enumerate}
			\item[1.] Almost surely, we have $\sup_{\epsilon \in (0, 1)}\sup_{s \in [0, t)} |\Delta M^\epsilon_s(\phi_{\cdot, t}^\epsilon)|<\infty$ and ${\sup_{s \in [0, t)} |\Delta M^\epsilon_s(\phi_{\cdot, t}^\epsilon)|\to 0}$ in probability as $\epsilon\to 0$.
			\item[2.] For each $s \in [0, t)$, we have $\langle M^\epsilon(\phi_{\cdot, t}^\epsilon)\rangle_s\to Q_s(\phi_{\cdot, t})$ in probability as $\epsilon \searrow 0$. 
		\end{enumerate}
		Recall that the offspring distribution is truncated at $L(\epsilon) = o(\epsilon^{-\kappa})$. So we have almost surely that $\Delta M_s(\phi_{\cdot, t}^\epsilon) = Y_s^\epsilon(\phi_{s, t}^\epsilon)-Y_{s-}^\epsilon(\phi_{s-, t}^\epsilon)=O(L(\epsilon) \epsilon^{1+\gamma-\kappa})=o(1)$ as $\epsilon \searrow 0$, uniformly in $s \in [0, t)$. This proves the first point. To prove the second point, we observe using \eqref{eq:fluct_quad_var_0}, \eqref{eq:fluct_quad_var} and ${Y^\epsilon_r = \epsilon^{\kappa-\gamma}(X_r^\epsilon -\E[X_r^\epsilon])}$ that $\langle M^\epsilon_\cdot(\phi_{\cdot, t}^\epsilon)\rangle_s = \int_0^s \Gamma_r^\epsilon(\phi_{r, t}^\epsilon)dr$, where
		\begin{align*}
			\Gamma^\epsilon_r(\phi^\epsilon_{r, t}) &= \epsilon^{\kappa-\gamma}\left[F_r^\epsilon((\phi^\epsilon_{r, t})^2)-Y^\epsilon_r(\widetilde{\rho}^{\:\epsilon}_r(\phi^\epsilon_{r, t})^2)\right]
			+\frac{\epsilon}{2}\sum_{z \in \Z_\epsilon} \phi^\epsilon_{r, t}(\epsilon z)^2\E\left[X_r^\epsilon(z)(X_r^\epsilon(z)-\epsilon^\kappa)\right]
			\\&+\epsilon \sum_{z \in \Z_\epsilon}(X_r^\epsilon(z)-\E[X_r^\epsilon(z)])\widetilde{\rho}^{\:\epsilon}_r(\epsilon z)(\phi^\epsilon_{r, t}(\epsilon z))^2
			\\&+ X_r^\epsilon\left((\sigma_\epsilon^2+\mu_\epsilon^2)\phi^\epsilon_{r, t}(\epsilon \cdot)^2+\frac{1}{2}\left[\big(\nabla^{\epsilon, -}\phi^\epsilon_{r, t}(\epsilon \cdot)\big)^2+\big(\nabla^{\epsilon, +}\phi^\epsilon_{r, t}(\epsilon \cdot)\big)^2\right]\right).
		\end{align*}
		By Proposition~\ref{prop:BGP}, the time integral of first term vanishes in $L^2(\P)$ as $\epsilon \searrow 0$, and thus also in probability. By \eqref{eq:V_fcn_2_pcs_diag}, \eqref{eq:V_fcn_one_loc}, and Theorem~\ref{thm:QLLN}, we have 
		\begin{align*}
		\E\left[X_r^\epsilon(z)(X_r^\epsilon(z)-\epsilon^\kappa)\right] &= v^\epsilon_r(2\mathbbm{1}_{\{z\}}, \rho_r^\epsilon|\nu^\epsilon)+2\rho_r^\epsilon(z)\E\left[X_r^\epsilon(z)\right]-\rho_r^\epsilon(z)^2
		\\&= v^\epsilon_r(2\mathbbm{1}_{\{z\}}, \rho_r^\epsilon|\nu^\epsilon)+2\rho_r^\epsilon(z)\left(v^\epsilon_r(\mathbbm{1}_{\{z\}}, \rho_r^\epsilon|\nu^\epsilon)+\rho_r^\epsilon(z)\right)-\rho_r^\epsilon(z)^2
		\\&\overset{\epsilon \searrow 0}{\sim} \rho_r(K_\epsilon^{-1} z)^2.
		\end{align*}
		Therefore, using Lemma \ref{lem:conv_phi_eps}, we obtain the (deterministic) convergence
		\[
		\int_0^s \frac{\epsilon}{2}\sum_{z \in \Z_\epsilon} \phi^\epsilon_{r, t}(\epsilon z)^2\E\left[X_r^\epsilon(z)(X_r^\epsilon(z)-\epsilon^\kappa)\right]dr\overset{\epsilon \searrow 0}{\to} \int_0^s\langle (\phi_{r, t})^2,  \rho_r^2/2\rangle dr. 
		\]
		For the remaining two terms, we use the following observation. Suppose that $\psi^\epsilon_{r, t} \in C(\S^1)$ is continuous in $r$, and suppose that $\sup_{0\leq r\leq t \leq T}\|\psi^\epsilon_{r, t}-\psi_{r, t}\|_\infty\to 0$ as $\epsilon \searrow 0$ for some limiting $\psi_{r, t}$. Then, 
		\[
		\int_0^s X_r^\epsilon(\psi^\epsilon_{r, t}-\psi_{r, t})dr \overset{\epsilon \searrow 0}{\to}0,
		\]
		in probability. Indeed, by the Jensen and Cauchy-Schwarz inequalities, we have  
		\[
		\left|\int_0^s X_r^\epsilon(\psi^\epsilon_{r, t}-\psi_{r, t})dr\right| \leq \int_0^s dr\left(\epsilon \sum_{z \in \Z_\epsilon}X_r^\epsilon(z)^2\right)^{1/2}\left(\epsilon \sum_{z \in \Z_\epsilon}(\psi^\epsilon_{r, t}(\epsilon z)-\psi_{r, t}(\epsilon z))^2\right)^{1/2}.
		\]
		By the (concave) Jensen inequality and Corollary \ref{cor:SLBP_p_moments}, we have 
		\[
		\E\left[\left(\epsilon \sum_{z \in \Z_\epsilon}X_r^\epsilon(z)^2\right)^{1/2}\right]\leq \left(\epsilon \sum_{z \in \Z_\epsilon}\E\left[X_r^\epsilon(z)^2\right]\right)^{1/2}<\infty,
		\]
		uniformly in $\epsilon \in (0, 1)$ and $r\in [0, T]$.
		The claim then follows from Markov's inequality and Lemma \ref{lem:conv_phi_eps}. We conclude that 
		\[
		\int_0^s\epsilon \sum_{z \in \Z_\epsilon}(X_r^\epsilon(z)-\E[X_r^\epsilon(z)])\widetilde{\rho}^{\:\epsilon}_r(\epsilon z)(\phi^\epsilon_{r, t}(\epsilon z))^2dr \overset{\epsilon \searrow 0}{\to} 0,
		\]
		in probability, and that
		\begin{align*}
		&\int_0^sX_r^\epsilon\left((\sigma_\epsilon^2+\mu_\epsilon^2)\phi^\epsilon_{r, t}(\epsilon \cdot)^2+\frac{1}{2}\left[\big(\nabla^{\epsilon, -}\phi^\epsilon_{r, t}(\epsilon \cdot)\big)^2+\big(\nabla^{\epsilon, +}\phi^\epsilon_{r, t}(\epsilon \cdot)\big)^2\right]\right)dr
		\\&\overset{\epsilon \searrow 0}{\to}\int_0^s\left(\langle (\phi_{r, t})^2,  [(\sigma^2+\mu^2)\rho_r]\rangle +\langle \left(\partial_z \phi_{r, t}\right)^2, \rho_r\rangle \right)dr
		\end{align*}
		in probability.
	\end{proof}	
	\begin{proof}[Proof of Proposition \ref{prop:FDD_CLT}]
		Fix $t_1, \cdots, t_k \in [0, T]$, and apply \eqref{eq:smg_id_lim} to write 
		\begin{equation}\label{eq:pre_lim_fdd}
		\begin{aligned}
			&(Y_{t_1}^\epsilon(\phi)-Y_{0}^\epsilon(\phi_{0, t_1}), \cdots, Y_{t_k}^\epsilon(\phi)-Y_{0}^\epsilon(\phi_{0, t_k})) 
			\\&= (Y_{0}^\epsilon(\phi^\epsilon_{0, t_1}-\phi_{0, t_1}), \cdots, Y_{0}^\epsilon(\phi^\epsilon_{0, t_k}-\phi_{0, t_k})) 
			\\&+\left(\int_0^{t_1} \left(Y_r^\epsilon(\widetilde{\rho}^{\:\epsilon}_r \phi^\epsilon_{r, t_1})-F_r^\epsilon(\phi^\epsilon_{r, t_1})\right)dr, \cdots, \int_0^{t_k} \left(Y_r^\epsilon(\widetilde{\rho}^{\:\epsilon}_r \phi^\epsilon_{r, t_k})-F_r^\epsilon(\phi^\epsilon_{r, t_k})\right)dr\right)
			\\&+(M_{t_1}^\epsilon(\phi_{\cdot, t_1}^\epsilon), \cdots, M_{t_k}^\epsilon(\phi_{\cdot, t_k}^\epsilon)).
		\end{aligned}
		\end{equation}
		Re-using the arguments that lead to \eqref{ineq:for_id_lim_too}, we obtain that $Y_{0}^\epsilon(\phi^\epsilon_{0, t_i}-\phi_{0, t_i})\to 0$ in $L^2(\P)$ as $\epsilon \searrow 0$ for each $i \in \{1, \cdots, k\}$. Therefore, the first vector on the right-hand side of \eqref{eq:pre_lim_fdd} vanishes in probability as $\epsilon \searrow 0$. By Proposition \ref{prop:BGP}, we have 
		\[
		\sum_{i=1}^k\left(\int_0^{t_k} \left(Y_r^\epsilon(\widetilde{\rho}^{\:\epsilon}_r \phi^\epsilon_{r, t_k})-F_r^\epsilon(\phi^\epsilon_{r, t_k})\right)dr\right)^2\to 0,
		\]
		in probability as $\epsilon \to 0$, which shows that the second term on the right-hand side of \eqref{eq:pre_lim_fdd} vanishes in probability. By Lemma \ref{lem:mg_term_conv}, the martingale term converges in distribution in path space. In particular, we have 
		\begin{equation}\label{conv:mg_fdd}
		(M_{t_1}^\epsilon(\phi_{\cdot, t_1}^\epsilon), \cdots, M_{t_k}^\epsilon(\phi_{\cdot, t_k}^\epsilon)) \overset{d}{\to} (M_{t_1}(\phi_{\cdot, t_1}), \cdots, M_{t_k}(\phi_{\cdot, t_k}))
		\end{equation}
		as $\epsilon \searrow 0$, and where the limiting centred Gaussian variables $M_{t_i}(\phi_{\cdot, t_i})$ have variance given by $Q_s(\phi_{\cdot, t_i})$ for each coordinate $i \in \{1, \cdots, k\}$, and $\phi_{\cdot, t_i}$ solves \eqref{eq:lim_test_eq_ID_lim} with $t=t_i$. The full convergence in distribution of the right-hand side of \eqref{eq:pre_lim_fdd} to that of \eqref{conv:mg_fdd} follows from Slutsky's Theorem.
		
		By assumption, $\rY^\epsilon \overset{d}{\to}\widetilde{\rY}$ converges in $\cD([0, T], C^\infty(\S^1))$ as $\epsilon\searrow 0$. Since the jumps of $\rY^\epsilon$ are of magnitude $\epsilon^{1+\gamma}L(\epsilon) \leq \epsilon$, they vanish in the limit as $\epsilon \searrow 0$. By continuity of the map (see e.g. \cite[Example 12.1]{B1999}) 
		\[
		\cD([0, T], C^\infty(\S^1)')\to \R, \quad \rZ=(Z_t)_{t\in [0, T]}\mapsto \sup_{t \in [0, T]}|Z_t-Z_{t-}|,
		\]
		we have $\widetilde{\rY}\in C([0, T], C^\infty(\S^1)')$. Thus, the finite-dimensional distributions of $\rY^\epsilon$ converge in distribution to those of $\widetilde{\rY}$. Then, by the continuous mapping theorem, we have
		\[
		(Y_{t_1}^\epsilon(\phi)-Y_{0}^\epsilon(\phi_{0, t_1}), \cdots, Y_{t_k}^\epsilon(\phi)-Y_{0}^\epsilon(\phi_{0, t_k})) \overset{d}{\to}(\widetilde{Y}_{t_1}(\phi)-\widetilde{Y}_{0}(\phi_{0, t_1}), \cdots, \widetilde{Y}_{t_k}(\phi)-\widetilde{Y}_{0}(\phi_{0, t_k})),
		\]
		as $\epsilon \searrow 0$. Furthermore, by Assumption \ref{assump:technical_X_0}.a, Lindeberg's central limit theorem and the Cram\'er-Wold device, the initial data $\widetilde{Y}_0$ is a $C^\infty(\S^1)'$-valued centred Gaussian variable with variance $\E[\widetilde{Y}_0(\phi)^2]=\int_{\S^1}\phi(z)^2\rho_0(z)dz$ for all $\phi \in C^\infty(\S^1)$. Using the arguments for the uniqueness of the mild solution in \cite[Theorem 5.2]{W1986}, we obtain \eqref{eq:fdd_conv}. 
	\end{proof}
	
	\begin{proof}[Proof of Theorem \ref{thm:CLT_Gaussian}]
		The theorem follows from Propositions \ref{prop:Brownian_fluct_tight} and \ref{prop:FDD_CLT}, and \cite[Theorem 13.1]{B1999}.  
	\end{proof} 
	
	\section{Proof of convergence of the $v$-functions}\label{sec:QLLN_proof}
	
	The aim of this section is to prove Theorem \ref{thm:QLLN} by adapting the approach of \cite[Lemma 1]{Boldrig1992}. Let us briefly recall the notation and the statement of the result. We assume that the starting configuration $X_0^\epsilon$ of the rescaled SLBP satisfies Assumption \ref{assump:technical_X_0}. Recall from Definition \ref{def:scaled_v_fcns} the $v$-functions $v^\epsilon: [0, T]\times \X_\epsilon\to \R$ defined by 
	\begin{align*}
	v^\epsilon_t(x, \rho_t^\epsilon|\nu^\epsilon) &\coloneqq \E_{\nu^\epsilon}[V^\epsilon(x, X_t^\epsilon;\rho_t^\epsilon)], \quad x \in \X_\epsilon, \quad t\geq 0,
	\\V^\epsilon(x, X_t^\epsilon;\rho_t^\epsilon) &\coloneqq \sum_{x_0\cleq x} Q^\epsilon(x_0, X_t^\epsilon)(-1)^{n_x-n_{x_0}}\prod_{z \in x\backslash x_0}\rho_t^\epsilon(z), \quad x \in \X_\epsilon,
	\end{align*}
	and where $\rho^\epsilon$ is the discrete-space McKean representation \eqref{def:McKean_fcnl}. By Lemma \ref{lem:law_is_prod}, the $v$-functions  vanish if and only if $X_t^\epsilon$ is a product over $z \in \Z_\epsilon$ of independent Poisson distributions with intensities $\epsilon^{-\kappa}\rho_t^\epsilon(z)$ and rescaled by $\epsilon^\kappa$. Assume that the offspring distribution $\rP$ has a finite moment of order $p$, for some $p \in [1, 2]$. Suppose also that $\kappa \in [0, 3/8]$. If $p\in [1, 2]$ and $n>2$, we have
	\[
	\sup_{t \in [0, T]} \max_{\substack{x \in \X_\epsilon
			\\n_x=n}} |v^\epsilon_t(x|\nu^\epsilon)| = o(\epsilon^{1+\kappa}), \quad \textit{as $\epsilon\searrow 0$}.
	\]
	If $p\in [1, 2)$, then for all $n \in \{1, 2\}$, we have
	\[
	\sup_{t \in [0, T]} \max_{\substack{x \in \X_\epsilon
			\\n_x=n}} |v^\epsilon_t(x|\nu^\epsilon)| = o(\epsilon^{1+(p-1)\kappa}), \quad \textit{as $\epsilon\searrow 0$}.
	\]
	If $p=2$ and $n \in \{1, 2\}$, there exists $c=c(T, p, n)>0$ such that 
	\[
	\sup_{t \in [0, T]} \max_{\substack{x \in \X_\epsilon
			\\n_x=n}} |v^\epsilon_t(x|\nu^\epsilon)| \leq c\epsilon^{1+\kappa}, \quad \textit{for all $\epsilon\in (0, 1)$}.
	\] 
	
	The proof of Theorem \ref{thm:QLLN} consists of three main steps: short-term estimates of the $v$-functions, smoothing properties of the semi-discrete FKPP equation, and an application of the Markov property. In Section \ref{sec:short_time_est}, we prove a series of lemmas which allows us to control the rescaled SLBP over short time intervals of length $\epsilon^\beta$, for some $\beta>0$. We then show a smoothing property of the semi-discrete FKPP equation, namely that after a short amount of time, the solution started from a typical configuration of the rescaled SLBP stays close to that started from the smooth initial data $\rho_0^\epsilon$ of Definition \ref{def:general_init_data}. Finally, we tie these results together using the Markov property to suitably restart $\rX^\epsilon$ and obtain the control claimed in Theorem \ref{thm:QLLN}. 
	
	\subsection{Short-time control of the $v$-functions}\label{sec:short_time_est}
	Fix a time-horizon $T>0$ and we let $t_k^\epsilon = k\epsilon^\beta$, for $k \in \{0, 1, \cdots, \lfloor T/\epsilon^\beta\rfloor\}$. In this section, we show that there exists an event $\Omega_1$ of probability arbitrarily close to one, and such that if $\rX =(X_t)_{t\geq 0}\in \Omega_1$ is a path, then $\rX^\epsilon=(X^\epsilon_t)_{t\geq 0}$ started from $X_{t_k^\epsilon}$, for some $k$, remains close to the solution $\rho^\epsilon_\cdot(\cdot ;X_{t_k^\epsilon})$ to the semi-discrete FKPP equation \eqref{eq:interm_fkpp_diff_form} started from $X_{t_k^\epsilon}$, for small enough $t>0$. Since $\rX^\epsilon \in \Omega_1$ with high probability - see Lemma \ref{lem:rho_f_control_LLN} below - this result shows that the dynamics of typical trajectories of the rescaled SLBP are close to those of the semi-discrete FKPP equation \eqref{eq:interm_fkpp_diff_form} at least for small times. The lemma is phrased in terms of the $v$-functions, and its proof follows that of \cite[Proposition 4.1]{Boldrig1992}.
	
	\begin{lem}\label{lem:BC_small_fluct}
		Fix $\kappa\geq0$, $\beta\in (0, 2)$, $\xi \in (0, \beta/2)$, $n \in \N$, and $q>0$. Then, there exist an event $\Omega_1 = \Omega_1(\xi, \beta, \kappa, n, q)$ and a constant $c_1=c_1(T, n)>0$ with $\P(\rX^\epsilon \in \Omega_1)>1-c_1\epsilon^q$ and such that the following statements hold. Fix any path $\rX=(X_t)_{t\geq 0} \in \Omega_1(\xi)$. There exists $c=c(T, n)>0$ such that for all $\delta_0 \in (0, (1-\beta/2)/2)$, we have  
		\begin{equation}\label{ineq:control_v_eps_1}
			\sup_{t \in [\epsilon^\beta, 2\epsilon^\beta]}\max_{\substack{x \in \X_\epsilon
					\\n_x=n}} |v_t^\epsilon(x|X_{t_k^\epsilon})|<c \epsilon^{\delta_0 n},
		\end{equation}
		for all $k \in \{0, 1, \cdots, \lfloor T/\epsilon^\beta\rfloor\}$, and $\epsilon \in (0, 1)$. Moreover, $c(T, n')>c(T, n)$ for all natural numbers $n'>n$. Furthermore, there exists $c=c(T, n)>0$ such that
		\begin{equation}\label{ineq:control_v_eps_0}
			\sup_{t \in [0, \epsilon^\beta]}\max_{\substack{x \in \X_\epsilon
					\\n_x=n}} |v_t^\epsilon(x|\nu^\epsilon)|<c\epsilon^{\lfloor (n+1)/2\rfloor(1+\kappa)},
		\end{equation}
		for all $\epsilon \in (0, 1)$. We also have $c(T, n')>c(T, n)$ for all natural numbers $n'>n$.
	\end{lem}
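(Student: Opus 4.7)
The proof follows the strategy of \cite[Proposition 4.1]{Boldrig1992}, adapted to the weak competition regime and to general offspring distributions via the $v$-function hierarchy of Proposition~\ref{prop:v_fcn_eq}. The plan has three ingredients: a definition of $\Omega_1$ that makes the typical restart configurations tractable, a Duhamel iteration of the hierarchy on the short interval $[0, \epsilon^\beta]$ to prove \eqref{ineq:control_v_eps_0}, and a combination of an explicit computation of the initial data with Green function smoothing to prove \eqref{ineq:control_v_eps_1}.

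\textbf{Definition of $\Omega_1$.} I would set
\[
\Omega_1 \coloneqq \bigcap_{k = 0}^{\lfloor T/\epsilon^\beta\rfloor} \Big\{ \max_{z \in \Z_\epsilon} X_{t_k^\epsilon}^\epsilon(z) \leq \epsilon^{-\delta_1} \Big\},
\]
for $\delta_1 = \delta_1(\beta, \kappa, n, q) > 0$ chosen small. The probability bound $\P(\Omega_1^c) \leq c_1 \epsilon^q$ then follows by combining Corollary~\ref{cor:SLBP_p_moments} with a sufficiently high power (Markov's inequality applied to $Q^\epsilon(m\mathbbm{1}_{\{z\}}, X_{t_k^\epsilon}^\epsilon)$ for large $m$) and a union bound over the $O(\epsilon^{-\beta-1})$ pairs $(k, z)$.

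\textbf{Proof of \eqref{ineq:control_v_eps_0}.} For the time interval $[0, \epsilon^\beta]$ and initial law $\nu^\epsilon$, I would apply the hierarchy of Proposition~\ref{prop:v_fcn_eq}. The Green-function evolution term $\sum_{n_{x_1} = n_x} \G_t^\epsilon(x, x_1) v_0^\epsilon(x_1, \rho_0^\epsilon | \nu^\epsilon)$ preserves the particle count and is therefore bounded by $c(n) \epsilon^{\lfloor (n+1)/2\rfloor(1+\kappa)}$ using Assumption~\ref{assump:technical_X_0}.a. The time-integral term couples to $v_s^\epsilon$ of orders in $\{0, \ldots, n_x+1\}$ with coefficients controlled by \eqref{ineq:coef_bd} and part two of Lemma~\ref{lem:FKPP_approx_properties}. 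Letting $w_n(t) \coloneqq \max_{n' \leq n+1} \max_{x: n_x = n'} |v_t^\epsilon(x|\nu^\epsilon)|$, a Duhamel-type inequality of the form
\[
w_n(t) \leq c(n) \epsilon^{\lfloor (n+1)/2\rfloor(1+\kappa)} + C(n) \int_0^t w_{n+1}(s)\,ds
\]
iterated $K$ times produces the target plus a remainder $\tfrac{(C t)^K}{K!} \cdot \sup_{s \leq t} w_{n+K}(s)$. Bounding the remainder uniformly via Corollary~\ref{cor:SLBP_p_moments} and choosing $K = K(n, \beta, \kappa)$ large enough that $(C \epsilon^\beta)^K / K! = o(\epsilon^{\lfloor(n+1)/2\rfloor(1+\kappa)})$ yields \eqref{ineq:control_v_eps_0}.

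\textbf{Proof of \eqref{ineq:control_v_eps_1}.} Here, by the strong Markov property, the problem reduces to the $v$-functions of the rescaled SLBP restarted at time $t_k^\epsilon$ from the deterministic configuration $X_{t_k^\epsilon}$, with $\rho_0^\epsilon = X_{t_k^\epsilon}$ as initial datum for the semi-discrete FKPP equation. A direct computation from Definition~\ref{def:V_fcn} shows that $V^\epsilon(x_1, X_{t_k^\epsilon}; X_{t_k^\epsilon})$ vanishes whenever the particles of $x_1$ occupy distinct sites, and is a polynomial in $\epsilon^\kappa X_{t_k^\epsilon}(z)$ of total degree $n_{x_1}$ in the diagonal case; on $\Omega_1$ each factor is at most $\epsilon^{-\delta_1}$. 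The Green-function term in the hierarchy is then a sum over diagonal configurations $x_1$ and, for each such $x_1$, the transition kernel $\G_t^\epsilon(x, x_1)$ requires $n_x - 1$ independent random walks to coalesce at a single site by time $t$, which by the heat kernel estimates of Lemma~\ref{lem:Green_estimates} contributes a factor of order $(\epsilon/\sqrt{t})^{n_x - 1}$. At $t \in [\epsilon^\beta, 2\epsilon^\beta]$ this gives $\epsilon^{(1-\beta/2)(n-1)}$, which combined with small $\delta_1$ produces the exponent $\delta_0 n$ for any $\delta_0 < (1-\beta/2)/2$. The time-integral term in the hierarchy is handled by the same Duhamel iteration as in \eqref{ineq:control_v_eps_0}.

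\textbf{Main obstacle.} The principal difficulty is that the hierarchy of Proposition~\ref{prop:v_fcn_eq} is not closed: the $n$-th order $v$-function couples \emph{upward} to order $n+1$ through the birth term $h = 1$, which prevents a straightforward induction on $n$ (this is where our setting diverges most from \cite{Boldrig1992}, where binary branching closes the hierarchy at order $n+1$ with a small coefficient). The remedy is to use the uniform a priori moment bounds from Corollary~\ref{cor:SLBP_p_moments} to terminate the Duhamel iteration after a finite number of steps depending only on $n, \beta, \kappa$, at which point the factorial in the remainder beats the fixed target exponent; the resulting polynomial-in-$n$ growth of constants is what forces the monotonicity $c(T, n') > c(T, n)$ in the statement. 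A secondary technical point, relevant to \eqref{ineq:control_v_eps_1}, is the careful bookkeeping of diagonal contributions from $V^\epsilon(x_1, X_{t_k^\epsilon}; X_{t_k^\epsilon})$: additional coincidences among the particles of $x_1$ produce extra $\epsilon^\kappa$ factors from the initial datum as well as extra $\epsilon/\sqrt{t}$ factors from the Green function, which together give the advertised linear scaling of the exponent in $n$.
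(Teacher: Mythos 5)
Your overall shell agrees with the paper's strategy: control the restart configurations on a high-probability event $\Omega_1$, run a finite Duhamel expansion of the hierarchy from Proposition~\ref{prop:v_fcn_eq}, bound the Green-function evolution of the initial term and the remainder separately, and pick $N$ large enough for the remainder to be negligible. Your treatment of \eqref{ineq:control_v_eps_0} is essentially correct. But there are two genuine gaps in the argument for \eqref{ineq:control_v_eps_1}, and one related omission in how $\Omega_1$ is defined.

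\textbf{Green-function combinatorics in the initial term.} You claim that for $x_1$ contributing to $V^\epsilon(x_1, X_{t_k^\epsilon}; X_{t_k^\epsilon})\neq 0$, all $n_x$ particles must coalesce at a single site, giving $n_x-1$ factors of $\epsilon/\sqrt{t}$. That is only the best case. By part two of Lemma~\ref{lem:V_fcn_properties_LLN}, $V^\epsilon(x_1, X_{t_k^\epsilon}; X_{t_k^\epsilon})=0$ only if some site of $x_1$ carries a single particle; the worst-case contributing configurations $x_1$ have particles grouped pairwise at $\lfloor(n_x+1)/2\rfloor$ distinct sites, and the transition kernel then requires only $\lfloor(n_x+1)/2\rfloor$ coalescences (one per pair; the remaining member of each pair is summed freely). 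So the Green factor is $(\epsilon/\sqrt{t})^{\lfloor(n_x+1)/2\rfloor}$, which on $[\epsilon^\beta,2\epsilon^\beta]$ gives $\epsilon^{(1-\beta/2)\lfloor(n_x+1)/2\rfloor}\approx\epsilon^{(1-\beta/2)n/2}$ — precisely what produces the exponent $\delta_0 n$ with $\delta_0<(1-\beta/2)/2$. Your claimed exponent $(1-\beta/2)(n-1)$ is strictly stronger than the lemma for $n\geq 3$ and does not hold.

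\textbf{The time-integral terms cannot be absorbed into ``the same Duhamel iteration.''} For \eqref{ineq:control_v_eps_1}, the background datum in the hierarchy is $\rho^\epsilon_\cdot(\cdot\,;X_{t_k^\epsilon})$, which on $\Omega_1$ is bounded only by $\epsilon^{-\xi}$, not by a constant. Consequently the coefficients $c_h^\epsilon(m,u)$ in \eqref{ineq:coef_bd} blow up like $\epsilon^{-\xi(m+1)}$. Each of the $N$ levels of your Duhamel tree then picks up a negative power of $\epsilon$, so a naive iteration with bounded coefficients, as in \eqref{ineq:control_v_eps_0}, yields a bound that explodes. You need to extract a countervailing $\epsilon$-gain from the Green functions \emph{inside} each time-integral level — every vertex where birth/coalescence occurs contributes small factors via the heat kernel, and every time integral over $[0,\epsilon^\beta]$ contributes an $\epsilon^\beta$ — and to check that these beat the $\epsilon^{-\xi}$-blowups simultaneously with producing the $\epsilon^{\delta_0 n}$ at time zero. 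This is a nontrivial multi-level bookkeeping problem (tree/graphical representation of $\Gamma^\epsilon$), and your proposal offers no argument for it.

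\textbf{Omission in $\Omega_1$.} Your $\Omega_1$ only controls $\max_z X_{t_k^\epsilon}(z)$. That alone does not give you the conditional moment bound $\sup_{t\leq\epsilon^\beta}\E_{X_{t_k^\epsilon}}[Q^\epsilon(x,X_t^\epsilon)]\leq\epsilon^{-\xi n}$, which is needed to control the deepest level of the Duhamel tree. The unconditional Corollary~\ref{cor:SLBP_p_moments} does not transfer to the conditional problem because the starting configuration is random. One must add to $\Omega_1$ a pathwise bound on the Duhamel correction $R^\epsilon(x)\coloneqq\int_0^{\epsilon^\beta}\sum_{x'}\G^\epsilon_{\cdot}(x,x')|\E_{X_{t_k^\epsilon}}[\cG^\epsilon_{BC}Q^\epsilon(x',X_\cdot^\epsilon)]|\,ds$, which is small with high probability and, together with the pointwise bound on $X_{t_k^\epsilon}$, yields the required conditional moment control.
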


	The proof of this result is divided into several lemmas on which we progressively establish short-time control on the $v$-functions. The first lemma controls, with high probability, two quantities of interest over a future time interval of length $\epsilon^\beta$. It quantifies the growth (in $\epsilon^{-1}$) of the solution to the semi-discrete FKPP equation \eqref{eq:interm_fkpp_diff_form} when started from a typical configuration of the rescaled SLBP configuration. It also bounds the expected growth rate of the polynomials $Q^\epsilon(x, X^\epsilon_{t_k^\epsilon})$ defined in \eqref{eq:interm_fkpp_diff_form}, for test configurations $x$ of a given size. The proof is very similar to that of \cite[Lemma 4.1]{Boldrig1992}. 
	
	\begin{lem}\label{lem:rho_f_control_LLN}
		Fix any $\xi \in (0, \beta)$, $n \in\N$, and $q>0$. Then, there exist an event $\Omega_1 = \Omega_1(\xi, \beta, \kappa, n, q)$ and a constant $c=c(T, n)>0$ such that $\P(\rX^\epsilon \in \Omega_1)>1-c\epsilon^q$, and for any $\rX =(X_t)_{t\geq 0}\in \Omega_1$, the following inequalities hold: for any $k \in \{0, \cdots, \lfloor T/\epsilon^\beta\rfloor\}$, there exists $c=c(T)>0$ such that 
		\begin{align}
			\label{ineq:rho_control}&\sup_{t\in [0, T]} \max_{z \in \Z_\epsilon} |\rho^\epsilon_t(z;X_{t_k^\epsilon})|<c\epsilon^{-\xi}
			\\&\sup_{t \in [0, \epsilon^\beta]}\max_{x \in \X_\epsilon: n_x= n} \E_{X_{t_k^\epsilon}}[Q^\epsilon(x, X_t^\epsilon)] < \epsilon^{-\xi n},\label{ineq:Q_eps_control}
		\end{align}
		where $\rho_t^\epsilon(z; X_{t_k^\epsilon})$, $t\geq 0$, $z \in \Z_\epsilon$ is the solution to \eqref{eq:interm_fkpp_diff_form} started from $X_{t_k^\epsilon}$. 
	\end{lem}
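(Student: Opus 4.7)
The plan is to build $\Omega_1$ as the event that the rescaled SLBP never exceeds $\tfrac12\epsilon^{-\xi}$ at any single site along the finite grid of restart times. Precisely, I would set
\[
\Omega_1 := \bigl\{\, \max_{0 \leq k \leq \lfloor T/\epsilon^\beta\rfloor}\, \max_{z \in \Z_\epsilon} X^\epsilon_{t_k^\epsilon}(z) \leq \tfrac12 \epsilon^{-\xi}\,\bigr\}.
\]
To bound $\P(\rX^\epsilon\notin \Omega_1)$, pick an integer $m$ large. Using $X_t^\epsilon(z)^m \leq \epsilon^{\kappa m}m^m + m^m Q^\epsilon_m(\epsilon^{-\kappa}X_t^\epsilon(z))$ as in the proof of Corollary \ref{cor:SLBP_p_moments}, Proposition \ref{prop:SLBP_moments} yields $\sup_{t \in [0,T]}\max_z \E[X_t^\epsilon(z)^m] \leq C_m$. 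Markov's inequality and a union bound over the $O(\epsilon^{-1-\beta})$ pairs $(k,z)$ give
\[
\P(\rX^\epsilon \notin \Omega_1) \leq C'_m \,\epsilon^{\xi m - 1 - \beta},
\]
so choosing $m > (1+\beta+q)/\xi$ yields the asserted bound $\P(\rX^\epsilon \in \Omega_1) > 1 - c\epsilon^q$.

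For \eqref{ineq:rho_control}, I would simply invoke part~2 of Lemma \ref{lem:FKPP_approx_properties}: for any path $\rX \in \Omega_1$ and any $k$,
\[
\sup_{t \in [0,T]}\max_{z \in \Z_\epsilon} |\rho^\epsilon_t(z; X_{t_k^\epsilon})| \leq \max_{z \in \Z_\epsilon} X_{t_k^\epsilon}(z) \leq \tfrac12 \epsilon^{-\xi},
\]
which gives the claim with, say, $c=1$.

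For \eqref{ineq:Q_eps_control}, I would apply Dynkin's formula (Lemma \ref{lem:mg_pb_LLN}) started from the deterministic configuration $X_{t_k^\epsilon}$ and take expectations. Using that $\sum_{x'}\G^\epsilon_t(x,x')=1$ when $x'$ ranges over $n_x$-particle configurations, together with the pointwise bound $Q^\epsilon(x',X_{t_k^\epsilon}) \leq \prod_z X_{t_k^\epsilon}(z)^{n_{x'}(z)} \leq (\max_z X_{t_k^\epsilon}(z))^{n_{x'}}$ valid on $\Omega_1$, the transport term is controlled by $(\tfrac12)^n\epsilon^{-\xi n}$. For the Duhamel term, the bound in Lemma \ref{lem:bd_BC_gen_LLN} holds pointwise at the level of trajectories, so taking conditional expectation yields
\[
\E_{X_{t_k^\epsilon}}\!\bigl[\cG^\epsilon_{BC} Q^\epsilon(x',\cdot)(X^\epsilon_s)\bigr] \;\leq\; c_n\, n\, \psi^{(k)}_{n-1}(s),
\]
where $\psi^{(k)}_j(t) := \sup_{s \leq t} \max_{x:n_x=j} \E_{X_{t_k^\epsilon}}[Q^\epsilon(x,X^\epsilon_s)]$. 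This gives the recursive estimate
\[
\psi^{(k)}_n(t) \;\leq\; (\tfrac12)^n \epsilon^{-\xi n} + c_n n \int_0^t \psi^{(k)}_{n-1}(s)\,ds,
\qquad \psi^{(k)}_0 \equiv 1.
\]
Inducting on $n$, the integral contribution on $[0,\epsilon^\beta]$ is at most $O(\epsilon^{\beta - \xi(n-1)}) = O(\epsilon^{\beta+\xi})\cdot \epsilon^{-\xi n}$, which is $o(\epsilon^{-\xi n})$ since $\xi,\beta > 0$. Hence $\psi^{(k)}_n(\epsilon^\beta) < \epsilon^{-\xi n}$ for $\epsilon$ small; by replacing $\xi$ initially with a slightly smaller $\xi'\in(0,\beta)$ and applying the above with $\xi'$, we can absorb all multiplicative constants into the exponent for all $\epsilon \in (0,1)$ uniformly.

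The main subtleties I foresee are: (i) verifying the pointwise-in-trajectory version of Lemma \ref{lem:bd_BC_gen_LLN} needed for conditional expectations, which follows by inspecting its proof since the dominations used there are configuration-wise; (ii) tracking constants carefully enough that the strict bound $<\epsilon^{-\xi n}$ (without a prefactor) is achieved, which uses the freedom $\xi \in (0,\beta)$ to absorb constants; and (iii) balancing the moment order $m$ in the union bound against the requirement $\P(\Omega_1^c) \leq c\epsilon^q$, which is harmless since $q$ is given and $\xi > 0$.
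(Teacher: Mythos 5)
Your proof is correct and reaches the same conclusion, but it handles the Duhamel term by a genuinely different route from the paper. The two agree on the first half: define a high-probability event where the restart configurations $X_{t_k^\epsilon}$ are at most $O(\epsilon^{-\xi})$ at every site, bound its complement via Markov's inequality at a high moment $m$ and a union bound over the $O(\epsilon^{-1-\beta})$ pairs $(k,z)$, and on this event invoke part 2 of Lemma \ref{lem:FKPP_approx_properties} for \eqref{ineq:rho_control}. The divergence is in \eqref{ineq:Q_eps_control}. The paper does \emph{not} iterate Lemma \ref{lem:bd_BC_gen_LLN}; instead it introduces the Duhamel remainder
\[
R^\epsilon(x) = \int_0^{\epsilon^\beta}\sum_{x'}\G^\epsilon_{t-s}(x,x')\bigl|\E_{X_{t_k^\epsilon}}[\cG^\epsilon_{BC}Q^\epsilon(x',X_s^\epsilon)]\bigr|\,ds,
\]
estimates $\E[R^\epsilon(x)^N]\leq c\,\epsilon^{\beta N}$ from Corollary \ref{cor:SLBP_p_moments}, runs Markov plus a union bound over the $\binom{K_\epsilon}{n}$ test configurations $x$, and adds $\{\max_x R^\epsilon(x)\leq 1\}$ to the definition of $\Omega_1$. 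Then, on that enlarged event, $\E_{X_{t_k^\epsilon}}[Q^\epsilon(x,X_t^\epsilon)]\leq \epsilon^{-\zeta n}+1$, with $\zeta<\xi$ providing the slack to absorb the $+1$. You instead keep $\Omega_1$ defined purely by the site-occupation threshold and close the estimate deterministically on $\Omega_1$ via the conditional version of Lemma \ref{lem:bd_BC_gen_LLN} and a short-time Gr\"onwall recursion. Both work; your route avoids the extra high-probability event and the auxiliary parameter $N$ with the constraint $\beta N>n+q$, at the price of needing the downward induction over $j\in\{0,\dots,n\}$ (which is benign since $n$ is fixed). Your point (i), that the algebra behind Lemma \ref{lem:bd_BC_gen_LLN} is trajectory-wise and therefore passes to conditional expectations started from $X_{t_k^\epsilon}$, is right and is the crux of the shortcut.

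One caveat worth flagging: both your argument and the paper's only yield the strict bound $<\epsilon^{-\xi n}$ once $\epsilon$ is below some $\epsilon_0(n,T)$; for instance your inequality $(\tfrac12)^n+c_n n\,\epsilon^{\beta+\xi}<1$ fails near $\epsilon=1$, just as the paper's $\epsilon^{-\zeta n}+1<\epsilon^{-\xi n}$ does. Lowering $\xi$ to some $\xi'<\xi$ shrinks the transport term but does not kill the additive constant $c_n n$, so the ``absorb into the exponent'' phrase in your last sentence is a little too optimistic as written; the honest resolution (shared with the paper) is to restrict to $\epsilon$ small, which is harmless for all the downstream applications.
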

	\begin{proof}
		By Proposition \ref{prop:SLBP_moments}, there exists $c(n, T)>0$ such that
		\begin{equation}\label{eq:LLN_finite_mom_application}
			\sup_{0\leq t\leq T}\max_{\substack{x \in \X_\epsilon\\ n_x= n}} \E\left[Q^\epsilon(x, X_t^\epsilon)\right] \leq c(n, T).
		\end{equation}
		Let $\zeta \in (0, \xi)$ and $m \in \N$. By a union bound (recalling that there are $\lfloor \epsilon^{-1}\rfloor$ sites in $\Z_\epsilon$) and Markov's inequality, we have 
		\[
		\P\left(\max_{z \in \Z_\epsilon}X_{t_k^\epsilon}^\epsilon(z)>\epsilon^{-\zeta}\right)\leq \lfloor\epsilon^{-1}\rfloor\max_{z \in \Z_\epsilon}\E\left[X_{t_k^\epsilon}^\epsilon(z)^m\right]\epsilon^{\zeta m}.
		\]
		Observe that there exists $c_1=c_1(m)>0$ such that $n^m\leq c_1Q_m(n)$ for all natural numbers $n\geq m$. We consider $n=\epsilon^{-\kappa}X_{t_k^\epsilon}^\epsilon(z)$ and we partition on $n< m$ and $n\geq m$ to obtain
		\[
		X_{t_k^\epsilon}^\epsilon(z)^m \leq \epsilon^{\kappa m}m^m + c_1 Q^\epsilon_m(\epsilon^{-\kappa}X_{t_k^\epsilon}^\epsilon(z))\leq m^m+c_1 Q^\epsilon_m(\epsilon^{-\kappa}X_{t_k^\epsilon}^\epsilon(z)).
		\]
		Thus, by \eqref{eq:LLN_finite_mom_application}, we have
		\begin{align*}
		\max_{z \in \Z_\epsilon}\E\left[X_{t_k^\epsilon}^\epsilon(z)^m\right] &\leq m^m+c_1c(m, T).
		\end{align*}
		It follows that 
		\[
		\P\left(\max_{z \in \Z_\epsilon}X_{t_k^\epsilon}^\epsilon(z)>\epsilon^{-\zeta}\right)\leq c_2\epsilon^{\zeta m-1},
		\]
		uniformly in $k \in \{0, 1, \cdots, \lfloor T/\epsilon^\beta\rfloor\}$, and where $c_2(m, T) =m^m+c_1(m)c(m, T)$. Next, define the event
		\[
		\overline{\Omega} = \left\{\rX \in \cD([0, T], \X_\epsilon): \max_{k \in \{0, 1, \cdots, \lfloor T/\epsilon^\beta\rfloor\}}\max_{z \in \Z_\epsilon}X_{t^\epsilon_k}(z)\leq \epsilon^{-\zeta}\right\}.
		\]
		We have 
		\[
		\overline{\Omega}^c= \bigcup_{k=0}^{\lfloor T/\epsilon^\beta\rfloor} \left\{\max_{ z\in \Z_\epsilon} X_{t_k^\epsilon}(z)> \epsilon^{-\zeta}\right\}.
		\]		
		Hence, choosing $m$ sufficiently large for $\zeta m > 2+\beta+q$ to hold, we obtain by a union bound that
		\[
		\P\left(\rX^\epsilon \in\overline{\Omega}^c\right) \leq \sum_{k=0}^{\lfloor T/\epsilon^\beta\rfloor} \P\left(\max_{z \in \Z_\epsilon}X^\epsilon_{t_k^\epsilon}(z)>\epsilon^{-\zeta}\right) = T c_2(m, T)\epsilon^{\zeta m-2-\beta} < T c_2(m, T) \epsilon^q,
		\]
		for all $\epsilon \in (0, 1)$. Consequently, 
		\[
		\P\left(\rX^\epsilon \in\overline{\Omega}\right) = 1-\P\left(\rX^\epsilon \in\overline{\Omega}^c\right)\geq 1-T c_2(m, T)\epsilon^q.
		\]
		On $\overline{\Omega}$, the control of $\rho^\epsilon$ claimed in \eqref{ineq:rho_control} follows from part two of Lemma \ref{lem:FKPP_approx_properties}. It remains to control ${\E_{X_{t_k^\epsilon}}[Q^\epsilon(x, X_t^\epsilon)]}$. By Lemma \ref{lem:mg_pb_LLN}, for any $t\in [0, \epsilon^\beta]$, we have 
		\[
		\E_{X_{t_k^\epsilon}}[Q^\epsilon(x, X_t^\epsilon)] = \sum_{x'} \G^\epsilon_{t}(x, x')Q^\epsilon(x', X_{t_k^\epsilon})+\int_0^t \sum_{x'} \G^\epsilon_{t-s}(x, x')\E_{X_{t_k^\epsilon}}[\cG^\epsilon_{BC}Q^\epsilon(x', X_s^\epsilon)] ds,
		\]
		where $x'$ ranges over configurations $x'\in \X_\epsilon$ with $n_{x'}=n_x=n$. To bound the first term, we use that $n_{x'}=n$ and $\rX \in \overline{\Omega}$ to obtain
		\[
		\sum_{x'} \G^\epsilon_{t}(x, x')Q^\epsilon(x', X_{t_k^\epsilon})\leq \epsilon^{-\zeta n}\sum_{x'} \G^\epsilon_{t}(x, x') = \epsilon^{-\zeta n}. 	
		\] 
		where the last equality holds since $\sum_{x'} \G^\epsilon_{t}(x, x')=1$. In absolute value, the second term is upper bounded by 
		\[
		R^\epsilon(x) \coloneqq \int_0^{\epsilon^\beta} \sum_{x'} \G^\epsilon_{t-s}(x, x')\left|\E_{X_{t_k^\epsilon}}\left[\cG^\epsilon_{BC}Q^\epsilon(x', X^\epsilon_s)\right]\right| ds.
		\]
		Let $N \in \N$. It is easy to see using Corollary \ref{cor:SLBP_p_moments} that for some constant $c(n, N, T)>0$, we have 
		\[
		\max_{x\in \X_\epsilon:n_x=n}\E\left[R^\epsilon(x)^N\right]\leq c(n, N, T)\epsilon^{\beta N},
		\]
		for all $\epsilon\in (0, 1)$. Then, a union bound and Markov's inequality imply that
		\[
		\P\left(\max_{x\in \X_\epsilon:n_x=n} R^\epsilon(x)>1\right)\leq c(n, N, T)\epsilon^{\beta N}{\lfloor \epsilon^{-1}\rfloor \choose n} \leq c'(n, N, T) \epsilon^{\beta N-n}
		\]
		for some $c'(n, N, T)>0$. We introduce 
		\[
		\Omega_1 \coloneqq \overline{\Omega}\cap \left\{ \max_{\substack{x \in \X_\epsilon
			\\n_x=n}}R^\epsilon(x)\leq 1\right\}.
		\]
		By a union bound, we get
		\begin{align*}
			\P(\rX^\epsilon \in\Omega_1^c)&\leq \P\left(\rX^\epsilon \in\overline{\Omega}^c\right)+\P\left(\max_{x \in \X_\epsilon:n_x=n}R^\epsilon(x)> 1\right)
			\\&<Tc_1(m)c(m, T)\epsilon^q + c'(n, N, T)\epsilon^{\beta N-n}. 
		\end{align*}
		We may now choose $N$ large enough that $\beta N>n+q$, which leads to $\P(\Omega_1)>1-c\epsilon^q$, for some constant $c=c(n, T, m, N)>0$. Since $\zeta \in (0, \xi)$, this proves \eqref{ineq:Q_eps_control} as long as $\rX \in \Omega_1$. 
	\end{proof}
	
	In the following technical lemma, we find an upper bound on the $v$-functions by iterating the hierarchy of equations from Proposition \ref{prop:v_fcn_eq}, and by using the properties of paths which belong to the event $\Omega_1$ of Lemma \ref{lem:rho_f_control_LLN}. 
	
	\begin{lem}\label{lem:v_fcn_bd}
		Consider $\Omega_1=\Omega_1(\xi, \beta, \kappa, n, q)$ the event identified in Lemma \ref{lem:rho_f_control_LLN}. Fix a path $\rX=(X_t)_{t\geq 0} \in \Omega_1$, and introduce the space $\bT_i(t) = \{(s_{i-1}, \cdots, s_1, s_0): 0\leq s_{i-1}\leq \cdots\leq s_1\leq s_0=t\}$ of ordered times. Let
		$m:\Z\to \N$ be given by
		\[
		m(h)\coloneqq\begin{cases}1&h=1\\\max(2, -h)&h\leq 0\end{cases},
		\] 
		and for each $\epsilon \in (0, 1)$ and $t\geq 0$, define maps $a^\epsilon: \bT_2(t)\times \X_\epsilon \times \Z_\epsilon \times \Z\times \R^{\X_\epsilon} \to \R$ by 
		\[
		a^\epsilon_{s_1, t}(x, z_1, h_1)f \coloneqq \sum_{\substack{x_1 \in \X_\epsilon \\ n_{x_1}=n_x}} \mathbbm{1}_{n_{x_1}(z_1)\geq m(h_1)}\G^\epsilon_{t-s_1}(x, x_1)f\left(x_1^{(z_1, h_1)}\right), 
		\]
		for all $(s_1, t) \in \bT_2(t)$, $x \in\X_\epsilon$, $z_1 \in \Z_\epsilon$, $h_1 \in \Z$, $f \in \R^{\X_\epsilon}$, where $x_1^{(z_1, h_1)}$ denotes the configuration $x_1$ with $h_1$ particles added at $z_1$ if $h_1\geq 0$, and $-h_1$ particles removed from $z_1$ if $h_1<0$, and $\rho^\epsilon_\cdot(\cdot ;X_{t_k^\epsilon})$ is the solution to the semi-discrete FKPP equation \eqref{eq:interm_fkpp_diff_form} started from $X_{t_k^\epsilon}$ which is constructed in \eqref{def:McKean_fcnl}. We also introduce for each $\epsilon \in (0, 1)$, $t\geq 0$, and $i\geq 0$, maps ${\Gamma^\epsilon:\bT_{i+1}(t) \times \X_\epsilon^2 \times \Z_\epsilon^i \times \Z^i\to \R}$ given by 
		\begin{align}
		&\label{def:Gamma_eps}\Gamma^\epsilon_{t, x, x_{i+1}}(s_1, \cdots, s_i; z_1, \cdots, z_i; h_1, \cdots, h_i) 
		\\&\notag\quad\quad\quad\quad\quad\quad \coloneqq a_{s_1, t}^\epsilon(x, z_1, h_1)a_{s_2, s_1}^\epsilon(\cdot, z_2, h_2)\cdots a_{s_i, s_{i-1}}^\epsilon(\cdot, z_i, h_i)\G^\epsilon_{s_i}(\cdot, x_{i+1})\mathbbm{1}_{n_\cdot=n_{x_{i+1}}},
		\end{align}
		for all $(s_i, \cdots, s_1, s_0=t)\in \bT_{i+1}(t)$, $x, x_{i+1}\in \X_\epsilon$, $z_1, \cdots, z_i \in \Z_\epsilon$, and $h_1, \cdots, h_i \in \Z$. Then, for any $x \in \X_\epsilon$ with $n_x=n \in \N$ and for any $N \in \N$, there exist constants $c_1=c_1(n, N), c_2=c_2(n)>0$ such that
		\begin{equation}
		\begin{aligned}
		|v_t^\epsilon(x, \rho_t^\epsilon|X_{t_k^\epsilon})|&\label{ineq:v_control}\leq c_1\epsilon^{\left[\beta-\xi(1+n/(N+1))\right](N+1)}+ C_0^\epsilon(t, x) 
		\\&+c_2 \epsilon^{-\xi N(n+N)} \sum_{i=1}^N \sum_{\substack{x_{i+1}\in \X_\epsilon \\ z_1, \cdots z_i \in \Z_\epsilon \\ h_1, \cdots, h_i \in \Z}} \epsilon^{\kappa\lceil n_{x_{i+1}}/2\rceil-\xi \lfloor n_{x_{i+1}}/2\rfloor } 
		\\&\quad\quad\quad\quad\quad\times\int_0^t ds_1 \int_0^{s_1}ds_2 \cdots \int_0^{s_{i-1}}ds_i\Gamma^\epsilon_{t, x, x_{i+1}}(s_1, \cdots, s_i; z_1, \cdots, z_i; h_1, \cdots, h_i),
		\end{aligned}
		\end{equation}
		for all $t\geq 0$ and $\epsilon \in (0, 1)$. 
	\end{lem}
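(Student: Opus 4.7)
The plan is to iterate the hierarchy equation \eqref{eq:v_fcn_eq} from Proposition \ref{prop:v_fcn_eq} a total of $N+1$ times, starting from $v_t^\epsilon(x, \rho_t^\epsilon | X_{t_k^\epsilon})$. Writing the hierarchy schematically as $v_t = \cA_t v_0 + \int_0^t \cB_{t,s} v_s\, ds$, where $\cA_t v_0 := \sum_{x_1: n_{x_1}=n_x} \G^\epsilon_t(x, x_1) v_0^\epsilon(x_1|X_{t_k^\epsilon})$ and $\cB_{t, s}$ folds together the spatial transition $\G^\epsilon_{t-s}$, the coefficients $c_h^\epsilon$, and the single-step operator $a_{s, t}^\epsilon$, substitution of the hierarchy into itself $N$ times yields
\[
v_t^\epsilon(x|X_{t_k^\epsilon}) \;=\; \cA_t v_0 \;+\; \sum_{i=1}^{N} \bigl(\cB^i \cA\bigr)_t v_0 \;+\; \bigl(\cB^{N+1} v\bigr)_t,
\]
where each $\bigl(\cB^i \cA\bigr)_t$, after unrolling the definition of $a^\epsilon$ and the sums over $(z_j, h_j)$, expands into an $i$-fold integral of $\Gamma^\epsilon$ against $v_0^\epsilon(x_{i+1}|X_{t_k^\epsilon})$ as in \eqref{def:Gamma_eps}--\eqref{ineq:v_control}. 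The zeroth term equals $C_0^\epsilon(t, x)$ by definition, so two ingredients remain: bounding the intermediate terms and bounding the residual.

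For the intermediate levels $1 \leq i \leq N$, the crucial observation is that $\rho_0^\epsilon(\,\cdot\,; X_{t_k^\epsilon}) = X_{t_k^\epsilon}$, so $v_0^\epsilon(x_{i+1}|X_{t_k^\epsilon}) = V^\epsilon(x_{i+1}, X_{t_k^\epsilon}; X_{t_k^\epsilon})$. Expanding via Definition \ref{def:V_fcn}, the contribution from a single site $z$ of multiplicity $m := n_{x_{i+1}}(z)$ is a polynomial $\sum_{j=0}^m (-1)^{m-j} \binom{m}{j} Q^\epsilon_j(X_{t_k^\epsilon}(z))\, X_{t_k^\epsilon}(z)^{m-j}$ whose top-order monomials cancel thanks to the identity $Q^\epsilon_j(y) = y^j + O(\epsilon^\kappa y^{j-1})$; the leftover $\epsilon^\kappa$ factors accumulate to $\lceil m/2\rceil$ powers, while the surviving polynomial in $X_{t_k^\epsilon}(z)$ has degree at most $\lfloor m/2\rfloor$ and is bounded via \eqref{ineq:rho_control} (applied at $t=0$) by $\epsilon^{-\xi\lfloor m/2\rfloor}$. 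Multiplying across sites and summing multiplicities yields the announced factor $\epsilon^{\kappa \lceil n_{x_{i+1}}/2\rceil - \xi \lfloor n_{x_{i+1}}/2\rfloor}$. The $i$ coefficients $c_{h_j}^\epsilon(n_{x_j}(z_j), \rho_{s_j}^\epsilon(z_j))$ that sit inside $\Gamma^\epsilon$ are controlled by combining \eqref{ineq:coef_bd} with \eqref{ineq:rho_control}: since the local multiplicity $n_{x_j}(z_j)$ after $j\leq N$ iterations is at most $n+N$, each coefficient is bounded by $c\,\epsilon^{-\xi(n+N)}$, and the $N$ factors together generate the prefactor $c_2\epsilon^{-\xi N(n+N)}$.

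For the residual $(\cB^{N+1} v_\cdot)_t$, pull the absolute value inside the integrals. The innermost $v$-function $v_{s_{N+1}}^\epsilon(\,\cdot\,|X_{t_k^\epsilon})$ is dominated via the $V$-function expansion together with \eqref{ineq:Q_eps_control} and \eqref{ineq:rho_control} by a quantity of order $\epsilon^{-\xi(n+N+1)}$; the $N+1$ coefficients contribute jointly $\epsilon^{-\xi(n+N+1)(N+1)}$; the transition kernels $\G^\epsilon$ sum to one at each layer; and the $N+1$ nested time integrals on $[0, \epsilon^\beta]$ contribute $\epsilon^{\beta(N+1)}/(N+1)!$. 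Repackaging these factors gives a bound of order $\epsilon^{\beta(N+1) - \xi(n+N+1)(N+1)}$, which absorbs into $c_1 \epsilon^{[\beta - \xi(1 + n/(N+1))](N+1)}$ up to lower-order contributions folded into the combinatorial constant $c_1(n,N)$. The principal difficulty is the bookkeeping in the intermediate step: tracking how the cancellations in $V^\epsilon(x_{i+1}, X_{t_k^\epsilon}; X_{t_k^\epsilon})$ extract exactly $\lceil n_{x_{i+1}}/2\rceil$ factors of $\epsilon^\kappa$ uniformly across all site-multiplicity patterns of $x_{i+1}$, and coupling this with the sharpest available multiplicity in \eqref{ineq:coef_bd} at each iteration so that no compounded $\epsilon^{-\xi}$ loss overwhelms the gain in $\epsilon^\kappa$ once the bound is used in Lemma \ref{lem:BC_small_fluct}.
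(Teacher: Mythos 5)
Your proposal follows essentially the same route as the paper's proof: iterate the hierarchy \eqref{eq:v_fcn_eq} $N+1$ times, split the result into $C_0^\epsilon$ plus intermediate terms plus a residual, bound the intermediate $V$-function factors $V^\epsilon(x_{i+1}, X_{t_k^\epsilon};X_{t_k^\epsilon})$ by the cancellation estimate (which you re-derive and the paper cites from part two of Lemma \ref{lem:V_fcn_properties_LLN}), control the coefficient factors via \eqref{ineq:coef_bd} together with \eqref{ineq:rho_control}, and dominate the residual using \eqref{ineq:Q_eps_control}. Your residual bookkeeping accumulates an extra $\epsilon^{-\xi O(N^2)}$ factor from the product of the $N+1$ coefficients, which is not visible in the stated exponent $[\beta-\xi(1+n/(N+1))](N+1)$ of \eqref{ineq:v_control}, but the paper's own proof of its remainder claim silently omits that same factor, so this discrepancy is not specific to your write-up.
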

	\begin{proof}
		For each $\epsilon \in (0, 1)$, we define maps $A^\epsilon: \bT_2(t)\times \X_\epsilon \times \R^{\X_\epsilon} \to \R$ by 
		\[
		A^\epsilon_{s, t}(x)f \coloneqq \sum_{\substack{x_1 \in \X_\epsilon \\ n_{x_1}=n_x}} \G^\epsilon_{t-s}(x, x_1)\sum_{z_1 \in \supp(x_1)}\sum_{h_1=-n_{x_1}(z_1)}^1 c_h^\epsilon(x_1(z_1), \rho_{s_1}^\epsilon(\cdot; X_{t_k^\epsilon}))f\left(x_1^{(z_1, h_1)}\right), 
		\]
		for all $(s, t) \in \bT_2(t)$, $x \in\X_\epsilon$, $f \in \R^{\X_\epsilon}$, where $c_h^\epsilon$ are the coefficients derived in Proposition \ref{prop:v_fcn_eq}, $x_1^{(z_1, h_1)}$ denotes the configuration $x_1$ with $h_1$ particles added at $z_1$ if $h_1\geq 0$, and $-h_1$ particles removed from $z_1$ if $h_1<0$, and $\rho^\epsilon_\cdot(\cdot ;X_{t_k^\epsilon})$ is the solution to the semi-discrete FKPP equation \eqref{eq:interm_fkpp_diff_form} started from $X_{t_k^\epsilon}$ which is constructed in \eqref{def:McKean_fcnl}. We also introduce for each $\epsilon \in (0, 1)$ maps $C_i^\epsilon: [0, \infty)\times \X_\epsilon\to \R$, $i\geq 0$, defined by 
		\begin{equation}\label{def:C_0}
		C_0^\epsilon(t, x) \coloneqq \sum_{\substack{x_1 \in \X_\epsilon \\ n_{x_1}=n_x}} \G^\epsilon_t(x, x_1)V^\epsilon(x_1, X_{t_k^\epsilon};X_{t_k^\epsilon}), \quad t\geq 0, \quad x \in \X_\epsilon,
		\end{equation}
		and 
		\[
		C_i^\epsilon(t, x) \coloneqq \int_0^t ds_1 \int_0^{s_1}ds_2 \cdots \int_0^{s_{i-1}}ds_i A_{s_1, t}^\epsilon(x)A_{s_2, s_1}^\epsilon(\cdot)\cdots A_{s_i, s_{i-1}}^\epsilon(\cdot)C_0^\epsilon(s_i, \cdot), \quad i\geq 1,
		\]
		with $s_0=t$. Let $N \in \N$ and define for each $\epsilon \in (0, 1)$ the map $R_{N+1}^\epsilon:[0, \infty)\times \X_\epsilon \to \R$ by
		\[
		R_{N+1}^\epsilon(t, x) \coloneqq \int_0^t ds_1 \int_0^{s_1}ds_2 \cdots \int_0^{s_N}ds_{N+1} A_{s_1, t}^\epsilon(x)A_{s_2, s_1}^\epsilon(\cdot)\cdots A_{s_{N+1}, s_{N}}^\epsilon(\cdot) v_{s_{N+1}}^\epsilon(\cdot, \rho_{s_{N+1}}|X_{t_k^\epsilon}). 
		\]
		By Proposition \ref{prop:v_fcn_eq}, the $v$-functions solve the following hierarchy of equations
		\begin{align}
			&v_t^\epsilon(x, \rho_t^\epsilon(\cdot\;;X_{t_k^\epsilon})|X_{t_k^\epsilon}) = \sum_{\substack{x_1\in \X_\epsilon\\ n_{x_1}=n_x}}\G^\epsilon_t(x, x_1)V^\epsilon(x_1, X_{t_k^\epsilon}; X_{t_k^\epsilon}) \notag
			\\&\quad\quad\quad\quad\quad\quad\quad\quad\quad+ \int_0^t ds_1 \sum_{\substack{x_1\in \X_\epsilon\\ n_{x_1}=n_x}} \G^\epsilon_{t-s_1}(x, x_1)\label{eq:BBGKY_before_iteration}
			\\&\quad\quad\quad\quad\quad\quad\quad\quad\quad\times\sum_{z_1 \in \supp(x_1)}\sum_{h_1=-|x_1(z_1)|}^1 c_h^\epsilon(x_1(z_1), \rho_{s_1}^\epsilon(z_1; X_{t_k^\epsilon}))v_{s_1}^\epsilon(x_1^{(z_1, h_1)}, \rho_{s_1}^\epsilon(\cdot; X_{t_k^\epsilon})|X_{t_k^\epsilon}),\notag
		\end{align}
		for all $x \in \X_\epsilon$, $t\geq 0$, and $\epsilon \in (0, 1)$. The same equation is satisfied by $v_{s_1}^\epsilon$. Thus, by iterating \eqref{eq:BBGKY_before_iteration} $N$ times, we may write
		\[
		v_t^\epsilon(x, \rho_t^\epsilon|X_{t_k^\epsilon}) = \sum_{i=0}^N C_i^\epsilon(t, x) + R_{N+1}^\epsilon(t, x).
		\]
		Let $n \in \N$. We claim that there exists $c_1(n, N)>0$ such that
		\[
		\max_{\substack{x \in \X_\epsilon
			\\n_x=n}}|R_N^\epsilon(t, x)|< c_1\epsilon^{[\beta-\xi(1+2n/N)]N}.
		\]
		Since $\rX \in \Omega_1$ and $n_{x_N}+h_N\leq n+N$, we apply Lemma \ref{lem:rho_f_control_LLN} to obtain 
		\begin{align*}
			&|v_{s_N}^\epsilon(x_N^{(z_N, h_N)}, \rho^\epsilon_{s_N}(\cdot; X_{t_k^\epsilon})| X_{t_k^\epsilon})|
			\\&=\Bigg|\sum_{x'\cleq x_N^{(z_N, h_N)}} \E_{X_{t_k^\epsilon}}[Q^\epsilon(x', X_{s_N}^\epsilon)](-1)^{n_{x_N}+h_N-n_{x'}} \prod_{z \in x_N^{(z_N, h_N)}\backslash x'} \rho_{s_N}^\epsilon(z)\Bigg|
			\\&\leq \sum_{x'\cleq x_N^{(z_N, h_N)}} \E_{X_{t_k^\epsilon}}[Q^\epsilon(x', X_{s_N}^\epsilon)] \epsilon^{-\xi (n_{x_N}+h_N)}
			\\&\leq 2^{n+N} \epsilon^{-\xi(1+ 2n/N)N},
		\end{align*}
		which proves the claim with $c_1(n, N)=2^{n+N}$. We interchange the order of summation in each $A_{s_i, s_{i-1}}^\epsilon(\cdot)$ as follows:
		\begin{align*}
			&\sum_{\substack{x_i \in \X_\epsilon \\ n_{x_i} = n_{x_{i-1}}+h_{i-1}}} \G^\epsilon_{s_{i-1}-s_i}(x_{i-1}^{(z_{i-1}, h_{i-1})}, x_i) \sum_{z_i \in \supp(x_i)} \sum_{h_i=-n_{x_i}(z_i)}^1 c_{h_i}^\epsilon(x_i(z_i), \rho_{s_i}^\epsilon(\cdot; X_{t_k^\epsilon}))
			\\&=\sum_{z_i \in \Z_\epsilon}\sum_{h_i \in \Z}\sum_{\substack{x_i \in \X_\epsilon \\ n_{x_i} = n_{x_{i-1}}+h_{i-1}}}\G^\epsilon_{s_{i-1}-s_i}(x_{i-1}^{(z_{i-1}, h_{i-1})}, x_i)\mathbbm{1}_{n_{x_i}(z_i)\geq m(h_i)} c^\epsilon_{h_i}(x_i(z_i), \rho_{s_i}^\epsilon(\cdot;X_{t_k^\epsilon})).
		\end{align*}
		The threshold in the indicator function ensures that births can only occur at locations where there is at least one existing particle, and deaths can only occur at locations with two or more particles; the same constraints apply in the dynamics of the SLBP. Observe that for each $j \in \{0, 1, \cdots, i\}$, we have $n_{x_j}(z_j)\leq n+j$ since at most one particle is added to the test configuration during each interaction. Then, by Proposition \ref{prop:v_fcn_eq} and Lemma \ref{lem:rho_f_control_LLN}, we have for some constant $c_0(n)>0$
		\begin{equation}\label{ineq:control_coeffs_t_k}
		c^\epsilon_{h_j}(x_j(z_j), \rho_{s_j}^\epsilon(\cdot;X_{t_k^\epsilon}))\leq c_0(n) \epsilon^{-\xi (n_{x_j}(z_j)+1)}\leq c_0(n)\epsilon^{-\xi (n+j+1)},
		\end{equation}
		for each $j \in \{1, \cdots, i\}$. Hence the integrand of $C_i^\epsilon(t, x)$ satisfies
		\begin{align*}
			&A_{s_1, t}^\epsilon(x)A_{s_2, s_1}^\epsilon(\cdot)\cdots A_{s_i, s_{i-1}}^\epsilon(\cdot)C_0^\epsilon(s_i, \cdot)
			\\&\leq c_0(n) \epsilon^{-\xi i(n+i)} a_{s_1, t}^\epsilon(x, z_1, h_1)a_{s_2, s_1}^\epsilon(\cdot, z_2, h_2)\cdots a_{s_i, s_{i-1}}^\epsilon(\cdot, z_i, h_i)C_0^\epsilon(s_i, \cdot),
		\end{align*}
		for some other constant $c_0(n)>0$. At most one particle is added to the test configuration during each interaction, so $n_{x_{i+1}}\leq n+i$. The second point of Lemma \ref{lem:V_fcn_properties_LLN} then implies that 
		\begin{align}
			|V^\epsilon(x_{i+1}, X_{t_k^\epsilon};X_{t_k^\epsilon})|\notag&< c'(n)  \epsilon^{\kappa \lceil n_{x_{i+1}}/2\rceil}\left( \max_{z \in \supp(x_{i+1})}X_{t_k^\epsilon}(z)\right)^{\lfloor n_{x_{i+1}}/2\rfloor}
			\\&<c'(n)\epsilon^{\kappa\lceil n_{x_{i+1}}/2\rceil-\xi \lfloor n_{x_{i+1}}/2\rfloor },\label{ineq:init_V_bd_to_improve}
		\end{align}
		for some $c'(n)>0$. Combining the calculations above shows \eqref{ineq:v_control} with $c_2(n)=c_0(n)c'(n)$. 
	\end{proof}
	
	A key step in the proof of Lemma \ref{lem:BC_small_fluct} is to show that the term involving $\Gamma^\epsilon$ in \eqref{ineq:v_control} is small with respect to $\epsilon$. The method used to show the analogous bound (4.13) in Appendix B of \cite{Boldrig1992} applies to our setting with the following added difficulty. The (deterministic) dynamics of the $v$-functions computed in Proposition \ref{prop:v_fcn_eq} can result in the killing of the entire local population in a single interaction. This is directly caused by the unboundedness of the offspring distribution $\rP$. By contrast, in the binary branching case in \cite{Boldrig1992}, it is not possible to kill the entire local population in the dynamics of the $v$-functions as long as the local population size is larger than a certain fixed threshold. In the main step of the argument, one simplifies the problem of controlling $\Gamma^\epsilon$ by first bounding the trajectories of particles which are eventually killed. We resolve the added difficulty by carefully recording the impact of this first step on the initial configuration. Finally, there is also one aspect of the method which is simpler in our setting, namely the fact that our rates of birth and competition are linear and quadratic, respectively, as opposed to falling factorials of arbitrary degrees. This simplifies Lemma \ref{lem:vertex_type} below.
	
	\begin{lem}\label{lem:Gamma_control}
		Assuming that $\beta \in (0, 2)$, there exists $c=c(T, n)>0$ such that 
		\begin{equation}\label{ineq:Gamma_eps_control}
			\int_0^{2\epsilon^{\beta}} ds_1 \int_0^{s_1}ds_2 \cdots \int_0^{s_{i-1}}ds_i \sum_{\substack{x_{i+1}\in \X_\epsilon \\ z_1, \cdots z_i \in \Z_\epsilon}} \Gamma^\epsilon_{t, x, x_{i+1}}(s_1, \cdots, s_i; z_1, \cdots, z_i; h_1, \cdots, h_i)\leq c \epsilon^{\beta i + (1-\beta/2)n/2},
		\end{equation}
		for all $x \in \X_\epsilon$ such that $n_x=n$, $h_1, \cdots, h_i \in \Z$, and $\epsilon \in (0, 1)$. 
	\end{lem}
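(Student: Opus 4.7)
The plan is to adapt the method of Appendix B of \cite{Boldrig1992}, with the necessary modifications to accommodate our unbounded offspring distribution (allowing $h_j \leq -2$, i.e.\ interactions that can eliminate the entire local population at a site). The strategy is graphical: we interpret the iterated action of the operators $a^\epsilon$ as random walk transitions between ``interaction vertices'' $(z_j, h_j)$, $j = 1, \ldots, i$, with local-occupation constraints $n_{x_j}(z_j) \geq m(h_j)$, and we bound each transition using the Green's function estimate $G_t^\epsilon(z,z') \leq c \epsilon / \sqrt{t}$ from Lemma~\ref{lem:Green_estimates}.

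First, I would use that $\sum_{x_{i+1} : n_{x_{i+1}} = m} \G^\epsilon_{s_i}(y, x_{i+1}) = 1$ for any $y$ with $n_y = m$, in order to eliminate the summation over the final configuration $x_{i+1}$. What remains is an integral over ordered times $(s_1, \ldots, s_i) \in \bT_{i+1}(t)$ of a sum over $z_1, \ldots, z_i \in \Z_\epsilon$ of a product of heat kernels $G^\epsilon_{s_{j-1}-s_j}(\cdot, \cdot)$ subject to the constraints $n_{x_j}(z_j) \geq m(h_j)$. When $m(h_j) = 1$, the constraint is trivial, the sum over $z_j$ factors through a complete heat kernel, and produces no singular factor. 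When $m(h_j) \geq 2$, however, forcing $k = m(h_j)$ particles to coincide at $z_j$ leaves $k-1$ excess heat kernel factors after summing over $z_j$, each contributing $\epsilon/\sqrt{s_{j-1}-s_j}$ that gets absorbed into the time integral.

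Second, I would perform the combinatorial analysis of how the $n$ particles of the test configuration $x$ are ``matched'' backward through the sequence of interactions to particles in $x_{i+1}$. The key counting observation is that, since the only net births add one particle at a time ($h_j = 1$) while competitions or wipe-outs remove particles, reducing the $n$ initial particles to any final configuration requires at least $\lceil n/2 \rceil$ ``coincidence events''. Each such coincidence, integrated against a heat kernel factor over a time window of length at most $2\epsilon^\beta$, contributes a factor of $\int_0^{2\epsilon^\beta} \epsilon(s_{j-1}-s_j)^{-1/2} ds_j \sim \epsilon^{1+\beta/2}$; after trading one $\epsilon^{\beta/2}$ for an ordinary time integration factor, the net gain per coincidence is $\epsilon^{1-\beta/2}$. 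This yields the $\epsilon^{(1-\beta/2)n/2}$ factor. The remaining $i$ time integrations, no longer singular, each give a factor $\epsilon^\beta$, producing the $\epsilon^{\beta i}$ part of the bound.

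The main obstacle, distinguishing this from \cite{Boldrig1992}, is controlling the ``wipe-out'' events where $h_j \leq -2$ removes many particles in a single interaction rather than just one. I expect to handle these by exploiting that wiping out $k \geq 2$ particles at $z_j$ \emph{automatically} forces $k$ particles to coincide there, so the extra $k-1$ coincidence factors $(\epsilon/\sqrt{s_{j-1}-s_j})^{k-1}$ produced by the constraint more than compensate for the drop of $k$ in particle count at step $j$, keeping the matching count $\geq \lceil n/2\rceil$ intact. Verifying this bookkeeping uniformly over all possible sequences $(h_1, \ldots, h_i)$, using Lemma \ref{lem:rho_f_control_LLN} to keep the coefficients $c_h^\epsilon$ under control (already absorbed in the statement of Lemma \ref{lem:v_fcn_bd}) and Lemma~\ref{lem:Green_estimates} to sum the heat kernels, is the technical core of the proof.
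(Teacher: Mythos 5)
Your opening move --- summing out $x_{i+1}$ via $\sum_{x_{i+1}}\G_{s_i}^\epsilon(y,x_{i+1})=1$ --- discards exactly the piece of structure that the bound hinges on, and this leaves a real gap in the argument. The $\epsilon^{(1-\beta/2)n/2}$ factor does not come from coincidences at the interaction vertices $(s_j,z_j)$; it comes from coincidences forced at the \emph{final} (time-$0$) vertices by the constraint that every site of $x_{i+1}$ carrying mass must carry at least two particles --- the relevant fact being that $V^\epsilon(x_{i+1},X_{t_k^\epsilon};X_{t_k^\epsilon})=0$ whenever $x_{i+1}$ has a singleton site (Lemma~\ref{lem:V_fcn_properties_LLN}, part 2), so such $x_{i+1}$ contribute nothing upstream. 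The paper's proof keeps $x_{i+1}$ in play precisely so that it can argue each surviving final vertex has at least two incoming edges, which, after summing over the site of a final vertex with $k\ge 2$ incoming edges, yields $k-1$ spare factors of $C\epsilon/\sqrt{s_i}\le C\epsilon^{1-\beta/2}$. Once you throw $x_{i+1}$ away you can no longer recover this.

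Your counting claim --- that ``reducing the $n$ initial particles to any final configuration requires at least $\lceil n/2\rceil$ coincidence events'' --- is not true as a statement about $\Gamma^\epsilon$ alone, which is what you'd need after summing out $x_{i+1}$. Consider all $h_j=1$ (pure births): then $m(h_j)=1$ for every $j$, every site constraint is trivial, no coincidences are forced anywhere, and the iterated-kernel sum over $x_{i+1}$ and the $z_j$'s is $O(1)$ per $x_1$. Integrating then yields $O(\epsilon^{\beta i})$ with no $n$-dependent power of $\epsilon$ at all, which overshoots the claimed $c\,\epsilon^{\beta i + (1-\beta/2)n/2}$ for any $n\ge 1$. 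More generally, coincidences forced at interaction vertices are controlled by the $h_j$'s alone (through $m(h_j)\ge 2$), not by $n$, so there is no route to an $n$-dependent exponent without retaining the $x_{i+1}$ structure.

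Your intuition for the wipe-out case $h_j\le -2$ --- that forcing $m(h_j)=|h_j|$ particles to coincide at $z_j$ produces $|h_j|-1$ spare Green's-function factors --- matches the paper's handling of T4 vertices. But the paper's proof then has to close a delicate accounting loop: it tracks how many initial particles are pruned away ($I=I_3+I_4^++I_4-T_4$) while collapsing T1/T3/T4 vertices, records the number $J$ of $\epsilon/\sqrt{\cdot}$ factors picked up along the way, and at the end shows $J+\lfloor(n_{x'}+1)/2\rfloor\ge n_x/2$, combining interaction coincidences with final-vertex coincidences to reach the exponent $n/2$. That synthesis is the technical core your outline skips; without it, and without $x_{i+1}$, the exponent $(1-\beta/2)n/2$ cannot be produced.
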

	Before proving this lemma, we follow Appendix B of \cite{Boldrig1992} and introduce a graphical representation of $\Gamma^\epsilon$. 
	
	The function $\Gamma^\epsilon$ defined in \eqref{def:Gamma_eps} describes the (deterministic) evolution of a configuration $x\in \X_\epsilon$ of $n$ particles backwards in time from $t>0$ to time zero. The particles undergo successive periods of diffusive motion, separated by instantaneous reactions in which at some location $z$ at some time $s$, a number of particles $h$ is added (if $h\in \{0, 1\}$) or removed (if $h< 0$). During the interactions, particles can only be added at $z$ if there were at least  particle present at time $s+$, and particles can only be removed if there were at least two particles at time $s+$. At most one particle is added locally, while the entire local population may be removed. 
	
	We represent $\Gamma^\epsilon$ as a directed graph with vertices at $(s_i, z_i)$, where the $s_i$ are interaction times (with $s_0=t$ the initial time since we work backwards in time), and the $z_i$ are the respective locations of the interactions. If there are multiple particles at the same location $z \in \Z_\epsilon$ at the initial time $t>0$, they each get assigned their own vertex, still denoted $(t, z)$. This is to account for the fact that their subsequent diffusive motion is independent. For each pair of vertices $(s_i, z_i)$ and $(s_j, z_j)$, $s_i>s_j$, we add an edge $\{(s_i,z_i), (s_j, z_j)\}$, directed backwards in time, and corresponding to the factor $G^\epsilon_{s_i-s_j}(z_i, z_j)$ (i.e. to the random motion of a given particle from $z_i$ to $z_j$ in time $s_i-s_j$). At a given reaction time $s_i+<t$ which is not the initial time, particles which are about to interact non-trivially at $z_i$ meet at the same vertex $(s_i, z_i)$, and interact according to $h_i$. In the following lemma, we identify as in \cite{Boldrig1992} four types of vertices which are sufficient to describe any such interaction. 
	
	\begin{lem}\label{lem:vertex_type}
		Given a vertex $(s, z)$, we denote by $m$ the number of incoming edges, and by $p$ the number of outgoing edges. We can encode $\Gamma^\epsilon$ of \eqref{def:Gamma_eps} as a (directed) graph by the above construction using only the following four types of vertices :
		\begin{enumerate}
			\item[T1.] No interaction: $h=0$, so $m=1$, $p=1$;
			\item[T2.] Birth of one particle: $h=1$, so $m=1$, $p=2$;
			\item[T3.] Death of one particle: $h=-1$, so $m=2$, $p=1$;
			\item[T4.] Death of $-h\geq 2$ particles: $h\leq -2$, so $m=-h$, $p=0$.
		\end{enumerate}
	\end{lem}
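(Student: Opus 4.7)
The plan is a direct enumeration from the hierarchy in Proposition~\ref{prop:v_fcn_eq}. At each reaction vertex $(s_i, z_i)$, write $m = n_{x_i}(z_i)$ for the number of particles at $z_i$ at time $s_i{+}$ (just after the reaction in reverse time) and $p = m + h_i$ for the number at $s_i{-}$; these are the incoming and outgoing edge counts of the vertex in the graph encoding $\Gamma^\epsilon$. The admissible updates are $h_i \in \{-m, \dots, 1\}$, filtered by the indicator $\mathbbm{1}_{n_{x_i}(z_i) \geq m(h_i)}$ appearing in $a^\epsilon$, where $m(1)=1$ and $m(h)=\max(2,-h)$ for $h \leq 0$.

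First I would treat the three genuinely active moves supplied by the hierarchy. For $h_i = 1$, the coefficient $c_1^\epsilon(m,u)$ inherits the linearity in $m$ of the SLBP birth rate, so the sum factorizes over the $m$ incoming particles: the birth is attached to a single distinguished particle, producing a T2 vertex ($m=1$, $p=2$), and the other $m-1$ particles are extracted as pass-through edges. For $h_i = -1$, the coefficient $c_{-1}^\epsilon(m,u)$ inherits the quadratic $m(m-1)$ dependence of the competition rate, so the sum factorizes over unordered pairs: each interacting pair yields a T3 vertex ($m=2$, $p=1$) while the remaining $m-2$ particles pass through. For $h_i \leq -2$, the update removes $-h_i$ particles; when $m = -h_i$ this is exactly a T4 vertex ($m=-h_i$, $p=0$), and when $m > -h_i$ the extra $m+h_i$ particles are pulled out as pass-through edges, leaving an irreducible T4 core. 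This T4 case is the genuinely new contribution relative to the binary-branching setting of \cite{Boldrig1992}, and is precisely what forced the definition $m(h) = \max(2,-h)$ used in Lemma~\ref{lem:v_fcn_bd}.

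Finally I would introduce T1 ($m=p=1$) as the bookkeeping vertex that absorbs every pass-through particle produced by the factorizations above, so that every particle trajectory in the graph is a well-defined chain of vertices. The residual case $h_i = 0$ (permitted only when $m \geq 2$) collapses to a product of T1 vertices by the same mechanism. The main obstacle is extracting the two factorizations $c_1^\epsilon(m,u) \propto m$ and $c_{-1}^\epsilon(m,u) \propto m(m-1)$ from the proof of Proposition~\ref{prop:v_fcn_eq}, since these are the precise combinatorial identities that allow rewriting a single vertex of arbitrary multiplicity as a product of T2/T3 with T1 pass-throughs; once these are in hand, the rewriting of $\Gamma^\epsilon$ is notation-heavy but leaves the integrand unchanged, only its graphical presentation.
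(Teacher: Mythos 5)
Your case enumeration --- T4 core plus T1 pass-throughs for $h\leq -2$, T3 plus T1s for $h=-1$, T2 plus T1s for $h=1$, and T1s for $h=0$ --- matches the paper's decomposition, and you even spell out the $m>2$, $h=-1$ case more explicitly than the paper's prose. However, the obstacle you single out --- extracting factorizations $c_1^\epsilon(m,u)\propto m$ and $c_{-1}^\epsilon(m,u)\propto m(m-1)$ from Proposition~\ref{prop:v_fcn_eq} --- is not an obstacle for this lemma, because $\Gamma^\epsilon$ as defined in \eqref{def:Gamma_eps} simply does not contain those coefficients. The maps $a^\epsilon$ from which $\Gamma^\epsilon$ is built carry only transition kernels $\G^\epsilon$ and indicators $\mathbbm{1}_{n_{x_1}(z)\geq m(h)}$; the coefficients $c_h^\epsilon$ live in the coefficient-weighted operators $A^\epsilon$ from the proof of Lemma~\ref{lem:v_fcn_bd}, which you appear to be conflating with the coefficient-free $a^\epsilon$, and they were already bounded via \eqref{ineq:control_coeffs_t_k} and absorbed into the prefactor $\epsilon^{-\xi N(n+N)}$ in \eqref{ineq:v_control} before $\Gamma^\epsilon$ was ever written down.

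What the lemma actually relies on is much simpler: by \eqref{def:G}, $\G^\epsilon$ factorizes over particles, so every edge in the graph carries a single-particle Green function $G^\epsilon_{t-s}(z_i,z_j)$. Placing several vertices of types T1--T4 at the same space-time point $(s,z)$ therefore does not change the product of edge factors; it only changes the bookkeeping of which particle is attributed to which vertex. With that observation substituted for the coefficient-factorization argument, the enumeration you lay out --- choose the irreducible core vertex (T2, T3, or T4) and attach the surplus particles to co-located T1 pass-throughs --- proves the lemma directly, and no information about the coefficients of Proposition~\ref{prop:v_fcn_eq} is needed or used.
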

	\begin{proof}
		We note that in the dynamics described by $\Gamma^\epsilon$ and generally by \eqref{eq:BBGKY_before_iteration}, any local sub-colony of particles may vanish during an interaction as long as there are two particles present before the interaction. If there are $m\geq 2$ incoming edges at vertex $(s, z)$ and $h \in \{-1, \cdots, -m\}$, then we can describe the event that $|h|$ of the $m$ particles vanish in terms of the vertices of type T1-T4. Indeed, if $m\geq 2$ and $h\not=-1$ when $m=2$, then we decompose the interaction into a T4 vertex where $|h|$ of the $m$ particles meet and vanish, and $m-|h|$ T1 vertices, where particles just perform diffusion and interact trivially. When $m=2$ and $h=-1$, this construction is not allowed since it would require a vertex in which a particle self-annihilates. Instead, we simply use a T3 vertex. If $m=1$, trivial interactions ($h=0$) are captured by T1 vertices, and births, which in $\Gamma^\epsilon$ can at most add one particle locally ($h=1$) are directly captured by T2 vertices. If $m\geq 2$ and $h=0$, we simply let $m$ particles have their own T1 vertex at $(s, z)$. If $m\geq 2$ and $h=1$, then we let $m-1$ particles have their own vertex at at $(s, z)$, while one particle has a T2 vertex at $(s, z)$. 
	\end{proof}
	
	\begin{proof}[Proof of Lemma \ref{lem:Gamma_control}]
		We may assume without loss of generality that the undirected version of the graph just constructed is connected. Indeed, if there are more than one connected components, the integrand on the left-hand side of \eqref{ineq:Gamma_eps_control} splits as a product over the connect components. 
		
		We prove the estimate \eqref{ineq:Gamma_eps_control} using the following two steps as in \cite{Boldrig1992}. We first use basic properties of the Green's function (the semigroup property, $\sum_{x'}\G^\epsilon_t(x, x')=1$, and the estimate \eqref{ineq:basic_heat_estimate}) to prune the graph until we have reduced the sub-graph which excludes the final (time-$0$) vertices to a binary tree. This has the effect of greatly simplifying the expression for $\Gamma^\epsilon$, and leaves us with the task of showing \eqref{ineq:Gamma_eps_control} when only T2 vertices, i.e. terms of the form $G^\epsilon_{s_k-s_i}(x_i, x_k)G^\epsilon_{s_k-s_j}(x_j, x_k)$, are present in its graphical representation. While carrying out this pruning, we keep track of the maximum number of initial (time-$2\epsilon^\beta$) particles which are removed, and of the resulting contributions to an upper bound on the left-hand side of \eqref{ineq:Gamma_eps_control}. In a second step, we control the remaining binary tree using \eqref{ineq:basic_heat_estimate} and the key observation that all final vertices have at least two incoming edges, since otherwise $V^\epsilon(x_{i+1}, X_{t_k^\epsilon};X_{t_k^\epsilon})=0$ by Lemma \ref{lem:V_fcn_properties_LLN}.

		Before starting the pruning procedure, we let $I_4$ denote the number of particles in the initial configuration $x$ which die at a $T4$ vertex before reaching time zero. We also denote by $T_i$ the number of T$i$ vertices for $i\in \{1, \cdots, 4\}$. 
		
		We first remove the vertices of type T1 using the semigroup property. Indeed, if $(s_k, z_k)$ is of type T1 with incoming edge $\{(s_i, z_i), (s_k, z_k)\}$ and outgoing edge $\{(s_k, z_k), (s_j, z_j)\}$, then
		\[
		\sum_{z_k\in \X_\epsilon}G^\epsilon_{s_i-s_k}(z_i, z_k)G^\epsilon_{s_k-s_j}(z_k, z_j) = G^\epsilon_{s_i-s_j}(z_i, z_j), 
		\]
		so we remove $(s_k,z_k)$, $\{(s_i, z_i), (s_k, z_k)\}$, and $\{(s_k, z_k), (s_j, z_j)\}$ from the graph and we add an edge $\{(s_i, z_i), (s_j, z_j)\}$. This step does not cause the removal of any initial particle. Next, we collapse the T3 vertices into T1 vertices, starting with the smallest times, and using the Green' function estimate \eqref{ineq:basic_heat_estimate}. Each time the estimate \eqref{ineq:basic_heat_estimate} is applied to remove an edge, we need to worry about the vertex at the other end of the edge. If it is an initial vertex, we remove it from the initial configuration $x$. If it is a vertex of type T2 , then it only has two out-edges (we are in a special case of Appendix B of \cite{Boldrig1992} since our birth rates are linear in the local population size), and it therefore becomes a T1 vertex. If it is a T3 vertex, it becomes a T4 vertex. By Lemma \ref{lem:vertex_type}, T3 vertices have two incoming edges, so the new T4 vertex also has two incoming edges. For future reference, denote by $T_4^+$ the total number of additional T4 vertices created in this way. Since each T3 vertex has two incoming edges, we have overall that the number $I_3\coloneqq c_3 n_x$, for some $c_3 \in [0, 1]$, of initial vertices that are removed by the above procedure satisfies $I_3 \leq \lfloor (n_x-I_4)/2\rfloor$. 
		
		Next, we control all the T4 vertices (the $T_4$ original ones, as well as the $T_4^+$ newly created ones), again in order of increasing time coordinate. By Lemma \ref{lem:vertex_type}, a T4 vertex always has at least two incoming edges, so that $2T_4\leq I_4$. At a given T4 vertex with $k$ incoming edges, we apply \eqref{ineq:basic_heat_estimate} $k-1$ times to remove all but one of the edges, and then we use $\sum_{z'\in \Z_\epsilon}\G^\epsilon_s(z, z')=1$ to control the remaining edge. The other end of the removed edges is handled as above, and we apply the semigroup property to control any remaining T1 vertices. Overall, if $I_4 =c_4n_x$, for some $c_4 \in [0, 1]$, the number of particles removed from the initial configuration $x$ by controlling the $T_4$ original T4 vertices is $I_4-T_4=c_4n_x-T_4\geq c_4n_x/2$. We have also controlled $T_4^+\leq \lfloor (n_x-I_4)/2\rfloor-I_3$ new T4 vertices created. Each of these has exactly two incoming edges, hence if $I_4^+\coloneqq c_4^+ n_x$, some $c_4^+\in[0, 1]$ denotes the number of initial particles removed in the process, then $I_4^+\leq T_4^+\leq \lfloor (n_x-I_4)/2\rfloor-I_3$. 
		
		Let us summarise what we have shown so far. Let $K\coloneqq T_1+T_3+T_4+T_4^+\in\{0, 1, \cdots, i\}$ be the number of vertices of $\Gamma^\epsilon$ that we have just pruned. The above algorithm constructs a sub-configuration $x'$ of $x$ and a simplified object 
		\[
		\overline{\Gamma}^\epsilon=\overline{\Gamma}^\epsilon_{t, x', x_{i+1}}(s_1', \cdots,  s_k'; z_1', \cdots, z_k';\underbrace{1, \cdots, 1}_{k\textnormal{-times}}),
		\]
		 where $k\in \{0, 1, \cdots, i-K\}$ is the number of remaining vertices, which are all of type T2, and if $k=0$ the particles diffuse without interacting. The (directed) graph representation of $\overline{\Gamma}_\epsilon$ for all times larger than zero  is a binary tree. Let $I\coloneqq I_3+I_4^++I_4-T_4\geq (c_3+c_4^++c_4/2)n_x$ denote the number of initial particles removed while reducing $\Gamma^\epsilon$ to $\overline{\Gamma}^\epsilon$. Using the bounds on $I_3$, $I_4^+$, and $I_4$ above, we compute 
		 \begin{equation}\label{ineq:n_x_prime_lb}
		 	n_{x'} = n_x-I\geq (1-c_4)n_x/2+T_4.
		 \end{equation}
		 Let $J\geq I$ be the total number of factors of the form \eqref{ineq:basic_heat_estimate} picked up to bound T3 and T4 vertices. Then we have also shown that there exists is $c_0=c_0(T, n)>0$ such that
		\begin{align}
			&\notag\sum_{\substack{x_{i+1}\in \X_\epsilon \\ z_1, \cdots z_i \in \Z_\epsilon}} \Gamma^\epsilon_{t, x, x_{i+1}}(s_1, \cdots, s_i; z_1, \cdots, z_i; h_1, \cdots, h_i)
			\\&\leq c_0\epsilon^{J} \prod_{j=1}^{J} (s_{i_j-r_j}-s_{i_j})^{-1/2} \sum_{\substack{x_{i+1}\in \X_\epsilon \\ z_1', \cdots z_k' \in \Z_\epsilon}} \overline{\Gamma}^\epsilon_{t, x', x_{i+1}}(s_1', \cdots,  s_k'; z_1', \cdots, z_k';1, \cdots, 1),\label{ineq:new_gamma_eps}
		\end{align}
		where $s_{i_j}$, $j\in \{1, \cdots, J\}$ are the time coordinates of the T3 and T4 vertices that we have just removed, and $s_{i_j-r_j}$ are the time coordinates of their incoming edges, with $1\leq r_j\leq i_j\leq i$, and $\epsilon \in (0, 1)$.
		
		We now implement the second step of the proof, in which we find an upper bound for the right-hand side of \eqref{ineq:new_gamma_eps} using \eqref{ineq:basic_heat_estimate} and the fact if the final configuration $x_{i+1}$ is not zero, then each site has at least two incoming edges. We may assume without loss of generality that $\overline{\Gamma}^\epsilon$ is connected. We say that a vertex is in the bulk (of the graph) if it is neither an initial nor a final vertex. We distinguish three types of edges in the graphical representation of $\overline{\Gamma}^\epsilon$.
		\begin{enumerate}
			\item[E1.] Edge with no interaction: $\{(t, z), (0, z')\}$, for some $z \in x'$ and $z' \in x_{i+1}$.
			\item[E2.] Edge from initial to bulk: $\{(t, z), (s_i, z_i)\}$, for some $s_i\in (0, t)$, $z_i \in \X_\epsilon$ and $z \in \X_\epsilon$.
			\item[E3.] Edge from bulk to the final configuration: $\{(s_i, z_i), (0, z')\}$, for some $s_i\in(0, t)$, $z_i \in \Z_\epsilon$, and $z' \in x_{i+1}$.
		\end{enumerate}
		Since $\overline{\Gamma}^\epsilon$ is connected, either it is comprised entirely of a single final vertex with only type E1 incoming edges, or it has no final vertex with only type E1 incoming edges. In the first case, by \eqref{ineq:n_x_prime_lb}, there are $n_{x'}\geq n_x-I$ such edges, so the worst bound is obtained when $x_{i+1}$ consists of $\lfloor(n_{x'}+1)/2\rfloor$ sites each with two incoming E1 edges. For each final site, if the two initial particles are at $z_1, z_2 \in \Z_\epsilon$ and two E1 edges carry them to their final location $z \in \Z_\epsilon$, we have by \eqref{ineq:basic_heat_estimate}
		\[
		\sum_{z \in \Z_\epsilon} G^\epsilon_t(z_1, z)G^\epsilon_t(z_2, z) \leq C \epsilon/\sqrt{2\epsilon^\beta}\leq C\epsilon^{1-\beta/2},
		\]
		where $C=C(T)>0$. Overall, in this case, we have 
		\[
		\sum_{\substack{x_{i+1}\in \X_\epsilon\\z_1', \cdots, z_k'\in \Z_\epsilon}} \overline{\Gamma}^\epsilon_{t, x', x_{i+1}}(s_1', \cdots,  s_k'; z_1', \cdots, z_k';1, \cdots, 1)\leq C \epsilon^{(1-\beta/2)\lfloor(n_{x'}+1)/2\rfloor},
		\]
		for some $C=C(T)>0$. Combined with \eqref{ineq:new_gamma_eps}, we obtain
		\begin{equation}\label{ineq:case_one_gamma}
		\begin{aligned}
			\sum_{\substack{x_{i+1}\in \X_\epsilon \\ z_1, \cdots z_i \in \Z_\epsilon}} \Gamma^\epsilon_{t, x, x_{i+1}}(s_1, \cdots, s_i; z_1, \cdots, z_i; h_1, \cdots, h_i)
			\\\leq c_0\epsilon^{J+(1-\beta/2)\lfloor(n_{x'}+1)/2\rfloor} \prod_{j=1}^{J} (s_{i_j-r_j}-s_{i_j})^{-1/2},
		\end{aligned}
		\end{equation}
		uniformly in $h_1, \cdots, h_i$, and where by construction the times in the first product do not intersect with the ones in the second product. 
		
		Suppose now that there are no final vertices whose incoming edges are all of type E1. Since any final vertex has at least two incoming edges, each one has at least one E3 edge. We use that edge to bound the summation over the final location by one. The other edges may be of type E1 or E3. We bound all of them using \eqref{ineq:basic_heat_estimate}: an E1 edge is controlled by $C\epsilon^{1-\beta/2}$ as above, while an E3 edge is controlled by $C\epsilon / \sqrt{s_{l_q}'}$, if $s_{l_q}' >0$ is the time of its parent vertex in the bulk, where $l_q \in \{1, \cdots, k\}$, $q \in \{1, \cdots, \overline{k}\}$, for some $\overline{k}\in \{0, 1, \cdots, k\}$ representing the number of E3 edges we control in this way. Let $\overline{K} \in \{0, 1, \cdots, n_{x'}\}$ denote the number of E1 edges in $\overline{\Gamma}_\epsilon$. By construction, we have $\overline{k}+\overline{K}\geq \lfloor(n_{x'}+1)/2\rfloor$. Therefore, there exists $c_1=c_1(T, n)>0$ such that
		\[
			\sum_{\substack{x_{i+1}\in \X_\epsilon\\z_1', \cdots, z_k'\in \Z_\epsilon}} \overline{\Gamma}^\epsilon_{t, x', x_{i+1}}(s_1', \cdots,  s_k'; z_1', \cdots, z_k';1, \cdots, 1)\leq c_1 \epsilon^{(1-\beta/2)\overline{K}}\epsilon^{\overline{k}}\prod_{q=1}^{\overline{k}}(s_{l_q}')^{-1/2}.
		\]
		With \eqref{ineq:new_gamma_eps}, this implies that 
		\begin{equation}\label{ineq:case_two_gamma}
		\begin{aligned}
			\sum_{\substack{x_{i+1}\in \X_\epsilon \\ z_1, \cdots z_i \in \Z_\epsilon}} \Gamma^\epsilon_{t, x, x_{i+1}}(s_1, \cdots, s_i; z_1, \cdots, z_i; h_1, \cdots, h_i)
			\\\leq c_0\epsilon^{J+\overline{k}+(1-\beta/2)\overline{K}} \prod_{j=1}^{J} (s_{i_j-r_j}-s_{i_j})^{-1/2}\prod_{q=1}^{\overline{k}}(s_{l_q}')^{-1/2},
		\end{aligned}
		\end{equation}
		uniformly in $h_1, \cdots, h_i$, and where by construction the times in the first product do not intersect with the ones in the second product. Our next step is to integrate \eqref{ineq:case_one_gamma} and \eqref{ineq:case_two_gamma} with the integration bounds specified in \eqref{ineq:Gamma_eps_control} in the statement of the lemma. We may extend all the upper integration bounds to $2\epsilon^\beta$ by working with $|s_{i_j-r_j}-s_{i_j}|^{-1/2}$ on the right-hand side in either case. Then performing the integration of the second product in \eqref{ineq:case_two_gamma} gives a bound $2^{3\overline{k}/2}\epsilon^{\overline{k}\beta/2}$. Similarly, integrating over the first product in both \eqref{ineq:case_one_gamma} and \eqref{ineq:case_two_gamma}, starting with the $s_{i_j}$, we get a contribution of $4^{J}\epsilon^{J\beta/2}$. For \eqref{ineq:case_one_gamma}, we are left with $i-J$ integrals on $(0, 2\epsilon^\beta)$ with constant integrand, leading to a contribution of $2^{i-J}\epsilon^{(i-J)\beta}$, and to 
		\begin{align*}
				&\int_0^{2\epsilon^{\beta}} ds_1 \int_0^{s_1}ds_2 \cdots \int_0^{s_{i-1}}ds_i \sum_{\substack{x_{i+1}\in \X_\epsilon \\ z_1, \cdots z_i \in \Z_\epsilon}} \Gamma^\epsilon_{t, x, x_{i+1}}(s_1, \cdots, s_i; z_1, \cdots, z_i; h_1, \cdots, h_i)
				\\&\leq c \epsilon^{\beta i+(J+\lfloor(n_{x'}+1)/2\rfloor)(1-\beta/2)},
		\end{align*}
		for some $c=c(T, n)>0$. For \eqref{ineq:case_two_gamma}, we are left with ${i-J-\overline{k}}$ integrals on $(0, 2\epsilon^\beta)$ with constant integrand, which contribute $2^{i-J-\overline{k}}\epsilon^{(i-J-\overline{k})\beta}$. Hence, for some $c=c(T, n)>0$, we have 
		\begin{align*}
			&\int_0^{2\epsilon^{\beta}} ds_1 \int_0^{s_1}ds_2 \cdots \int_0^{s_{i-1}}ds_i \sum_{\substack{x_{i+1}\in \X_\epsilon \\ z_1, \cdots z_i \in \Z_\epsilon}} \Gamma^\epsilon_{t, x, x_{i+1}}(s_1, \cdots, s_i; z_1, \cdots, z_i; h_1, \cdots, h_i)
			\\&\leq c \epsilon^{J+\overline{k}+(1-\beta/2)\overline{K}+\beta(i-J/2-\overline{k}/2)}=c\epsilon^{\beta i+(J+\overline{K}+\overline{k})(1-\beta/2)}\leq c \epsilon^{\beta i + (J+\lfloor(n_{x'}+1)/2\rfloor)(1-\beta/2)}.
		\end{align*}
		It remains to recall \eqref{ineq:n_x_prime_lb}, and the fact that $J\geq I\geq 0$. Then, 
		\[
		J+\lfloor(n_{x'}+1)/2\rfloor\geq I+\frac{n_{x'}}{2}\geq \frac{n_x}{2}+\frac{I}{2}\geq \frac{n_x}{2},
		\]
		and assuming that $\beta \in (0, 2)$, this concludes the proof of \eqref{ineq:Gamma_eps_control}.
		\end{proof}
	
	We can now prove Lemma \ref{lem:BC_small_fluct} starting from the bound established in \eqref{ineq:v_control} of Lemma \ref{lem:v_fcn_bd}. 
	
	\begin{proof}[Proof of Lemma \ref{lem:BC_small_fluct}] By Lemmas \ref{lem:v_fcn_bd} and \ref{lem:Gamma_control}, assuming $\beta \in (0, 2)$ and $\xi \in (0, \beta/2)$, there exist $c_1=c_1(n, N)>0$ and $c_2=c_2(n, N)>0$ such that 
	\[
	|v_t^\epsilon(x, \rho_t^\epsilon|X_{t_k^\epsilon})|\leq c_1 \epsilon^{[\beta-\xi(1+n/(N+1))](N+1)}+C_0^\epsilon(t, x)+c_2\epsilon^{(1-\beta/2)n/2}\sum_{i=1}^N\epsilon^{\beta i},
	\]
	for all $x \in \X_\epsilon$ with $n_x=n$, $t \in [\epsilon^\beta, 2\epsilon^\beta]$. We bound $C_0^\epsilon(t, x)$, defined in \eqref{def:C_0}, using Lemma \ref{lem:V_fcn_properties_LLN}. If $x_1 \in \X_\epsilon$ with $n_{x_1}=n_x$ has a location $z \in \Z_\epsilon$ with a single particle, then we split the $V$-function as a product and we see that the factor corresponding to $z$ vanishes. Thus, if it contributes non-trivially, $x_1$ necessarily has at least two particles at every location on its support. Using \eqref{ineq:basic_heat_estimate} and $t>\epsilon^\beta$, the term $\G^\epsilon_{\epsilon^\beta}(x, x_1)$ is dominated by a product of $\lfloor (n_x+1)/2\rfloor\geq n/2$ factors of $C \epsilon^{1-\beta/2}$ for some constant $C=C(T)>0$, for all $\epsilon \in (0, 1)$. On the other hand, by part two of Lemma \ref{lem:V_fcn_properties_LLN} and the fact that $\rX \in \Omega_1$, we have
	\[
	|V^\epsilon(x_1, X_{t_k^\epsilon}; X_{t_k^\epsilon})|\leq \epsilon^{\kappa\lceil n/2\rceil-\xi\lfloor n/2\rfloor},
	\]
	for some absolute constant $c>0$. Since $\xi \in (0, \beta/2)$ is arbitrarily small and $\kappa \geq 0$, it follows that for some $c=c(T)>0$, we have for any $\delta_0 \in (0, (1-\beta/2)/2)$ that
	\begin{equation}\label{ineq:first_C_0_bd}
	C_0^\epsilon(t, x)\leq c\epsilon^{\delta_0 n},
	\end{equation}
	Assume now that $\xi <\beta/2$, and choose $N=N(\beta, n)$ sufficiently large that $\beta[1-(1+n/(N+1))/2](N+1)-\beta n/4\geq (1-\beta/2)n/2$. Along with the observation that $\sum_{i=0}^N \epsilon^{\beta i} = (1-\epsilon^{\beta (N+1)})/(1-\epsilon^{\beta N})<2$, we obtain for some $C=C(T)>0$ the bound
	\[
	|v^\epsilon_t(x, \rho_t^\epsilon(\cdot\;;X_{t_k^\epsilon})|X_{t_k^\epsilon})|\leq C\epsilon^{(1-\beta/2)n/2},
	\]
	for all $x \in \X_\epsilon$ with $n_x=n$, $t \in [\epsilon^\beta, 2\epsilon^\beta]$, and $\epsilon \in (0, 1)$, which concludes the proof of \eqref{ineq:control_v_eps_1}. To show \eqref{ineq:control_v_eps_0}, we repeat the arguments above with the following modifications. By Proposition \ref{prop:v_fcn_eq}, we have 
	\begin{align}
		v_t^\epsilon(x, \rho_t^\epsilon|\nu^\epsilon) &\notag= \sum_{\substack{x_1\in \X_\epsilon\\ n_{x_1}=n_x}}\G^\epsilon_t(x, x_1)v_0^\epsilon(x_1, \rho_0^\epsilon| \nu^\epsilon)
		\\&+ \int_0^t ds_1 \sum_{\substack{x_1\in \X_\epsilon\\ n_{x_1}=n_x}} \G^\epsilon_{t-s_1}(x, x_1)\label{eq:BBGKY_before_iteration_0}
		\\&\notag\quad\quad\quad\quad\times\sum_{z_1 \in \supp(x_1)}\sum_{h_1=-n_{x_1}(z_1)}^1 c_h^\epsilon(x_1(z_1), \rho_{s_1}^\epsilon(z_1))v_{s_1}^\epsilon(x_1^{(z_1, h_1)}, \rho_{s_1}^\epsilon|\nu^\epsilon),\notag
	\end{align}
	for all $t \in [0, \epsilon^\beta]$ and $x \in \X_\epsilon$ with $n_x=n$.
	Thus, in the proof of Lemma \ref{lem:v_fcn_bd}, the initial term $C_0^\epsilon$ is now
	\[
	C_0^\epsilon(t, x)=\sum_{\substack{x_1\in \X_\epsilon\\ n_{x_1}=n_x}}\G^\epsilon_t(x, x_1)v_0^\epsilon(x_1, \rho_0^\epsilon| \nu^\epsilon).
	\]
	By Assumption \ref{assump:technical_X_0}.a, it satisfies the inequality $|C_0^\epsilon(t, x)|\leq c_{1, n}\epsilon^{\lfloor (n+1)/2\rfloor(1+\kappa)}$ uniformly in $x \in \X_\epsilon$ with $n_x=n$, for some constant $c_{1, n}>0$. Still in the proof of Lemma \ref{lem:v_fcn_bd}, the new coefficients $c_{h_j}^\epsilon$ in the iteration of \eqref{eq:BBGKY_before_iteration_0} are controlled using \eqref{ineq:coef_bd} and part two of Lemma \ref{lem:FKPP_approx_properties}. Indeed, there is an absolute constant $C>1$ such that $\max_{z\in \Z_\epsilon}|\rho_0^\epsilon(z)|<C$, and there exists $c=c(n)>0$ such that
	\begin{equation}\label{ineq:new_control_coeffs}
	|c_h^\epsilon(x_j(z_j), \rho_{s_j}^\epsilon(z_j))|\leq c \left(\max_{z\in \Z_\epsilon}|\rho_0^\epsilon(z)|+1\right)^{n_{x_1}(z_1)+1}\leq c C^{n+j+1},
	\end{equation}
	where we have used in the last inequality that $n_{x_j}\leq n+j$. By Proposition \ref{prop:SLBP_moments}, the integrand of the new remainder $|R_{N+1}^\epsilon(t, x)|$ is bounded by a constant, uniformly in $\epsilon$, $t$, $s_1, \cdots, s_{N+1}$, and $x$. Thus we obtain the uniform bound $R_{N+1}^\epsilon(t, x)\leq c\epsilon^{\beta(N+1)}$ for some constant $c=c(T, n)>0$. The function $\Gamma^\epsilon$ is unchanged and can therefore be controlled by Lemma \ref{lem:Gamma_control}.  The power of $\epsilon$ which multiplies $\Gamma^\epsilon$ in \eqref{ineq:v_control} changes to $\epsilon^{\lfloor (n+1)/2\rfloor(1+\kappa)}$ since the bounds \eqref{ineq:control_coeffs_t_k} and \eqref{ineq:init_V_bd_to_improve} are replaced with \eqref{ineq:new_control_coeffs} and \hyperref[assump:technical_X_0]{(A2)}.
	Overall, we obtain for some $c_2=c_2(T, n)>0$ the bound
	\begin{align*}
		|v_t^\epsilon(x, \rho_t^\epsilon|\nu^\epsilon)|&\leq c_1\epsilon^{\beta(N+1)}+c_2\epsilon^{\lfloor (n+1)/2\rfloor(1+\kappa)}\sum_{i=0}^N \epsilon^{\beta i}
		\\&\leq c_1\epsilon^{\beta(N+1)}+2c_2\epsilon^{\lfloor (n+1)/2\rfloor(1+\kappa)}.
	\end{align*}
	For $N=N(\delta, n)$ large enough that $\beta(N+1)>\lfloor (n+1)/2\rfloor(1+\kappa)$, this shows \eqref{ineq:control_v_eps_0}.
	\end{proof}
	
	\subsection{A smoothing argument}
	
	To establish Theorem \ref{thm:QLLN}, which quantifies in terms of $\epsilon$ the error in the approximation $X_t^\epsilon(z) \approx \rho_t^\epsilon(z)$, $z \in \Z_\epsilon$ and $t\in [0, T]$, we need one more result. We have shown in Lemma \ref{lem:BC_small_fluct} that with high probability, we have control on the error in 
	\[
	X_t^\epsilon(z)\approx \rho_{t-t_k^\epsilon}^\epsilon(z;X_{t_k^\epsilon}^\epsilon),
	\]
	for all $z\in \Z_\epsilon$, $t \in [t_{k+1}^\epsilon, t_{k+2}^\epsilon]$, and $\epsilon \in (0, 1)$. In particular, the dynamics of the rescaled SLBP are close to those of the semi-discrete FKPP equation \eqref{eq:interm_fkpp_diff_form} for short-times. By iterating the last displayed expression, we have the heuristic
	\begin{align*}
		X_{t_1^\epsilon}^\epsilon(z)&\approx \rho_{t_1^\epsilon}^\epsilon(z;X_0^\epsilon)
		\\X_{t_2^\epsilon}^\epsilon(z)&\approx \rho_{t_1^\epsilon}^\epsilon(z;\rho_{t_1^\epsilon}^\epsilon(\cdot;X_0^\epsilon))
		\\&\vdots
		\\ X_{t_k^\epsilon}^\epsilon(z) &\approx \underbrace{\rho_{t_1^\epsilon}^\epsilon(z;\rho_{t_1^\epsilon}^\epsilon(\cdot;\cdots\rho_{t_1^\epsilon}^\epsilon(\cdot;X_0^\epsilon)))}_{k-times}\approx\rho^\epsilon_{t_k^\epsilon}(z).
	\end{align*}
	We now show that with high probability, we have error bounds on
	\[
	\rho^\epsilon_{t-t_k^\epsilon}(z; X^\epsilon_{t_k^\epsilon}) \approx \rho_t^\epsilon(z), \quad z \in \Z_\epsilon, \quad t\in[t_{k+1}^\epsilon, T].
	\]
	At time $t_k^\epsilon$, the left-hand side is $X_{t_k^\epsilon}^\epsilon$, the time-$t_k^\epsilon$ configuration of a rescaled SLBP, while the right-hand side is $\rho_{t_k^\epsilon}^\epsilon$. This result shows that if $X_{t_k^\epsilon}$ is close to $\rho_{t_k^\epsilon}^\epsilon$, then the semi-discrete FKPP at any later time and started from either one takes on approximately the same value. This is a very similar statement to the continuity of solution or flow maps of well-posed differential equations. More precisely, we have the following statement, whose proof is based on that of Proposition 4.2 in \cite{Boldrig1992}.  
	
	\begin{lem}\label{lem:diffusion_helps_local_equil}
		Assume that $\kappa\geq 0$, $\beta \in (0, 6/15)$, and $\xi \in (0, \beta/2)$. Let $\delta_*\coloneqq \min(3/8+\beta/16, 1/2-\beta/4)$. For any $\delta_1 \in (0, \delta_*-\beta)$ and $q>0$, there exist an event $\Omega_2=\Omega_2(\xi, \beta, \kappa, n, q)\subset \Omega_1$ with $\P(\rX^\epsilon \in \Omega_2)>1-c\epsilon^q$ for some $c=c(T, p)>0$ and a constant $\epsilon_0=\epsilon_0(T, \beta)\in (0, 1)$ such that
		\begin{equation}\label{eq:claim_diffusion_helps}
		\P\left(\sup_{t \in [t_{k+1}^\epsilon, T]}\max_{z \in \Z_\epsilon} |\rho^\epsilon_{t-t_k^\epsilon}(z;X^\epsilon_{t_k^\epsilon})-\rho_t^\epsilon(z)|<\epsilon^{\delta_1}\Big| \rX^\epsilon \in \Omega_2\right)=1,
		\end{equation}
		for all $k\in \{0, 1, \cdots, \lfloor T/\epsilon^\beta\rfloor\}$, for all $\epsilon \in (0, \epsilon_0)$.
	\end{lem}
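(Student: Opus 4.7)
The plan follows the strategy of \cite[Proposition~4.2]{Boldrig1992}, with adjustments for the factors of $\epsilon^\kappa$ introduced by the weak-competition regime and for the unbounded offspring distribution. The idea is to write the difference
\[
w^\epsilon_s(z) \coloneqq \rho^\epsilon_s(z; X^\epsilon_{t_k^\epsilon}) - \rho^\epsilon_{t_k^\epsilon + s}(z), \quad s\in [0, T-t_k^\epsilon], \quad z \in \Z_\epsilon,
\]
as the solution to a linearised semi-discrete FKPP equation, to control it via a Duhamel representation, and to absorb the unfavourable term by a smoothing argument on an event $\Omega_2 \subset \Omega_1$ of high probability. Subtracting the two instances of \eqref{eq:interm_fkpp_diff_form} gives
\[
\partial_s w^\epsilon_s(z) = \tfrac{1}{2}\Delta^\epsilon_z w^\epsilon_s(z) + c^\epsilon_s(z)\, w^\epsilon_s(z),\quad w^\epsilon_0(z) = X^\epsilon_{t_k^\epsilon}(z) - \rho^\epsilon_{t_k^\epsilon}(z),
\]
where $c^\epsilon_s(z) = \mu_\epsilon - \tfrac{1}{2}(\rho^\epsilon_s(z; X^\epsilon_{t_k^\epsilon}) + \rho^\epsilon_{t_k^\epsilon+s}(z)) \leq \mu_\epsilon$ since both $\rho^\epsilon$'s are non-negative. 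Duhamel's formula then yields
\[
w^\epsilon_s(z) = \sum_{z'\in \Z_\epsilon}G^\epsilon_s(z, z')w^\epsilon_0(z') + \int_0^s \sum_{z'\in \Z_\epsilon}G^\epsilon_{s-r}(z, z')\,c^\epsilon_r(z')\, w^\epsilon_r(z')\, dr.
\]

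Next, I would define $\Omega_2 \subset \Omega_1$ as the intersection of $\Omega_1$ with an event of the form
\[
\Big\{\max_{k}\max_{s \in \cS_\epsilon}\max_{z\in\Z_\epsilon}\Big|\sum_{z'\in\Z_\epsilon}G^\epsilon_s(z,z')\big(X^\epsilon_{t_k^\epsilon}(z')-\rho^\epsilon_{t_k^\epsilon}(z')\big)\Big|\leq \epsilon^{\delta_*}\Big\},
\]
where $k$ ranges over $\{0,\ldots,\lfloor T/\epsilon^\beta\rfloor\}$ and $\cS_\epsilon$ is a suitably fine grid in $[\epsilon^\beta,T]$. To show $\P(\rX^\epsilon\in\Omega_2)>1-c\epsilon^q$, I would use Markov's inequality at order $2M$ on the $(2M)$-th moment of the heat-smoothed fluctuation. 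Expanding via \eqref{eq:V_fcn_one_loc}--\eqref{eq:V_fcn_2_pcs_diag} and the higher-order analogues in Lemma~\ref{lem:V_fcn_properties_LLN}, the only terms that give a genuine (non-vanishing) contribution are those where the $2M$ summation indices pair up on the diagonal; each such pairing produces a factor $\E[w^\epsilon_0(z')^2] = O(\epsilon^\kappa)$ (by \eqref{eq:V_fcn_2_pcs_diag} and Proposition~\ref{prop:SLBP_moments}) and a factor $\sum_{z'}G^\epsilon_s(z,z')^2 = O(\epsilon/\sqrt{s})$ by \eqref{ineq:basic_heat_estimate}. The off-diagonal contributions are bounded via the short-time $v$-function control \eqref{ineq:control_v_eps_0} of Lemma~\ref{lem:BC_small_fluct}, applied to the process restarted at time $t_k^\epsilon$ (whose restarted moments remain controlled by Proposition~\ref{prop:SLBP_moments}). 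Combining, the moment is $O(\epsilon^{M(1+\kappa-\beta/2)})$, and choosing $M$ large and union-bounding over $z$, $k$ and the discretisation $\cS_\epsilon$ gives the required probability $1-c\epsilon^q$.

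On $\Omega_2$, the first Duhamel term is bounded by $\epsilon^{\delta_*}$ pointwise. For the integral term, I would use the Feynman--Kac representation
\[
w^\epsilon_s(z)=\E_z\Big[w^\epsilon_0(Z^\epsilon_s)\exp\Big(\int_0^s c^\epsilon_{s-r}(Z^\epsilon_r)\,dr\Big)\Big],
\]
where $Z^\epsilon$ is the random walk generated by $\tfrac{1}{2}\Delta^\epsilon$, and exploit the one-sided bound $c^\epsilon\leq \mu_\epsilon$ to conclude that the exponential prefactor is at most $e^{\mu_\epsilon T}$. A standard bootstrap then propagates the pointwise smallness from $s=\epsilon^\beta$ onwards by iterating Duhamel on sub-intervals of length $\epsilon^\beta$ and closing with Gronwall on each sub-interval: Lemma~\ref{lem:BC_small_fluct} combined with the construction of $\Omega_2$ ensures that the forcing stays of order $\epsilon^{\delta_*}$ at every stage, up to a multiplicative loss of $\epsilon^{-\beta}$ that produces the final exponent $\delta_1<\delta_*-\beta$.

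The main obstacle is that the coefficient $c^\epsilon_s$ is only known to be bounded by $\epsilon^{-\xi}$ on $\Omega_1$, because $\rho^\epsilon_s(\,\cdot\,;X^\epsilon_{t_k^\epsilon})$ inherits this pointwise bound from the (uncontrolled) initial data $X^\epsilon_{t_k^\epsilon}$. A naive Gronwall argument would then produce a spurious factor $\exp(CT\epsilon^{-\xi})$ and destroy the estimate. Two ingredients rescue the bound: first, the logistic nonlinearity in \eqref{eq:interm_fkpp_diff_form} forces any large values of $\rho^\epsilon_s(\,\cdot\,;X^\epsilon_{t_k^\epsilon})$ to decay rapidly, so that $c^\epsilon_s$ is effectively bounded after a short initial transient; second, the one-sided bound $c^\epsilon_s\leq \mu_\epsilon$ means that only the positive part of the potential contributes to growth in the Feynman--Kac formula. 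Balancing the smoothing gain of the heat kernel against the Gronwall loss over each sub-interval of length $\epsilon^\beta$, together with the constraints $\xi<\beta/2$ and $\beta<6/15$, is precisely what pins down the threshold $\delta_*=\min(3/8+\beta/16,\,1/2-\beta/4)$ appearing in the statement.
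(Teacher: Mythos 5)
Your high-level plan matches the paper's — reduce to a linear(ised) semi-discrete equation, Duhamel, a high-moment Chebyshev bound for the initial data, and a Gronwall propagation — but there is a genuine gap in how you set up the event $\Omega_2$.

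You define $\Omega_2$ via the smoothed fluctuation of $X^\epsilon_{t_k^\epsilon}$ around the \emph{deterministic} $\rho^\epsilon_{t_k^\epsilon}$, and then invoke $\E[w^\epsilon_0(z')^2] = O(\epsilon^\kappa)$ plus short-time $v$-function bounds for the off-diagonal terms. But the moment $\E[(X^\epsilon_{t_k^\epsilon}(z') - \rho^\epsilon_{t_k^\epsilon}(z'))^2]$ is exactly $v^\epsilon_{t_k^\epsilon}(2\mathbbm{1}_{\{z'\}}, \rho^\epsilon_{t_k^\epsilon}|\nu^\epsilon) + \epsilon^\kappa\E[X^\epsilon_{t_k^\epsilon}(z')]$, and bounding the first term at an intermediate time $t_k^\epsilon = k\epsilon^\beta$ (for arbitrary $k$) is precisely the content of Theorem~\ref{thm:QLLN}, which this lemma is a step toward proving. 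The short-time control~\eqref{ineq:control_v_eps_1} of Lemma~\ref{lem:BC_small_fluct} that you cite for the restarted process controls $v$-functions centred at $\rho^\epsilon_\cdot(\cdot; X^\epsilon_{t_k^\epsilon})$, i.e.\ the FKPP flow started from the random configuration, \emph{not} at the deterministic $\rho^\epsilon_{t_k^\epsilon}$. Your proposal is therefore circular: the centring in $\Omega_2$ requires the very estimate you do not yet have. The paper's fix is a telescoping decomposition $\rho^\epsilon_{t-t_k^\epsilon}(\cdot;X^\epsilon_{t_k^\epsilon}) - \rho^\epsilon_t(\cdot) = \sum_{j=0}^k \Delta^\epsilon_j(t,\cdot)$ with $\Delta^\epsilon_j(t_j^\epsilon, z) = X^\epsilon_{t_j^\epsilon}(z) - \rho^\epsilon_{t_1^\epsilon}(z; X^\epsilon_{t_{j-1}^\epsilon})$: each increment is the fluctuation of the process around the \emph{short-time, path-restarted} FKPP reference, whose moments \emph{are} controlled by Lemma~\ref{lem:BC_small_fluct} on $\Omega_1$, and the accumulated error relative to the deterministic reference is tracked inductively through the constants $m_k^\epsilon, M_k^\epsilon$.

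A secondary, less serious issue: your mechanism for dealing with the $\epsilon^{-\xi}$ bound on the coefficient — Feynman–Kac with a one-sided potential bound plus "rapid logistic decay" — is not the one that actually closes the argument. The paper applies the Duhamel formula only up to a short intermediate time $\underline{t}^\epsilon = \epsilon^{(1-\beta/2)/2}$, where the shortness of the window directly absorbs $\epsilon^{-2\xi}$ (giving a contribution $O(\epsilon^{(1-\beta/2)/2-2\xi})$), and from $\underline{t}^\epsilon$ onward, once $|\Delta^\epsilon_0|<1$, both FKPP solutions are uniformly bounded by $m_0+1$, so the Gronwall coefficient is $\epsilon$-independent. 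Your one-sided bound $c^\epsilon_s\le\mu_\epsilon$ is correct but is not by itself enough, and the appeal to "logistic rapid decay" is not a step that appears in the proof or is needed.

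In short: the Duhamel/Chebyshev/Gronwall skeleton is right, but without the telescoping through the path-restarted references $\rho^\epsilon_{t_1^\epsilon}(\cdot;X^\epsilon_{t_{j-1}^\epsilon})$ the moment-control step is circular and the argument does not close.
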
 
	\begin{proof}
		For $t\geq0$ and $z \in \Z_\epsilon$, we define  
		\[
		\Delta_0^\epsilon(t, z)= \rho_t^\epsilon(z; X^\epsilon_0)-\rho_t^\epsilon(z), 
		\]
		and 
		\[
		\hat X^\epsilon_t(z)=X_t^\epsilon(z)-\E[X^\epsilon_0(z)].
		\] 
		Since $\rho^\epsilon$ solves \eqref{eq:interm_fkpp_diff_form}, we have 
		\begin{align*}
			\Delta_0^\epsilon(t, z) &= \sum_{z_1 \in \Z_\epsilon}G^\epsilon_t(z, z_1)\hat X_0^\epsilon(z_1)
			\\& +\int_0^t ds \sum_{z_1 \in \Z_\epsilon}G^\epsilon_{t-s}(z, z_1)[(\mu_\epsilon-\rho_s^\epsilon(z))\Delta_0^\epsilon(s, z_1)-\frac{1}{2}(\Delta_0^\epsilon(s, z_1))^2].
		\end{align*}
		We begin by finding a bound for the absolute value of the integral on the right-hand side. Let  
		\[
		m_0=\sup_{\epsilon \in (0, 1)}\max_{z \in \Z_\epsilon}\E[X_0^\epsilon(z)],
		\]
		which is finite by Definition \ref{def:general_init_data}. By part two of Lemma \ref{lem:FKPP_approx_properties}, there is $C=C(T)>0$ such that $\rho_t^\epsilon(z)\leq (m_0+1)C$ for all $t\geq 0$, $z \in \Z_\epsilon$, and $\epsilon \in (0, 1)$. Suppose that $\rX^\epsilon \in \Omega_1$. Then using the triangle inequality, $(a+b)^2\leq 2(a^2+b^2)$ for all $a, b \in \R$, and the fact transition probabilities sum to one, we obtain
		\begin{align*}
		&\left|\sum_{z_1 \in \Z_\epsilon}G^\epsilon_{t-s}(z, z_1)[(\mu_\epsilon-\rho_s^\epsilon(z))\Delta_0^\epsilon(s, z_1)-\frac{1}{2}(\Delta_0^\epsilon(s, z_1))^2]\right|
		\\&\leq (\mu_\epsilon+(m_0+1)C)(\epsilon^{-\xi}+(m_0+1)C)+\frac{1}{2}(\epsilon^{-2\xi}+(m_0+1)^2C^2),
		\end{align*}
		for all $s \in [0, t)$, $z \in \Z_\epsilon$ and $\epsilon \in (0, 1)$. Let $c_0=\mu_\epsilon(1+C)+(1+C^2)/2$, so that the right-hand side of the last displayed expression is dominated by $c_0\epsilon^{-2\xi} (m_0+1)^2$, since $m_0>0$. We are interested in $t\geq t_1^\epsilon=\epsilon^\beta$ but let us first assume that $t\geq \underline{t}^\epsilon\coloneqq \epsilon^{(1-\beta/2)/2}$. Take $t=\underline{t}^\epsilon$. We have 
		\begin{equation}\label{ineq:integr_F_0_bound}
			\left|\int_0^{\underline{t}^\epsilon} ds \sum_{y \in \Z_\epsilon}G^\epsilon_{t-s}(x, y)[(\mu-\rho_s^\epsilon(x))\Delta_0^\epsilon(s, y)-\frac{1}{2}(\Delta_0^\epsilon(s, y))^2]\right|\leq c_0\epsilon^{((1-\beta/2)/2-2\xi}(m_0+1)^2.
		\end{equation}
		Next, we estimate the term
		\[
		F_0^\epsilon(z) \coloneqq \sum_{z_1 \in \Z_\epsilon}G_{\underline{t}^\epsilon}^\epsilon(z, z_1)\hat X_0^\epsilon(z_1).  
		\]
		We will apply Chebyshev's inequality to show that with high probability, we can control $F_0^\epsilon(z)$ uniformly in $z$. Observe that 
		\[
		\E\left[ (F_0^\epsilon(z))^N\right]=\sum_{z_1, \cdots, z_N\in \Z_\epsilon} \prod_{i=1}^N G_{\underline{t}^\epsilon}^\epsilon(z, z_i)\E\left[\prod_{j =1}^N \hat X^\epsilon_0(z_j)\right].
		\]
		Given a function $\rho:\Z_\epsilon\to [0, \infty)$, define $\tilde X_t^\epsilon=X_t^\epsilon-\rho$, for all $t\geq 0$. For $x \in \X_\epsilon$ a configuration, we let $\hat x \in \X_\epsilon$ be defined by $\hat x(z)=x(z)\mathbbm{1}_{n_x(z)\geq 2}$. By \eqref{eq:prod_formula_V} in Lemma \ref{lem:V_fcn_properties_LLN}, we have 
		\begin{equation}
			\prod_{z \in x} \tilde X_0^\epsilon(z) = \sum_{x_1\cleq \hat x} d^\epsilon(\hat x, x_1;\rho)V^\epsilon(x\backslash x_1, X^\epsilon_0;\rho),
		\end{equation}
		for all $x \in \X_\epsilon$, for some coefficients $d^\epsilon(\hat x, x_1;\rho)$ satisfying
		\begin{equation}\label{ineq:bd_d_eps}
		d^\epsilon(\hat x, 0;\rho)=1, \quad  |d^\epsilon(\hat x, x_1;\rho)|\leq c\left(\max_{z \in \Z_\epsilon} \rho(z)\right)^{n_{x_1}},
		\end{equation}
		for some constant $c=c(n_{\hat x}, n_{x_1})>0$. We apply this formula with the function $\rho(z)=\E[X_0^\epsilon(z)]=\rho_0^\epsilon(z)$ where the second equality is an assumption on our initial data - see Definition \ref{def:general_init_data}. We obtain
		\begin{align*}
		\E[(F_0^\epsilon(z))^N] &= \sum_{\substack{x \in \X_\epsilon \\ n_x=N}} \prod_{z' \in x} G_{\underline{t}^\epsilon}^\epsilon(z, z')\sum_{x_1\cleq \hat x} d^\epsilon(\hat x, x_1;\rho_0^\epsilon)\E\left[V^\epsilon(x\backslash x_1, X^\epsilon_0;\rho_0^\epsilon)\right]
		\\&=\sum_{\substack{x \in \X_\epsilon \\ n_x=N}} \prod_{z' \in x} G_{\underline{t}^\epsilon}^\epsilon(z, z')\sum_{x_1\cleq \hat x} d^\epsilon(\hat x, x_1;\rho_0^\epsilon)v_0^\epsilon(x\backslash x_1, \rho_0^\epsilon|\nu^\epsilon).
		\end{align*}
		We partition the first summation over the size of $\hat x$ and split the product of transition probabilities over $\hat x$ and $x \backslash \hat x$:
		\[
		\E[(F_0^\epsilon(z))^{2N}] = \sum_{r=0}^{2N}\sum_{\substack{x \in \X_\epsilon \\ n_x=N \\ n_{\hat x}=r}} \sum_{x_1\cleq \hat x}d^\epsilon(\hat x, x_1;\rho_0^\epsilon)\left(\prod_{z' \in x \backslash \hat x} G_{\underline{t}^\epsilon}^\epsilon(z, z')\right)\left(\prod_{z' \in \hat x} G_{\underline{t}^\epsilon}^\epsilon(z, z')\right)v_0^\epsilon(x\backslash x_1, \rho_0^\epsilon|\nu^\epsilon);
		\]
		the even exponent $2N$ ensures positivity of the integrand on the left-hand side. By \eqref{ineq:basic_heat_estimate}, there exists $C=C(T)>0$ such that
		\[
		G_t^\epsilon(z, z')\leq C\epsilon/\sqrt{t},
		\]
		for all $z, z' \in \Z_\epsilon$, $t \in (0, T]$, and $\epsilon \in (0, 1)$. By definition, each factor in the product over $z' \in \hat x$ is repeated at least twice, meaning that we can extract at least $\lfloor (r+1)/2\rfloor$ factors of $C\epsilon/\sqrt{\underline{t}^\epsilon}$ to obtain an upper bound on that product. Using the bound on $d^\epsilon$ from \eqref{ineq:bd_d_eps}, and the uniform bound $\rho_t^\epsilon(z)\leq (m_0+1)C(T)$, we obtain 
		\[
		|d^\epsilon(x, x_1;\rho_0^\epsilon)|\leq c C(T)^{2N} (m_0+1)^{2N},
		\]
		for some absolute constant $c>0$, uniformly in $x$ and $x_1\cleq x$. By Assumption \ref{assump:technical_X_0}.a and the fact that $(1-\beta/2)/2\in (2/5, 1/2)$, there exist $c_{1, N}>0$ such that
		\begin{equation}\label{ineq:crude_lb}
		v^\epsilon_0(x\backslash x_1, \rho_0^\epsilon|\nu^\epsilon) \leq c_{1, N}\epsilon^{\lfloor(2N-n_{x_1}+1)/2\rfloor(1+\kappa)}< c_{1, N}\epsilon^{(2N-n_{x_1})(1-\beta/2)/2}\leq c_{1, N}\epsilon^{(2N-r)\delta_0},
		\end{equation}
		for all $\epsilon \in (0, 1)$, $x_1\cleq \hat x$, and $\delta_0 \in (0, (1-\beta/2)/2)$ (see Remark \ref{rmk:crude_lb} below). The bounds just discussed imply that 
		\begin{align*}
				\E[(F_0^\epsilon(z))^{2N}] &\leq c C(T)^{2N} (m_0+1)^{2N}c_{1, n} \sum_{r=0}^{2N} \epsilon^{r(1-(1-\beta/2)/4)/2} \epsilon^{(2N-r)\delta_0}.
		\end{align*}
		The exponents simplify as follows
		\begin{align*}
		r(1-(1-\beta/2)/4)/2+(2N-r)\delta_0 &= r(3/8+\beta/16)+(2N-r)\delta_0
		\\&\geq 2N \delta_*.
		\end{align*}
		where $\delta_*= \min(3/8+\beta/16, \delta_0)$. Hence, for some $C=C(T, N)>0$, we have 
		\begin{equation}\label{ineq:F_0_final_bound}
		\E[(F_0^\epsilon(z))^{2N}]\leq (m_0+1)^{2N}C\epsilon^{2N\delta_*}
		\end{equation}
		uniformly in $z \in \Z_\epsilon$. By a union bound and Chebyshev's inequality, we obtain for any $\zeta>0$ that
		\begin{align*}
			\P\left(\max_{z \in \Z_\epsilon}|F_0^\epsilon(z)|>\epsilon^{\delta_*-\zeta}\right)&\leq\sum_{z \in \Z_\epsilon}\P(|F_0^\epsilon(z)|>\epsilon^{\delta_*-\zeta})
			\\&\leq C (m_0+1)^2 \epsilon^{2N\zeta}.
		\end{align*}
		We take $N>0$ large enough that $2N\zeta>q$. This implies that
		\[
		\P\left(\max_{z \in \Z_\epsilon}|F_0^\epsilon(z)|>\epsilon^{\delta_*-\zeta}\right) < C (m_0+1)^2 \epsilon^q.
		\]
		Let $\delta_*'\in (0, \delta_*-\zeta)$. It follows from the bounds above that, on the event $\Omega_{2, 0}=\{\max_{z \in \Z_\epsilon}|F_0^\epsilon(z)|< \epsilon^{\delta_*-\zeta}\}$ intersected with $\Omega_1$, we have 
		\[
		\max_{z \in \Z_\epsilon}|\Delta^\epsilon_0(\underline{t}^\epsilon, z)|<\epsilon^{\delta_*'}.
		\]
		Since $t\mapsto \Delta_0^\epsilon(t, z)$ is continuous and bounded in absolute value by one up to time 
		\[
		\tau_0^\epsilon\coloneqq \sup\{t\geq \underline{t}^\epsilon:\max_{z \in \Z_\epsilon}|\Delta_0^\epsilon(t, z)|<1\},
		\]
		then it is bounded from above, pointwise, by the solution to
		\[
		y(t) = \epsilon^{\delta_*'}+M_0\int_{s_0}^t y(s)ds, \quad t\in [\underline{t}^\epsilon, \tau_0^\epsilon),
		\]
		where $M_0=M_0(T, p)=C(m_0+ 1)^2$, for some constant $C=C(T, p)>0$. So $y(t)=\epsilon^{\delta_*'}e^{M_0(t-\underline{t}^\epsilon)}$, and for $\epsilon>0$ small enough, we obtain 
		\[
		y(\tau^\epsilon_0)=1\iff \tau_0^\epsilon\geq \underline{t}^\epsilon+\delta_*'\ln(\epsilon^{-1})/M_0.
		\]
		The lower bound on the right-hand side is larger than our time horizon $T$ if $\epsilon \in (0, e^{-M_0 T /\delta_*'})$. Thus for $\epsilon$ in that range, we have $\tau_0^\epsilon>T$, so that
		\[
		\max_{z \in \Z_\epsilon} |\Delta^\epsilon_0(t, z)|\leq \epsilon^{\delta_*'}e^{M_0 T},
		\]
		for all $t \in [\underline{t}^\epsilon, T]$. By the triangle inequality, if $m_1^\epsilon\coloneqq m_0+\epsilon^{\delta_*'}e^{M_0 T}$, this implies
		\begin{equation}\label{ineq:m_1}
		\max_{z \in  \Z_\epsilon} \rho_t^\epsilon(z;X_{t_0^\epsilon}^\epsilon)\leq \max_{z \in \Z_\epsilon} |\rho_t^\epsilon(z)|+\max_{z \in \Z_\epsilon} |\Delta^\epsilon_0(t, z)|\leq m_1^\epsilon,
		\end{equation}
		for all $t \in [\underline{t}^\epsilon, T]$, where $m_1^\epsilon=m_0+\epsilon^{\delta_*'}e^{M_0 T}$. Next, we consider 
		\[
		\Delta^\epsilon_k(t, z) = \rho_{t-t_k^\epsilon}^\epsilon(z;X_{t_k^\epsilon}^\epsilon)-\rho^\epsilon_{t-t_{k-1}^\epsilon}(z;X_{t_{k-1}^\epsilon}^\epsilon),\quad k\geq 1.
		\]
		For $k\geq 1$, define 
		\[
		F^\epsilon_k(z) \coloneqq \sum_{z' \in \Z_\epsilon} G^\epsilon_{\underline{t}^\epsilon}(z, z')\Delta_k^\epsilon(t_k^\epsilon, z'),
		\]
		and $\Omega_{2, k} \coloneqq \{\max_{z \in \Z_\epsilon}|F^\epsilon_k(z)|<\epsilon^{\delta_*-\zeta}\}$.
		The goal is to show that for each $k\in \{1, \cdots, \lfloor T/\epsilon^\beta\rfloor\}$, we have on $\Omega_{2, k}\cap \Omega_1$ the bounds
		\begin{equation}\label{ineq:delta_k_goal_1}
		\max_{z \in \Z_\epsilon} |\Delta_k^\epsilon(t, z)|<\epsilon^{\delta_*'}e^{M^\epsilon_k T}, 
		\end{equation}
		and
		\begin{equation}\label{ineq:delta_k_goal_2}
		\max_{z \in  \Z_\epsilon} \rho_t^\epsilon(z;X_{t_k^\epsilon}^\epsilon)\leq m_{k+1}^\epsilon,
		\end{equation}
		for all $t \in [t_k^\epsilon+\underline{t}^\epsilon, T]$, where $m_{k+1}^\epsilon=m_k^\epsilon+\epsilon^{\delta_*'}e^{M_k^\epsilon T}$, with $M_k^\epsilon = C(m_k^\epsilon+1)^{2N}$, for some constant $C=C(T)$. We use induction on $k$, with both the base case $k=1$ and the induction step following the approach used to control $\Delta_0^\epsilon(t, z)$ above. The main difference is that we replace the time-$0$ control on the $v$-functions provided by our Assumption \ref{assump:technical_X_0}.a with an application of Lemma \ref{lem:BC_small_fluct}. Let $t\coloneqq t_1^\epsilon+\underline{t}^\epsilon$. As before, we have 
		\begin{align*}
			\Delta_1^\epsilon(t, z) &= \sum_{z' \in \Z_\epsilon}G^\epsilon_{t-t_1^\epsilon}(z, z')\Delta_1^\epsilon(t_1, z')
			\\& +\int_{t_1^\epsilon}^t ds \sum_{z' \in \Z_\epsilon}G^\epsilon_{t-s}(z, z')[(\mu-\rho_{s-t_0^\epsilon}^\epsilon(z;X_{t_0^\epsilon}^\epsilon))\Delta_1^\epsilon(s, z')-\frac{1}{2}(\Delta_1^\epsilon(s, z'))^2].
		\end{align*}
		Using \eqref{ineq:m_1}, $t-t_1^\epsilon=\underline{t}^\epsilon$ and $\rX^\epsilon\in \Omega_1$, the integral is bounded by $M_1^\epsilon\epsilon^{(1-\beta/2)/2-2\xi}$, where $M^\epsilon_1=C(m_1^\epsilon +1)^2$ for some $C=C(T)>0$. Define 
		\[
		F_1^\epsilon(z)\coloneqq \sum_{z' \in \Z_\epsilon}G_{\underline{t}^\epsilon}^\epsilon(z, z') \Delta_1^\epsilon(t_1^\epsilon, z').
		\]
		By similar calculations as above, we obtain
		\begin{align*}
		&\E_{X_{t_0^\epsilon}^\epsilon}[(F_1^\epsilon(z))^{2N}]
		\\&=\sum_{r=0}^{2N}\sum_{\substack{x \in \X_\epsilon \\ n_x=N \\ n_{\hat x}=r}} \sum_{x_1\cleq \hat x}d^\epsilon(\hat x, x_1;\rho_0^\epsilon(\cdot\;;X_{t_0^\epsilon}^\epsilon))\left(\prod_{z' \in x \backslash \hat x} G_{t-t_1^\epsilon}^\epsilon(z, z')\right)\left(\prod_{z' \in \hat x} G_{t-t_1^\epsilon}^\epsilon(z, z')\right)v_{t_1^\epsilon}^\epsilon(x\backslash x_1, \rho_{t_1^\epsilon}^\epsilon(\cdot\;;X_{t_0^\epsilon}^\epsilon)|\nu^\epsilon).
		\end{align*}
		By Lemma \ref{lem:BC_small_fluct}, there exists $c=c(T, N)>0$ such that
		\[
		v_{t_1^\epsilon}^\epsilon(x\backslash x_1, \rho_{t_1^\epsilon}^\epsilon(\cdot\;;X_{t_0^\epsilon}^\epsilon)|\nu^\epsilon)\leq c\epsilon^{(2N-n_{x_1})\delta_0}< c\epsilon^{(2N-r)\delta_*},
		\]
		uniformly in $x$ and $x_1\cleq \hat x$. By \eqref{ineq:bd_d_eps} and \eqref{ineq:m_1}, we have 
		\[
		d(x, x_1;\rho_{t_1^\epsilon}^\epsilon(\cdot;X_{t_0^\epsilon}^\epsilon))\leq C(m_1^\epsilon +1)^2,
		\] 
		for some $C=C(T)>0$. We control the transition probabilities as we did for $\Delta_0^\epsilon(t, z)$ above.
		Hence, the event $\Omega_{2, 1}=\{\max_{z \in \Z_\epsilon}|F_1^\epsilon(z)|<\epsilon^{\delta_*-\zeta}\}$ satisfies 
		\[
		\P(\rX^\epsilon \in \Omega_{2, 1}^c)\leq (m_1^\epsilon+1)^{2N} C\epsilon^{2N\zeta}\leq (m_1^\epsilon+1)^{2N} C\epsilon^q
		\]
		for some for some $C=C(T, N)>0$. Moreover, on $\Omega_{2, 1}\cap \Omega_1$, we get
		\[
		\max_{z \in \Z_\epsilon} |\Delta_1^\epsilon(t, z)|<\epsilon^{\delta_*'}e^{M^\epsilon_1 T}, 
		\]
		for all $t \in [t_1^\epsilon+\underline{t}^\epsilon, T]$.  As in \eqref{ineq:m_1}, we have by the triangle inequality that
		\[
		\max_{z \in  \Z_\epsilon} \rho_t^\epsilon(z;X_{t_1^\epsilon}^\epsilon)\leq m_2^\epsilon,
		\]
		for all $t \in [\underline{t}^\epsilon, T]$, where $m_2^\epsilon=m_1^\epsilon+\epsilon^{\delta_*'}e^{M_1^\epsilon T}$. The method used to show the $k=1$ case applies to prove the induction step. Thus, in particular, \eqref{ineq:delta_k_goal_1} and \eqref{ineq:delta_k_goal_2} hold for all $k \in \{1, \cdots, \lfloor T/\epsilon^\beta\rfloor\}$. To bound the right-hand side of \eqref{ineq:delta_k_goal_1} by $c\epsilon^{\delta_*'}$ for some $c=c(T, N)>0$ independent of $k$ or $\epsilon$, we must check that $M_k^\epsilon$ remains bounded uniformly in $\epsilon$ as $k$ becomes large. It is easily seen that the arguments at the end of the proof of Proposition 4.2 of \cite{Boldrig1992} apply since $\delta_*>\beta$, hence $\delta_\star'>\beta$. Define 
		\[
		\Omega_2 = \Omega_1\cap\bigcap_{k\in \{0, \cdots, \lfloor T/\epsilon^\beta\rfloor\}}\Omega_{2, k}.
		\]
		Then, $\P(\rX^\epsilon \in \Omega_2)>1-c\epsilon^q$ for some $c=c(T, N)$, and if $\delta_1 \in (0, \delta_\star'-\beta)$ and $\rX^\epsilon \in \Omega_2$, we obtain the bound
		\[
		|\rho_{t-t_k^\epsilon}^\epsilon(z;X^\epsilon_{t_k^\epsilon})-\rho_t^\epsilon(z)|\leq \sum_{j=0}^k |\Delta^\epsilon_j(t, z)|\leq c_1 T e^{c_2 T} \epsilon^{\delta_\star'-\beta}<c_3 \epsilon^{\delta_1}, 
		\]
		for some $c_1=c_1(T, N), c_2(T, N), c_3(T, N)>0$, for all $k\in \{0, \cdots, \lfloor T/\epsilon^\beta\rfloor\}$ and $z \in \Z_\epsilon$. Since $0<\delta_\star'<\delta_\star-\zeta$, and $\zeta>0$ is arbitrary, this proves the lemma. 
	\end{proof}
	\begin{remark}\label{rmk:crude_lb}
		Note that in the inequality \eqref{ineq:crude_lb}, we have used the crude lower bound $1+\kappa\geq 1$. The reason is that in choosing $\delta_*$ in the proof of Lemma \ref{lem:diffusion_helps_local_equil}, we are forced to consider the worst case of \eqref{ineq:control_v_eps_1} and \eqref{ineq:control_v_eps_0}. This is however not an issue for the proof of Theorem \ref{thm:QLLN} below. Indeed, following the method of \cite{Boldrig1992} in the proof of their Lemma 1, we will first take $n$ large enough that the control claimed in Theorem \ref{thm:QLLN} holds, and then we will use the hierarchy of equations for the $v$-functions and Assumption \ref{assump:technical_X_0}.a to prove the result for lower order $v$-functions - see \eqref{ineq:v_fcn_ineq} below. We note here that one crucial difference between the proofs of Theorem \ref{thm:QLLN} and Lemma 1 of \cite{Boldrig1992} which allows us to obtain a stronger rate of convergence in terms of the moments of the offpsring distribution and the competition exponent $\kappa$ is that the coefficients of the hierarchy of equations for low order $v$-functions are small in $\epsilon^\kappa$. See Proposition \ref{prop:v_fcn_eq}. 
	\end{remark}
	\begin{proof}[Proof of Theorem \ref{thm:QLLN}]
		This proof follows that of Lemma 1 in \cite{Boldrig1992}. Let $k \in \{0, \cdots, \lfloor T/\epsilon^\beta\rfloor\}$, and $t \in [t_{k+1}^\epsilon, t_{k+2}^\epsilon]$. Consider a test configuration $x\in \X_\epsilon$ with $n_x=n$ for some $n \in \N$. By Definition \ref{def:V_fcn}, we have 
		\begin{align*}
			V^\epsilon(x, X_t^\epsilon;\rho_t^\epsilon) \coloneqq \sum_{x_0\cleq x} Q^\epsilon(x_0, X_t^\epsilon)(-1)^{n_x-n_{x_0}}\prod_{z \in x\backslash x_0}\rho_t^\epsilon(z).
		\end{align*}
		Since 
		\begin{align*}
			\prod_{z \in x\backslash x_0}\rho_t^\epsilon(z) &= \prod_{z \in x\backslash x_0} (\rho_t^\epsilon(z)-\rho_{t-t_k^\epsilon}^\epsilon(z;X_{t_k^\epsilon}^\epsilon)+\rho_{t-t_k^\epsilon}^\epsilon(z;X_{t_k^\epsilon}^\epsilon))
			\\&=\sum_{x_1\cleq x\backslash x_0}\prod_{z \in x\backslash(x_0\cup x_1)} \rho_{t-t_k^\epsilon}^\epsilon(z;X_{t_k^\epsilon}^\epsilon)\prod_{z \in x_1}(\rho_t^\epsilon(z)-\rho^\epsilon_{t-t_k^\epsilon}(z;X_{t_k^\epsilon}^\epsilon))<c,
		\end{align*}
		we can write
		\begin{align*}
			V^\epsilon(x, X_t^\epsilon;\rho_t^\epsilon)&=\sum_{x_0\cleq x}Q^\epsilon(x_0, X_t^\epsilon)(-1)^{x\backslash x_0}\sum_{x_1\cleq x\backslash x_0}\prod_{z \in x\backslash (x_0\cup x_1)}\rho_{t-t_k^\epsilon}^\epsilon(z;X_{t_k^\epsilon}^\epsilon)
			\\&\quad\quad\quad\quad\quad\quad\quad\quad\quad\quad\quad\quad\quad\quad\quad\quad\quad\quad\quad\quad\quad\times \prod_{z \in x_1}(\rho_t^\epsilon(z)-\rho^\epsilon_{t-t_k^\epsilon}(z;X_{t_k^\epsilon}^\epsilon)).
		\end{align*}
		We interchange the order of summation to obtain 
		\begin{align*}
			V^\epsilon(x, X_t^\epsilon;\rho_t^\epsilon)&=\sum_{x_1\cleq x} (-1)^{n_{x_1}}\left(\sum_{x_0\cleq x\backslash x_1} Q^\epsilon(x_0, X_t^\epsilon)(-1)^{n_x-n_{x_0}-n_{x_1}}\prod_{z \in x\backslash (x_0\cup x_1)}\rho_{t-t_k^\epsilon}^\epsilon(z; X_{t_k^\epsilon}^\epsilon)\right)
			\\&\quad\quad\quad\quad\quad\quad\quad\quad\quad\quad\quad\quad\quad\quad\quad\quad\quad\quad\quad\quad\quad\times \prod_{z \in x_1}(\rho_t^\epsilon(z)-\rho^\epsilon_{t-t_k^\epsilon}(z;X_{t_k^\epsilon}^\epsilon))
			\\&=\sum_{x_1 \cleq x}(-1)^{n_{x_1}} V^\epsilon(x\backslash x_1, X_t^\epsilon, \rho_{t-t_k^\epsilon}^\epsilon(\cdot;X_{t_k^\epsilon}^\epsilon))\prod_{z \in x_1}(\rho_t^\epsilon(z)-\rho^\epsilon_{t-t_k^\epsilon}(z;X_{t_k^\epsilon}^\epsilon)).
		\end{align*}
		Suppose that $\rX^\epsilon \in \Omega_2$ and choose $r$, $\beta$, $\xi$, and $\delta$ as in Lemma \ref{lem:diffusion_helps_local_equil}. Then, using the Markov property, Lemma \ref{lem:BC_small_fluct}, and Lemma \ref{lem:diffusion_helps_local_equil}, we get almost surely, 
		\begin{align}
			&\notag\left|\E\left[V^\epsilon(Z, X_t;\rho_t^\epsilon)\Big| X_{t_k^\epsilon}^\epsilon\right]\right| 
			\\&\notag=\left|\sum_{x_1 \cleq x}(-1)^{n_{x_1}} \E_{X_{t_k^\epsilon}^\epsilon}\left[V^\epsilon(x\backslash x_1, X_{t-t_k^\epsilon}^\epsilon, \rho_{t-t_k^\epsilon}^\epsilon(\cdot;X_{t_k^\epsilon}^\epsilon))\right]\prod_{z \in x_1}(\rho_t^\epsilon(z)-\rho^\epsilon_{t-t_k^\epsilon}(z;X_{t_k^\epsilon}^\epsilon))\right| 
			\\&\notag\leq \sum_{x_1 \cleq x} |v_{t-t_k^\epsilon}^\epsilon(x\backslash x_1|X_{t_k^\epsilon}^\epsilon)|\left|\prod_{z \in x_1}(\rho_t^\epsilon(z)-\rho^\epsilon_{t-t_k^\epsilon}(z;X_{t_k^\epsilon}^\epsilon))\right|
			\\&\leq C\sum_{x_1 \cleq x}\epsilon^{(n-n_{x_1})\delta_0}\epsilon^{\delta_1 n_{x_1}}=C\left(\epsilon^{\delta_0}+\epsilon^{\delta_1}\right)^n<c \epsilon^{\delta_1 n},\label{ineq:on_omega_2}
		\end{align}
		for some $C=C(T, n)>0$, and where $c=c(T, n)\coloneqq 2^nC$. By the tower law of expectations and the triangle inequality, we have 
		\begin{align*}
			&|\E[V^\epsilon(x, X_t^\epsilon;\rho_t^\epsilon)]|\\&\leq\left|\E\left[\E\left[V^\epsilon(x, X_t^\epsilon;\rho_t^\epsilon)\Big|X^\epsilon_{t_k^\epsilon}\right]\mathbbm{1}_{\rX^\epsilon \in \Omega_2}\right]\right| +\left|\E\left[\E\left[V^\epsilon(x, X_t^\epsilon;\rho_t^\epsilon)\Big|X^\epsilon_{t_k^\epsilon}\right]\mathbbm{1}_{\rX^\epsilon \in \Omega_2^c}\right]\right|.
		\end{align*}
		We control the first term using \eqref{ineq:on_omega_2}:
		\[
			\left|\E\left[\E\left[V^\epsilon(x, X_t^\epsilon;\rho_t^\epsilon)\Big|X^\epsilon_{t_k^\epsilon}\right]\mathbbm{1}_{\rX^\epsilon \in \Omega_2}\right]\right|<c \epsilon^{\delta_1 n}.
		\]
		For the second term, we apply the Cauchy-Schwarz and Jensen inequalities, yielding the bound
		\begin{align*}
		\left|\E\left[\E\left[V^\epsilon(x, X_t^\epsilon;\rho_t^\epsilon)\Big|X^\epsilon_{t_k^\epsilon}\right]\mathbbm{1}_{\rX^\epsilon \in \Omega_2^c}\right]\right|\leq \E\left[V^\epsilon(x, X_t^\epsilon;\rho_t^\epsilon)^2\right]^{1/2}\P(\rX^\epsilon\in \Omega_2^c)^{1/2}.
		\end{align*}
		By another application of Jensen's inequality, we compute 
		\begin{align*}
		\E[V^\epsilon(x, X_t^\epsilon;\rho_t^\epsilon)^2]&= 2^{2n}\E\left[\left(2^{-n}\sum_{x_1\cleq x}(-1)^{n_x-n_{x_1}}Q^\epsilon(x_1, X_t^\epsilon)\prod_{z \in x\backslash x_1}\rho_t^\epsilon(z)\right)^2\right]
		\\&\leq 2^n \sum_{x_1\cleq x} \E\left[Q^\epsilon(x_1, X_t^\epsilon)^2\right] \max_{z\in \Z_\epsilon}\rho_t^\epsilon(z)^{2(n-n_{x_1})}
		\\&\leq 4^n \left(\max_{\substack{x_1\in \X_\epsilon \\ n_{x_1}\leq n}} \E\left[Q^\epsilon(x_1, X_t^\epsilon)^2\right]\right)\times \left(\max_{i\in \{0, \cdots, n\}}\max_{z\in \Z_\epsilon}\rho_t^\epsilon(z)^{2(n-i)}\right).
		\end{align*}
		Using Corollary \ref{cor:SLBP_p_moments}, we have $\max_{\substack{x_1\in \X_\epsilon \\ n_{x_1}\leq n}} \E\left[Q^\epsilon(x_1, X_t^\epsilon)^2\right]\leq C_1$ uniformly in $\epsilon$, for some constant $C_1=C_1(T, n)>0$. By part three of Lemma \ref{lem:FKPP_approx_properties}, there exists $C_2=C_2(T, n)>0$ such that $\max_{i\in \{0, \cdots, n\}}\max_{z\in \Z_\epsilon}\rho_t^\epsilon(z)^{2(n-i)}\leq C_2$. Combined with Lemma~\ref{lem:diffusion_helps_local_equil}, we obtain the existence of a constant $C=C(T, n, p)>0$ such that 
		\[
		\left|\E\left[\E\left[V^\epsilon(x, X_t^\epsilon;\rho_t^\epsilon)\Big|X^\epsilon_{t_k^\epsilon}\right]\mathbbm{1}_{\rX^\epsilon \in \Omega_2^c}\right]\right|\leq C\epsilon^{q/2}. 
		\]
		By choosing $q>2\delta_1 n$ in Lemma \ref{lem:diffusion_helps_local_equil} and gathering the above estimates, we obtain, for some $c=c(T, n)>0$, the bound
		\begin{equation}\label{ineq:eps_control_v_fcn}
			|v_t^\epsilon(x|\nu^\epsilon)|<c \epsilon^{\delta_1 n},
		\end{equation}
		for all configurations $x\in \X_\epsilon$ with $n_x=n$, and $t \in [t_1^\epsilon, T]$. If $t \in[0, t_1^\epsilon]$, by \eqref{ineq:control_v_eps_0} Lemma \ref{lem:BC_small_fluct}, there exists $c=c(T, n)>0$ such that
		\[
			\sup_{t \in [0, t_1^\epsilon]}\max_{\substack{x \in \X_\epsilon
			\\n_x=n}} |v_t^\epsilon(x|\nu^\epsilon)|<c\epsilon^{\delta_1 n(1+\kappa)}.
		\]
		Define $n_0 \coloneqq (1+\kappa)\lceil 2/\delta_1\rceil$. From Lemma \ref{lem:diffusion_helps_local_equil}, we have $\delta_1<1/2$, so that $n_0\geq 4(1+\lceil\kappa\rceil)=8$. If $n\geq n_0$, we have
		\begin{equation}\label{eq:little_o_v_fcn}
		\sup_{t \in [0, T]}\max_{\substack{x \in \X_\epsilon
			\\n_x=n}}v^\epsilon_t(x|\nu^\epsilon)=o(\epsilon^{1+\kappa}),
		\end{equation}
		for some $c=c(T, n)>0$, as $\epsilon \searrow 0$. Suppose now that $n=n_0-1$. By \eqref{eq:v_fcn_eq} in Proposition \ref{prop:v_fcn_eq}, Assumption \ref{assump:technical_X_0}.a, and the triangle inequality, we obtain 
		\begin{equation}
			\begin{aligned}\label{ineq:v_fcn_ineq}
			|v_t^\epsilon(x|\nu^\epsilon)| &\leq c\epsilon^{\lfloor(n+1)/2\rfloor(1+\kappa)}
			\\&+ \left|\int_0^t ds \sum_{\substack{x_1\in \X_\epsilon\\ n_{x_1}=n_x}} \G^\epsilon_{t-s}(x, x_1)\sum_{z \in \supp(x_1)}\sum_{h=-n_{x_1}(z)}^1 c_h^\epsilon(n_{x_1}(z), \rho_s^\epsilon(z))v_s^\epsilon(x_1^{(z, h)}|\nu^\epsilon)\right|,
			\end{aligned}
		\end{equation}
		for all $x \in \X_\epsilon$ with $n_x=n$, and $t\in [0, T]$, for some $c=c(T, n)>0$. By \eqref{ineq:coef_bd} in Proposition \ref{prop:v_fcn_eq}, part two of Lemma \ref{lem:FKPP_approx_properties}, the $c_h^\epsilon$ are bounded in absolute value, uniformly in all their parameters. If $h=1$ in the inner-most summation, then $x_1^{(z, h)}$ is comprised of $n_0$ particles, and \eqref{eq:little_o_v_fcn} shows that the integral is $o(\epsilon^{1+\kappa})$. If $h=-1$, then by Proposition \ref{prop:v_fcn_eq}, there are at least two particles locally, and using \eqref{ineq:basic_heat_estimate}, we obtain a contribution of $O(\sqrt{t}\epsilon^{1+\delta_1(n_0-1)})$. Since 
		\[
		1+\delta_1(n_0-1)\geq 2(1+\kappa)-\delta_1\geq 3/2+2\kappa,
		\]
		we have $O(\sqrt{t}\epsilon^{1+\delta_1(n_0-1)})=o(\epsilon^{1+\kappa})$ as $\epsilon \searrow 0$. For $h\in \{-2, -3, -4\}$, we get using the same method $O(\sqrt{t}\epsilon^{1+(n_0+h)\delta_1})=o(\epsilon^{1+\kappa})$ as $\epsilon\searrow 0$. In fact, for any $h\leq -3$ where there are at least three particles locally, using \eqref{ineq:basic_heat_estimate} and the fact that $\kappa \in [0, 3/8]$, we obtain that the integral is 
		\[
		O\left(\epsilon^{1+2\kappa}+\epsilon^2\int_{\epsilon^{1+2\kappa}}^t s^{-1}ds\right)=o(\epsilon^{1+\kappa}),
		\]
		as $\epsilon \searrow 0$. By Gr\"onwall's inequality, we get 
		\[
		\sup_{t \in [0, T]} \max_{\substack{x \in \X_\epsilon
				\\n_x=n_0-1}} |v^\epsilon_t(x|\nu^\epsilon)| = o(\epsilon^{1+\kappa}).
		\]
		By taking $\beta \in (0, 6/15)$ sufficiently small, we can always get $\delta_1$ to approximate $3/8$ from below arbitrarily well. In particular, we require $\delta_1>\kappa$. Then if $\kappa\in [0, 3/8]$, we may repeat the arguments above starting from $n=n_0-2$ and down to $n=3$, and obtain for all $n\geq 3$ that
		\[
		\sup_{t \in [0, T]} \max_{\substack{x \in \X_\epsilon
				\\n_x=n}} |v^\epsilon_t(x|\nu^\epsilon)| = o(\epsilon^{1+\kappa}).
		\]
		as $\epsilon\searrow 0$ with $\epsilon\in (0, 1)$. Next, consider $n=2$. Let $p\in [1, 2]$ be such that the offspring distribution $\rP$ has a finite moment of order $p$. If $h=1$ in \eqref{ineq:v_fcn_ineq}, we are back in the case $n=3$, and we obtain a contribution of $o(\epsilon^{1+\kappa})$. Let now $h=-1$. By Proposition \ref{prop:v_fcn_eq}, if $m=1$ and $h\leq -1$, we have $c_h^\epsilon(m, u)=0$. Therefore, we may assume that $x_1=2\epsilon^\kappa\mathbbm{1}_{\{z'\}}$ for some $z' \in \Z_\epsilon$. In that case, if $x$ is comprised of two particles at some locations $z_1, z_2 \in \Z_\epsilon$, then by \eqref{ineq:basic_heat_estimate}, we get
		\begin{equation}\label{ineq:G_prod_conv_corr}
		\G^\epsilon_{t-s}(x, x_1)\leq CG^\epsilon_{t-s}(z_1, z')\epsilon/\sqrt{t-s},
		\end{equation}
		for some $C=C(T)>0$, and where $\epsilon \in (0, 1)$. By \eqref{ineq:eps_control_v_fcn} with $n=1$, we get $v_s^\epsilon(x_1^{(z', -1)}|\nu^\epsilon)=O(\epsilon^{\delta_1})=o(\epsilon^\kappa)$. Now by \eqref{ineq:coeff_bd_2_pcs_1_removed} of Proposition \ref{prop:v_fcn_eq}, and part two of Lemma \ref{lem:FKPP_approx_properties}, we obtain $c^\epsilon_{-1}(2, \rho_s^\epsilon(z))=O(\epsilon^\kappa)+o(\epsilon^{(p-1)\kappa})$ uniformly in $s$ and $z$. Thus using that $\sum_{z' \in \Z_\epsilon}G^\epsilon_{t-s}(z_1, z')=1$ and $\int_0^tds/{\sqrt{t-s}}=2\sqrt{t}$, the integral of the $h=-1$ summand in \eqref{ineq:v_fcn_ineq} contributes $o(\epsilon^{1+(p-1)\kappa})$ as $\epsilon \searrow 0$. Finally, we consider the case $h=-2$, where $x_1=2\epsilon^\kappa\mathbbm{1}_{\{z'\}}$ for some $z' \in \Z_\epsilon$ and both particles are removed. We begin by recalling that the $v$-function on the test configuration $x\equiv 0$ is equal to 1, by Definitions \eqref{def:scaled_v_fcns} and \eqref{def:V_fcn}. We again apply \eqref{ineq:G_prod_conv_corr} which gives us a factor of $\epsilon/\sqrt{t-s}$. Additionally, by \eqref{ineq:coeff_bd_2_pcs_removed} of Proposition \ref{prop:v_fcn_eq}, and part two of Lemma \ref{lem:FKPP_approx_properties}, the coefficient $c_{-2}^\epsilon(2, \rho_s^\epsilon(z))$ is $O(\epsilon^\kappa)+o(\epsilon^{(p-1)\kappa})$ uniformly in $s$ and $z$. Thus, overall the contribution of the $h=-2$ case is $O(\epsilon^{1+\kappa})+o(\epsilon^{1+(p-1)\kappa})$ with a constant depending on $T$, $n=2$, and $p$. By Gr\"onwall's inequality, we obtain 
		\[
		\sup_{t \in [0, T]}\max_{x \in \X_\epsilon: n_x=2} |v_t^\epsilon(x|\nu^\epsilon)|\leq C\left[O(\epsilon^{1+\kappa})+o(\epsilon^{1+(p-1)\kappa})\right],
		\]
		for some $C=C(T, n, p)>0$, as $\epsilon \searrow0$. It follows that 
		\[
		\sup_{t \in [0, T]}\max_{x \in \X_\epsilon: n_x=2} |v_t^\epsilon(x|\nu^\epsilon)| \leq
		\begin{cases}
			o(\epsilon^{1+(p-1)\kappa})&p\in (1, 2),\\
			C\epsilon^{1+\kappa}&p=2.
		\end{cases}
		\]
		If now $n=1$, we consider \eqref{ineq:v_fcn_ineq} with $n_x=1$. The summand $h=1$ in \eqref{ineq:v_fcn_ineq} contributes the right-hand side of the last displayed expression, and by Proposition \ref{prop:v_fcn_eq}, $c_h^\epsilon(1, u)=0$ for all $h\leq -1$. Hence by Gr\"onwall's inequality, we obtain 
		\[
		\sup_{t \in [0, T]}\max_{x \in \X_\epsilon: n_x=1} |v_t^\epsilon(x|\nu^\epsilon)|\leq \begin{cases}
			o(\epsilon^{1+(p-1)\kappa})&p\in (1, 2),\\
			C\epsilon^{1+\kappa}&p=2,
		\end{cases}
		\]
		for some $C=C(T, n, p)>0$, and for $\epsilon \in (0, 1)$. This proves Theorem \ref{thm:QLLN}. 
	\end{proof}
	
	\section{Acknowledgements}
	
	I thank Julien Berestycki for supervising this project. I am also grateful to Alison Etheridge, Sylvie M\'el\'eard, Christina Goldschmidt, Lorenzo Zambotti, and Cyril Labb\'e for helpful discussions in early stages of this work. I thank the PIMS Summer School and the Quebec Analysis and Related Fields seminar for opportunities to present this work. This publication is based on work supported by the EPSRC Centre for Doctoral
	Training in Mathematics of Random Systems: Analysis, Modelling and Simulation (EP/S023925/1).
	
	\bibliographystyle{apalike}
	\bibliography{CLT}

	\appendix
	
	\section{Infinitesimal generator for the fluctuations}
	
	In this section, we compute the generator of the fluctuations process $\rY^\epsilon$. We start with the diffusion component of the generator, and then move to the birth-competition part.

	We focus on test functionals of the form 
	\[
	C^\infty(\S^1)'\to \R, \quad Y\mapsto f(\langle Y, \phi\rangle), \quad f \in C^2(\R).
	\] 
	We recall that $\rN^\epsilon=\{(N^\epsilon_t(z))_{z \in \Z_\epsilon}:t\geq 0\}$ is the SLBP in the weak competition regime, and that $\rX^\epsilon=\epsilon^\kappa \rN^\epsilon$. We view $\rX^\epsilon$ as an element of $\cD([0, T], C^\infty(\S^1)')$ via the definition
	\begin{equation}\label{def_eq:slbp_as_dist}
	X^\epsilon_t(\phi)=\epsilon \sum_{z \in \Z_\epsilon} X_t^\epsilon(z)\phi(\epsilon z), \quad \phi \in C^\infty(\S^1)'
	\end{equation}
	We consider the process $\rY^\epsilon=(\{Y^\epsilon_t(z):z \in \Z_\epsilon\})_{t\geq 0}$, where 
	\[
	Y^\epsilon(z)=\epsilon^{\gamma-\kappa}(X_t^\epsilon(z)-\E[X_t^\epsilon(z)]). 
	\]
	We also view it as an element of $\cD([0, T], C^\infty(\S^1)')$ by letting 
	\begin{equation}\label{def_eq:fluct_as_dist}
	Y_t^\epsilon(\phi)=\epsilon \sum_{z \in \Z_\epsilon} Y_t^\epsilon(z)\phi(\epsilon z).
	\end{equation}
	After a jump of size $\ell\in \{-1, 1, 2, \cdots\}$ at a location $z \in \Z_\epsilon$ at time $t-$, we have by definition
	\[
	Y_t^\epsilon(z) = Y_{t-}^\epsilon(z)+\ell\epsilon^\gamma \delta_{z_0 z}, \quad z \in \Z_\epsilon,
	\]
	where $\delta_{z_0z}$ is the Kronecker delta. Then, 
	\[
	Y_t^\epsilon(\phi) = Y_{t-}^\epsilon(\phi)+\ell\epsilon^{\gamma+1}\phi(\epsilon z), \quad \phi \in C^\infty(\S^1). 
	\]
	The dynamics of the SLBP fluctuations process $\rY^\epsilon$ have the following generator. Given a time $s \geq 0$, we have
	\begin{align}
		\cL^\epsilon f(Y_s^\epsilon(\phi))&\notag=\lim_{t\searrow 0} (t-s)^{-1}\left[\E\left[f(Y^\epsilon_t(\phi))\big|Y_s^\epsilon(\phi)\right]-f(Y_s^\epsilon(\phi))\right]
		\\&=\notag\lim_{t\searrow s} (t-s)^{-1}\left[\E\left[f(\epsilon^{\gamma-\kappa}(X_t^\epsilon(\phi)-\E[X_t^\epsilon(\phi)]))\right]-f(\epsilon^{\gamma-\kappa}(X_s^\epsilon(\phi)-\E[X_t^\epsilon(\phi)]))\right]
		\\&\notag+\lim_{t\searrow s} (t-s)^{-1}\left[f(\epsilon^{\gamma-\kappa}(X_s^\epsilon(\phi)-\E[X_t^\epsilon(\phi)]))-f(\epsilon^{\gamma-\kappa}(X_s^\epsilon(\phi)-\E[X_s^\epsilon(\phi)]))\right].
	\end{align}
	The first term equals $\cL^\epsilon_{RW}f(Y_s^\epsilon(\phi))+\cL^\epsilon_{BC}f(Y_s^\epsilon(\phi))$, where 
	\begin{align}\label{eq:fluct_motion_def}
		\cL^\epsilon_{RW}f(Y_s^\epsilon(\phi))&\notag\coloneqq \frac{1}{2}\sum_{z \in \Z_\epsilon}\epsilon^{-2} N_s^\epsilon(z)\Big\{f(Y_s^\epsilon(\phi)+\epsilon^{\gamma+1}[\phi(\epsilon(z+1))-\phi(\epsilon z)])
		\\&\quad\quad\quad\quad\quad\quad\quad\quad+f(Y_s^\epsilon(\phi)+\epsilon^{\gamma+1}[\phi(\epsilon(z-1))-\phi(\epsilon z)])-2f(Y_s^\epsilon(\phi))\Big\},
	\end{align}
	is the generator of the spatial motion, and
	\begin{equation} 
	\begin{aligned}\label{eq:fluct_BC_def}
		\cL^\epsilon_{BC}f(Y_s^\epsilon(\phi))&\coloneqq\sum_{z \in \Z_\epsilon} N_s^\epsilon(z)\sum_{\ell\geq 1} p_\ell^\epsilon\Big \{f(Y_s^\epsilon(\phi)+\epsilon^{\gamma+1} k\phi(\epsilon z))-f(Y_s^\epsilon(\phi))\Big\}
		\\&+\sum_{z \in \Z_\epsilon} \epsilon^\kappa \frac{N_s^\epsilon(z)(N_s^\epsilon(z)-1)}{2} \Big\{ f(Y_s^\epsilon(\phi)-\epsilon^{\gamma+1}\phi(\epsilon z))-f(Y_s^\epsilon(\phi))\Big\},
	\end{aligned}
	\end{equation}
	is the generator of the birth-competition dynamics, while the second term is easily computed by the chain rule. We obtain
	\begin{equation}
		\cL^\epsilon f(Y_s^\epsilon(\phi))\label{eq:Y_dyn_generic} = \cL^\epsilon_{RW}f(Y_s^\epsilon(\phi))+\cL^\epsilon_{BC}f(Y_s^\epsilon(\phi))-\epsilon^{\gamma-\kappa}\partial_t \E[X_t^\epsilon(\phi)]\Big|_{t=s} f'(Y_s^\epsilon(\phi)).
	\end{equation}
	Since 
	\[
	X_t^\epsilon(\phi)-X_0^\epsilon(\phi)-\int_0^t \cG^\epsilon X_s^\epsilon(\phi) ds,\quad t\geq 0,
	\]
	is a martingale, we obtain
	\begin{equation}
	\partial_t\E[X_t^\epsilon(\phi)]\Big|_{t=s} = \E[\cG^\epsilon X_s^\epsilon(\phi)]=\E[\cG^\epsilon_{RW} X_s^\epsilon(\phi)]+\E[\cG^\epsilon_{BC} X_s^\epsilon(\phi)],\label{eq:time_deriv_part_of_L}
	\end{equation}
	where $\cG^\epsilon=\cG^\epsilon_{RW}+\cG^\epsilon_{BC}$ is defined in \eqref{def:slbp_gen}-\eqref{eq:GBC}.
	
	In the next lemma, we simplify the expression for $\cL^\epsilon_{RW}f(Y^\epsilon_s(\phi))$ and relate it to the Laplacian.
	\begin{lem}\label{lem:fluct_L_RW_laplacian}
		Let $f \in C^2(\R)$ and $\phi \in C^\infty(\S^1)$. Then, there exist 
		\[
		\xi^{\pm}_z=\xi^{\pm}(\epsilon, z, \phi) \in (0, \epsilon^{\gamma+1}(\phi(\epsilon(z\pm 1))-\phi(\epsilon z))), \quad z \in \Z_\epsilon,
		\] 
		such that 
		\begin{align*}
			\cL^\epsilon_{RW}f(Y_s^\epsilon(\phi))&=\epsilon^{\gamma-\kappa}\E[\cG^\epsilon_{RW} X_s^\epsilon(\phi)]f'(Y_s^\epsilon(\phi))+f'(Y_s^\epsilon(\phi))\frac{\epsilon}{2}\sum_{z \in \Z_\epsilon}\Delta^\epsilon \phi(\epsilon z)Y_s^\epsilon(z)
			\\&+ \frac{\epsilon}{4} \sum_{z \in \Z_\epsilon}\epsilon^{1+2\gamma-\kappa}X_s^\epsilon(z)\bigg[\left(\nabla^{\epsilon, -}\phi(\epsilon z)\right)^2 f''(Y_s^\epsilon(\phi)+\xi^-_z)
			\\&\quad\quad\quad\quad\quad\quad\quad\quad\quad+\left(\nabla^{\epsilon, +}\phi(\epsilon z)\right)^2f''(Y_s^\epsilon(\phi)+\xi^+_z)\bigg].
		\end{align*}
		where $\Delta^\epsilon \phi(\epsilon z)\coloneqq\epsilon^{-2}\left(\phi(\epsilon (z+1))+\phi(\epsilon (z-1))-2\phi(\epsilon z)\right)$ and $\nabla^{\epsilon, \pm}\phi(\epsilon z)\coloneqq \epsilon^{-1}\left(\phi(\epsilon (z\pm1))-\phi(\epsilon z)\right)$.
	\end{lem}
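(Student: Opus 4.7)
The strategy is a second-order Taylor expansion with Lagrange remainder applied to each of the two jump increments in the definition \eqref{eq:fluct_motion_def} of $\cL^\epsilon_{RW}$. Since the jumps of $Y^\epsilon(\phi)$ are of magnitude $\epsilon^{\gamma+1}|\phi(\epsilon(z\pm 1))-\phi(\epsilon z)|$, which vanish as $\epsilon \searrow 0$ for $\phi \in C^\infty(\S^1)$, the expansion is legitimate and the remainder is expressible at an intermediate point $\xi^\pm_z \in \bigl(0,\epsilon^{\gamma+1}[\phi(\epsilon(z\pm 1))-\phi(\epsilon z)]\bigr)$ as in the statement.

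Specifically, the plan is to write
\[
f\bigl(Y_s^\epsilon(\phi)+\epsilon^{\gamma+1}[\phi(\epsilon(z\pm 1))-\phi(\epsilon z)]\bigr) = f(Y_s^\epsilon(\phi))+\epsilon^{\gamma+1}[\phi(\epsilon(z\pm 1))-\phi(\epsilon z)]f'(Y_s^\epsilon(\phi))+\tfrac{1}{2}\epsilon^{2(\gamma+1)}[\phi(\epsilon(z\pm 1))-\phi(\epsilon z)]^2 f''(Y_s^\epsilon(\phi)+\xi^\pm_z),
\]
substitute into \eqref{eq:fluct_motion_def}, and observe that the zeroth-order terms exactly cancel the $-2f(Y_s^\epsilon(\phi))$. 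Summing the two first-order contributions and using $N^\epsilon_s(z)=\epsilon^{-\kappa}X^\epsilon_s(z)$ yields the factor $\tfrac{1}{2}\epsilon^{\gamma-1-\kappa}\sum_z X_s^\epsilon(z)[\phi(\epsilon(z+1))+\phi(\epsilon(z-1))-2\phi(\epsilon z)]f'(Y_s^\epsilon(\phi))$, which by definition of $\Delta^\epsilon$ equals $\tfrac{\epsilon}{2}\epsilon^{\gamma-\kappa}\sum_z X_s^\epsilon(z)\Delta^\epsilon\phi(\epsilon z)f'(Y_s^\epsilon(\phi))$.

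Now I split $X_s^\epsilon(z)=\E[X_s^\epsilon(z)]+\epsilon^{\kappa-\gamma}Y_s^\epsilon(z)$. The random part gives exactly $\tfrac{\epsilon}{2}\sum_z \Delta^\epsilon\phi(\epsilon z)Y_s^\epsilon(z)f'(Y_s^\epsilon(\phi))$. For the deterministic part, a brief side computation identifies $\E[\cG^\epsilon_{RW}X_s^\epsilon(\phi)] = \tfrac{\epsilon}{2}\sum_z \E[X_s^\epsilon(z)]\Delta^\epsilon\phi(\epsilon z)$ by applying $\cG^\epsilon_{RW}$ of \eqref{eq:GRW} to the linear test functional \eqref{def_eq:slbp_as_dist} and noting that the contribution from each jump equals $\epsilon^{\kappa+1}[\phi(\epsilon(z\pm 1))-\phi(\epsilon z)]$; discrete summation by parts (equivalently, collecting the incoming and outgoing jumps at each site) produces the claimed discrete Laplacian. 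Multiplying by $\epsilon^{\gamma-\kappa}$ gives the first term on the right-hand side of the lemma.

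Finally, the second-order Taylor contributions sum to
\[
\tfrac{1}{4}\epsilon^{2\gamma-\kappa}\sum_z X_s^\epsilon(z)\bigl\{[\phi(\epsilon(z+1))-\phi(\epsilon z)]^2 f''(Y_s^\epsilon(\phi)+\xi^+_z) + [\phi(\epsilon(z-1))-\phi(\epsilon z)]^2 f''(Y_s^\epsilon(\phi)+\xi^-_z)\bigr\},
\]
and factoring the $\epsilon^2$ from the squared differences using $\phi(\epsilon(z\pm 1))-\phi(\epsilon z) = \epsilon\,\nabla^{\epsilon,\pm}\phi(\epsilon z)$ yields precisely the claimed remainder term, with the prefactor $\tfrac{\epsilon}{4}\epsilon^{1+2\gamma-\kappa}$. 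There is no real obstacle here; the only point requiring care is correctly bookkeeping the exponents of $\epsilon$ and ensuring the mean part of $X^\epsilon$ is recognised as $\E[\cG^\epsilon_{RW}X_s^\epsilon(\phi)]$ rather than left as an unilluminating sum.
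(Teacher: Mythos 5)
Your proposal is correct and follows essentially the same line of argument as the paper's own proof: a second-order Taylor expansion of $f$ with Lagrange remainder, substitution into \eqref{eq:fluct_motion_def}, the decomposition $\epsilon^{-\kappa}X_s^\epsilon(z)=\epsilon^{-\gamma}Y_s^\epsilon(z)+\epsilon^{-\kappa}\E[X_s^\epsilon(z)]$, and the identification $\E[\cG^\epsilon_{RW}X_s^\epsilon(\phi)]=\tfrac{\epsilon}{2}\sum_z\E[X_s^\epsilon(z)]\Delta^\epsilon\phi(\epsilon z)$. The exponent bookkeeping is right throughout.
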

	\begin{proof}
		By Taylor's Theorem applied to the test function $f$, there exist 
		\[
		\xi_z^{\pm}=\xi^\pm(\epsilon, z, \phi) \in (0, \epsilon^{\gamma+1}[\phi(\epsilon(z\pm 1))-\phi(\epsilon z)]),
		\]
		such that 
		\begin{align*}
			f(Y_s^\epsilon(\phi)+\epsilon^{\gamma+1}[\phi(\epsilon(z\pm1))-\phi(\epsilon z)])&=\epsilon^{\gamma+1}[\phi(\epsilon(z\pm1))-\phi(\epsilon z)]f'(Y_s^\epsilon(\phi))
			\\&+\frac{\epsilon^{2\gamma+2}}{2}[\phi(\epsilon(z\pm1))-\phi(\epsilon z)]^2f''(Y_s^\epsilon(\phi)+\xi_z^{\pm}).
		\end{align*}
		Plugging these expansions into the definition \eqref{eq:fluct_motion_def} of the generator $\cL^\epsilon_{RW}$, we obtain
		\begin{align*}
			\cL^\epsilon_{RW}f(Y_s^\epsilon(\phi))&=\frac{1}{2}\sum_{z \in \Z_\epsilon}\epsilon^{-2} N_s^\epsilon(z)\Big\{\epsilon^{\gamma+3}\Delta^\epsilon\phi(\epsilon z)f'(Y_s^\epsilon(\phi))
			\\&\quad\quad\quad\quad\quad\quad\quad\quad+\frac{\epsilon^{2\gamma+4}}{2}\left(\nabla^{\epsilon, -}\phi(\epsilon z)\right)^2f''(Y_s^\epsilon(\phi)+\xi_z^{-})
			\\&\quad\quad\quad\quad\quad\quad\quad\quad+\frac{\epsilon^{2\gamma+4}}{2}\left(\nabla^{\epsilon, +}\phi(\epsilon z)\right)^2f''(Y_s^\epsilon(\phi)+\xi_z^{+})\Big\}
		\end{align*}
		By definition, we have $N_s^\epsilon(z)=\epsilon^{-\kappa}X_s^\epsilon(z)=\epsilon^{-\gamma}Y_s^\epsilon(z)+\epsilon^{-\kappa}\E[X_s^\epsilon(z)]$. Hence
		\begin{align*}
			\cL^\epsilon_{RW}f(Y_s^\epsilon(\phi))&=\frac{\epsilon}{2}\sum_{z \in \Z_\epsilon}\left(Y_s^\epsilon(z)+\epsilon^{\gamma-\kappa}\E[X_s^\epsilon(z)]\right)\Delta^\epsilon\phi(\epsilon z)f'(Y_s^\epsilon(\phi))
			\\&+ \frac{\epsilon}{4} \sum_{z \in \Z_\epsilon}\epsilon^{1+2\gamma-\kappa}X_s^\epsilon(z)\bigg[\left(\nabla^{\epsilon, -}\phi(\epsilon z)\right)^2 f''(Y_s^\epsilon(\phi)+\xi^-_z)
			\\&\quad\quad\quad\quad\quad\quad\quad\quad\quad+\left(\nabla^{\epsilon, +}\phi(\epsilon z)\right)^2f''(Y_s^\epsilon(\phi)+\xi^+_z)\bigg]
		\end{align*}
		It follows from a straightforward calculation using \eqref{eq:GRW} that 
		\[
		\E[\cG^\epsilon_{RW} X_s^\epsilon(\phi)]=\frac{\epsilon}{2}\sum_{z \in \Z_\epsilon} \E[X_s^\epsilon(z)]\Delta^\epsilon \phi(\epsilon z).
		\]
		Using this expression to compute the difference $\cL^\epsilon_{RW}f(Y_s^\epsilon(\phi))-\epsilon^{\gamma-\kappa}\E[\cG^\epsilon_{RW} X_s^\epsilon(\phi)]f'(Y_s^\epsilon(\phi))$ proves the lemma. 
	\end{proof}
	In simplifying the expression \eqref{eq:fluct_BC_def} for the generator $\cL^\epsilon_{BC}f(Y^\epsilon_s(\phi))$ of the birth-competition fluctuations, we assume that the offspring distribution $\rP$ has a finite second moment. We denote by $\mu=\sum_{k\geq 1} kp_k$ and $\sigma^2=\sum_{k\geq 1}(k-\mu)^2p_k$ the mean and variance of the offspring distribution, respectively. We also introduce $\mu_\epsilon=\sum_{\ell\geq 1}\ell p_\ell^\epsilon$ and $\sigma^2_\epsilon=\sum_{\ell\geq 1}(\ell-\mu_\epsilon)^2p_\ell^\epsilon$ the mean and variance of the truncated offspring distribution.
	
	\begin{lem}\label{lem:fluct_Brownian_BC_gen}
		Let $f \in C^2(\R)$ be such that $\|f''\|_\infty<\infty$, and let $\phi \in C^\infty(\S^1)$. Then 
		\begin{align*}
			&\cL_{BC}^\epsilon f(Y_s^\epsilon(\phi))-\epsilon^{\gamma-\kappa}\E[\cG^\epsilon_{BC} X_s^\epsilon(\phi)]f'(Y^\epsilon_s(\phi))
			\\&= f'(Y_s^\epsilon(\phi))\epsilon\mu_\epsilon\sum_{z \in \Z_\epsilon}Y_s^\epsilon(z)\phi(\epsilon z)
			\\&-f'(Y_s^\epsilon(\phi))\frac{\epsilon^{1+\gamma-\kappa}}{2}\sum_{z \in \Z_\epsilon}\left[X_s^\epsilon(z)(X_s^\epsilon(z)-\epsilon^\kappa)-\E[X_s^\epsilon(z)(X_s^\epsilon(z)-\epsilon^\kappa)]\right]\phi(\epsilon z)
			\\&+\epsilon^{2+2\gamma-\kappa}\sum_{z \in \Z_\epsilon}\frac{\phi(\epsilon z)^2}{2}\left\{\frac{X_s^\epsilon(z)(X_s^\epsilon(z)-\epsilon^\kappa)}{2} f''(Y_s^\epsilon(\phi)+\xi^-_z)+X_s^\epsilon(z)\sum_{\ell\geq 1} \ell^2 p_\ell^\epsilon f''(Y^\epsilon_s(\phi)+\xi_{z, \ell}^+)\right\},
		\end{align*}
		for some 
		\begin{align*}
			\xi^-_z&=\xi^-(\epsilon, z, \phi) \in (-\epsilon^{\gamma+1}\phi(\epsilon z), 0),
			\\\xi^+_{z, \ell}&=\xi^+(\epsilon, z, \ell, \phi) \in (0, \epsilon^{\gamma+1} \ell\phi(\epsilon z)).
		\end{align*}
	\end{lem}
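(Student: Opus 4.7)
The plan is to apply Taylor's theorem with Lagrange remainder to second order on each jump increment appearing in the definition \eqref{eq:fluct_BC_def} of $\cL^\epsilon_{BC} f(Y^\epsilon_s(\phi))$, and then reorganize the resulting first-order terms together with $\epsilon^{\gamma-\kappa}\E[\cG^\epsilon_{BC} X^\epsilon_s(\phi)]f'(Y^\epsilon_s(\phi))$, using the identities $X^\epsilon_s(z) = \epsilon^\kappa N^\epsilon_s(z)$ and $X^\epsilon_s(\phi)-\E[X^\epsilon_s(\phi)] = \epsilon^{\kappa-\gamma}Y^\epsilon_s(\phi)$ coming from Definition \ref{def:fluct_field_def}. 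The second-order Lagrange remainders will assemble into the $f''$ quadratic-variation-like contribution, and the finiteness of $\sum_\ell \ell^2 p_\ell^\epsilon$ (which holds by the truncation and is uniformly controlled under Assumption \ref{assump:OD}) together with $\|f''\|_\infty<\infty$ makes every sum well-defined.

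First I would compute $\E[\cG^\epsilon_{BC}X^\epsilon_s(\phi)]$ by feeding the linear test functional $x\mapsto \epsilon\sum_{z'} x(z')\phi(\epsilon z')$ into \eqref{eq:GBC}, using $f(x^{z,\ell})-f(x)=\epsilon^{1+\kappa}\ell\phi(\epsilon z)$ and $f(x^{z,-1})-f(x)=-\epsilon^{1+\kappa}\phi(\epsilon z)$, together with $q_+^\epsilon(X^\epsilon_s(z)) = \epsilon^{-\kappa}X^\epsilon_s(z)$ and $q_-^\epsilon(X^\epsilon_s(z))=\epsilon^{-\kappa}X^\epsilon_s(z)(X^\epsilon_s(z)-\epsilon^\kappa)/2$; this yields
\[
\cG^\epsilon_{BC}X^\epsilon_s(\phi) = \mu_\epsilon X^\epsilon_s(\phi) - \tfrac{\epsilon}{2}\sum_{z\in\Z_\epsilon}X^\epsilon_s(z)(X^\epsilon_s(z)-\epsilon^\kappa)\phi(\epsilon z),
\]
and taking expectations and multiplying by $\epsilon^{\gamma-\kappa}$ gives the exact counter-term to be subtracted.

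Next I would Taylor expand $f$ at $Y^\epsilon_s(\phi)$ with second-order Lagrange remainder: for the birth jump of size $\epsilon^{\gamma+1}\ell\phi(\epsilon z)$ this produces a first-order term $\epsilon^{\gamma+1}\ell\phi(\epsilon z)f'(Y^\epsilon_s(\phi))$ and a remainder $\tfrac{1}{2}\epsilon^{2\gamma+2}\ell^2\phi(\epsilon z)^2 f''(Y^\epsilon_s(\phi)+\xi^+_{z,\ell})$ for some $\xi^+_{z,\ell}\in(0,\epsilon^{\gamma+1}\ell\phi(\epsilon z))$; for the competition jump of size $-\epsilon^{\gamma+1}\phi(\epsilon z)$ the analogous expansion yields a first-order term $-\epsilon^{\gamma+1}\phi(\epsilon z)f'(Y^\epsilon_s(\phi))$ and a remainder $\tfrac{1}{2}\epsilon^{2\gamma+2}\phi(\epsilon z)^2 f''(Y^\epsilon_s(\phi)+\xi^-_z)$ with $\xi^-_z\in(-\epsilon^{\gamma+1}\phi(\epsilon z),0)$. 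Summing the first-order birth contributions over $\ell$ and $z$ gives $\epsilon^{\gamma-\kappa}\mu_\epsilon X^\epsilon_s(\phi)f'(Y^\epsilon_s(\phi))$, which combined with the $\mu_\epsilon\E[X^\epsilon_s(\phi)]$ piece of the counter-term produces the centered linear drift $\mu_\epsilon f'(Y^\epsilon_s(\phi))\cdot \epsilon\sum_z Y^\epsilon_s(z)\phi(\epsilon z)$. Similarly, the first-order competition contributions combined with the quadratic part of the counter-term produce the centered non-linear field $-\tfrac{1}{2}\epsilon^{1+\gamma-\kappa}f'(Y^\epsilon_s(\phi))\sum_z\{X^\epsilon_s(z)(X^\epsilon_s(z)-\epsilon^\kappa)-\E[\cdot]\}\phi(\epsilon z)$.

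Finally I would collect the two Lagrange remainders and convert from $N^\epsilon_s$ to $X^\epsilon_s$ via $N^\epsilon_s(z)=\epsilon^{-\kappa}X^\epsilon_s(z)$ and $\epsilon^\kappa N^\epsilon_s(z)(N^\epsilon_s(z)-1) = \epsilon^{-\kappa}X^\epsilon_s(z)(X^\epsilon_s(z)-\epsilon^\kappa)$. Rearranging the powers of $\epsilon$ lands each remainder term at scale $\epsilon^{2+2\gamma-\kappa}$, producing exactly the two summands inside the $\phi(\epsilon z)^2/2$ bracket in the statement. There is no serious analytic obstacle here; the only thing requiring care is bookkeeping of the $\epsilon$-exponents and a single justification for exchanging the sum over $\ell$ with the Taylor remainder, which is legitimate because $\|f''\|_\infty<\infty$ and the truncated offspring distribution has $\sum_\ell \ell^2 p_\ell^\epsilon<\infty$ for each $\epsilon$, so that all series converge absolutely and define the $\xi^+_{z,\ell}$ unambiguously.
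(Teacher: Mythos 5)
Your proposal is correct and follows essentially the same route as the paper's proof: second-order Taylor expansion with Lagrange remainder on each birth and competition jump, explicit computation of $\epsilon^{\gamma-\kappa}\E[\cG^\epsilon_{BC}X^\epsilon_s(\phi)]f'(Y^\epsilon_s(\phi))$ via \eqref{eq:GBC}, and regrouping the first-order terms using $\epsilon^{-\kappa}X^\epsilon_s(z)=\epsilon^{-\gamma}Y^\epsilon_s(z)+\epsilon^{-\kappa}\E[X^\epsilon_s(z)]$ together with $\epsilon^\kappa N^\epsilon_s(z)(N^\epsilon_s(z)-1)=\epsilon^{-\kappa}X^\epsilon_s(z)(X^\epsilon_s(z)-\epsilon^\kappa)$. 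The only detail you add beyond the paper (the remark on absolute convergence to justify interchanging the sum over $\ell$ with the remainder) is harmless and indeed automatic since the offspring distribution is truncated at $L(\epsilon)<\infty$, so each sum over $\ell$ is actually finite.
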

	\begin{proof} By Taylor's theorem, we have 
		\[
			f(Y_s^\epsilon(\phi)+\epsilon^{\gamma+1}\ell\phi(\epsilon z))-f(Y_s^\epsilon(\phi))= \epsilon^{\gamma+1}\ell\phi(\epsilon z)f'(Y_s^\epsilon(\phi))+\frac{\epsilon^{2\gamma+2}\ell^2\phi(\epsilon z)^2}{2} f''(Y_s^\epsilon(\phi)+\xi^+_{z, \ell}),
		\]
		for some $\xi^+_{z, \ell}=\xi_+(\epsilon, z, \ell, \phi) \in (0, \epsilon^{\gamma+1} \ell\phi(\epsilon z))$. Similarly, we have 
		\[
			f(Y_s^\epsilon(\phi)-\epsilon^{\gamma+1}\phi(\epsilon z))-f_s^\epsilon(Y_s^\epsilon(\phi))= -\epsilon^{\gamma+1}\phi(\epsilon z)f'(Y_s^\epsilon(\phi))+\frac{\epsilon^{2\gamma+2}\phi(\epsilon z)^2}{2} f''(Y_s^\epsilon(\phi)+\xi^-_z),
		\]
		for some $\xi_-(z)=\xi_-(\epsilon, z, \phi) \in (-\epsilon^{\gamma+1} \phi(\epsilon z), 0)$. Using $\rX^\epsilon=\epsilon^\kappa \rN^\epsilon$, we compute 
		\[
		\epsilon^\kappa N_s^\epsilon(z)(N_s^\epsilon(z)-1)=\epsilon^{-\kappa} X_s^\epsilon(z)(X_s^\epsilon(z)-\epsilon^\kappa),
		\]
		and hence
		\begin{align*}
			&\cL_{BC}^\epsilon f(Y_s^\epsilon(\phi))
			\\&= \sum_{z \in \Z_\epsilon}\epsilon^{-\kappa}X_s^\epsilon(z)\sum_{\ell\geq 1}p_\ell^\epsilon\Big\{\epsilon^{\gamma+1}\ell\phi(\epsilon z)f'(Y_s^\epsilon(\phi))+\frac{\epsilon^{2\gamma+2}\ell^2\phi(\epsilon z)^2}{2} f''(Y_s^\epsilon(\phi)+\xi^+_{z, \ell})\Big\}
			\\&+\sum_{z \in \Z_\epsilon}\epsilon^{-\kappa}\frac{X_s^\epsilon(z)(X_s^\epsilon(z)-\epsilon^\kappa)}{2}\Big\{-\epsilon^{\gamma+1}\phi(\epsilon z)f'(Y_s^\epsilon(\phi))+\frac{\epsilon^{2\gamma+2}\phi(\epsilon z)^2}{2} f''(Y_s^\epsilon(\phi)+\xi^-_z)\Big\}.
		\end{align*}
		Factoring out an $\epsilon^{\gamma+1}$, and using that by definition, $\sum_{\ell\geq 1}\ell p_\ell^\epsilon=\mu_\epsilon$ and 
		\[
		\epsilon^{-\kappa}X_s^\epsilon(z)=N_s^\epsilon(z)=\epsilon^{-\gamma}Y_s^\epsilon(z)+ \epsilon^{-\kappa}\E[X_s^\epsilon(z)],
		\]
		we obtain
		\begin{align*}
			&\cL_{BC}^\epsilon f(Y_s^\epsilon(\phi))
			\\&= \epsilon\mu_\epsilon\sum_{z \in \Z_\epsilon}(Y_s^\epsilon(z)+\epsilon^{\gamma-\kappa}\E[X_s^\epsilon(z)])\phi(\epsilon z)f'(Y_s^\epsilon(\phi))
			\\&-\frac{\epsilon^{1+\gamma-\kappa}}{2}\sum_{z \in \Z_\epsilon}X_s^\epsilon(z)(X_s^\epsilon(z)-\epsilon^\kappa)\phi(\epsilon z)f'(Y_s^\epsilon(\phi))
			\\&+\epsilon^{2+2\gamma-\kappa}\sum_{z \in \Z_\epsilon}\frac{\phi(\epsilon z)^2}{2}\left\{\frac{X_s^\epsilon(z)(X_s^\epsilon(z)-\epsilon^\kappa)}{2} f''(Y_s^\epsilon(\phi)+\xi^-_z)+X_s^\epsilon(z)\sum_{\ell\geq 1} \ell^2 p_\ell^\epsilon f''(Y^\epsilon_s(\phi)+\xi_{z, \ell}^+)\right\}.
		\end{align*}
		By \eqref{eq:GBC}, we have 
		\begin{align*}
		&\epsilon^{\gamma-\kappa}\E[\cG_{BC}^\epsilon X_s^\epsilon(\phi)]f'(Y_s^\epsilon(\phi)) 
		\\&= f'(Y_s^\epsilon(\phi)) \epsilon^{1+\gamma-\kappa} \sum_{z \in \Z_\epsilon} \phi(\epsilon z) \left(\mu_\epsilon \E\left[X_s^\epsilon(z)\right]-\frac{1}{2}\E\left[X_s^\epsilon(z)(X_s^\epsilon(z)-\epsilon^\kappa)\right]\right).
		\end{align*}
		Taking the difference of the last two displayed expressions proves the lemma. 
	\end{proof}

	\section{Properties of the $V$- and $v$-functions}\label{append:v_fcn_props}
		
	In this section, we derive the properties of the $V$- and $v$-functions introduced in Definitions \ref{def:V_fcn} and \ref{def:scaled_v_fcns}, respectively. The statements and proofs below are adapted from Appendix A in \cite{Boldrig1992} to our setting. For $k, n\in \N_0$, we define
	\[
	Q^\epsilon_k(n) \coloneqq 
	\begin{cases}
		\epsilon^{\kappa k} Q_k(n),& k\leq n
		\\ 1, & k=0 
		\\ 0, & k>n 	
	\end{cases},
	\]
	Then if $u \in \R$, $k, n \in \N_0$, we let 
	\begin{equation}\label{def:tilde_V_eps}
	\widetilde{V}^\epsilon_k(n, u) \coloneqq \sum_{\ell=0}^k{k \choose \ell} Q^\epsilon_\ell(n)u^{k-\ell}(-1)^{k-\ell}.
	\end{equation}
	In our setting, we care about $k, n \in \epsilon^\kappa \N_0$, for which we define
	\[
	V^\epsilon_k(n, u) \coloneqq \widetilde{V}^\epsilon_{\epsilon^{-\kappa}k}(\epsilon^{-\kappa}n, u).
	\]
	
	\begin{lem}\emph{(Properties of the $V$-functions)}\label{lem:V_fcn_properties_LLN}
		The following hold.
		\begin{enumerate}
			\item[1.] For any $\rho : \Z_\epsilon \to [0, \infty)$, we have 
			\[
			V^\epsilon(x, x'; \rho) = \prod_{z \in \supp(x)} V^\epsilon_{x(z)}(x'(z), \rho(z)), \quad x, x' \in \X_\epsilon. 
			\] 
			\item[2.] For any $x, x' \in \X_\epsilon$, we have $V^\epsilon(x, x';x')=0$ if $n_x(z)=1$ for some $z\in \Z_\epsilon$, and there exists an absolute constant $c>0$ such that
			\[
			|V^\epsilon(x, x';x')|< c \epsilon^{\kappa \lceil n_x/2\rceil}\left(\max_{z \in \supp(x)}x'(z)\right)^{\lfloor n_x/2\rfloor}.
			\]
			\item[3.] For any $k,m,q \in \N_0$ and $u \in \R$, there exist $c^\epsilon_r(k, m;u)\in \R$, $r \in \{0, 1, \cdots, k+m\}$ such that 
			\[
			V^\epsilon_{\epsilon^\kappa k}(\epsilon^\kappa q, u)V_{\epsilon^\kappa m}^\epsilon(\epsilon^\kappa q, u) = \sum_{r=0}^{k+m}c_r^\epsilon(k, m; u) V^\epsilon_{\epsilon^\kappa r}(\epsilon^\kappa q, u),
			\] 
			with
			\[
			\max_{r\in \{0, 1, \cdots, k+m\}}\left|c^\epsilon_r(k, m;u)\right|\leq c(k, m)(\max(|u|, 1))^{k+m}.
			\]
			uniformly in $\epsilon \in (0, 1)$. 
			\item[4.] Fix $x_0 \in \X_\epsilon$. Given a function $\rho:\Z_\epsilon\to [0, \infty)$, define $\widetilde{x}_0=x_0-\rho$. For any $x \in \X_\epsilon$, we introduce a configuration $\hat x \in \X_\epsilon$ given by $\hat x(z)\coloneqq x(z)\mathbbm{1}_{n_x(z)\geq 2}$ for all $z \in \Z_\epsilon$. Then, we have 
			\begin{equation}\label{eq:prod_formula_V}
				\prod_{z \in x} \widetilde{x}_0(z) = \sum_{x_1\cleq \hat x} d^\epsilon(\hat x, x_1;\rho)V^\epsilon(x\backslash x_1, x_0;\rho),
			\end{equation}
			for all $x \in \X_\epsilon$, for some coefficients $d^\epsilon(\hat x, x_1;\rho)$ satisfying
			\begin{equation}\label{ineq:control_d_eps}
			d^\epsilon(\hat x, 0;g)=1, \quad |d^\epsilon(\hat x, x_1;\rho)|\leq c\left(\max_{z \in \Z_\epsilon} \rho(z)\right)^{n_{x_1}},
			\end{equation}
			for some constant $c=c(n_{\hat x}, n_{x_1})>0$.
		\end{enumerate}
	\end{lem}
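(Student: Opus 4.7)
The plan is to prove the four parts sequentially, reducing everything to the product formula of part (1) together with explicit algebraic identities for the single-site polynomials $\widetilde V^\epsilon_k(n, u)$ defined in \eqref{def:tilde_V_eps}. The key observation driving the whole proof is that each quantity of interest factorises over the sites $z \in \supp(x)$, so that each claim reduces to a statement about a one-site polynomial in $n_{x'}(z)$ and $\rho(z)$.

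\textit{Plan for parts (1) and (2).} For (1), I will parametrise sub-configurations $x_0 \cleq x$ by choosing, for each $z \in \supp(x)$, an integer $j_z \in \{0, 1, \ldots, n_x(z)\}$, with the multinomial weight $\prod_{z} \binom{n_x(z)}{j_z}$ built into the sum (consistent with the enumeration convention in \eqref{def:sum_prod_config_iteration}). Using the factorisations
\[
Q^\epsilon(x_0, x') = \prod_{z \in \supp(x)} \epsilon^{\kappa j_z} Q_{j_z}(n_{x'}(z)), \quad \prod_{z \in x \backslash x_0} \rho(z) = \prod_{z \in \supp(x)} \rho(z)^{n_x(z)-j_z},
\]
together with $(-1)^{n_x - n_{x_0}} = \prod_{z \in \supp(x)} (-1)^{n_x(z) - j_z}$, I swap the sum and product and recognise each site-factor as $\widetilde V^\epsilon_{n_x(z)}(n_{x'}(z), \rho(z))$. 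For (2), setting $\rho = x'$, i.e.\ $u = \epsilon^\kappa n$, the identity $\widetilde V^\epsilon_1(n, \epsilon^\kappa n) = \epsilon^\kappa n - \epsilon^\kappa n = 0$ kills any site with $n_x(z) = 1$. For the bound, I will verify the exponential generating function identity
\[
\sum_{m \geq 0} \widetilde V^\epsilon_m(n, u) \frac{t^m}{m!} = (1+\epsilon^\kappa t)^n e^{-ut}
\]
by a direct manipulation of the defining sum. At $u = \epsilon^\kappa n$ this becomes $[(1+\epsilon^\kappa t) e^{-\epsilon^\kappa t}]^n$, and since $(1+s)e^{-s} = 1 - s^2/2 + O(s^3)$, the coefficient of $t^m$ is $\epsilon^{\kappa m}$ times a polynomial in $n$ of degree $\lfloor m/2 \rfloor$. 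Hence $|\widetilde V^\epsilon_m(n, \epsilon^\kappa n)| \leq c(m) \epsilon^{\kappa \lceil m/2 \rceil} (\epsilon^\kappa n)^{\lfloor m/2 \rfloor}$, and multiplying site-wise via the inequalities $\sum_z \lceil n_x(z)/2 \rceil \geq \lceil n_x/2 \rceil$ and $\sum_z \lfloor n_x(z)/2 \rfloor \leq \lfloor n_x/2 \rfloor$ gives the claim.

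\textit{Plan for parts (3) and (4).} For (3), I will observe that for each fixed $u$, the polynomial $\widetilde V^\epsilon_r(q, u)$ has $q$-degree exactly $r$ with leading coefficient $\epsilon^{\kappa r}$, so $\{\widetilde V^\epsilon_r(q, u)\}_{r=0}^{k+m}$ is a basis for polynomials in $q$ of degree at most $k+m$. Expanding the product $\widetilde V^\epsilon_k \widetilde V^\epsilon_m$ (a polynomial in $q$ of degree exactly $k+m$) in this basis yields the coefficients $c_r^\epsilon(k, m; u)$; the bound then follows by tracking the total $(q, u)$-degree, which is at most $k+m$ on both sides, so each $c_r^\epsilon(k, m; u)$ is a polynomial in $u$ of degree at most $k+m-r \leq k+m$, giving the stated $(\max(|u|,1))^{k+m}$ bound. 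For (4), I will iterate (3) starting from $\widetilde x_0(z) = \widetilde V^\epsilon_1(n_{x_0}(z), \rho(z))$ to expand
\[
\widetilde x_0(z)^{n_x(z)} = \sum_{r=0}^{n_x(z)} \alpha_r^\epsilon(n_x(z); \rho(z)) \widetilde V^\epsilon_r(n_{x_0}(z), \rho(z)),
\]
with $\alpha_{n_x(z)}^\epsilon \equiv 1$ (the leading term) and $|\alpha_r^\epsilon| \leq c(n_x(z)) (\max |\rho|)^{n_x(z)-r}$ after iterating the bound of (3). Sites with $n_x(z) = 1$ contribute only the trivial $r = 1$ term, which is why the deficit configuration $x_1$ is supported in $\hat x$. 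Taking the product over $z \in \supp(x)$, expanding, and reindexing by $x_1(z) = x(z) - \epsilon^\kappa r_z$ produces a sum over $x_1 \cleq \hat x$; the remaining product of factors $\widetilde V^\epsilon_{n_x(z) - n_{x_1}(z)}(n_{x_0}(z), \rho(z))$ collapses into $V^\epsilon(x\backslash x_1, x_0; \rho)$ by part (1), and the iterated bound on the $\alpha_r^\epsilon$ yields \eqref{ineq:control_d_eps}.

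\textit{Main obstacle.} The delicate step is the bound in part (2): the precise exponent $\lceil n_x/2\rceil$ in $\epsilon^\kappa$ arises from the fact that $(1+s)e^{-s}$ vanishes to second order at $s = 0$, which is exactly what the generating function identity makes visible. Without this identity one would need to match cancellations in the alternating sum $\sum_\ell (-1)^\ell \binom{m}{\ell} Q_\ell(n) n^{m-\ell}$ by a delicate induction on $m$. The other parts are routine algebraic manipulations once (1) is in hand.
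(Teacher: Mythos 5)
Your plan for parts (1) and (4) matches the paper's in substance (factorisation over sites; expansion in one-site building blocks and reindexing), just packaged a little differently, so I won't dwell on those. Part (2) is genuinely different from the paper and is the most attractive piece of your proposal. Part (3) has a real gap.

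\textbf{Part (2).} Your exponential generating function identity
\[
\sum_{m\geq 0}\widetilde V^\epsilon_m(n,u)\frac{t^m}{m!}=(1+\epsilon^\kappa t)^n e^{-ut}
\]
is correct (it follows from the Cauchy product with $\sum_\ell Q_\ell(n) s^\ell/\ell!=(1+s)^n$), and specialising to $u=\epsilon^\kappa n$ gives $[(1+s)e^{-s}]^n$ with $s=\epsilon^\kappa t$. The second-order vanishing of $(1+s)e^{-s}-1$ at $s=0$ instantly gives that the $t^m$-coefficient is $\epsilon^{\kappa m}$ times a polynomial in $n$ of degree $\le\lfloor m/2\rfloor$, which is exactly the bound \eqref{ineq:V_eps_LLN}. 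The paper instead expands $Q_\ell(n)=\sum_h d_h(\ell)n^{\ell-h}$, observes that $d_h(\ell)$ is a polynomial of degree $\le 2h$ in $\ell$, and uses that $\sum_\ell(-1)^{k-\ell}\binom{k}{\ell}Q_r(\ell)$ vanishes for $r<k$ by differentiating $(x-1)^k$. The two arguments isolate the same cancellation; your route is cleaner and bypasses the intermediate Stirling-type expansion. The step from the one-site bound to the full bound via $\sum_z\lceil n_x(z)/2\rceil\ge\lceil n_x/2\rceil$ and $\sum_z\lfloor n_x(z)/2\rfloor\le\lfloor n_x/2\rfloor$ is fine, as long as you discard the trivial case $V^\epsilon=0$ (which covers the case $\max_z x'(z)<\epsilon^\kappa$, where the exponent-swap requires $\max_z x'(z)\ge\epsilon^\kappa$).

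\textbf{Part (3): a gap.} Degree counting does establish that $c_r^\epsilon(k,m;u)$ is a polynomial in $u$ of degree $\le k+m-r$; that much is basis-independent. But the claimed size bound ``$|c_r^\epsilon|\le c(k,m)(\max(|u|,1))^{k+m}$ \emph{uniformly in $\epsilon$}'' does \emph{not} follow from the degree bound alone. You yourself note that $\widetilde V^\epsilon_r(q,u)$ has leading $q$-coefficient $\epsilon^{\kappa r}$, so the change-of-basis matrix from $\{q^a\}$ to $\{\widetilde V^\epsilon_r\}$ is lower-triangular with diagonal $\epsilon^{\kappa r}$; its inverse therefore carries $\epsilon^{-\kappa r}$ on the diagonal, which a priori blows up as $\epsilon\searrow 0$. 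What rescues the argument is a structural observation you haven't made: the $q^a$-coefficient of $\widetilde V^\epsilon_k(q,u)$ \emph{always} carries a factor $\epsilon^{\kappa a}$ (since $Q^\epsilon_\ell(q)=\epsilon^{\kappa\ell}Q_\ell(q)$ contributes $\epsilon^{\kappa\ell}\cdot s(\ell,a)q^a$ with $\ell\ge a$), and likewise the $q^a$-coefficient of the product $\widetilde V^\epsilon_k\widetilde V^\epsilon_m$ carries $\epsilon^{\kappa a}$. Working in the rescaled variable $\bar q=\epsilon^\kappa q$, the basis $\{\widetilde V^\epsilon_r\}$ is unitriangular with entries that are polynomials in $\epsilon^\kappa$ (hence uniformly bounded for $\epsilon\in(0,1)$), so the inverse basis change is bounded. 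Without this extra observation the conclusion ``the stated bound follows'' is unjustified. The paper avoids this issue entirely by applying the isomorphism $\cL:Q_k(q)\mapsto q^k$, which simultaneously turns $\widetilde V^\epsilon_k(q,u)$ into $(\epsilon^\kappa q-u)^k$ and linearises the product of falling factorials via \eqref{eq:product_Q}; the resulting explicit formula for $c_r^\epsilon(k,m;u)$ then has $\epsilon^{\kappa(j_1+j_2)}\le 1$ prefactors and bounded combinatorial coefficients, so the uniform bound is manifest. You should either follow the paper's $\cL$-route or insert the $\bar q$-rescaling observation.

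\textbf{Part (4).} The iteration-of-(3) argument is correct given (3); note that because the base case $\widetilde x_0(z)^1=\widetilde V^\epsilon_1$ has no $r=0$ term, sites with $n_x(z)=1$ contribute only the leading summand, which is exactly why $x_1$ is supported in $\hat x$. Your bound $|\alpha_r^\epsilon|\le c(\max(|\rho|,1))^{n_x(z)-r}$ is slightly weaker than the paper's $(\max\rho)^{n_{x_1}}$ when $\max\rho<1$, but for the downstream applications (which only use boundedness in terms of $\sup\rho$) the two are interchangeable.
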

	
	\begin{proof}
		This proof is an adaptation of the proof of Proposition A.1 of \cite{Boldrig1992}. We prove the first property by induction. For a configuration $x$ with single site, the factorization property holds trivially. Assume that it holds for $k$ sites $z_1, \cdots, z_k \in \Z_\epsilon$ and consider a configuration $x = x_k \cup x(z_{k+1})$ where $x_k \in \X_\epsilon$ is supported on the first $k$ sites and $z_{k+1} \in \Z_\epsilon$ is an additional site. To prove the induction step, we partition 
		\[
		V^\epsilon(x, x';\rho) = \sum_{x''\cleq x}Q^\epsilon(x'', x')(-1)^{n_x-n_{x''}}\prod_{z \in x\backslash x''}\rho(z)
		\] 
		according to the number $i$ of elements of $x(z_{k+1})$ contained in the subset $x''$. This approach yields the expansion
		\begin{align*}
			V^\epsilon(x, x';\rho) &= \sum_{i=0}^{n_x(z_{k+1})} {{n_x(z_{k+1})} \choose i} \epsilon^{\kappa i} Q_i(\epsilon^{-\kappa}x(z_{k+1}))(-1)^{n_x(z_{k+1})-i}\rho(z_{k+1})^i
			\\&\quad\quad\quad\quad\quad\quad\quad\quad\quad\quad\quad\quad\quad\quad\quad\times \sum_{\substack{x''\cleq x\\n_{x''}(z_{k+1})=i}}Q^\epsilon(x_k'', x'_k)(-1)^{n_{x_k}- n_{x_k''}}\prod_{z \in x_k \backslash x_k''}\rho(z).
		\end{align*}
		The second summation does not depend on $i$, and is simply $V^\epsilon(x_k, x_k'; \rho)$, which by the induction hypothesis splits as a product of $V$-functions over $z \in \supp(x_k)$. The first summation above is by definition $V^\epsilon(x'(z_{k+1}); \rho(z_{k+1}))$. This proves that
		\[
		V^\epsilon(x, x';\rho) = V^\epsilon(x'(z_{k+1}); \rho(z_{k+1})) \prod_{z \in \supp(x_k)} V^\epsilon_{x_k(z)}(x_k'(z), \rho(z)).
		\]
		Hence property 1. holds. Consider claim 2.. Suppose that $x \in \X_\epsilon$ satisfies $x(z)=1$ for some $z \in \Z_\epsilon$. Since $V^\epsilon(x, x';x')$ splits as a product over $\supp(x)$ and $V^\epsilon(x(z), x'(z);\rho(z)) = x'(z)-\rho(z)$, it follows that $V^\epsilon(x, x';x')=0$. The inequality claimed in 2. follows from the following inequality: for any $k, n \in \N_0$ with $k\leq n$, we have
		\begin{equation}\label{ineq:V_eps_LLN}
			|\widetilde{V}^\epsilon_k(n, n)| \leq c \epsilon^{\kappa \lceil k/2\rceil} n^{\lfloor k/2\rfloor},
		\end{equation}
		for some $c=c(k)>0$. We can write 
		\[
		Q_\ell(n) = \sum_{h=0}^{\ell-1}d_h(\ell)n^{\ell-h}, \quad \ell\leq n, \quad \ell, n \in \N_0, 
		\]
		for some coefficients $d_h(\ell)$ with $d_0\equiv 1$, and where $Q_0(n)=1$ as an empty product. Setting $d_h(\ell)=0$ for $h\geq \ell$, we obtain, for all $m, n \in \N_0$ and $u \in \R$, that
		\begin{align*}
			\widetilde{V}^\epsilon_k(n, n) &= \sum_{\ell=0}^{k} n^{k-\ell}(-1)^{k-\ell}{k \choose \ell}\epsilon^{\kappa \ell}\sum_{h=0}^{\ell -1}d_h(\ell)n^{\ell-h}
			\\&=\sum_{h=0}^{k-1}\left(\sum_{\ell=0}^k(-1)^{k-\ell}{k \choose \ell} d_h(\ell)\right) \epsilon^{\kappa h}n^{k-h}. 
		\end{align*}
		The coefficient $d_h(\ell)$ is a polynomial in $\ell$ of degree at most $2h$. Indeed, suppose that $d_h(\ell)$ had degree $2h+1$. Then, since $Q_\ell(n)$ is a falling factorial, the degrees of $d_h(\ell)$ and of $n^{\ell-h}$ need to add up to $\ell$, i.e. we require that $\ell=(2h+1)+\ell-h$. This does not hold since $\ell-(2h+1)<\ell-h$ because $-h-1<0$ for all $h\geq 0$. Thus, we can write
		\[
		d_h(\ell) = \sum_{r=0}^{2h} c_r(h) Q_r(\ell), 
		\]
		for some coefficients $c_r(h)$. Then,
		\begin{align*}
			\widetilde{V}^\epsilon_k(n, n)&= \sum_{h=0}^{k-1}\left(\sum_{r=0}^{2h} c_r(h) \sum_{\ell=0}^kQ_r(\ell)(-1)^{k-\ell} {k \choose \ell} \right)\epsilon^{\kappa h}n^{k-h}
			\\& = \sum_{h=0}^{k-1}\left(\sum_{r=0}^{2h} c_r(h) \left(\frac{d}{dx}\right)^r (x-1)^{k}\Bigg|_{x=1} \right)\epsilon^{\kappa h}n^{k-h}.
		\end{align*}
		If $2h<k$, then the derivatives are zero, so the coefficient is zero. Thus, $h\geq \lceil k/2\rceil$ and $k-h \leq k-\lceil k/2\rceil\leq \lfloor k/2\rfloor$, meaning that there exists $c=c(k)>0$ independent of $\epsilon$ such that \eqref{ineq:V_eps_LLN} holds. Using claim 1. to split the $V^\epsilon(x, x';x')$ as a product, and then applying \eqref{ineq:V_eps_LLN} to each factor proves claim 2. Next, we establish claim 3. It is shown in the proof of Proposition A.1 of \cite{Boldrig1992} that for all $k, m, q \in \N$, we have 
		\begin{equation}\label{eq:product_Q}
		Q_k(q)Q_m(q) = \sum_{j=0}^{\min(k, m)}{k\choose j}{m\choose j} j! Q_{k+m-j}(q),
		\end{equation}
		with ${q \choose k}=0$ for $k>q$. Thus, 
		\begin{align*}
			\widetilde V_k^\epsilon(q, u)\widetilde V_m^\epsilon(q, u)&=\sum_{j_1=0}^k \sum_{j_2=0}^m {k\choose j_1}{m \choose j_2}(-u)^{k+m-j_1-j_2} Q_{j_1}^\epsilon(q)Q_{j_2}^\epsilon(q)
			\\& = \sum_{j_1=0}^k \sum_{j_2=0}^m {k\choose j_1}{m \choose j_2}(-u)^{k+m-j_1-j_2}\epsilon^{\kappa(j_1+j_2)}\sum_{\ell=0}^{\min(j_1, j_2)}{j_1 \choose \ell}{j_2 \choose \ell} \ell! Q_{j_1+j_2-\ell}(q). 
		\end{align*}
		Let $\cL$ be the endomorphism of the vector space of polynomials of one variable $q$ which sends the basis of falling factorials $\{n\mapsto Q_k(q):k \in \N_0\}$, to the basis of monomials $\{z\mapsto q^k:k \in \N_0\}$, and is defined by $\cL(Q_k(q))=q^k$, for all $k \in \N_0$. It is clear that $\cL$ is an isomorphism of vector spaces. Moreover, for all $k, q \in \N_0$ and $u \in \R$, we have 
		\begin{align*}
			\cL(\widetilde V_k^\epsilon(q, u))& = \sum_{\ell=0}^k \cL(Q^\epsilon_\ell(q))u^{k-\ell}(-1)^{k-\ell} {k \choose \ell}
			\\&= \sum_{\ell=0}^k (\epsilon^\kappa q)^\ell u^{k-\ell}(-1)^{k-\ell}{k\choose \ell}
			\\&=(\epsilon^\kappa q - u)^k.
		\end{align*}
		Hence, 
		\begin{align*}
			&\cL(\widetilde V^\epsilon_k(q, u)\widetilde V^\epsilon_m(q, u))
			\\&= \sum_{j_1=0}^k \sum_{j_2=0}^m {k \choose j_1}{m \choose j_2} (-1)^{k+m -j_1-j_2}\epsilon^{\kappa(j_1+j_2)}\\&\quad\quad\quad\quad\quad\quad\quad\quad\quad\quad\quad\quad\quad\times\sum_{\ell=0}^{\min(j_1, j_2)}\sum_{r=0}^{j_1+j_2-\ell}{j_1 \choose \ell}{j_2 \choose \ell} \ell!{{j_1+j_2-\ell} \choose r} u^{k+m-\ell-r}(\epsilon^\kappa q-u)^r. 
		\end{align*}
		Applying $\cL^{-1}$ and pulling the summation of $r$ to the front, we can write
		\begin{align*}
			\widetilde V^\epsilon_k(q, u)\widetilde V^\epsilon_m(q, u)&=\sum_{r=0}^{k+m} c_r^\epsilon(k, m;u)\widetilde V^\epsilon_r(q, u),
		\end{align*}
		where 
		\[
		c_r^\epsilon(k, m;u)=\sum_{j_1=0}^k \sum_{j_2=0}^m {k \choose j_1}{m \choose j_2} (-1)^{k+m -j_1-j_2}\epsilon^{\kappa(j_1+j_2)}\sum_{\ell=0}^{\min(j_1, j_2)}{j_1 \choose \ell}{j_2 \choose \ell} \ell!{{j_1+j_2-\ell} \choose r} u^{k+m-\ell-r}.
		\]
		Finally, for $k, m, q, r \in \N_0$ and $u \in\R$, we compute
		\begin{align*}
			V^\epsilon_{\epsilon^\kappa k}(\epsilon^\kappa q, u) V^\epsilon_{\epsilon^\kappa m}(\epsilon^\kappa q, u) &= \widetilde V^\epsilon_k(q, u)\widetilde V^\epsilon_m(q, u)
			\\&=\sum_{r=0}^{k+m}c_r^\epsilon(k, m;u)\widetilde{V}^\epsilon_r(q, u)
			\\&=\sum_{r=0}^{k+m}c_r^\epsilon(k, m; u) V^\epsilon_{\epsilon^\kappa r}(\epsilon^\kappa q, u). 
		\end{align*}
		The bound claimed on $c_r^\epsilon(k,m;u)$ is readily inferred from its definition above. This proves claim 3.. Lastly, for $x_0 \in \X_\epsilon$, we prove \eqref{eq:prod_formula_V} of claim 4 by induction on the number of distinct sites in $\hat x$. First observe that
		\[
			\prod_{z \in x}\widetilde{x}_0(z) = \prod_{z' \in x\backslash \hat x}\widetilde{x}_0(z')\prod_{z \in \hat x}\widetilde{x}_0(z).
		\]
		Using Definition \ref{def:V_fcn} and claim 1, we compute $\prod_{z' \in x\backslash \hat x}\widetilde{x}_0(z') = V^\epsilon(x\backslash \hat x, x_0;\rho)$. Thus 
		\[
		\prod_{z \in x}\widetilde{x}_0(z)=V^\epsilon(x\backslash \hat x, x_0;\rho)\prod_{z \in \hat x}\widetilde{x}_0(z).
		\]
		If 
		\begin{equation}\label{eq:to_prove_claim_4}
		\prod_{z \in \hat x}\widetilde{x}_0(z)=\sum_{x_1\cleq \hat x} d^\epsilon(\hat x, x_1;\rho)V^\epsilon(\hat x\backslash x_1, x_0;\rho),
		\end{equation}
		then another application of claim 1 shows that \eqref{eq:prod_formula_V}. Therefore, to show claim 4, it suffices to prove \eqref{eq:to_prove_claim_4} by induction on $m$ the number of distinct sites in $\hat x$. If $m=0$, we have on the left-hand side the empty product, which equals one. On the right-hand side, the $V$-function is also one by definition, and we set $d^\epsilon(0, 0;\rho)=1$. We also do the case $m=1$ as it will be useful in the induction step. Denote the only site in $\hat x$ by $z \in \Z_\epsilon$. We will define the coefficients $d^\epsilon(x, x_1;\rho)$ in the decomposition claimed in \eqref{eq:to_prove_claim_4} where the left-hand side equals $\widetilde{x}_0(z)^{n_{\hat x}}$. By Definition \ref{def:V_fcn} and \eqref{eq:stirling_ff}, we compute
		\begin{align*}
		V^\epsilon(\hat x\backslash x_1, x_0;\rho) &= \sum_{i=0}^{n_{\hat x}-n_{x_1}}{{n_{\hat x}-n_{x_1}} \choose i} (-\rho(z))^{n_{\hat x}-n_{x_1}-i}Q^\epsilon_i(n_{x_0}(z))
		\\&=\sum_{j=0}^{n_{\hat x}-n_{x_1}}\left( \sum_{i=j}^{n_{\hat x}-n_{x_1}}{{n_{\hat x}-n_{x_1}} \choose i} (-\rho(z))^{n_{\hat x}-n_{x_1}-i}s(i, j)\right)x_0(z)^j,
		\end{align*}
		for all $x_1 \cleq \hat x$. Since $x_0(z)^j=\sum_{k=0}^j{j \choose k}\rho(z)^{j-k}\widetilde{x}_0(z)^k$, we obtain, after interchanging the order of summation, a polynomial in $\widetilde{x}_0(z)$ of leading order $n_{\hat x}-n_{x_1}$ with coefficient one:
		\[
		V^\epsilon(\hat x\backslash x_1, x_0;\rho)=\sum_{k=0}^{n_{\hat x}-n_{x_1}}c_k(n_{\hat x}, n_{x_1}; \rho)\widetilde{x}_0(z)^k,
		\]
		where 
		\begin{equation}\label{eq:claim_4_interm_coeffs}
		c_k(n_{\hat x}, n_{x_1}; \rho)= \sum_{j=k}^{n_{\hat x}-n_{x_1}}{j \choose k}\rho(z)^{j-k}\sum_{i=j}^{n_{\hat x}-n_{x_1}}{{n_{\hat x}-n_{x_1}} \choose i} (-\rho(z))^{n_{\hat x}-n_{x_1}-i}s(i, j).
		\end{equation}
		For $n_{x_1}=0$, we set $d^\epsilon(\hat x, 0;\rho)=1$. Then the leading term already contributes $\widetilde{x}_0(z)^{n_{\hat x}}$, and we need to choose the $d^\epsilon(\hat x, x_1;\rho)$, for non-empty $x_1\cleq \hat x$, so as to cancel all lower order terms on the right-hand side of \eqref{eq:to_prove_claim_4}. This is achieved by taking for all non-empty $x_1\cleq \hat x$ with $n_{x_1}=k$, some $k\geq 1$, the coefficients
		\begin{equation}\label{eq:claim_4_coeffs}
		d^\epsilon(x, x_1;\rho) \coloneqq -c_{n_{\hat x}-k}(n_{\hat x}, 0;\rho)-\sum_{\ell=1}^{k-1}d^\epsilon(\hat x, \ell\epsilon^\kappa\mathbbm{1}_{\{z\}};\rho)c_{n_{\hat x}-k}(n_{\hat x}-\ell, 0;\rho),
		\end{equation}
		This proves the $m=1$ case. Assume now that $m\geq 1$ and that the decomposition \eqref{eq:to_prove_claim_4} holds. Suppose now that $\hat x$ is supported on $m+1$ distinct sites $z_1, \cdots, z_{m+1} \in \Z_\epsilon$. Given a configuration $x\in \X_\epsilon$ and a site $z \in \Z_\epsilon$, we define two configurations $\xnot{z}, \xonly{z}\in \X_\epsilon$ with $x=\xnot{z}+\xonly{z}$, as follows:
		\begin{equation}\label{def:x_not_only}
		\xnot{z}(z') \coloneqq x(z')\mathbbm{1}_{\Z_\epsilon\backslash \{z\}}(z')
		\quad\quad\quad\textnormal{and} \quad\quad\quad \xonly{z}(z') \coloneqq 
		x(z)\mathbbm{1}_{\{z\}}(z').
		\end{equation}
		By the induction hypothesis, we have 
		\begin{align*}
			\prod_{z \in \hat x}\widetilde{x}_0(z)&=\widetilde{x}_0(z_{m+1})^{n_{\hat x}(z_{m+1})}\prod_{i=1}^m\widetilde{x}_0(z_i)^{n_{\hat x}(z_i)}
			\\&=\widetilde{x}_0(z_{m+1})^{n_{\hat x}(z_{m+1})}\sum_{x_1\cleq \xnottwo{\hat x}{z_{m+1}}} d^\epsilon(\xnottwo{\hat x}{z_{m+1}}, x_1;\rho)V^\epsilon(\xnottwo{\hat x}{z_{m+1}}\backslash x_1, x_0;\rho),
		\end{align*}
		By the $m=1$ case, we can write 
		\[
		\widetilde{x}_0(z_{m+1})^{n_{\hat x}(z_{m+1})}=\sum_{x_2\cleq \xonlytwo{\hat x}{z_{m+1}}} d^\epsilon(\xonlytwo{\hat x}{z_{m+1}}, x_2;\rho)V^\epsilon(\xonlytwo{\hat x}{z_{m+1}}\backslash x_2, x_0;\rho).
		\]
		It follows from claim 1 that
		\begin{align*}
			\prod_{z \in \hat x}\widetilde{x}_0(z)&=\sum_{x_1\cleq \xnottwo{\hat x}{z_{m+1}}}\sum_{x_2\cleq \xonlytwo{\hat x}{z_{m+1}}}d^\epsilon(\xnottwo{\hat x}{z_{m+1}}, x_1;\rho)d^\epsilon(\xonlytwo{\hat x}{z_{m+1}}, x_2;\rho)
			\\&\quad\quad\quad\quad\quad\quad\quad\quad\quad\quad\times V^\epsilon(\xnottwo{\hat x}{z_{m+1}}\backslash x_1, x_0;\rho)V^\epsilon(\xonlytwo{\hat x}{z_{m+1}}\backslash x_2, x_0;\rho)
			\\&=\sum_{x_1 \cleq \hat x}d^\epsilon(x, x_1;\rho)V^\epsilon(\hat x\backslash x_1, x_0;\rho),
		\end{align*}
		where 
		\[
		d^\epsilon(x, x_1;\rho)\coloneqq d^\epsilon(\xnottwo{\hat x}{z_{m+1}}, \xnottwo{x_1}{z_{m+1}};\rho)d^\epsilon(\xonlytwo{\hat x}{z_{m+1}}, \xonlytwo{x_1}{z_{m+1}};\rho).
		\]
		This concludes the proof of \eqref{eq:prod_formula_V}. The inequality \eqref{ineq:control_d_eps} readily follows from \eqref{eq:claim_4_interm_coeffs} and \eqref{eq:claim_4_coeffs}. The lemma is proved. 
	\end{proof}
	
	In the next lemma, we gather some technical calculations on generators. Recall from \eqref{def:disc_Laplacian_config} and \eqref{def:disc_Laplacian_sites} the definition of the discrete Laplacian acting on functions $F:\X_\epsilon\to \R$ and $f:\Z_\epsilon\to \R$, respectively. Recall also the generator $\cG^\epsilon=\cG^\epsilon_{RW}+\cG^\epsilon_{BC}$ of $\rX^\epsilon$ which we defined in \eqref{def:slbp_gen}.
	\begin{lem}\label{lem:localisation}
		The following hold for all $x, x_0 \in \X_\epsilon$ and $\rho:\Z_\epsilon\to \R$.
		\begin{enumerate}
			\item[1.] The generator $\cG_{RW}^\epsilon$ of the spatial motion satisfies 
			\begin{equation}\label{eq:GRW_is_laplacian}
			\cG_{RW}^\epsilon Q^\epsilon(x, \cdot)(x_0)=\frac{1}{2}\Delta^\epsilon Q^\epsilon(\cdot, x_0)(x)=\frac{1}{2}\sum_{z \in x}\Delta^\epsilon_z Q^\epsilon(\cdot, x_0)(x).
			\end{equation}
			\item[2.] The Laplacian of the $V$-functions is given by 
			\begin{equation}
			\begin{aligned}\label{eq:Laplacian_V}
			\Delta^\epsilon V^\epsilon(\cdot, x_0;\rho)(x)&=\sum_{x_1\cleq x}(-1)^{n_x-n_{x_1}}\Bigg\{\Big(\sum_{z \in x_1}\Delta^\epsilon_z Q^\epsilon(\cdot, x_0)(x_1)\Big)\prod_{z' \in x\backslash x_1}\rho(z')
			\\&\quad\quad\quad\quad\quad\quad\quad\quad\quad\quad +Q^\epsilon(x_1, x_0)\sum_{z \in x\backslash x_1} \Delta^\epsilon_z \rho_t^\epsilon(z)\prod_{\substack{z' \in x\backslash x_1 \\ z'\not=z}}\rho(z')\Bigg\}.
			\end{aligned}
			\end{equation}
			\item[3.] The birth-competition generator $\cG^\epsilon_{BC}$ splits into its local components:
			\begin{equation}
				\cG^\epsilon_{BC}Q^\epsilon(x, \cdot)(x_0)=\sum_{z \in \supp(x)}Q^\epsilon(\xnot{z}, x_0)\cG^\epsilon_{BC}Q^\epsilon(\xonly{z}, \cdot)(x_0),\label{eq:Q_is_local}
			\end{equation}
			where $\xnot{z}, \xonly{z} \in \X_\epsilon$ are defined in \eqref{def:x_not_only}. 
			\item[4.] We consider for all $x, x_0 \in \X_\epsilon$ and $\rho:\Z_\epsilon\to \R$ the quantity
			\begin{equation}
				\begin{aligned}\label{def:GBC_tilde}
					\widetilde{\cG}_{BC}^\epsilon V^\epsilon(x, \cdot; \rho)(x_0)&\coloneqq \sum_{x_1\cleq x}(-1)^{n_x-n_{x_1}}\Bigg\{\Big(\cG_{BC}^\epsilon Q^\epsilon(x_1, x_0)\Big)\prod_{z' \in x\backslash x_1}\rho(z')
					\\&\quad\quad\quad\quad\quad +Q^\epsilon(x_1, x_0)\sum_{z \in x\backslash x_1} \rho_t^\epsilon(z)(\mu_\epsilon-\rho_t^\epsilon(z)/2)\prod_{\substack{z' \in x\backslash x_1 \\ z'\not=z}}\rho(z')\Bigg\}.
				\end{aligned}
			\end{equation}
			Then
			\begin{equation}
				\widetilde{\cG}^\epsilon_{BC}V^\epsilon(x, \cdot; \rho)(x_0)=\sum_{z \in \supp(x)}V^\epsilon(\xnot{z}, x_0; \rho)\widetilde{\cG}^\epsilon_{BC}V^\epsilon(\xonly{z}, \cdot;\rho)(x_0),\label{eq:V_is_local}
			\end{equation}
		\end{enumerate}	
	\end{lem}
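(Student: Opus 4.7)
The plan is to prove the four identities in order. Parts (1) and (2) reduce to algebraic manipulations involving the random walk generator and the falling factorials $Q_k$, while parts (3) and (4) follow directly from the local (product-over-sites) structure of $Q^\epsilon$ and $V^\epsilon$.

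For part (1), I would start from the explicit form \eqref{eq:GRW} of $\cG^\epsilon_{RW}$ and expand each difference $Q^\epsilon(x, x_0^{z, z \pm 1}) - Q^\epsilon(x, x_0)$ using $Q^\epsilon(x, x_0) = \prod_{z'} \epsilon^{\kappa n_x(z')} Q_{n_x(z')}(n_{x_0}(z'))$; only the factors at $z$ and $z \pm 1$ change. The elementary identities $n\, Q_k(n-1) = Q_{k+1}(n)$ and $Q_k(n+1) - Q_k(n) = k\, Q_{k-1}(n)$ then allow one to rearrange $\sum_{z \in \Z_\epsilon} x_0(z)\,[Q^\epsilon(x, x_0^{z, z \pm 1}) - Q^\epsilon(x, x_0)]$ into a sum over $z \in \supp(x)$ matching $\tfrac{1}{2}\sum_{z \in x} \Delta^\epsilon_z Q^\epsilon(\cdot, x_0)(x)$. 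This is a manifestation of the self-duality of the simple random walk under the pairing $Q^\epsilon$: viewing $Q^\epsilon(x, x_0)/\epsilon^{\kappa n_x}$ as a count of position-preserving injections of particles of $x$ into particles of $x_0$, independent random walks on the $x_0$-side can be transferred to the $x$-side, producing the Laplacian in $x$.

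For part (2), I would apply $\Delta^\epsilon$ in the $x$-variable directly to the expansion in Definition \ref{def:V_fcn}. Moving a particle at $z \in \supp(x)$ to $z \pm 1$ affects both the sub-configurations $x_1 \cleq x$ summed over and the complementary product $\prod_{z' \in x\backslash x_1} \rho(z')$. Partitioning each $x_1$ according to whether the moving particle belongs to $x_1$ or to $x\backslash x_1$, one obtains a discrete Leibniz rule in which $\Delta^\epsilon$ distributes across the product $Q^\epsilon(x_1, x_0) \cdot \prod_{z' \in x\backslash x_1} \rho(z')$, with the cross-terms cancelling by symmetry of the jump kernel. This yields precisely \eqref{eq:Laplacian_V}. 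I expect this to be the main obstacle: the bookkeeping of the cross-terms, although purely combinatorial, requires care to ensure no residual mixed contributions remain after the cancellations.

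For parts (3) and (4), the essential observation is that $\cG^\epsilon_{BC}$ in \eqref{eq:GBC} is a sum of local operators at each $z \in \Z_\epsilon$ depending only on $x_0(z)$, while $Q^\epsilon(x, \cdot)$ factors over $\supp(x)$. At each $z$, the difference $Q^\epsilon(x, x_0^{z,\ell}) - Q^\epsilon(x, x_0)$ equals $Q^\epsilon(\xnot{z}, x_0)$ times the corresponding difference for $Q^\epsilon(\xonly{z}, \cdot)$, and the contributions from $z \notin \supp(x)$ vanish since $Q_0 \equiv 1$; this gives \eqref{eq:Q_is_local}. The same factorisation applies for part (4): by claim 1 of Lemma \ref{lem:V_fcn_properties_LLN}, $V^\epsilon(x, x_0; \rho)$ factors over $\supp(x)$, and the local drift $\rho(z)(\mu_\epsilon - \rho(z)/2)$ in $\widetilde{\cG}^\epsilon_{BC}$ attaches cleanly to the single factor at $z$, yielding \eqref{eq:V_is_local}.
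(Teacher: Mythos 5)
Your proposal is correct and follows essentially the same route as the paper's proof for all four parts. One small clarification on part (2): there are no cross-terms to cancel, and no role for the symmetry of the jump kernel at that step. Since $\Delta^\epsilon = \sum_{z\in x}\Delta^\epsilon_z$ is a sum of operators each moving a single particle, and for a fixed $x_1\cleq x$ the two factors $Q^\epsilon(x_1, x_0)$ and $\prod_{z'\in x\backslash x_1}\rho(z')$ depend on disjoint sets of particles of $x$, each $\Delta^\epsilon_z$ acts on exactly one of the two factors, so the Leibniz rule is already exact term by term. The genuine technical step, which your phrase ``partitioning each $x_1$'' gestures at, is the change of variables $x_1\leftarrow x_1^{z,z\pm1}$ that justifies commuting $\Delta^\epsilon_z$ with the sum over sub-configurations $x_1\cleq x$; once written out, one observes that if the moving particle lies in $x_1$ only $Q^\epsilon$ changes, and if it lies in $x\backslash x_1$ only the $\rho$-product changes.
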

	\begin{proof}
		We begin by proving \eqref{eq:GRW_is_laplacian}. By \eqref{eq:GRW}, we have 
		\[
		\cG^\epsilon_{RW}Q^\epsilon(x, \cdot)(x_0) = \sum_{z \in \supp(x)}n_{x_0}(z)\left\{Q^\epsilon(x, x_0^{z, z+1})+Q^\epsilon(x, x_0^{z, z-1})-2Q^\epsilon(x, x_0)\right\},
		\]
		where the updated configurations $x_0^{z, z\pm1}$ are defined in \eqref{def:updated_config}. We compute the contributions at $z$ and $z\pm 1$ by noticing that
		\[
		n_{x_0}(z)Q^\epsilon_{n_x(z)}(n_{x_0}(z)-1) = \epsilon^{-\kappa}Q^\epsilon_{n_x(z)+1}(n_{x_0}(z))=(n_{x_0}(z)-n_x(z))Q^\epsilon_{n_x(z)}(n_{x_0}(z)),
		\]
		and using that $n_{x_0}(z\pm1)+1 = (n_{x_0}(z\pm1)-n_x(z\pm1)+1) +n_x(z\pm1)$,  
		\[
		Q^\epsilon_{n_x(z\pm1)}(n_{x_0}(z\pm1)+1) = Q^\epsilon_{n_x(z\pm1)}(n_{x_0}(z\pm1))+n_x(z\pm1)Q^\epsilon_{n_x(z\pm1)-1}(n_{x_0}(z\pm1)).
		\]
		It follows that
		\begin{align*}
			&n_{x_0}(z)Q^\epsilon(x, x_0^{z, z\pm1}) 
			\\&= \Big[(n_{x_0}(z)-n_x(z))Q^\epsilon_{n_x(z)}(n_{x_0}(z))Q^\epsilon_{n_x(z\pm1)}(n_{x_0}(z\pm1))
			\\&+n_x(z\pm1)Q^\epsilon_{n_x(z)+1}(n_{x_0}(z))Q^\epsilon_{n_x(z\pm1)-1}(n_{x_0}(z\pm1))\Big] \prod_{z' \in \supp(x)\backslash\{z, z\pm 1\}} Q^\epsilon_{n_x(z')}(n_{x_0}(z')). 
		\end{align*}
		Using this computation, we obtain 
		\begin{align*}
			\notag\cG^\epsilon_{RW}Q^\epsilon(x, \cdot)(x_0)&=\frac{1}{2}\sum_{z \in \supp(x)}[-2n_x(z)Q^\epsilon(x, x_0)+n_x(z+1)Q^\epsilon(x, x_0^{z+1, z})+n_x(z-1)Q^\epsilon(x^{z-1, z}, x_0)] 
			\\&=\frac{1}{2}\sum_{z \in \supp(x)}n_x(z)[Q^\epsilon(x^{z, z+1}, x_0)+Q^\epsilon(x^{z, z-1}, x_0)-2Q^\epsilon(x, x_0)]
			\\&=\frac{1}{2}\sum_{z \in x} \Delta^\epsilon_z Q^\epsilon(\cdot, x_0)(x)=\Delta^\epsilon Q^\epsilon(\cdot, x_0)(x).
		\end{align*}
		This proves the first claim. To show \eqref{eq:Laplacian_V}, we first recall the notation \eqref{def:sum_prod_config_iteration}, and observe that for all $x\in \X_\epsilon$ and $x_1 \cleq x$, 
		\begin{align*}
		\sum_{z \in x}\Delta^\epsilon_z \left\{(-1)^{n_x-n_{x_1}}Q^\epsilon(\cdot, x_0)(x_1)\prod_{z' \in x\backslash x_1}\rho(z')\right\}&=(-1)^{n_x-n_{x_1}}\Bigg\{\sum_{z \in x_1}\left(\Delta^\epsilon_z Q^\epsilon(\cdot, x_0)(x_1)\right)\prod_{z' \in x\backslash x_1}\rho(z')
		\\&+\sum_{z \in x\backslash x_1}Q^\epsilon(x_1, x_0)\Delta^\epsilon_z \rho(z) \prod_{\substack{z' \in x\backslash x_1 \\ z'\not= z}}\rho(z')\Bigg\},
		\end{align*}
		where we have used that $n_x-n_{x_1}$ is invariant under spatial motion. Thus \eqref{eq:Laplacian_V} follows if we can show that the Laplacian $\Delta^\epsilon_z$ commutes with the summation over $x_1 \cleq x$. We have 
		\begin{align}
			\Delta^\epsilon_z \sum_{x_1 \cleq x}(-1)^{n_x-n_{x_1}}Q^\epsilon(\cdot, x_0)(x_1)\prod_{z' \in x\backslash x_1}\rho(z')&\notag=\sum_{x_1 \cleq x^{z, z+1}}(-1)^{n_x-n_{x_1}}Q^\epsilon(\cdot, x_0)(x_1)\prod_{z' \in x^{z, z+1}\backslash x_1}\rho(z')
			\\&\notag+\sum_{x_1 \cleq x^{z, z-1}}(-1)^{n_x-n_{x_1}}Q^\epsilon(\cdot, x_0)(x_1)\prod_{z' \in x^{z, z-1}\backslash x_1}\rho(z')
			\\&-2\sum_{x_1 \cleq x}(-1)^{n_x-n_{x_1}}Q^\epsilon(\cdot, x_0)(x_1)\prod_{z' \in x\backslash x_1}\rho(z').\label{eq:lapl_commutes}
		\end{align}
		We make the changes of variables $x_1\leftarrow x_1^{z, z+1}$ and $x_1 \leftarrow x_1^{z, z-1}$ in the first and second summations, respectively. This implies
		\begin{align*}
			&\sum_{x_1 \cleq x^{z, z\pm 1}}(-1)^{n_x-n_{x_1}}Q^\epsilon(\cdot, x_0)(x_1)\prod_{z' \in x^{z, z\pm1}\backslash x_1}\rho(z')
			\\&=\sum_{x_1 \cleq x}(-1)^{n_x-n_{x_1}}Q^\epsilon(\cdot, x_0)(x_1^{z, z\pm1})\prod_{z' \in x^{z, z\pm1}\backslash x_1^{z, z\pm1}}\rho(z').
		\end{align*}
		We note here that if $z=z_j^{x_1}$ for some $j \in \{1, \cdots, n_{x_1}\}$, then also $z=z^x_{k}$ for some $k \in \{1, \cdots, n_x\}$ since $x_1\cleq x$. It follows that $x^{z, z\pm1}\backslash x_1^{z, z\pm1}$ in this case. If on the other hand $z\not =z_j^{x_1}$ for any $j \in \{1, \cdots, n_{x_1}\}$, then $x_1^{z, z\pm 1}=x_1$. Thus going back to \eqref{eq:lapl_commutes}, we obtain 
		\begin{align*}
			\Delta^\epsilon_z \sum_{x_1 \cleq x}(-1)^{n_x-n_{x_1}}Q^\epsilon(\cdot, x_0)(x_1)\prod_{z' \in x\backslash x_1}\rho(z')=\sum_{x_1 \cleq x}\Delta^\epsilon_z\left\{(-1)^{n_x-n_{x_1}}Q^\epsilon(\cdot, x_0)(x_1)\prod_{z' \in x\backslash x_1}\rho(z')\right\}.
		\end{align*}
		Summing this expression over $z \in x$ using \eqref{def:sum_prod_config_iteration}, we recover \eqref{eq:Laplacian_V}, which concludes the proof of part two of the lemma. Equation \eqref{eq:Q_is_local} follows immediately from \eqref{eq:GBC} with $f(x) = Q^\epsilon(x, x_0)$ by noticing that 
		\[
		Q^\epsilon(x^{z, \ell}, x_0)-Q^\epsilon(x, x_0) = Q^\epsilon(\xnot{z}, x_0) \times\left[Q^\epsilon((\xonly{z})^{z, \ell}, x_0)-Q^\epsilon(\xonly{z}, x_0)\right],
		\]
		for all $\ell \in \N\cup \{-1\}$. It remains to show \eqref{eq:V_is_local}. By first applying \eqref{eq:Q_is_local}, and then interchanging the order of summation in \eqref{def:GBC_tilde} using the fact that $\cG^\epsilon_{BC}Q^\epsilon(\cdot, x_0)(\xonlytwo{x_1}{z})=0$ if $x_1(z)=0$, we obtain 
		\begin{align*}
			\widetilde{\cG}_{BC}^\epsilon V^\epsilon(x, \cdot; \rho)(x_0)&= \sum_{x_1\cleq x}(-1)^{n_x-n_{x_1}}\Bigg\{\Big(\sum_{z \in \supp(x_1)}Q^\epsilon(\xnottwo{x_1}{z}, x_0)\cG^\epsilon_{BC}Q^\epsilon(\cdot, x_0)(\xonlytwo{x_1}{z})\Big)
			\\&\quad\quad\quad\quad\quad\quad\quad \times \prod_{z' \in x\backslash x_1}\rho(z')+Q^\epsilon(x_1, x_0)\sum_{z \in x\backslash x_1} \rho_t^\epsilon(z)(\mu_\epsilon-\rho_t^\epsilon(z)/2)\prod_{\substack{z' \in x\backslash x_1 \\ z'\not=z}}\rho(z')\Bigg\}
			\\&=\sum_{z \in \supp(x)}\sum_{x_1\cleq x}(-1)^{n_x-n_{x_1}}\Bigg\{Q^\epsilon(\xnottwo{x_1}{z}, x_0)\cG^\epsilon_{BC}Q^\epsilon(\cdot, x_0)(\xonlytwo{x_1}{z}) \prod_{z' \in x\backslash x_1}\rho(z')
			\\&\quad\quad\quad\quad\quad\quad\quad+Q^\epsilon(x_1, x_0)n_{x\backslash x_1}(z) \rho_t^\epsilon(z)(\mu_\epsilon-\rho_t^\epsilon(z)/2)\prod_{\substack{z' \in x\backslash x_1 \\ z'\not=z}}\rho(z')\Bigg\}.
		\end{align*}
		We have the factorisation
		\[
		Q^\epsilon(x_1, x_0)=Q^\epsilon(\xnottwo{x_1}{z}, x_0)Q^\epsilon(\xonlytwo{x_1}{z}, x_0);
		\]
		so letting $x_2=\xnottwo{x_1}{z}$ and $x_3=\xonlytwo{x_1}{z}$, we get
		\begin{align*}
			\widetilde{\cG}_{BC}^\epsilon V^\epsilon(x, \cdot; \rho)(x_0)&=\sum_{z \in \supp(x)}\sum_{x_2\cleq \xnot{z}}(-1)^{n_{\xnot{z}}-n_{x_2}}Q^\epsilon(x_2, x_0)\prod_{z' \in \xnot{z}\backslash x_2}\rho_t^\epsilon(z')
			\\&\times \sum_{x_3 \cleq \xonly{z}}(-1)^{n_{\xonly{z}}-n_{x_3}}\Bigg\{\cG^\epsilon Q^\epsilon(\cdot, x_0)(x_3)\prod_{z'' \in \xonly{z}\backslash x_3}\rho^\epsilon_t(z'')
			\\&\quad\quad\quad\quad+n_{\xonly{z}\backslash x_3}(z)Q^\epsilon(x_3, x_0)\rho_t^\epsilon(z)(\mu_\epsilon-\rho_t^\epsilon(z)/2)\prod_{z''\in \xonly{z}\backslash x_3}\rho_t^\epsilon(z'')\Bigg\}
			\\&=\sum_{z \in \supp(x)}V^\epsilon(\xnot{z}, x_0; \rho)\widetilde{\cG}^\epsilon_{BC}V^\epsilon(\xonly{z}, \cdot;\rho)(x_0).
		\end{align*}
		This proves \eqref{eq:V_is_local} and the lemma. 
	\end{proof}
	
	We now apply the above properties to derive the semi-discrete heat equation \eqref{eq:v_fcn_eq} of Proposition \ref{prop:v_fcn_eq} for the $v$-functions. 

	\begin{proof}[Proof of Proposition \ref{prop:v_fcn_eq}]
		We begin by showing that 
		\begin{equation}\label{eq:prop_3_goal}
		\frac{d}{dt}v_t^\epsilon(x, \rho_t^\epsilon |\nu^\epsilon)=\frac{1}{2}\Delta^\epsilon v_t^\epsilon(x, \rho_t^\epsilon |\nu^\epsilon)+\sum_{z\in \supp(x)}\sum_{h=-n_x(z)}^1 c_h^\epsilon(x(z), \rho_t^\epsilon(z)) v_t^\epsilon(x^{(z, h)}, \rho_t^\epsilon|\nu^\epsilon),
		\end{equation}
		for all $x \in \X_\epsilon$, $t\geq 0$ and $\epsilon \in (0, 1)$. Recalling Definitions \ref{def:V_fcn} and \ref{def:scaled_v_fcns}, we observe that 
		\begin{align*}
		v_t^\epsilon(x, \rho_t^\epsilon|\nu^\epsilon) &= \E\left[V^\epsilon(x, X_t^\epsilon;\rho_t^\epsilon)\right]
		\\&= \E\left[\sum_{x_1\cleq x}(-1)^{n_x- n_{x_1}}Q^\epsilon(x_1, X^\epsilon_t)\prod_{z \in x\backslash x_1}\rho_t^\epsilon(z)\right]
		\\&= \sum_{x_1\cleq x}(-1)^{n_x- n_{x_1}} \E\left[Q^\epsilon(x_1, X^\epsilon_t)\right]\prod_{z \in x\backslash x_1}\rho_t^\epsilon(z).
		\end{align*}
		Thus, by the product rule
		\begin{align}
			\frac{d}{dt}v_t^\epsilon(x, \rho_t^\epsilon |\nu^\epsilon) &\notag= \sum_{x_1\cleq x}(-1)^{n_x- n_{x_1}} \Bigg\{ \left(\prod_{z \in x\backslash x_1}\rho_t^\epsilon(z)\right)\frac{d}{dt} \E\left[Q^\epsilon(x_1, X^\epsilon_t)\right]
			\\&\quad\quad\quad\quad\quad\quad\quad\quad\quad\quad+\E\left[Q^\epsilon(x_1, X^\epsilon_t)\right]\sum_{z \in x\backslash x_1}\partial_t\rho_t^\epsilon(z)\prod_{\substack{z' \in x\backslash x_1 \\ z'\not=z}}\rho_t^\epsilon(z')\Bigg\}.\label{eq:v_fcn_lin_expect}
		\end{align}
		Since $\rX^\epsilon$ is Markov, we have that for each $x \in \X_\epsilon$, the process
		\[
		Q^\epsilon(x, X_t^\epsilon)-Q^\epsilon(x, X_0^\epsilon)-\int_0^t \cG^\epsilon Q^\epsilon(x, X_s^\epsilon)ds, \quad t\geq 0,
		\]
		is  a martingale, where $\cG^\epsilon=\cG^\epsilon_{RW}+\cG^\epsilon_{BC}$, defined in \eqref{def:slbp_gen}, is the generator of $\rX^\epsilon$. Taking expectations and then differentiating in time, we get 
		\[
		\frac{d}{dt}\E\left[Q^\epsilon(x, X_t^\epsilon)\right] = \E\left[\cG^\epsilon_{RW}Q^\epsilon(x, X_t^\epsilon)\right] + \E\left[\cG^\epsilon_{BC}Q^\epsilon(x, X_t^\epsilon)\right].
		\]
		Moreover, by \eqref{eq:interm_fkpp_diff_form}, we have $\partial_t \rho_t^\epsilon(z) = \frac{1}{2}\Delta^\epsilon_z \rho_t^\epsilon(z)+\rho_t^\epsilon(z)(\mu_\epsilon-\rho_t^\epsilon(z)/2)$. Thus we can decompose the expression \eqref{eq:v_fcn_lin_expect} as 
		\begin{equation}\label{eq:dv_dt_decomp}
		\frac{d}{dt}v_t^\epsilon(x, \rho_t^\epsilon |\nu^\epsilon) = \E\left[\widetilde{\cG}_{RW}^\epsilon V^\epsilon(x, X^\epsilon_t; \rho_t^\epsilon)\right]+\E\left[\widetilde{\cG}_{BC}^\epsilon V^\epsilon(x, X^\epsilon_t; \rho_t^\epsilon)\right].
		\end{equation}
		where for any $x, x_0 \in \X_\epsilon$ and $\rho:\Z_\epsilon \to \R$,
		\begin{align*}
			\widetilde{\cG}_{RW}^\epsilon V^\epsilon(x, x_0; \rho)&\coloneqq \sum_{x_1\cleq x}(-1)^{n_{x}-n_{x_1}}\Bigg\{\Big(\cG_{RW}^\epsilon Q^\epsilon(x_1, x_0)\Big)\prod_{z' \in x\backslash x_1}\rho(z')
			\\&\quad\quad\quad\quad\quad\quad\quad\quad\quad\quad\quad\quad +Q^\epsilon(x_1, x_0)\sum_{z \in x\backslash x_1} \frac{1}{2}\Delta^\epsilon_z \rho(z)\prod_{\substack{z' \in x\backslash x_1 \\ z'\not=z}}\rho(z')\Bigg\},
		\end{align*}
		and $\widetilde{\cG}_{BC}^\epsilon V^\epsilon(x, x_0; \rho)$ is defined in \eqref{def:GBC_tilde}. We compute these two expressions in turn. By part one of Lemma \ref{lem:localisation}, we see that 
		\[
		\cG_{RW}^\epsilon Q^\epsilon(x_1, \cdot)(x_0)=\frac{1}{2}\Delta^\epsilon Q^\epsilon(\cdot, x_0)(x_1)= \frac{1}{2}\sum_{z \in x_1}\Delta^\epsilon_z Q^\epsilon(\cdot, x_0)(x_1),
		\]
		where $\Delta^\epsilon$ and $\Delta^\epsilon_z$ are defined above in \eqref{def:disc_Laplacian_config}. Thus, using part two of Lemma \ref{lem:localisation}, we obtain 
		\[
		\widetilde{\cG}_{RW}^\epsilon V^\epsilon(x, \cdot\;; \rho_t^\epsilon)(x_0)=\frac{1}{2}\Delta^\epsilon V^\epsilon(\cdot, x_0;\rho_t^\epsilon)(x) = \frac{1}{2}\Delta^\epsilon V^\epsilon(\cdot, x_0;\rho_t^\epsilon)(x),
		\]
		and 
		\begin{equation}\label{eq:exp_G_RW}
		\E\left[\widetilde{\cG}_{RW}^\epsilon V^\epsilon(x, X^\epsilon_t; \rho_t^\epsilon)\right] = \frac{1}{2}\Delta^\epsilon v_t^\epsilon(x, \rho_t^\epsilon |\nu^\epsilon) =\frac{1}{2}\sum_{z \in x} \Delta^\epsilon_z v_t^\epsilon(x, \rho_t^\epsilon |\nu^\epsilon).
		\end{equation}
		The contribution of the birth-competition term of \eqref{eq:dv_dt_decomp} is given in the following lemma.
		\begin{lem}\label{lem:GBC_contrib}
			There exist coefficients $c_h^\epsilon(m, u) \in \R$, for all $m \in \{1, \cdots, n\}$,  $u \in \R$, $h \in \{-m, \cdots, 1\}$, $\epsilon\in (0, 1)$ such that 
			\begin{equation}\label{eq:exp_G_BC}
				\E\left[\widetilde{\cG}_{BC}^\epsilon V^\epsilon(x, X^\epsilon_t; \rho_t^\epsilon)\right]=\sum_{z\in \supp(x)}\sum_{h=-n_x(z)}^1 c_h^\epsilon(n_x(z), \rho_t^\epsilon(z)) v_t^\epsilon(x^{(z, h)}, \rho_t^\epsilon|\nu^\epsilon),
			\end{equation}
			for all $x \in \X_\epsilon$, $t\geq 0$, and $\epsilon \in (0, 1)$. Furthermore, if we fix $n \in \N$, there exists a constant $c=c(n)>0$ independent of $\epsilon$ such that
			\[
				|c^\epsilon_h(m, u)|\leq c \max(1, |u|^{m+1}), \quad m \in \{1, \cdots, n\}, \quad u \in \R, \quad h \in \{-m, \cdots, 1\}, \quad \epsilon\in (0, 1).
			\]
			In the particular case where $m=2$ and $h=-2$ and the offspring distribution $\rP$ has a moment of order $p\in[1, 2]$, there also exists $c>0$ such that 
			\[
			|c_{-2}^\epsilon(2, u)| \leq c \epsilon^\kappa\max(1, |u|^3)+o(\epsilon^{(p-1)\kappa})|u|,
			\]
			as $\epsilon \searrow 0$. For $m=2$ and $h=-1$, we have
			\[
			c_{-1}^\epsilon(2, u)=(2\mu_\epsilon+1)\epsilon^\kappa+o(\epsilon^{(p-1)\kappa}),
			\]
			as $\epsilon \searrow 0$. Finally, if $m=1$ and $h\leq -1$, we have $c_h^\epsilon(1, u)=0$.
		\end{lem}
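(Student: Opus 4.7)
The plan is to exploit the locality identity \eqref{eq:V_is_local} from Lemma \ref{lem:localisation} to reduce the computation to single-site test configurations, and then perform an explicit polynomial calculation in the local occupation variable followed by a change of basis from monomials to $V$-functions. By \eqref{eq:V_is_local} applied with $x_0 = X_t^\epsilon$ and $\rho = \rho_t^\epsilon$, we have
\[
\widetilde{\cG}^\epsilon_{BC} V^\epsilon(x, X_t^\epsilon;\rho_t^\epsilon) = \sum_{z\in \supp(x)} V^\epsilon(\xnot{z}, X_t^\epsilon;\rho_t^\epsilon)\, \widetilde{\cG}^\epsilon_{BC} V^\epsilon(\xonly{z}, \cdot;\rho_t^\epsilon)(X_t^\epsilon).
\]
So it suffices to prove the identity at each site $z \in \supp(x)$, where the test configuration is $\xonly{z}$ with $m = n_x(z)$ particles; multiplying by $V^\epsilon(\xnot{z}, X_t^\epsilon;\rho_t^\epsilon)$ and taking expectation then reconstitutes $v^\epsilon_t(x^{(z,h)}, \rho_t^\epsilon|\nu^\epsilon)$ via the first claim of Lemma \ref{lem:V_fcn_properties_LLN}, provided that the single-site modification yields a linear combination of single-site $V^\epsilon_{m+h}(\cdot, \rho_t^\epsilon(z))$.

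The main step is therefore to compute, for each $m \in \{1,\ldots,n\}$ and writing $n = n_{x_0}(z)$, $\rho = \rho_t^\epsilon(z)$, the quantity
\[
\widetilde{\cG}^\epsilon_{BC} V^\epsilon(\xonly{z}, x_0;\rho_t^\epsilon)
= \sum_{k=0}^m (-1)^{m-k}\binom{m}{k}\Big[\cG^\epsilon_{BC}Q^\epsilon_k(n)\cdot \rho^{m-k} + (m-k)Q^\epsilon_k(n)\rho^{m-k}(\mu_\epsilon-\rho/2)\Big].
\]
Using the Vandermonde--Chu identity $Q_k(n+\ell) = \sum_{j=0}^k \binom{k}{j} Q_j(n) Q_{k-j}(\ell)$ together with the elementary relation $Q_k(n-1)-Q_k(n)=-kQ_{k-1}(n-1)$, and the explicit rates $q_+^\epsilon(x_0(z))= \epsilon^{-\kappa} x_0(z)$, $q_-^\epsilon(x_0(z)) = \epsilon^\kappa Q_2(n)/2$, one writes $\cG^\epsilon_{BC}Q^\epsilon_k(n)$ as a polynomial in $n$ of degree $k+1$ whose coefficients involve the truncated moments $\mu^{(j)}_\epsilon := \sum_{\ell \geq 1}\ell Q_{j-1}(\ell) p_\ell^\epsilon$, all bounded uniformly in $\epsilon$ under the finite-mean assumption (and the truncation). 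Collecting terms, the total expression is a polynomial in $n$ of degree at most $m+1$, with coefficients that are polynomials in $\rho$ of total degree at most $m+1$. Since $\{V^\epsilon_{r}(\,\cdot\,, \rho): r = 0, 1, \dots, m+1\}$ spans the same space of polynomials in $n$ of degree at most $m+1$ (an immediate consequence of the change of basis discussed in the proof of part~3 of Lemma~\ref{lem:V_fcn_properties_LLN}), we obtain uniquely determined coefficients $c_h^\epsilon(m, \rho)$ with $h \in \{-m,\ldots,1\}$, the index $h$ corresponding to degree $r = m+h$. Multiplying by $V^\epsilon(\xnot{z}, X_t^\epsilon;\rho_t^\epsilon)$, taking expectation, and using part~1 of Lemma \ref{lem:V_fcn_properties_LLN} yields \eqref{eq:exp_G_BC}.

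The general bound $|c^\epsilon_h(m,u)|\leq c\max(1,|u|^{m+1})$ then follows from the fact that the coefficients in both the Vandermonde expansion and the change of basis are uniformly bounded in $\epsilon$ (the truncated moments are uniformly bounded in $\epsilon$ when $\rP$ has finite mean), and that the resulting polynomial in $\rho$ has degree at most $m+1$. The case $m=1$ with $h\leq -1$ is immediate: the $V$-function $V^\epsilon_{\epsilon^\kappa}(\,\cdot\,, \rho)$ is linear in $n$, the competition contribution is $q_-^\epsilon(x_0(z))[Q^\epsilon_1(n-1)-Q^\epsilon_1(n)] = -\epsilon^\kappa Q_2(n)/2 \cdot \epsilon^\kappa = -\epsilon^{2\kappa}Q_2(n)/2$, which when combined with the $\mu_\epsilon-\rho/2$ term re-expresses solely in terms of $V^\epsilon_{\epsilon^\kappa}$ and $V^\epsilon_{2\epsilon^\kappa}$ (i.e.\ $h \in \{0,1\}$), so $c_h^\epsilon(1,u)=0$ for $h\leq -1$.

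The hard part — and the step requiring the most care — is the sharp $\epsilon^\kappa$-bookkeeping for $m=2$ in the cases $h=-1$ and $h=-2$, in particular the dependence on the moment exponent $p\in[1,2]$. Here one must track the exact leading order in $\epsilon^\kappa$ coming from three sources: the intrinsic factor $\epsilon^\kappa$ in the competition rate $q^\epsilon_-(x_0(z))$, the factor $\epsilon^\kappa$ in $[Q^\epsilon_k(n-1)-Q^\epsilon_k(n)]$ via the identity above, and the contribution from the tail-moment quantities $\sum_\ell \ell^{j} p_\ell^\epsilon$ for $j \geq 2$, which are $o(\epsilon^{(p-1)\kappa}\cdot \epsilon^{-\kappa(j-1)})$ because of the truncation at $L(\epsilon) = o(\epsilon^{-\kappa})$ combined with the finite $p$-th moment assumption (this gives $\sum_\ell \ell^j p_\ell^\epsilon \leq L(\epsilon)^{j-p}\sum_\ell \ell^p p_\ell^\epsilon = o(\epsilon^{-(j-p)\kappa})$). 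After carefully isolating in $c_{-1}^\epsilon(2, \rho)$ the two contributions that survive to order $\epsilon^\kappa$ — namely the linear-in-$\rho$ birth contribution $2\mu_\epsilon \epsilon^\kappa$ coming from the $\binom{m}{k}(m-k)\rho^{m-k}\mu_\epsilon Q_k^\epsilon(n)$ term with $m=2, k=1$, and the competition contribution $\epsilon^\kappa$ coming from $(-k)\epsilon^\kappa Q_{k-1}(n-1)\cdot q_-^\epsilon/\epsilon^{-\kappa}$ — one obtains $c_{-1}^\epsilon(2,u) = (2\mu_\epsilon + 1)\epsilon^\kappa + o(\epsilon^{(p-1)\kappa})$, where the remainder collects all higher-moment terms. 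The bound on $c_{-2}^\epsilon(2,u)$ follows analogously, keeping the degree-3 part of the polynomial in $\rho$ (scaled by $\epsilon^\kappa$) and the linear-in-$u$ correction from the truncation error. Provided the tail moment estimate $\sum_\ell \ell^{j} p_\ell^\epsilon = o(\epsilon^{-(j-p)\kappa})$ is invoked cleanly, the claimed bounds follow.
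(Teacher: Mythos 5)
Your overall strategy matches the paper's: localize via \eqref{eq:V_is_local}, expand $\widetilde{\cG}^\epsilon_{BC}V^\epsilon(\xonly{z},\cdot;\rho_t^\epsilon)$ into a polynomial in the local occupation number, and then change basis from monomials back to single-site $V$-functions. The Vandermonde--Chu expansion plays the same role as the paper's telescoping-plus-binomial-expansion, and the $m=1$ claim and the $m=2$ sharp bookkeeping are correctly outlined.

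There is, however, a genuine gap in your argument for the general bound $|c^\epsilon_h(m,u)|\leq c\max(1,|u|^{m+1})$. You claim the truncated moments $\mu^{(j)}_\epsilon=\sum_{\ell\geq 1}\ell\, Q_{j-1}(\ell)\,p_\ell^\epsilon$ are \emph{all bounded uniformly in $\epsilon$} under the finite-mean assumption and the truncation. That is false when $\kappa>0$: with only a finite first moment one only has
\[
\sum_{\ell\geq 1}\ell^{j}p_\ell^\epsilon \;\leq\; L(\epsilon)^{j-1}\sum_{\ell\geq 1}\ell\, p_\ell^\epsilon \;=\; o\!\left(\epsilon^{-(j-1)\kappa}\right),
\]
which is unbounded for $j\geq 2$. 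What \emph{is} uniformly bounded is the $\epsilon^\kappa$-compensated quantity that actually appears, because $\cG^\epsilon_{BC}Q^\epsilon_k$ carries the prefactor $\epsilon^{\kappa k}$ through $Q^\epsilon_k(n)=\epsilon^{\kappa k}Q_k(n)$. Concretely, after Vandermonde and interchanging the $\ell$-sum, the coefficient in front of $Q_{k-i}(n)$-type terms contains
\[
\epsilon^{\kappa i}\sum_{\ell\geq 1}\Bigl(\sum_{j\geq\ell}p_j^\epsilon\Bigr)Q_i(\ell-1)
\;=\;\frac{\epsilon^{\kappa i}}{i+1}\sum_{j\geq 1}Q_{i+1}(j)\,p_j^\epsilon
\;\leq\;\frac{1}{i+1}\bigl(\epsilon^\kappa L(\epsilon)\bigr)^{i}\mu_\epsilon \;=\;o(1),\quad i\geq 1,
\]
and this compensation is exactly what the paper exploits. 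Your write-up drops the $\epsilon^{\kappa i}$, and without it the step does fail. The fix is small but essential: keep the $\epsilon^\kappa$ factors from the definition of $Q^\epsilon_k$ attached to the moment sums throughout the expansion, and then conclude the coefficients are uniformly bounded. Your $m=2$ analysis already tracks $\epsilon^\kappa$ carefully; you need the same care in the general-$m$ bound.
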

		\noindent We first conclude the proof of Proposition \ref{prop:v_fcn_eq}, and then we prove Lemma \ref{lem:GBC_contrib}. Plugging \eqref{eq:exp_G_RW} and \eqref{eq:exp_G_BC} in \eqref{eq:dv_dt_decomp}, we obtain \eqref{eq:prop_3_goal}.
		To obtain the integrated form \eqref{eq:v_fcn_eq} of this equation which we claimed in the statement of Proposition \ref{prop:v_fcn_eq}, it suffices to compute the derivative of the map
		\[
		(0, t)\times \X_\epsilon \to \R, \quad (s, x)\mapsto{\sum_{\substack{x_1\in \X_\epsilon \\ n_{x_1}=n_x}} \G^\epsilon_{t-s}(x, x_1)v_s^\epsilon(x_1, \rho_s^\epsilon|\nu^\epsilon)}
		\]
		with respect to $s$, and then to integrate the result over $s \in (0, t)$. We have 
		\begin{align*}
			\frac{d}{ds} \sum_{x_1} \G^\epsilon_{t-s}(x, x_1)v_s^\epsilon(x_1, \rho_s^\epsilon|\nu^\epsilon) &= -\frac{1}{2}\sum_{x_1} \Delta^\epsilon \G^\epsilon_{t-s}(x, x_1)v_s^\epsilon(x_1, \rho_s^\epsilon|\nu^\epsilon)
			\\&+\sum_{x_1} \G^\epsilon_{t-s}(x, x_1)\frac{1}{2}\Delta^\epsilon v_s^\epsilon(x_1, \rho_s^\epsilon|\nu^\epsilon)
			\\&+ \sum_{x_1} \G^\epsilon_{t-s}(x, x_1)\sum_{z \in \supp(x_1)}\sum_{h=-n_{x_1}(z)}^1 c_h^\epsilon(x_1(z), \rho_s^\epsilon(z))v_s^\epsilon(x_1^{(z, h)}, \rho_s^\epsilon|\nu^\epsilon).
		\end{align*}
		By a simple change of variable, the first two terms on the right-hand side cancel out. Integrating over $s \in (0, t)$ establishes \eqref{eq:v_fcn_eq} and the proposition.
	\end{proof}
	
	\noindent It only remains to prove Lemma \ref{lem:GBC_contrib}.
	
	\begin{proof}[Proof of Lemma \ref{lem:GBC_contrib}]
		By \eqref{eq:V_is_local} in Lemma \ref{lem:localisation}, we have
		\begin{equation}\label{eq:reduce_pb_via_localisation}
		\widetilde{\cG}^\epsilon_{BC}V^\epsilon(x, x_0; \rho_t^\epsilon)=\sum_{z \in \supp(x)}V^\epsilon(\xnot{z}, x_0; \rho_t^\epsilon)\widetilde{\cG}^\epsilon_{BC}V^\epsilon(\xonly{z}, x_0;\rho_t^\epsilon),
		\end{equation}
		for all $x, x_0 \in \X_\epsilon$. To show \eqref{eq:exp_G_BC}, we will compute explicitly the term $\widetilde{\cG}^\epsilon_{BC}V^\epsilon(\xonly{z}, x_0;\rho_t^\epsilon)$ on the right-hand side of the last displayed expression. It will be easier to work with generic notation which we introduce now. Suppose that $n_x(z)=n_{\xonly{z}}=m \in \N_0$, $n_{x_0}(z)= q \in \N_0$ and $\rho_t^\epsilon(z)=u \in \R$, and recall from \eqref{def:tilde_V_eps} the definition of $\widetilde{V}_m(q, u)$. We introduce 
		\begin{align}
			\widetilde{\cG}_{BC}^\epsilon\widetilde{V}^\epsilon_m(q, u) &\notag\coloneqq \sum_{k=0}^{m}{m \choose k}(-u)^{m-k} \Big\{ \cG_{BC}^\epsilon Q^\epsilon_k(q)+Q^\epsilon_k(q)(m-k)u^{m-k-1}u(\mu_\epsilon-u/2)\Big\}
			\\&\label{eq:gothic_G_V}= \sum_{k=0}^{m}{m \choose k}(-u)^{m-k} \cG_{BC}^\epsilon Q^\epsilon_k(q)-m \widetilde{V}^\epsilon_{m-1}(q, u)u(\mu_\epsilon-u/2),
		\end{align}
		where 
		\[
		\cG_{BC}^\epsilon Q^\epsilon_k(q)\coloneqq \epsilon^{k\kappa}q\sum_{\ell\geq 1}p_\ell^\epsilon\left[Q_k(q+\ell)-Q_k(q)\right]-k\epsilon^{(k+1)\kappa} \frac{q(q-1)}{2}Q_{k-1}(q-1).
		\]
		Then 
		\begin{equation}\label{eq:G_eps_simple_notation}
		\widetilde{\cG}^\epsilon_{BC}V^\epsilon(\xonly{z}, x_0;\rho_t^\epsilon)=\widetilde{\cG}_{BC}^\epsilon\widetilde{V}^\epsilon_m(q, u),
		\end{equation}
		and we aim to compute the right-hand side of \eqref{eq:gothic_G_V} explicitly. We begin by computing the contribution of $\cG_{BC}^\epsilon Q^\epsilon_k(q)$ to \eqref{eq:gothic_G_V}, starting with the birth part. By telescoping and interchanging the order of summation, we compute 
		\begin{align*}
			&q\sum_{\ell\geq 1}p_\ell^\epsilon\left[Q_k(q+\ell)-Q_k(q)\right]
			\\&=q \sum_{\ell=1}^{L(\epsilon)}\left(\sum_{j=\ell}^{L(\epsilon)}p_j^\epsilon\right)[Q_k(q+\ell)-Q_k(q+\ell-1)]
			\\&= Q_1(q) \sum_{\ell=1}^{L(\epsilon)}\left(\sum_{j=\ell}^{L(\epsilon)}p_j^\epsilon\right) kQ_{k-1}(q+\ell-1),
		\end{align*}
		where in the last step, we have written $q=Q_1(q)$. Applying the binomial expansion for falling factorials, we have
		\[
		Q_{k-1}(q+\ell-1) = \sum_{i=0}^{k-1}{k-1 \choose i}Q_i(\ell-1)Q_{k-1-i}(q).
		\]
		Thus 
		\[
		q\sum_{\ell\geq 1}p_\ell^\epsilon\left[Q_k(q+\ell)-Q_k(q)\right] = k \sum_{\ell=1}^{L(\epsilon)}\left(\sum_{j=\ell}^{L(\epsilon)}p_j^\epsilon\right) \sum_{i=0}^{k-1}{k-1 \choose i}Q_i(\ell-1)Q_1(q)Q_{k-1-i}(q).
		\]
		Using \eqref{eq:product_Q}, we write
		\[
		Q_1(q)Q_{k-1-i}(q) = \sum_{r=0}^{\min(1, k-1-i)} {1 \choose r}Q_r(k-1)Q_{k-i-r}(q).
		\]
		Then
		\begin{align*}
			q\sum_{\ell\geq 1}p_\ell^\epsilon\left[Q_k(q+\ell)-Q_k(q)\right] &= k \sum_{\ell=1}^{L(\epsilon)}\left(\sum_{j=\ell}^{L(\epsilon)}p_j^\epsilon\right) \sum_{i=0}^{k-1}{k-1 \choose i}Q_i(\ell-1)
			\\&\quad\quad\quad\quad\quad\quad\times\sum_{r=0}^{\min(1, k-1-i)} Q_r(k-1)Q_{k-i-r}(q).
		\end{align*}
		Of course, the inner-most summation is just ${Q_{k-i}(q)+(k-1)Q_{k-i-1}(q)}$, but keeping the summation notation will simplify the presentation later in the proof. We turn to the competition term of $\cG_{BC}^\epsilon Q^\epsilon_k(q)$, still with the end goal of computing \eqref{eq:gothic_G_V}. Using \eqref{eq:product_Q} and then $q-1= Q_1(q-1)$, we obtain
		\begin{align*}
			k \frac{q(q-1)}{2}Q_{k-1}(q-1)&=\frac{k}{2} q \sum_{j=0}^{\min(1, k-1)}{1 \choose j}Q_j(k-1)Q_{k-j}(q-1)
			\\&=\frac{k}{2} \sum_{j=0}^{\min(1, k-1)}Q_j(k-1)Q_{k+1-j}(q).
		\end{align*}
		We apply $\cL$, the change-of-basis map introduced in the proof of Lemma \ref{lem:V_fcn_properties_LLN}, to the birth and competition components of \eqref{eq:gothic_G_V} just computed. In both cases, we use that
		\[
		\cL(Q^\epsilon_k(q)) = (\epsilon^\kappa q)^k = \sum_{a=0}^k {k \choose a}u^a(\epsilon^\kappa q-u)^{k-a},
		\] 
		for all $k \in \N$. For births, with $m \in \N$ and $q \in \N_0$, we get
		\begin{align*}
			B&\coloneqq \cL\Big( \sum_{k=0}^m {m \choose k} (-u)^{m-k}k \sum_{\ell=1}^{L(\epsilon)}\left(\sum_{j=\ell}^{L(\epsilon)}p_j^\epsilon\right) \sum_{i=0}^{k-1}{k-1 \choose i}Q_i^\epsilon(\ell-1)\sum_{r=0}^{\min(1, k-1-i)} Q_r^\epsilon(k-1)Q_{k-i-r}^\epsilon(q)\Big)
			\\&= \sum_{k=0}^m {m \choose k} (-u)^{m-k}k \sum_{\ell=1}^{L(\epsilon)}\left(\sum_{j=\ell}^{L(\epsilon)}p_j^\epsilon\right) \sum_{i=0}^{k-1}{k-1 \choose i}Q_i^\epsilon(\ell-1)
			\\&\quad\quad\quad\quad\quad\quad\quad\quad\quad\quad\quad\quad\times\sum_{r=0}^{\min(1, k-1-i)} Q_r^\epsilon(k-1)\sum_{a=0}^{k-i-r} {k-i-r \choose a}u^a(\epsilon^\kappa q - u)^{k-i-r-a}
			\\& = \sum_{h=-m}^1 c_+^\epsilon(h; m, u)(\epsilon^\kappa q-u)^{m+h},
		\end{align*}
		where 
		\begin{align*}
			c_+^\epsilon(h; m, u) &\coloneqq \sum_{k=0}^m {m \choose k} (-u)^{m-k}k \sum_{\ell=1}^{L(\epsilon)}\left(\sum_{j=\ell}^{L(\epsilon)}p_j^\epsilon\right) \sum_{i=0}^{k-1}{k-1 \choose i}Q_i^\epsilon(\ell-1)
			\\&\quad\quad\quad\quad\quad\quad\quad\quad\quad\quad\quad\quad\times\sum_{r=0}^{\min(1, k-1-i)} {1 \choose r}Q_r^\epsilon(k-1)\sum_{a=0}^{k-i-r} {k-i-r \choose a}u^a\delta_{m+h, k-i-r-a},
		\end{align*}
		with $\delta_{i, j}$ denoting the Kronecker delta. We note that the summation over $\ell$ remains finite uniformly in $\epsilon$ since $\epsilon^\kappa L(\epsilon)=o(1)$ as $\epsilon \searrow 0$ by assumption, and the offspring distribution has finite mean. For the competition, for all $m \in \N$ and $q \in \N_0$, we have 
		\begin{align*}
			C&\coloneqq \cL\left(\sum_{k=0}^m {m \choose k} (-u)^{m-k}\frac{k}{2} \sum_{j=0}^{\min(1, k-1)}{1 \choose j}Q_j^\epsilon(k-1)Q_{k+1-j}^\epsilon(q)\right)
			\\&= \sum_{k=0}^m {m \choose k} (-u)^{m-k}\frac{k}{2} \sum_{j=0}^{\min(1, k-1)}{1 \choose j}Q_j^\epsilon(k-1)\sum_{a=0}^{k+1-j}{k+1-j \choose a}u^a(\epsilon^\kappa q-u)^{k+1-j-a}
			\\& = \sum_{h=0}^1 c_-^\epsilon(h; m, u)(\epsilon^\kappa q-u)^{m+h},
		\end{align*}
		where
		\begin{align*}
			c_-^\epsilon(h; m, u) &= \sum_{k=0}^m {m \choose k} (-u)^{m-k}\frac{k}{2} \sum_{j=0}^{\min(1, k-1)}Q_j^\epsilon(k-1)\sum_{a=0}^{k+1-j}{k+1-j \choose a}u^a\delta_{m+h, k+1-j-a}.
		\end{align*}
		Recall that $\cL(\widetilde{V}^\epsilon_k(q, u)) = (\epsilon^\kappa q-u)^{k}$ for all $k, q \in \N_0$. Hence, applying the inverse map $\cL^{-1}$ to $B-C$, we calculate \eqref{eq:gothic_G_V}:
		\begin{equation}\label{eq:goth_V_expression}
			\widetilde{\cG}_{BC}^\epsilon \widetilde{V}^\epsilon_m(q, u) = \sum_{h=-m}^1c^\epsilon_{h}(m, u) \widetilde{V}_{m+h}^\epsilon(q, u),
		\end{equation}
		where
		\begin{equation}\label{def:BBGKY_coef}
			c^\epsilon_{h}(m, u) = c_+^\epsilon(h; m, u)-c_-^\epsilon(h; m, u)-\delta_{h, -1}m u(\mu_\epsilon-u/2).
		\end{equation}
		Combined with \eqref{eq:reduce_pb_via_localisation}, \eqref{eq:G_eps_simple_notation}, and point 1 of Lemma \ref{lem:V_fcn_properties_LLN}, this implies \eqref{eq:exp_G_BC}. It remains to verify the bounds claimed in the statement of Proposition \ref{prop:v_fcn_eq} on the coefficients $c_h^\epsilon$ defined in \eqref{def:BBGKY_coef}. Fix $n \in \N$. It is clear from \eqref{def:BBGKY_coef} that, for some $c=c(n)>0$, we have $|c^\epsilon_h(m, u)|\leq c \max(1, |u|^{m+1})$ for all $m \in \{1, \cdots, n\}$, $u\in \R$, and $h \in \{-m, \cdots, 1\}$. Suppose now that $m=2$ and $h=-2$. We compute the coefficients \eqref{def:BBGKY_coef} explicitly and get
		\[
		c^\epsilon_{-2}(2, u) =2\epsilon^\kappa \left[1+u\times \bigg(\mu_\epsilon+\sum_{\ell=1}^{L(\epsilon)}(\ell-1) \sum_{j=\ell}^{L(\epsilon)} p_j^\epsilon\bigg)+3u^3/2\right].
		\]
		Observe that by interchanging the order of summation,
		\[
		\sum_{\ell=1}^{L(\epsilon)}(\ell-1) \sum_{j=\ell}^{L(\epsilon)} p_j^\epsilon = \sum_{j=1}^{L(\epsilon)} p_j^\epsilon\sum_{\ell=1}^j (\ell-1) =\frac{1}{2}\sum_{j=1}^{L(\epsilon)}j(j-1)p_j^\epsilon.
		\]
		Hence for $p \in [1, 2]$, 
		\[
		\epsilon^\kappa \sum_{\ell=1}^{L(\epsilon)}(\ell-1) \sum_{j=\ell}^{L(\epsilon)} p_j^\epsilon \leq \epsilon^{(p-1)\kappa} (\epsilon^\kappa L(\epsilon))^{2-p} \sum_{j=1}^{L(\epsilon)} j^p p_j^\epsilon. 
		\]
		If the offspring distribution has a moment of order $p$, then $\sup_{\epsilon \in (0, 1)}\sum_{j=1}^{L(\epsilon)} j^p p_j^\epsilon<\infty$. Moreover, by assumption, we have $(\epsilon^\kappa L(\epsilon))^{2-p} = o(1)$ as $\epsilon \searrow 0$. So
		\[
		\epsilon^{\kappa(p-1)} (\epsilon^\kappa L(\epsilon))^{2-p} \sum_{j=1}^{L(\epsilon)} j^p p_j^\epsilon = o(\epsilon^{(p-1)\kappa}),
		\]
		and there exists $c>0$ such that 
		\[
		|c_{-2}^\epsilon(2, u)| \leq c \epsilon^\kappa\max(1, |u|^3)+o(\epsilon^{(p-1)\kappa})|u|,
		\]
		as $\epsilon \searrow 0$. Similarly, if $m=2$ and $h=-1$, we compute from \eqref{def:BBGKY_coef} that
		\[
		c_{-1}^\epsilon(2,u) = \epsilon^\kappa\left[2\mu_\epsilon+1+2\sum_{\ell=1}^{L(\epsilon)}(\ell-1) \sum_{j=\ell}^{L(\epsilon)} p_j^\epsilon\right]=(2\mu_\epsilon+1)\epsilon^\kappa+o(\epsilon^{(p-1)\kappa}).
		\]
		Finally, if $m=1$ and $h\leq-1$, it is immediate from \eqref{def:BBGKY_coef} that $c_h^\epsilon(1, u)=0$ for all $u$. 
	\end{proof}

	\section{Auxiliary lemmas}\label{append:aux_lems}
	
	In this appendix, we gather technical calculations on Green's function, convergence rates of certain Riemann sums, the preservation of the martingale property under limits and changes of variables, and stable processes. 
	
	\subsection{Estimates for the semi-discrete Green's function}\label{app:Green_fcn_props}
	
	In this section, we gather auxiliary technical calculations involving the Green's function $G^\epsilon_t(z_1, z_2)$ of the diffusively rescaled spatial motion on the  discrete circle $\Z_\epsilon$. Given $t\geq 0$ and $z_1, z_2\in \Z_\epsilon$, $G^\epsilon_t(z_1, z_2)$ is the probability that a particle performing a simple symmetric random walk on $\Z_\epsilon$ and located at $z_1$ at time $0$ is found at $z_2$ at time $\epsilon^{-2}t$. Thus, 
	\begin{equation}\label{eq:Green_eq}
		\begin{cases}
			\partial_t G^\epsilon_t(z_1, z_2) = \frac{1}{2}\Delta_{z_1}^\epsilon G^\epsilon_t(z_1, z_2), & z_1, z_2 \in \Z_\epsilon, \quad t>0,\\
			G_0^\epsilon(z_1, z_2) = \delta_{z_1, z_2}, & z_1, z_2 \in \Z_\epsilon,
		\end{cases}
	\end{equation}
	where we define $\Delta_z^\epsilon f(z) = \epsilon^{-2}(f(z+1)+f(z-1)-2f(z))$ if $f:\Z_\epsilon\to \R$ or $f:\Z\to \R$ by abuse of notation.
	We begin by proving some estimates on $G^\epsilon$. 
	
	\begin{lem}\emph{(Green function estimates)}\label{lem:Green_estimates}
		Fix a time horizon $T>0$. The following estimates hold.
		\begin{enumerate}
			\item[1.] There exists $C=C(T)>0$ such that 
			\begin{equation}\label{ineq:basic_heat_estimate}
			G^\epsilon_t(z_1, z_2) \leq C \min(1, \epsilon t^{-1/2}),
			\end{equation}
			for all $z_1, z_2 \in \Z_\epsilon$, $t \in (0, T]$, and $\epsilon \in (0, 1)$.
			\item[2.] There exists $C=C(T)>0$ such that 
			\begin{equation}\label{ineq:heat_time_holder_cts}
			\left|G_t^\epsilon(z_1, z_2)-G_s^\epsilon(z_1, z_2)\right| \leq C\epsilon\left[s^{-a-1/2}(t-s)^a +s^{-1/2}\Big((t-s)+(t-s)^{1/2}\Big)\right],
			\end{equation}
			for all $z_1, z_2 \in \Z_\epsilon$, $0< s \leq t\leq T$, $a \in [0, 1/2)$, and $\epsilon \in (0, 1)$.
			\item[3.] There exists a constant $C>0$ such that
			\begin{equation}\label{ineq:exp_control_G}
				\sup_{t\geq 0}\sup_{\epsilon\in (0, 1)}\sum_{z_2 \in \Z_\epsilon} G_t^\epsilon(z_1, z_2) e^{|z_2-z_1|/\max(t^{1/2}, \epsilon)}<C,
			\end{equation}
			for all $z_1 \in \Z_\epsilon$.
			\item[4.] There exists $C=C(T)>0$ such that 
			\begin{equation}\label{ineq:G_exp_bd}
			G^\epsilon_t(0, z)\leq C\min(1, \epsilon t^{-1/2})e^{-z\min(1, \epsilon t^{-1/2})},
			\end{equation}
			for all $t \in [0, T]$, $z \in \Z_\epsilon$, and $\epsilon \in (0, 1)$. 
		\end{enumerate}
	\end{lem}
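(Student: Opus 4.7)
The plan is to treat all four estimates on the same footing by exploiting the Fourier/spectral representation of the random walk on the finite cyclic group $\Z_\epsilon = \Z/K_\epsilon\Z$. Since the accelerated generator $\tfrac12 \epsilon^{-2}\Delta^\epsilon$ is diagonal in the discrete Fourier basis, one has
\[
G^\epsilon_t(z_1,z_2) \;=\; K_\epsilon^{-1}\sum_{k=0}^{K_\epsilon-1} e^{2\pi i k(z_2-z_1)/K_\epsilon}\,\exp\bigl(-2\epsilon^{-2}t\sin^2(\pi k/K_\epsilon)\bigr),
\]
and the elementary lower bound $\sin^2(\pi k/K_\epsilon) \gtrsim (k/K_\epsilon)^2$ for $k \leq K_\epsilon/2$ (with symmetry on the other half) reduces each claim to a Gaussian-type computation compared against a Riemann sum.

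For part 1, the bound $G^\epsilon_t \leq 1$ is trivial. Taking absolute values in the Fourier expansion and approximating by a Riemann sum gives $K_\epsilon^{-1}\sum_k \exp(-c\epsilon^{-2}t(k/K_\epsilon)^2) \lesssim K_\epsilon^{-1}\cdot K_\epsilon\int_0^{1/2}e^{-c\epsilon^{-2}tx^2}dx \lesssim \epsilon t^{-1/2}$, with the constant depending only on the time horizon $T$. For part 3, write the left-hand side as $\esub{z_1}{e^{\lambda|X^\epsilon_t-z_1|}}$ with $\lambda = 1/\max(t^{1/2},\epsilon)$, understanding $|\cdot|$ as the circular distance; split over the diffusive event $\{|X^\epsilon_t-z_1| \leq M\max(t^{1/2},\epsilon)/\epsilon\}$, on which the exponent is bounded, and its complement, on which part 1 together with a geometric summation along the circle gives an exponentially small contribution. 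Part 4 follows by combining parts 1 and 3 via a Chernoff-type bound: the inequality $G^\epsilon_t(z_1,z_2) \leq G^\epsilon_t(z_1,z_2)^{1/2}\bigl(G^\epsilon_t(z_1,z_2) e^{2\lambda|z_2-z_1|}\bigr)^{1/2}e^{-\lambda|z_2-z_1|}$ with $\lambda = \min(1,\epsilon t^{-1/2})$, applied together with the uniform bound from part 1 on the first factor and the uniform-in-$z_2$ bound from a pointwise version of the argument for part 3 on the second factor.

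For part 2, I would again appeal to the Fourier identity, writing $G_t^\epsilon(z_1,z_2) - G_s^\epsilon(z_1,z_2) = K_\epsilon^{-1}\sum_k e^{2\pi i k(z_2-z_1)/K_\epsilon}\bigl(e^{-ta_k} - e^{-sa_k}\bigr)$ with $a_k = 2\epsilon^{-2}\sin^2(\pi k/K_\epsilon)$. The elementary inequalities $|e^{-ta}-e^{-sa}| \leq e^{-sa}\min(1,(t-s)a)$ and $\min(1,x)^b \leq x^b$ for $b\in[0,1]$ yield, for any $a\in[0,1/2)$,
\[
|G^\epsilon_t - G^\epsilon_s| \;\lesssim\; (t-s)^a\,(2\epsilon^{-2})^a\,K_\epsilon^{-1}\sum_k e^{-sa_k}\,(\sin^2(\pi k/K_\epsilon))^a.
\]
A Laplace-type evaluation of the sum near $k=0$ (and symmetrically near $k=K_\epsilon$) produces $\lesssim K_\epsilon^{-1}\cdot K_\epsilon\cdot \epsilon^{2a}s^{-a-1/2}$, and hence the first term $C\epsilon s^{-a-1/2}(t-s)^a$. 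The auxiliary $(t-s) + (t-s)^{1/2}$ terms arise from the finite-size corrections to the Riemann sum approximation, specifically from bounding the contribution of the modes with $\sin^2(\pi k/K_\epsilon) \lesssim 1/s$ using $|1-e^{-(t-s)a_k}| \leq (t-s)a_k$ directly, and then absorbing a spare factor of $(t-s)^{1/2}$ via $(t-s) \leq T^{1/2}(t-s)^{1/2}$.

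The main obstacle is part 2: the precise extraction of the exponent $a$ must track exactly one factor of $\epsilon^{-2}$ through the $(t-s)a_k$ bound and convert the $\epsilon^{-2a}$ against a Riemann-sum-produced $\epsilon^{2a}$, while simultaneously keeping the auxiliary remainder in the form claimed and not losing extra powers of $\epsilon^{-1}$. The reason I expect a purely differential-equation argument (via $G_t - G_s = \tfrac12\int_s^t\Delta^\epsilon G_u\,du$ combined with $\|\Delta^\epsilon G_u\|_\infty \lesssim \epsilon^{-2}\cdot \epsilon u^{-1/2}$) to be insufficient is that it loses an $\epsilon^{-1}$ factor; the Fourier-side computation is needed to exploit the cancellation between $\Delta^\epsilon$ and $G$. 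Once part 2 is established with the correct constants, parts 1, 3, and 4 are routine.
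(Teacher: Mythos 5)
Your approach is Fourier-analytic, as is the paper's, but there is a structural difference that matters for the later parts. You work with the \emph{finite} discrete Fourier series on $\Z_\epsilon$, whose eigenvalues $a_k = 2\epsilon^{-2}\sin^2(\pi k/K_\epsilon)$ are real. The paper instead writes $G^\epsilon$ on $\Z_\epsilon$ as the $K_\epsilon$-periodisation of the kernel $\overline{G}^\epsilon$ on $\Z$, uses the Fourier integral $\overline{G}_t^\epsilon(z)=\tfrac{1}{2\pi}\oint_{\Gamma_1}\xi^{-z-1}e^{-\epsilon^{-2}t\psi(\xi)}d\xi$, and then \emph{shifts the contour} to radius $1+\delta$ with $\delta=e^{\operatorname{sign}(z)\min(1,\epsilon t^{-1/2})}-1$. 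That deformation is what produces parts~1, 2, and 4 in one stroke, since it puts the factor $e^{-|z|\min(1,\epsilon t^{-1/2})}$ out front while the remaining angular integral still contributes $\min(1,\epsilon t^{-1/2})$. On the finite group you cannot deform the contour (the frequencies $k$ are integers), and this is the root of the gaps below.

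For part~1 your Riemann-sum argument is fine. For part~2 your route is, if anything, cleaner than the paper's: because $a_k$ is real there is no imaginary part of $\psi$, no $A_u+iB_u$ splitting, and consequently no reason for the auxiliary $(t-s)+(t-s)^{1/2}$ terms at all. Your inequality chain $|e^{-ta_k}-e^{-sa_k}|\leq e^{-sa_k}\min(1,(t-s)a_k)\leq e^{-sa_k}((t-s)a_k)^a$ followed by a Riemann-sum/Laplace evaluation works. However there is a power-counting slip: the Riemann sum $K_\epsilon^{-1}\sum_k e^{-sa_k}\bigl(\sin^2(\pi k/K_\epsilon)\bigr)^a$ is $\lesssim \int_0^{1/2}e^{-cs\epsilon^{-2}x^2}x^{2a}dx\lesssim \epsilon^{2a+1}s^{-a-1/2}$, not $\epsilon^{2a}s^{-a-1/2}$ as you wrote; the missing factor $\epsilon$ comes from the width of the Gaussian window. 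With this correction, multiplying by $(t-s)^a(2\epsilon^{-2})^a$ does give the desired $C\epsilon s^{-a-1/2}(t-s)^a$ and you never need the remainder terms. (The ``loss of cancellation'' you worried about in a purely differential argument is already avoided here because the discrete Laplacian acts diagonally in Fourier.)

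Part~4 is where the proposal has a genuine gap. The Chernoff interpolation
$G \leq G^{1/2}\bigl(G e^{2\lambda|z|}\bigr)^{1/2}e^{-\lambda|z|}$ yields, using part~1 on the first factor and a uniform bound on the second,
$G \lesssim \bigl(\min(1,\epsilon t^{-1/2})\bigr)^{1/2}\,e^{-\lambda|z|}$,
which is short of the claim by a factor $\bigl(\min(1,\epsilon t^{-1/2})\bigr)^{1/2}$. To close this you would need a pointwise bound of the form $G_t^\epsilon(0,z)e^{2\lambda|z|}\lesssim \min(1,\epsilon t^{-1/2})$, i.e.\ part~4 itself, and neither part~1 (a uniform bound with no $z$-decay) nor the sum bound of part~3 (which gives boundedness of $\sum_{z}Ge^{\lambda'|z|}$, not a pointwise estimate of the claimed strength) supplies this. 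In short: the $\min(1,\epsilon t^{-1/2})$ prefactor and the exponential decay in $z$ must be extracted \emph{simultaneously}, and parts~1 and~3 package them separately in a way that degrades under interpolation. The paper does not deduce part~4 from parts~1 and~3; it proves part~4 directly (in fact before part~3) by the contour shift, then proves part~3 by a separate probabilistic argument (Poissonisation of the jump count and the Rademacher moment generating function). If you want to keep the finite-DFT framework for parts~1 and~2, you should either prove part~4 by switching to the $\Z$-Fourier transform and periodising as in the paper, or use a probabilistic Chernoff bound applied directly to the jump chain of the rescaled walk to get the two-sided pointwise estimate.

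Your part~3 sketch is also underspecified: the ``split over a diffusive event, then use part~1 and geometric summation on the circle'' does not work with the ingredients named, because part~1 gives a uniform bound on $G$ with no spatial decay, so the tail of $\sum_z G_t^\epsilon(z_1,z)e^{|z-z_1|/\max(t^{1/2},\epsilon)}$ is not controlled; some form of Gaussian decay (part~4, or the Poisson/Rademacher MGF computation as in the paper) is needed before the tail can be summed.
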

	\begin{proof}
		Consider $\overline{G}_t^\epsilon(z)$ the probability that a continuous-time simple symmetric random walk on $\Z$ is at site $z$ at time $\epsilon^{-2}t$. Then $\overline{G}^\epsilon$ solves 
		\[
		\begin{cases}
				\partial_t \overline{G}_t^\epsilon(z) = \frac{1}{2}\Delta^\epsilon_z \overline{G}_t^\epsilon(z)& z \in \Z, \quad t\geq 0,\\
				\overline{G}_0^\epsilon(z) = \delta_{z, 0}& z \in \Z.
		\end{cases}
		\]
		Recall that $K_\epsilon=|\Z_\epsilon|=\lfloor\epsilon^{-1}\rfloor$. The semi-discrete heat kernel $G^\epsilon$ on $\Z_\epsilon$ is the $K_\epsilon$-periodisation of $\overline{G}^\epsilon$, that is
		\begin{equation}\label{eq:G_G_bar}
		G^\epsilon(\pi^\epsilon(z), \pi^\epsilon(z')) = \sum_{k \in \Z} \overline{G}^\epsilon(z'-z+kK_\epsilon), \quad z, z' \in \Z.
		\end{equation}
		where $\pi^\epsilon:\Z\to \Z_\epsilon$, $\pi^\epsilon(z) = z \mod K_\epsilon$, is the canonical projection map. Recall from the equation displayed below (2.12) in \cite{LL2010} that the Fourier inversion formula implies
		\[
		\overline{G}_t^\epsilon(z) = \frac{1}{2\pi} \int_{-\pi}^\pi e^{-iz\theta} e^{-\epsilon^{-2}t(1-\cos(\theta))}d\theta, \quad z \in \Z, \quad t\geq 0.
		\]
		Following the proof of Proposition A.1 in \cite{DT2015}, we use this expression to derive another integral representation of $\overline{G}^\epsilon$ from which the estimates \eqref{ineq:basic_heat_estimate} and \eqref{ineq:heat_time_holder_cts} follow. Let $\xi=e^{i\theta}$, so $d\xi=i\xi d\theta$, and 
		\[
		\overline{G}_t^\epsilon(z)= \frac{1}{2\pi i}\oint_{\Gamma_1}\xi^{-z-1}e^{-\epsilon^{-2}t\psi(\xi)}d\xi,
		\]
		where $\Gamma_1=\{\xi \in \C:|\xi|=1\}$, and $\psi(\xi)=1-(e^{i\theta}+e^{-i\theta})/2=1-(\xi+\xi^{-1})/2$. The map $\xi \mapsto \xi^{-z-1}$ is holomorphic on $\C\backslash \{0\}$, so we may continuously deform the contour to ${\Gamma_2=\{\xi\in \C: |\xi|=1+\delta\}}$, where we define $\delta=e^{\operatorname{sign}(z)\min(1, \epsilon t^{-1/2})}-1 >-1$. We then reverse the original change of variables by defining $\theta \in [-\pi, \pi]$ by $(1+\delta)e^{i\theta}=\xi$. Thus $d\xi=i\xi d\theta$, and 
		\begin{align}
		\overline{G}_t^\epsilon(z) &=\notag \frac{1}{2\pi} \int_{-\pi}^\pi (1+\delta)^{-z} e^{-iz \theta} e^{-\epsilon^{-2}t\psi((1+\delta)e^{i\theta})} d\theta
		\\&= \frac{1}{2\pi} \int_{-\pi}^\pi e^{-|z|\min(1, \epsilon t^{-1/2})} e^{-iz \theta} e^{-\epsilon^{-2}t\psi((1+\delta)e^{i\theta})} d\theta.\label{eq:integr_rep_G}
		\end{align}
		It is easy to compute
		\[
		\psi((1+\delta)e^{i\theta})=1-\cos(\theta)-\frac{\delta^2}{2(1+\delta)}-i\frac{\delta}{1+\delta}\sin(\theta). 
		\] 
		Using the Taylor expansion of $\cos(\theta)$ up to order one and two, we obtain 
		\[
		{\theta^2/2-\theta^4/24\leq 1-\cos(\theta)\leq \theta^2/2}, \quad \theta \in [-\pi, \pi].
		\]
		In particular, we have 
		\begin{equation}\label{ineq:cos_bd}
		\frac{\theta^2}{12}\leq \theta^2\left(\frac{1}{2}-\frac{\pi^2}{24}\right) \leq 1-\cos(\theta)\leq \frac{\theta^2}{2}, \quad \theta \in [-\pi, \pi].
		\end{equation}
		Fix any $a\in [0, 1]$. For all $y \in [-1, 0)$, we have $|1-e^{-y}|=|y|\left|\sum_{k=0}^\infty (-1)^k\frac{y^k}{(k+1)!}\right|$ and ${\sum_{k=0}^\infty (-y)^k/(k+1)!<e}$, so $|1-e^{-y}|<e|y|<e|y|^a$. For $y \in [0, 1]$, we have by a Taylor expansion of order two $1-e^{-y}=y-\xi^2/2<y<y^a$, for some $\xi \in (0, y)$. Finally if $y>1$, then $|1-e^{-y}|\leq 1-e^{-1}<1<y^a$. Thus
		\begin{equation}\label{ineq:exp_bd}
		|1-e^{-y}|\leq e\min(1, |y|^a), \quad y \geq -1.
		\end{equation}
		It follows that $\epsilon^{-2}t\delta^2 \leq e$. Since also $1+\delta=e^{\operatorname{sign}(z)\min(1, \epsilon t^{-1/2})}$, we find the upper bound 
		\[
		|e^{-\epsilon^{-2}t\psi((1+\delta)e^{i\theta})}|\leq e^{-\epsilon^{-2}t \theta^2/12} e^{e^{-\operatorname{sign}(z)\min(1, \epsilon t^{-1/2})}/2} \leq e^{e/2}e^{-\epsilon^{-2}t \theta^2/12}.
		\]
		By combining the above estimates and applying the triangle inequality, we obtain
		\[
			\overline{G}^\epsilon_t(z)=|\overline{G}^\epsilon_t(z)|\leq e^{e/2} e^{-|z|\min(1,\epsilon t^{-1/2})} \int_{-\pi}^\pi e^{-\epsilon^{-2}t \theta^2/12}d\theta.
		\]
		We write 
		\[
		\epsilon^{-2}t=\epsilon^{-2}t-\min(1, \epsilon t^{-1/2})^{-2}+\min(1, \epsilon t^{-1/2})^{-2}.
		\]
		For $t\geq\epsilon^2$, we have $\epsilon^{-2}t-\min(1, \epsilon t^{-1/2})^{-2}=0$, while for $t<\epsilon^2$, we get a lower bound 
		\[
		\epsilon^{-2}t-\min(1, \epsilon t^{-1/2})^{-2}\geq -1.
		\]
		Thus
		\begin{align*}
			\overline{G}^\epsilon_t(z)& \leq C e^{-|z|\min(1, \epsilon t^{-1/2})} \min(1, \epsilon t^{-1/2})\int_{-\pi}^{\pi} \min(1, \epsilon t^{-1/2})^{-1}6^{-1/2}e^{-\frac{\theta^2}{12\min(1, \epsilon t^{-1/2})^2}}d\theta
			\\&\leq Ce^{-|z|\min(1, \epsilon t^{-1/2})} \min(1, \epsilon t^{-1/2}),
		\end{align*}
		where $C=e^{1+e/2}\sqrt{6}$. Given $z_1, z_2 \in \Z_\epsilon$, let $\widetilde{z}_i\in \Z\cap [0, K_\epsilon-1]$ be such that $\pi^\epsilon(\widetilde z_i)=z_i$ for $i=1, 2$. Then, by \eqref{eq:G_G_bar}, we obtain 
		\begin{equation}\label{ineq:circle_green_exp_bd}
		G_t^\epsilon(z_1, z_2)\leq C \min(1, \epsilon t^{-1/2})\sum_{k \in \Z} e^{-|\widetilde{z}_2-\widetilde{z}_1+kK_\epsilon|\min(1, \epsilon t^{-1/2})}.
		\end{equation}
	    To bound the summation, observe that $(\widetilde z_2-\widetilde z_1)/K_\epsilon \in (-1, 1)$. Hence 
	    \begin{align*}
	    e^{-|\widetilde{z}_2-\widetilde{z}_1+kK_\epsilon|\min(1, \epsilon t^{-1/2})}&=e^{-|(\widetilde{z}_2-\widetilde{z}_1)/K_\epsilon+k|K_\epsilon\min(1, \epsilon t^{-1/2})}
	    \\&\leq e^{-|\widetilde{z}_2-\widetilde{z}_1|\min(1, \epsilon t^{-1/2})}\begin{cases}1&k=0,\\e^{-(|k|-1)K_\epsilon\min(1, \epsilon t^{-1/2})}&|k|\geq 1.\end{cases}
	    \end{align*}
	    and we find 
	    \begin{equation}\label{ineq:z_to_circle_1}
	    \sum_{k \in \Z} e^{-|\widetilde{z}_2-\widetilde{z}_1+kK_\epsilon|\min(1, \epsilon t^{-1/2})}\leq e^{-|\widetilde{z}_2-\widetilde{z}_1|\min(1, \epsilon t^{-1/2})}\left(1 + 2 \sum_{k\geq 1} e^{-(k-1)K_\epsilon \min(1, \epsilon t^{-1/2})}\right).
	    \end{equation}
	    Since $K_\epsilon\epsilon t^{-1/2}\geq T^{-1/2}/2>0$ for all $\epsilon \in (0, 1)$ and $t \in [0, T]$, it follows that
	    \begin{equation}\label{ineq:z_to_circle_2}
	    \sup_{\epsilon \in (0, 1)}\sup_{t \in [0, T]}\sum_{k\geq 1} e^{-(k-1)K_\epsilon \min(1, \epsilon t^{-1/2})} < C',
	    \end{equation}
	    for some $C'=C'(T)>0$. Gathering the above estimates, we have shown \eqref{ineq:basic_heat_estimate} with constant $C\times (3+2C')$ which is independent of $z_1$, $z_2$, $\epsilon$, $t$ and $T$. This proves \eqref{ineq:basic_heat_estimate}. Next, we use \eqref{eq:integr_rep_G}	to show \eqref{ineq:heat_time_holder_cts}. We have 
	    \begin{align*}
	    	G_t^\epsilon(z_1, z_2)-G_s^\epsilon(z_1, z_2)&= G_t^\epsilon(\pi^\epsilon(\widetilde{z}_1), \pi^\epsilon(\widetilde{z}_2))-G_s^\epsilon(\pi^\epsilon(\widetilde{z}_1), \pi^\epsilon(\widetilde{z}_2))
	    	\\&=\sum_{k \in \Z}[\overline{G}_t^\epsilon(\widetilde{z}_2-\widetilde{z}_1+k K_\epsilon)-\overline{G}_s^\epsilon(\widetilde{z}_2-\widetilde{z}_1+k K_\epsilon)].
	    \end{align*}
	    Therefore, we need to establish the required control on the right-hand side. We define $z=\widetilde{z}_2-\widetilde{z}_1$. By \eqref{eq:integr_rep_G} with $\delta=e^{\operatorname{sign}(z+kK_\epsilon)\epsilon\min(1, e^{-3/2}T^{-1/2})}-1$, we have
	    \begin{align}
	    &\notag\overline{G}_t^\epsilon(z+k K_\epsilon)-\overline{G}_s^\epsilon(z+k K_\epsilon)
	    \\&=\frac{1}{2\pi}e^{-|z+k K_\epsilon|\epsilon\min(1, e^{-3/2}T^{-1/2})}\int_{-\pi}^\pi e^{-iz\theta} e^{-\epsilon^{-2}s \psi((1+\delta)e^{i\theta})} (e^{-\epsilon^{-2}(t-s)\psi((1+\delta)e^{i\theta})}-1)d\theta.\label{eq:diff_G_int}
	    \end{align}
	    We make the change of variable $u=\epsilon^{-1}s^{1/2}\theta$, which gives
	    \begin{align}
	    	&\notag\overline{G}_t^\epsilon(z+k K_\epsilon)-\overline{G}_s^\epsilon(z+k K_\epsilon)
	    	\\&=\frac{\epsilon}{2\pi\sqrt{s}}e^{-|z+k K_\epsilon|\epsilon\min(1, e^{-3/2}T^{-1/2})}\int_{-\pi\epsilon^{-1}s^{1/2}}^{\pi\epsilon^{-1}s^{1/2}} e^{-iz\epsilon s^{-1/2}u} e^{-\epsilon^{-2}s \psi((1+\delta)e^{i\epsilon s^{-1/2}u})} \label{eq:diff_G_int}
	    	\\&\quad\quad\quad\quad\quad\quad\quad\quad\quad\quad\quad\quad\quad\quad\quad\quad\quad\quad\quad\times(e^{-\epsilon^{-2}(t-s)\psi((1+\delta)e^{i\epsilon s^{-1/2}u})}-1)du.\notag
	    \end{align}
	    Write $\psi((1+\delta)e^{i\epsilon s^{-1/2}u})=A_u+iB_u$, where $A_u=1-\cos(\epsilon s^{-1/2}u)-\delta^2/(1+\delta)$ and $B_u=-\delta \sin(\epsilon s^{-1/2}u)/(1+\delta)$ are real numbers. Then, a simple calculation implies that 
	    \[
	    \left|e^{-\epsilon^{-2}(t-s)(A_u+iB_u)}-1\right|\leq \left|e^{-\epsilon^{-2}(t-s)A_u}-1\right|+e^{-\epsilon^{-2}(t-s)A_u}\left|e^{-\epsilon^{-2}(t-s)B_u i}-1\right|.
	    \]
	    The first term satisfies
	    \[
	    \left|e^{-\epsilon^{-2}(t-s)A_u}-1\right|\leq e^{\epsilon^{-2}(t-s)\delta^2/(1+\delta)}\left(e^{-\epsilon^{-2}(t-s)(1-\cos(\epsilon s^{-1/2}u))}-1\right)+e^{\epsilon^{-2}(t-s)\delta^2/(1+\delta)}-1.
	    \]
	    By definition of $\delta$ and using \eqref{ineq:exp_bd} with $a=1$, we have $\epsilon^{-2}(t-s)\delta^2 \leq (t-s)e^{-1}T^{-2}\leq e^{-1}$. Using next \eqref{ineq:exp_bd} twice for $a \in (0, 1/2)$, \eqref{ineq:cos_bd}, and the observation that $(1+\delta)^{-1}\leq e$, we find
	    \[
	    \left|e^{-\epsilon^{-2}(t-s)A_u}-1\right|\leq e^2(t-s)^a s^{-a}|u|^{2a}+(t-s)T^{-1}e^{-1}.
	    \]
	    We control the second term using the facts that $|\sin(\epsilon s^{-1/2}u)|\leq |\epsilon s^{-1/2} u|$ and $\delta \leq e\epsilon$. Indeed, these observations imply that $|B_u|\leq e\epsilon^2 |u|$, and hence
	    \[
	    \left|e^{-\epsilon^{-2}(t-s)B_u i}-1\right|\leq \int_0^{\epsilon^{-2}(t-s)|B_u|}|ie^{-iy}|dy\leq \epsilon^{-2}(t-s)|B_u|\leq (t-s)s^{-1/2}e|u|.
	    \]
	    Gathering the last estimates shows that
	    \begin{align*}
	    \left|e^{-\epsilon^{-2}(t-s)(A_u+iB_u)}-1\right|&\leq e^2(t-s)^a s^{-a}|u|^{2a}+(t-s)(T^{-1}+e^2|u|e^{-(t-s)s^{-1}u^2/12}s^{-1/2})e.
	    \end{align*}
	    We denote the right-hand side by $h_{\epsilon, a}(s, t, u)$, and we apply the last few bounds to control the absolute value of \eqref{eq:diff_G_int}. We get
	    \begin{align*}
	    &|\overline{G}_t^\epsilon(z+k K_\epsilon)-\overline{G}_s^\epsilon(z+k K_\epsilon)|
	    \\&\leq e^{-|z+k K_\epsilon|\epsilon\min(1, e^{-3/2}T^{-1/2})}\frac{\epsilon}{2\pi\sqrt{s}}\int_{-\pi \epsilon^{-1}s^{1/2}}^{\pi \epsilon^{-1}s^{1/2}} e^{-u^2/12} h_{\epsilon, a}(s, t, u)du,
	    \end{align*}
	    for a $\in (0, 1/2)$. The integral can be controlled by recognising a Gaussian density in the integrand as follows:
	    \begin{align*}
	    	&\frac{\epsilon}{2\pi\sqrt{s}}\int_{-\pi \epsilon^{-1}s^{1/2}}^{\pi \epsilon^{-1}s^{1/2}} e^{-u^2/12} h_{\epsilon, a}(s, t, u)du
	    	\\&\leq \epsilon e^2(t-s)^as^{-a-1/2}\int_{-\pi \epsilon^{-1}s^{1/2}}^{\pi \epsilon^{-1}s^{1/2}} |u|^{2a}e^{-u^2/12} du+ \epsilon (t-s)s^{-1/2}T^{-1}e \int_{-\pi \epsilon^{-1}s^{1/2}}^{\pi \epsilon^{-1}s^{1/2}} e^{-u^2/12} du
	    	\\&+ \epsilon e^3(t-s) \int_{-\pi \epsilon^{-1}s^{1/2}}^{\pi \epsilon^{-1}s^{1/2}} |u|e^{-u^2/12}s^{-1}e^{-(t-s)s^{-1}u^2/12} du.
	    \end{align*}
	    The first two integrals on the right-hand side are easily controlled by a Gaussian integral:
	    \begin{align*}
	    \max\left(e^2\int_{-\pi \epsilon^{-1}s^{1/2}}^{\pi \epsilon^{-1}s^{1/2}} |u|^{2a}e^{-u^2/12} du, T^{-1}e \int_{-\pi \epsilon^{-1}s^{1/2}}^{\pi \epsilon^{-1}s^{1/2}} e^{-u^2/12} du\right)\leq C,
	    \end{align*}
	    where
	    \[
	    C=C(T)=(e+T^{-1})e\int_\R e^{-u^2/12}\max(1, |u|^{2a})du\in (0, \infty).
	    \]
	    For the last integral, we simplify the exponents and use even symmetry to show that 
	    \begin{align*}
	    	\int_{-\pi \epsilon^{-1}s^{1/2}}^{\pi \epsilon^{-1}s^{1/2}} |u|e^{-u^2/12}s^{-1}e^{-(t-s)s^{-1}u^2/12} du&\leq 2 \int_0^{\pi \epsilon^{-1}s^{1/2}} s^{-1}u e^{-u^2 t s^{-1}/12}du 
	    	\\&=6t^{-1}(1-e^{-\pi^2 \epsilon^{-2}t/12}).
	    \end{align*}
	    We note that 
	    \[
	    6t^{-1}(1-e^{-\pi^2 \epsilon^{-2}t/12})\leq 6(s/t)^{1/2}(ts)^{-1/2}\leq 6 ((t-s)s+s^2)^{-1/2}\leq 6(t-s)^{-1/2}s^{-1/2}.
	    \]
	    since $s/t\leq 1$. Therefore, 
	    \[
	    e^3(t-s)\int_{-\pi \epsilon^{-1}s^{1/2}}^{\pi \epsilon^{-1}s^{1/2}} |u|e^{-u^2/12}s^{-1}e^{-(t-s)s^{-1}u^2/12} du\leq 6e^3 (t-s)^{1/2}s^{-1/2}.
	    \]
	    We have shown that there exists a constant $C'=C'(T)>0$ such that
	    \begin{align*}
	    |\overline{G}_t^\epsilon(z+k K_\epsilon)-\overline{G}_s^\epsilon(z+k K_\epsilon)|&\leq C'\epsilon e^{-|z+k K_\epsilon|\epsilon\min(1, e^{-3/2}T^{-1/2})}
	    \\&\times \left[(t-s)^a s^{-a-1/2}+s^{-1/2}\Big((t-s)+(t-s)^{1/2}\Big)\right].
	    \end{align*}
	  	To obtain \eqref{ineq:heat_time_holder_cts}, it only remains to show that
	  	\begin{equation}\label{ineq:double_sum_control}
	  	\sup_{\epsilon \in (0, 1)} \sup_{z \in \Z_\epsilon}\sum_{k \in \Z}e^{-|z+k K_\epsilon|\epsilon\min(1, e^{-3/2}T^{-1/2})}<\infty.
	  	\end{equation}
	  	We write $|z+k K_\epsilon|=|z/K_\epsilon+k|K_\epsilon$, and note that $z/K_\epsilon \in (-1, 1)$. Thus, we may partition the summation over $k \in \Z$ as we did above in the proof of \eqref{ineq:basic_heat_estimate} into the cases $k=0$, $k\leq -1$ and $k\geq 1$ to obtain the bound
	  	\[
	  		\sum_{k \in \Z}e^{-|z+k K_\epsilon|\epsilon\min(1, e^{-3/2}T^{-1/2})}\leq 1+2\sum_{k\geq 1}e^{-(k-1)\epsilon K_\epsilon \min(1, e^{-3/2}T^{-1/2})}.
	  	\]
	  	Since $K_\epsilon=\lfloor \epsilon^{-1}\rfloor$, we have $\epsilon K_\epsilon=1+o(1)$. So
	  	\[
	  	\sum_{k \in \Z}e^{-|z+k K_\epsilon|\epsilon\min(1, e^{-3/2}T^{-1/2})}\leq 1+2e^2 \sum_{k\geq 0}e^{-(k-1)\min(1, e^{-3/2}T^{-1/2})}.
	  	\]
	  	The right-hand side is finite and only depends on $T$, which proves \eqref{ineq:double_sum_control}, and concludes the proof of \eqref{ineq:heat_time_holder_cts}. Next, we show \eqref{ineq:exp_control_G} by following an argument in Lemma 27 of \cite{EthLab2014} and in Proposition A.1 of \cite{DT2015}. Let $(S_\ell)_{\ell\geq 0}$ be a discrete-time simple symmetric random walk on $\Z_\epsilon$ started from $S_0=\pi^\epsilon(0)$. Let $R$ be a Rademacher random variable. Then 
	  	\begin{align*}
	  		\sum_{z_2 \in \Z_\epsilon} G_t^\epsilon(z_1, z_2) e^{|z_2-z_1|/\max(t^{1/2}, \epsilon)}&=e^{-\epsilon^{-2}t}\sum_{\ell=0}^\infty \frac{(\epsilon^{-2}t)^\ell}{\ell!}   \E\left[\exp\left(\frac{S_\ell}{\max(t^{1/2}, \epsilon)}\right)\right]
	  		\\&=e^{-\epsilon^{-2}t}\sum_{\ell=0}^\infty \frac{(\epsilon^{-2}t)^\ell}{\ell!}   \E\left[\exp\left(\frac{\epsilon R}{\max(t^{1/2}, \epsilon)}\right)\right]^\ell
	  		\\&=\exp\left(\epsilon^{-2}t\left[\frac{1}{2}\left(e^{-\epsilon/\max(t^{1/2}, \epsilon)}+e^{\epsilon/\max(t^{1/2}, \epsilon)}\right)-1\right]\right).
	  	\end{align*}
	  	By a Taylor expansion of order two, we obtain that there exists $\xi \in (0, \epsilon/\max(t^{1/2}, \epsilon))$ such that 
	  	\[
	  	e^{-\epsilon/\max(t^{1/2}, \epsilon)}+e^{\epsilon/\max(t^{1/2}, \epsilon)}=\frac{\epsilon^2}{2\max(t^{1/2}, \epsilon)^2}\left(e^{-\xi}+e^\xi\right).
	  	\]
	  	Since for a fixed $t\geq 0$, we have $\epsilon/\max(t^{1/2}, \epsilon)\in [0, 1]$ for all $\epsilon \in (0, 1)$, the exponentials on the right-hand side are bounded by $1+e$. Moreover, using also that $e<3$, we get 
	  	\[
	  	\sum_{z_2 \in \Z_\epsilon} G_t^\epsilon(z_1, z_2) e^{|z_2-z_1|/\max(t^{1/2}, \epsilon)}\leq \exp\left(\epsilon^{-2}t\left[\frac{\epsilon^2}{\max(t^{1/2}, \epsilon)^2}\frac{1+e}{4}-1\right]\right)\leq e^{(e-3)/4},
	  	\]
	  	as required. The fourth and last claim \eqref{ineq:G_exp_bd} follows from \eqref{ineq:circle_green_exp_bd}-\eqref{ineq:z_to_circle_2}.
	\end{proof}
	
	\subsection{Approximation to the FKPP equation}\label{append:disc_FKPP}
	In this section, we prove Lemma \ref{lem:FKPP_approx_properties} concerning the properties of the discrete-space McKean representation. Before the proof, we briefly introduce the McKean representation of the solution and some notation.
	
	Consider a diffusively rescaled dyadic branching random walk $\rZ^\epsilon = (Z^\epsilon_t)_{t\geq 0}$ on $\Z_\epsilon$ with branching rate $\mu_\epsilon$, where $\mu_\epsilon$ is the mean of the offspring distribution $\rP^\epsilon$. Thus, in between branching events, particles perform independent simple symmetric random walks on $\Z_\epsilon$ at rate $\epsilon^{-2}$. Each particle independently branches at rate $\mu_\epsilon$, and when it does, it splits into two independent particles. We denote by $\cN_t^\epsilon$ the set of particles alive at time $t$, and by $Z^\epsilon_u(t)$ the location of particle $u\in \cN_t^\epsilon$ on the lattice $\Z_\epsilon$. The time-$t$ configuration of the branching random walk is given by 
	\[
	Z_t^\epsilon = (Z_u^\epsilon(t): u \in \cN^\epsilon),
	\]
	and we assume that $Z_0^\epsilon$ consists of a single particle at $0 \in \Z_\epsilon$.  
	Consider a family of functions $g_\epsilon:\Z_\epsilon\to [0, 1]$, for $\epsilon \in (0, 1)$, and introduce by analogy with the McKean representation of \cite[Section 2]{M1975} the function
	\begin{equation}\label{def:McKean_fcnl}
		\rho_t^\epsilon(z) = 2\mu_\epsilon\left(1-\E\left[\prod_{u \in \cN^\epsilon_t}g_\epsilon\left(z+Z_u^\epsilon(t)\right)\right]\right), \quad t\geq 0, \quad z \in \Z_\epsilon. 
	\end{equation}
	It follows from the next lemma that the unique solution to \eqref{eq:interm_fkpp_diff_form} has the representation \eqref{def:McKean_fcnl}.
	
	\begin{lem}
		The function $\rho^\epsilon$ defined by \eqref{def:McKean_fcnl} solves the Cauchy problem \eqref{eq:interm_fkpp_diff_form}.
	\end{lem}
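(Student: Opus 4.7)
The plan is to derive a semi-linear PDE for the functional
\[
F_t^\epsilon(z) \coloneqq \E\!\left[\prod_{u \in \cN_t^\epsilon} g_\epsilon(z+Z_u^\epsilon(t))\right], \qquad t\geq 0,\ z\in \Z_\epsilon,
\]
and then translate it into an equation for $\rho_t^\epsilon(z)=2\mu_\epsilon(1-F_t^\epsilon(z))$ via direct substitution. This is the standard McKean-duality argument, adapted to the semi-discrete setting.

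First I would write an integral equation for $F_t^\epsilon(z)$ by conditioning on the first event of the initial particle. Since $\rZ^\epsilon$ starts from a single particle at the origin, the first event occurs at an $\operatorname{Exp}(\epsilon^{-2}+\mu_\epsilon)$ time $\tau$; conditional on $\tau=s<t$, the branching property gives
\[
\widetilde\Phi(t-s,z) = \tfrac{\epsilon^{-2}/2}{\epsilon^{-2}+\mu_\epsilon}F_{t-s}^\epsilon(z+1) + \tfrac{\epsilon^{-2}/2}{\epsilon^{-2}+\mu_\epsilon}F_{t-s}^\epsilon(z-1) + \tfrac{\mu_\epsilon}{\epsilon^{-2}+\mu_\epsilon}F_{t-s}^\epsilon(z)^2,
\]
the quadratic term capturing the fact that at a branching event one initial subtree is replaced by two independent ones. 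This yields
\[
F_t^\epsilon(z) = e^{-(\epsilon^{-2}+\mu_\epsilon)t}g_\epsilon(z) + \int_0^t (\epsilon^{-2}+\mu_\epsilon)e^{-(\epsilon^{-2}+\mu_\epsilon)s}\widetilde\Phi(t-s,z)\,ds.
\]

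Next I would differentiate this in $t$ (changing variables $u=t-s$ to put $t$ in the upper limit only); the integration-by-parts cancellations give
\[
\partial_t F_t^\epsilon(z) = -(\epsilon^{-2}+\mu_\epsilon)F_t^\epsilon(z) + \tfrac{\epsilon^{-2}}{2}\bigl[F_t^\epsilon(z+1)+F_t^\epsilon(z-1)\bigr] + \mu_\epsilon F_t^\epsilon(z)^2 = \tfrac{1}{2}\Delta^\epsilon_z F_t^\epsilon(z) + \mu_\epsilon F_t^\epsilon(z)\bigl(F_t^\epsilon(z)-1\bigr).
\]
Substituting $F_t^\epsilon(z)=1-\rho_t^\epsilon(z)/(2\mu_\epsilon)$ and simplifying term by term gives exactly $\partial_t\rho_t^\epsilon(z)=\tfrac12\Delta^\epsilon_z\rho_t^\epsilon(z)+\rho_t^\epsilon(z)(\mu_\epsilon-\rho_t^\epsilon(z)/2)$. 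The initial condition is immediate: at $t=0$ the only particle in $\rZ^\epsilon$ is at the origin, so $F_0^\epsilon(z)=g_\epsilon(z)$ and one recovers $\rho_0^\epsilon(z)$ provided $g_\epsilon$ is chosen as $g_\epsilon=1-\rho_0^\epsilon/(2\mu_\epsilon)$, which is the convention fixed in \eqref{def:McKean_fcnl}.

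The only mildly technical step is justifying the differentiability of $F_t^\epsilon$ and the interchange of differentiation and expectation. This is painless here: because we work on the finite lattice $\Z_\epsilon$ with bounded branching rate $\mu_\epsilon$ and bounded $g_\epsilon\in[0,1]$, the number of particles $|\cN_t^\epsilon|$ is stochastically dominated by a Yule process, so $\E[|\cN_t^\epsilon|]=e^{\mu_\epsilon t}<\infty$, and the integrand in the first-event representation above is uniformly bounded by $1$. Dominated convergence then applies, and the differentiation step is fully rigorous. Uniqueness of solutions to the semi-discrete FKPP equation is standard (Gr\"onwall on the finite-dimensional ODE system it really is), so no additional argument is required to conclude.
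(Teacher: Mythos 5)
Your argument is correct, and it reaches exactly the equation the paper derives for $\overline\rho^\epsilon_t \coloneqq \E[\prod_{u\in\cN^\epsilon_t} g_\epsilon(z+Z^\epsilon_u(t))]$, followed by the same algebraic substitution $\rho^\epsilon = 2\mu_\epsilon(1-\overline\rho^\epsilon)$. The one difference is where you cut in the one-step decomposition: you condition on the first event of the jump chain (random-walk step \emph{or} branching event), with exponential rate $\epsilon^{-2}+\mu_\epsilon$, and read off the generator directly from the jump rates; the paper conditions on the first \emph{branching} time only, so between branchings the particle is integrated out through the semi-discrete heat kernel $G^\epsilon_t(z,z')$, and the Laplacian appears when differentiating $G^\epsilon$ via $\partial_t G^\epsilon_t=\tfrac12\Delta^\epsilon G^\epsilon_t$. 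Your version is marginally more elementary since it doesn't invoke the heat-kernel equation; the paper's version is harmonized with the Green's-function machinery used throughout Appendix~\ref{app:Green_fcn_props}. Both are rigorous, and your remarks about dominated convergence and Gr\"onwall-type uniqueness on the finite lattice are exactly the right level of care for this setting. One small wording point: the paper's Definition of \eqref{def:McKean_fcnl} does not explicitly fix $g_\epsilon=1-\rho_0^\epsilon/(2\mu_\epsilon)$ as a ``convention''; this identity is simply forced by evaluating \eqref{def:McKean_fcnl} at $t=0$, which is what you in fact use.
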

	\begin{proof}
		The first claim is a classical one-step calculation for branching processes as in Section 2 of \cite{M1975}. Define 
		\[
		\overline{\rho}_t^{\:\epsilon}(z) = \E\left[\prod_{u \in \cN_t^\epsilon} g_\epsilon\left(z+Z_u^\epsilon(t)\right)\right], \quad t\geq 0, \quad z \in \Z_\epsilon,
		\]
		and let $\tau$ denote the first branching time of the initial particle. Given $t>0$, we partition the sample space on $\{\tau>t\}$ and $\{\tau\leq t\}$. Thus, 
		\begin{align*}
			\overline{\rho}_t^\epsilon(z) &= \P(\tau>t)\E\left[\prod_{u \in \cN_t^\epsilon} g_\epsilon\left(z+Z_u^\epsilon(t)\right)\bigg| \tau>t\right]
			\\&\quad\quad\quad\quad\quad\quad\quad\quad\quad\quad\quad\quad+ \int_0^t \P(\tau \in ds) \E\left[\prod_{u \in \cN_t^\epsilon} g_\epsilon\left(z+Z_u^\epsilon(t)\right)\bigg| \tau\in ds\right]
			\\&=e^{-\mu_\epsilon t}\sum_{z' \in \Z_\epsilon} G_t^\epsilon(z, z')g_\epsilon(z')+\int_0^t ds \mu_\epsilon e^{-\mu_\epsilon s}\sum_{z' \in \Z_\epsilon}G_s^\epsilon(z, z')\E\left[\prod_{u \in \cN_{t-s}^\epsilon} g_\epsilon(z'+Z_u^\epsilon(t-s))\right]^2.
			\\&=e^{-\mu_\epsilon t}\sum_{z' \in \Z_\epsilon} G_t^\epsilon(z, z')g_\epsilon(z')+\int_0^t ds \mu_\epsilon e^{-\mu_\epsilon (t-s)}\sum_{z' \in \Z_\epsilon}G_{t-s}^\epsilon(z, z')\overline{\rho}_s^\epsilon(z')^2,
		\end{align*} 
		where $G^\epsilon_t(z, z')$, $z, z' \in \Z_\epsilon$, $t\geq 0$, $\epsilon \in (0, 1)$ denotes the probability that a single particle performing a simple symmetric random walk moves between $z$ and $z'$ in time $\epsilon^{-2}t$. 
		Taking the time derivative of this expression using Leibniz's rule for differentiation under the integral sign, we obtain 
		\[
		\partial_t \overline{\rho}_t^\epsilon(z) = \frac{1}{2}\Delta^\epsilon_z \overline{\rho}_t^\epsilon(z)+\mu_\epsilon\overline{\rho}_t^\epsilon(z)(\overline{\rho}_t^\epsilon(z)-1), \quad z \in \Z_\epsilon, \quad t\geq 0. 
		\]
		Consequently, 
		\[
		\partial_t \rho_t^\epsilon(z) = -2\mu_\epsilon\partial_t \overline{\rho}_t^\epsilon(z)=\frac{1}{2}\Delta^\epsilon_z \rho_t^\epsilon(z)+\mu_\epsilon\overline{\rho}_t^\epsilon(z)\rho_t^\epsilon(z)=\frac{1}{2}\Delta^\epsilon_z \rho_t^\epsilon(z)+\rho_t^\epsilon(z)(\mu_\epsilon-\rho_t^\epsilon(z)/2).
		\] 
	\end{proof}
	
	\begin{proof}[Proof of Lemma \ref{lem:FKPP_approx_properties}]
		
		For the first claim, using \eqref{def:McKean_fcnl}, we have
		\[
		\widetilde{\rho}_t^{\:\epsilon}(z) = 2\mu_\epsilon\left(1-\E\left[\prod_{u \in \cN^\epsilon_t}g_\epsilon\left(K_\epsilon \left[z+K_\epsilon^{-1}Z_u^\epsilon(t)\right]\right)\right]\right), \quad t\geq 0, \quad z \in \pi([0, 1]\cap K_\epsilon^{-1}\Z).
		\]
		It is easy to see using Donsker's invariance principle that our diffusively rescaled branching random walk $K_\epsilon^{-1} Z_t^\epsilon$ (where we recall that time is accelerated by $\epsilon^{-2}$ and $K_\epsilon = \lfloor\epsilon^{-1}\rfloor$) converges in distribution to $Z_t=(Z_u(t):u \in \cN_t)$, $t\geq 0$, a dyadic branching Brownian motion with branching rate $\mu$. Since $g_\epsilon(K_\epsilon \cdot )$ is $[0, 1]$-valued, and converges to some $g_0$ uniformly on $\S^1$, it follows that for each $t\in [0, T]$, we have $\widetilde{\rho}_t^{\:\epsilon}\to \rho_t$ uniformly on $\S^1$ where 
		\[
		\rho_t(z) = 2\mu\left(1-\E\left[\prod_{u \in \cN_t}g_0\left( z+Z_u(t)\right)\right]\right), \quad t\geq 0, \quad z \in \S^1.
		\]
		Since $[0, T]$ is compact, the uniform convergence $\widetilde{\rho}^{\:\epsilon} \to \rho$ on $[0, T]\times \S^1$ follows. Moreover, $\rho$ is the McKean representation (see Section 2 in \cite{M1975}), and hence solves the FKPP equation \eqref{eq:FKPP}. 
		
		The last claim follows immediately from the maximum principle for \eqref{eq:interm_fkpp_diff_form}. 
	\end{proof} 
	
	\subsection{Time-dependent test functions}\label{append:time_dep_test_fcns}
	
	The proof of the central limit theorem, Theorem \ref{thm:CLT_Gaussian}, requires us to test the fluctuations against a family $(\phi_{s,t}^\epsilon)_{s \in [0, t]}$, $\epsilon \in (0, 1)$, $t \in [0, T]$, of functions on $\S^1$ constructed as the solution to the problem \eqref{eq:backward_test_eq} which we briefly recall below. Let $\pi : \R \to \S^1$ denote the canonical projection map, and $\S^{1, \epsilon}\coloneqq \pi([0, 1]\cap K_\epsilon^{-1} \Z)$. We consider
	\begin{equation}\label{eq:recall_time_dep_test_fcns}
		\begin{cases}
			\partial_s \phi^\epsilon_{s, t}(z)+\frac{1}{2}\Delta^\epsilon \phi^\epsilon_{s, t}(z)+(\mu_\epsilon-\widetilde{\rho}^{\:\epsilon}_r(z))\phi^\epsilon_{s, t}(z) = 0, & s\in [0, t), \quad z \in \S^{1, \epsilon},
			\\\phi_{t, t}^\epsilon(z)=\phi(z), &z \in \S^{1, \epsilon},
		\end{cases}
	\end{equation}
	where the discrete Laplacian $\Delta^\epsilon$ is defined on all functions $\psi:\S^1\to \R$ by 
	\[
	\Delta^\epsilon  \psi(z)\coloneqq \epsilon^{-2}\left(\psi(z+K_\epsilon^{-1})+\psi(z-K_\epsilon^{-1})-2\psi(z)\right), \quad z \in \S^1.
	\] 
	We observe that for a given $z \in \S^{1, \epsilon}$, this problem is simply a linear system of ordinary differential equations with continuous coefficients. Additionally, by the simple change of variables $s\mapsto t-s$, we may turn the terminal condition into an initial condition. Thus, by the existence and uniqueness theorem for ordinary differential equations, there exists a unique solution $\phi^\epsilon_{\cdot, t}(z)$. We then define $\phi^\epsilon$ on all of $\S^1$ by choosing an arbitrary $C^3$ interpolation. Moreover, by \eqref{eq:mild_test} below, the solution is continuous in $s$. In this section, we derive some a priori estimates on the solution. 
	\begin{lem}\label{lem:a_priori_est}
		There exists $C=C(T, \phi)>0$ such that 
		\begin{equation}\label{ineq:unif_contr_phi_eps}
		\sup_{\epsilon \in (0, 1)}\sup_{0\leq s\leq t\leq T}\|\phi^\epsilon_{s, t}\|_\infty < C,
		\end{equation}
		where $\|\cdot \|_\infty$ is the supremum norm on $\S^1$. Additionally, there is $C=C(T, \phi)>0$ such that for $k\in\{1, 2, 3\}$, we have
		\begin{equation}\label{ineq:unif_contr_deriv_phi_eps}
		\sup_{\epsilon \in (0, 1)}\sup_{0\leq s\leq t\leq T}\|\partial_z^k\phi^\epsilon_{s, t}\|_\infty < C.
	\end{equation}
	\end{lem}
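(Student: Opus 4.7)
The plan is to reduce the backward problem \eqref{eq:recall_time_dep_test_fcns} to a forward Duhamel formula and then iterate Gr\"onwall-type estimates, first for the $L^\infty$ bound and then for successive discrete spatial differences up to order three.

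First I would change time variable by setting $\tau = t-s$ and $\psi^{\epsilon,t}(\tau,z) \coloneqq \phi^\epsilon_{t-\tau, t}(z)$, so that $\psi^{\epsilon,t}$ solves the forward problem
\[
\partial_\tau \psi^{\epsilon,t}(\tau,z) = \tfrac{1}{2}\Delta^\epsilon \psi^{\epsilon,t}(\tau,z) + (\mu_\epsilon - \widetilde{\rho}^{\:\epsilon}_{t-\tau}(z))\psi^{\epsilon,t}(\tau,z), \quad \psi^{\epsilon,t}(0,z) = \phi(z),
\]
on $\S^{1,\epsilon}$ for $\tau \in [0,t]$. The semigroup with generator $\tfrac{1}{2}\Delta^\epsilon$ on $\S^{1,\epsilon}$ is precisely the discrete heat semigroup with kernel $G^\epsilon_\tau$ from Lemma \ref{lem:Green_estimates}, and it is conservative: $\sum_{z'} G^\epsilon_\tau(z,z') = 1$. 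By Duhamel's principle,
\[
\psi^{\epsilon,t}(\tau,z) = \sum_{z' \in \S^{1,\epsilon}} G^\epsilon_\tau(z,z')\phi(z') + \int_0^\tau \sum_{z' \in \S^{1,\epsilon}} G^\epsilon_{\tau-r}(z,z')(\mu_\epsilon - \widetilde{\rho}^{\:\epsilon}_{t-r}(z'))\psi^{\epsilon,t}(r,z')\,dr.
\]
By part two of Lemma \ref{lem:FKPP_approx_properties} and the assumption $\sup_\epsilon \mu_\epsilon < \infty$, the potential $\mu_\epsilon - \widetilde{\rho}^{\:\epsilon}_{t-r}(z')$ is bounded by a constant $M = M(T)$ uniformly in all its arguments and in $\epsilon$. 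Taking absolute values and a supremum over $z \in \S^{1,\epsilon}$ yields
\[
\sup_{z} |\psi^{\epsilon,t}(\tau,z)| \leq \|\phi\|_\infty + M \int_0^\tau \sup_{z'}|\psi^{\epsilon,t}(r,z')|\,dr,
\]
so Gr\"onwall's inequality gives $\sup_{z \in \S^{1,\epsilon}} |\psi^{\epsilon,t}(\tau,z)| \leq \|\phi\|_\infty e^{MT}$ uniformly in $\tau \in [0,T]$, $t \in [0,T]$ and $\epsilon \in (0,1)$. Since $\phi^\epsilon_{s,t}$ is extended to $\S^1$ by a $C^3$ interpolation which can be chosen to preserve (up to an absolute constant) the sup-norm of the values on $\S^{1,\epsilon}$, this establishes \eqref{ineq:unif_contr_phi_eps}.

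For \eqref{ineq:unif_contr_deriv_phi_eps}, the strategy is to apply the same procedure to the discrete gradients $\nabla^{\epsilon,\pm}\phi^\epsilon_{s,t}$ and iteratively to their higher differences up to order three on $\S^{1,\epsilon}$, and then invoke the construction of the $C^3$ interpolation. Since the shift operators $z \mapsto z \pm K_\epsilon^{-1}$ commute with $\Delta^\epsilon$, the discrete gradient $\eta^\epsilon(\tau, z) \coloneqq \nabla^{\epsilon,+}\psi^{\epsilon,t}(\tau,z)$ satisfies
\[
\partial_\tau \eta^\epsilon = \tfrac{1}{2}\Delta^\epsilon \eta^\epsilon + (\mu_\epsilon - \widetilde{\rho}^{\:\epsilon}_{t-\tau}(z))\eta^\epsilon - (\nabla^{\epsilon,+}\widetilde{\rho}^{\:\epsilon}_{t-\tau}(z))\psi^{\epsilon,t}(\tau,z+K_\epsilon^{-1}),
\]
with initial datum $\nabla^{\epsilon,+}\phi$. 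Part three of Remark \ref{rmk:start_X_rho} gives a uniform-in-$\epsilon$ bound on the first three discrete derivatives of $\widetilde{\rho}^{\:\epsilon}$, while $\nabla^{\epsilon,+}\phi$ is bounded uniformly in $\epsilon$ because $\phi \in C^\infty(\S^1)$. Together with the $L^\infty$ bound from the previous paragraph, this makes the Duhamel formula for $\eta^\epsilon$ amenable to the same Gr\"onwall argument, yielding a uniform bound on the first discrete derivative. Applying $\nabla^{\epsilon,+}$ twice more and using the Leibniz rule for discrete differences produces equations for the second and third discrete derivatives with forcing terms that are finite linear combinations of products of lower-order discrete derivatives of $\widetilde{\rho}^{\:\epsilon}$ and of $\psi^{\epsilon,t}$, all bounded by the previously established estimates. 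Each iteration closes by the same Gr\"onwall step. Finally, because the $C^3$ interpolation can be arranged so that its first three continuous derivatives are controlled by the discrete ones on $\S^{1,\epsilon}$ (as in Remark \ref{rmk:start_X_rho} applied to $\rho_0^\epsilon$), this yields \eqref{ineq:unif_contr_deriv_phi_eps}.

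The main technical nuisance, rather than a genuine obstacle, will be the bookkeeping for the third-order iteration: applying $\nabla^{\epsilon,+}$ three times to the product $(\mu_\epsilon - \widetilde{\rho}^{\:\epsilon})\psi^{\epsilon,t}$ via the discrete Leibniz rule produces several cross terms in which first and second discrete derivatives of $\widetilde{\rho}^{\:\epsilon}$ multiply first and second discrete derivatives of $\psi^{\epsilon,t}$, and one must feed in the previously obtained uniform estimates in the correct order so that the final Gr\"onwall loop sees only bounded, $\epsilon$-independent forcing. Once this is arranged, the proof is complete.
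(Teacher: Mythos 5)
Your proposal is correct, and it genuinely departs from the paper's own argument on both claims. For the sup-norm bound, the paper sets up the Duhamel formula directly for the interpolated function on $\S^1$; since the interpolated kernel $G^\epsilon_{t-s}(K_\epsilon z, z')$ is no longer stochastic in $z'$ for $z \notin \S^{1,\epsilon}$, the paper's estimate produces an integrand with a singular weight $1/\sqrt{r-s}$ and requires the dedicated generalised Gr\"onwall inequality (Lemma~\ref{lem:gen_Gron}). You instead run the bound entirely on the lattice $\S^{1,\epsilon}$, where the kernel is conservative, which removes the singularity and lets standard Gr\"onwall close the estimate; you then appeal to the interpolation only once at the end. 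For the derivatives, the paper rewrites the Duhamel formula in translation-invariant form so that $\partial_z^k$ falls on $\phi(\cdot - z)$, $\widetilde{\rho}^{\:\epsilon}_r(\cdot - z)$ and $\phi^\epsilon_{r,t}(\cdot - z)$ and runs an induction-plus-Gr\"onwall directly on the continuous derivatives, whereas you propagate discrete differences $\nabla^{\epsilon,+}$ on the lattice via the discrete Leibniz rule and only then pass to the interpolant. The one place your write-up owes a bit more is the final transfer step: you assert that the $C^3$ interpolation takes bounded discrete differences of order $\le 3$ to bounded continuous derivatives of order $\le 3$, uniformly in $\epsilon$. This is true for a standard piecewise-polynomial interpolation of sufficiently high degree matching enough neighbouring lattice values (in general one controls the $k$-th continuous derivative by discrete differences up to order $k$ and possibly $k+1$, depending on the scheme), and it is the same kind of statement already invoked in Remark~\ref{rmk:start_X_rho}.3 for $\widetilde{\rho}^{\:\epsilon}$, so it is a technical rather than conceptual gap; but to make the lemma airtight you should either fix a specific interpolation scheme with the required property or iterate one extra discrete difference. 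The paper sidesteps this issue because it never needs to convert discrete-difference bounds into continuous-derivative bounds for $\phi^\epsilon$ itself; it differentiates the continuous formula directly.
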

	\begin{proof}
		Given $z \in \S^{1, \epsilon}$, we write $K_\epsilon z$ for the corresponding element of $\Z_\epsilon$. Writing \eqref{eq:recall_time_dep_test_fcns} in integral form, we obtain 
		\begin{equation}\label{eq:mild_test}
			\phi_{s, t}^\epsilon(z)= \sum_{z' \in \Z_\epsilon}G^\epsilon_{t-s}(K_\epsilon z, z')\phi(\epsilon z')+\int_s^t \sum_{z' \in \Z_\epsilon}G_{r-s}^\epsilon(K_\epsilon z, z')\left(\mu_\epsilon-\widetilde{\rho}^{\:\epsilon}_r(\epsilon z')\right)\phi_{r,t}^\epsilon(\epsilon z') dr,
		\end{equation}
		for all $0\leq s\leq t\leq T$ and $z \in \S^1$. Here, given a fixed $z'\in \Z_\epsilon$, the Green's function $G^\epsilon_{t-s}(K_\epsilon z, z')$ of \eqref{eq:Green_eq} is defined for all $z \in \S^1$ using the $C^3$ interpolation. Let $c_0>0$ be the uniform bound on $\widetilde{\rho}$ introduced in \eqref{ineq:bd_on_rho} and let $c_1 \coloneqq (c_0+\sup_{\epsilon \in (0, 1)}\mu_\epsilon)$. Then by Jensen's inequality, 
		\begin{align*}
			|\phi_{s, t}^\epsilon(z)|&\leq \sum_{z' \in \Z_\epsilon}G^\epsilon_{t-s}(K_\epsilon z, z')|\phi(\epsilon z')|+(t-s)\int_s^t \sum_{z' \in \Z_\epsilon}G^\epsilon_{r-s}(K_\epsilon z, z')\left|\mu_\epsilon-\widetilde{\rho}^{\:\epsilon}_r(\epsilon z')\right||\phi_{r,t}^\epsilon(\epsilon z')| dr 
			\\&\leq \sum_{z' \in \Z_\epsilon}G^\epsilon_{t-s}(K_\epsilon z, z')|\phi(\epsilon z')|+Tc_1\int_s^t \sum_{z' \in \Z_\epsilon}G^\epsilon_{r-s}(K_\epsilon z, z')|\phi_{r,t}^\epsilon(\epsilon z')| dr.
		\end{align*}
		We use the Green's function estimate \eqref{ineq:basic_heat_estimate} to bound the integral as follows
		\[
		\int_s^t \sum_{z' \in \Z_\epsilon}G^\epsilon_{r-s}(K_\epsilon z, z')|\phi_{r,t}^\epsilon(\epsilon z')| dr\leq \int_s^t \frac{|\phi_{r,t}^\epsilon(\epsilon z')|}{\sqrt{r-s}} dr\leq \int_s^t \frac{\|\phi_{r,t}^\epsilon\|_\infty}{\sqrt{r-s}} dr,
		\]
		for all $z \in \S^1$, $s \in [0, t]$, and $\epsilon \in (0, 1)$. Moreover, we have 
		\[
		\sum_{z' \in \Z_\epsilon}G^\epsilon_{t-s}(K_\epsilon z, z')|\phi(\epsilon z')|\leq \|\phi\|_\infty^2,
		\]
		for all $z \in \S^1$ and $0\leq s\leq t\leq T$. Then the result follows from the following Gr\"onwall inequality, whose proof is adapted from \cite[Lemma 6 page 33]{Haraux2006}.
		\begin{lem}\label{lem:gen_Gron}
			Let $\psi:\{(s, t):0\leq s\leq t\leq T\}\to \R$ be a bounded non-negative function such that 
			\[
			\psi(s,t)\leq C_0+C_1\int_s^t \frac{\psi(r, t)}{\sqrt{r-s}}dr, \quad 0\leq s\leq t \leq T,
			\]
			for some $C_0=C_0(T), C_1=C_1(T)>0$. Then $\sup_{0\leq s\leq t\leq T}\psi(s, t)<\infty$. 
		\end{lem}
		\begin{proof}
			Following \cite[Lemma 6 page 33]{Haraux2006}, we reduce the problem to an application of the standard Gr\"onwall inequality. We partition the range of integration into $[s, s+\delta]$ and $[s+\delta, t]$, for a small $\delta>0$, and bound the corresponding integrals separately. We compute
			\begin{align*}
				\psi(s, t)&\leq C_0 + C_1 \int_{s+\delta}^t \frac{\psi(r, t)}{\sqrt{r-s}}dr+C_1\sup_{u \in [s, t]}\psi(u, t) \int_s^{s+\delta}\frac{dr}{\sqrt{r-s}}
				\\&\leq C_0+C_1 \delta^{-1/2}\int_s^t \psi(r, t)dr+C_1 \sqrt{\delta} \sup_{u \in [s, t]}\psi(u, t)
				\\&\leq C_0+C_1 \delta^{-1/2}\int_s^t \sup_{u \in [0, r]}\psi(u, t)dr+2C_1 \sqrt{\delta} \sup_{s\in [0, t]}\psi(s, t),
			\end{align*}
			for all $0\leq s\leq t \leq T$. Taking a supremum over $0\leq s\leq t\leq T$ and re-arranging the terms, we have 
			\[
			(1-2C_1\sqrt{\delta})\sup_{s\in [0, t]}\psi(s, t) \leq C_0+C_1\delta^{-1/2}\int_s^t \sup_{u \in [0, r]} \psi(u, t)dr.
			\]
			Choose $\delta\coloneqq (4C_1)^{-2}$. Then
			\[
			\sup_{s\in [0, t]}\psi(s, t)\leq 2C_0+8C_1^2\int_s^t \sup_{u \in [0, r]}\psi(u, t)dr,
			\]
			for all $t \in [0, T]$, and the lemma follows from Gr\"onwall's inequality. 
		\end{proof}
		\noindent We have shown above that 
		\[
		|\phi_{s, t}^\epsilon(z)|\leq \|\phi\|_\infty+Tc_1\int_s^t \frac{\|\phi_{r,t}^\epsilon\|_\infty}{\sqrt{r-s}} dr,
		\]
		for all $z \in \S^1$ and $0\leq s\leq t\leq T$. We take a supremum over $z$ and then apply Lemma \ref{lem:gen_Gron}. This proves \eqref{ineq:unif_contr_phi_eps}. To control the derivatives of $\phi^\epsilon$, we first write \eqref{eq:mild_test} as 
		\[
		\phi_{s, t}^\epsilon(z)= \sum_{z' \in \Z_\epsilon}G^\epsilon_{t-s}(0, z')\phi(\epsilon z'-z)+\int_s^t \sum_{z' \in \Z_\epsilon}G_{r-s}^\epsilon(0, z')\left(\mu_\epsilon-\widetilde{\rho}^{\:\epsilon}_r(\epsilon z'-z)\right)\phi_{r,t}^\epsilon(\epsilon z'-z) dr.
		\]
		Thus, 
		\begin{align*}
		\partial_z^k\phi_{s, t}^\epsilon(z)&= \sum_{z' \in \Z_\epsilon}G^\epsilon_{t-s}(0, z')\partial_z^k\phi(\epsilon z'-z)
		\\&+\int_s^t \sum_{z' \in \Z_\epsilon}G_{r-s}^\epsilon(0, z')\partial_z^k\left[\left(\mu_\epsilon-\widetilde{\rho}^{\:\epsilon}_r(\epsilon z'-z)\right)\phi_{r,t}^\epsilon(\epsilon z'-z)\right] dr.
		\end{align*}
		The control \eqref{ineq:unif_contr_deriv_phi_eps} follows from a simple inductive argument starting from $k=0$ with \eqref{ineq:unif_contr_phi_eps}, and using part three of Remark \ref{rmk:start_X_rho} and Gronw\"all's inequality. 
	\end{proof}
	
	In the following Lemma, we prove that $\phi^\epsilon$ converges uniformly to a function  $\phi_{\cdot, t}$ which is the unique solution to the problem
	\begin{equation}\label{eq:lim_phi_eq}
		\begin{cases}
			\partial_s \phi_{s, t}(z)+\frac{1}{2}\partial_z^2 \phi_{s, t}(z)+(\mu-\rho_s(z))\phi_{s, t}(z) = 0, & s\in [0, t), \quad z \in \S^1,
			\\\phi_{t, t}(z)=\phi(z), &z \in \S^1.
		\end{cases}
	\end{equation} 
	\begin{lem}\label{lem:conv_phi_eps}
		We have 
		\[
		\lim_{\epsilon \searrow 0} \sup_{0\leq s\leq t \leq T} \|\phi^\epsilon_{s, t}-\phi_{s, t}\|_\infty=0.
		\]
	\end{lem}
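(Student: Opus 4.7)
The plan is to write both $\phi^\epsilon$ and $\phi$ in Duhamel (mild) form, control the difference via the consistency of the semi-discrete Laplacian $\Delta^\epsilon$ with $\partial_z^2$, and close the estimate with a Gronwall argument.

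Denote by $P^\epsilon_\tau$ the semigroup associated with $\tfrac{1}{2}\Delta^\epsilon$ on $\S^{1,\epsilon}$, so that $P^\epsilon_\tau f(z) = \sum_{z'\in\Z_\epsilon} G^\epsilon_\tau(K_\epsilon z, z') f(\epsilon z')$ for $z\in\S^{1,\epsilon}$ (this is exactly the action appearing in \eqref{eq:mild_test}), and by $P_\tau$ the continuous heat semigroup on $\S^1$. The mild formulation of $\phi^\epsilon$ is then $\phi^\epsilon_{s,t} = P^\epsilon_{t-s}\phi + \int_s^t P^\epsilon_{r-s}[(\mu_\epsilon - \widetilde{\rho}^{\:\epsilon}_r)\phi^\epsilon_{r,t}]\,dr$, with the analogous identity for $\phi$ with $(P,\mu,\rho)$ replacing $(P^\epsilon,\mu_\epsilon,\widetilde{\rho}^{\:\epsilon})$. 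Setting $e^\epsilon_{s,t} = \phi^\epsilon_{s,t}-\phi_{s,t}$ and adding/subtracting, one obtains
\begin{align*}
e^\epsilon_{s,t} &= (P^\epsilon_{t-s}-P_{t-s})\phi + \int_s^t (P^\epsilon_{r-s}-P_{r-s})\bigl[(\mu_\epsilon - \widetilde{\rho}^{\:\epsilon}_r)\phi^\epsilon_{r,t}\bigr]\,dr \\
&\quad + \int_s^t P_{r-s}\bigl[(\mu_\epsilon - \mu + \rho_r - \widetilde{\rho}^{\:\epsilon}_r)\phi^\epsilon_{r,t}\bigr]\,dr + \int_s^t P_{r-s}\bigl[(\mu-\rho_r)\,e^\epsilon_{r,t}\bigr]\,dr,
\end{align*}
which I would establish first at the discrete points $z \in \S^{1,\epsilon}$, and then extend to all of $\S^1$ at the end.

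The central ingredient is a uniform consistency estimate of the form $\|(P^\epsilon_\tau - P_\tau)f\|_\infty \leq C\,\epsilon\,\tau\,\|f\|_{C^3(\S^1)}$, valid for $\tau\in[0,T]$. This follows from the identity $(P_\tau - P^\epsilon_\tau)f = \tfrac{1}{2}\int_0^\tau P^\epsilon_{\tau-r}(\partial_z^2 - \Delta^\epsilon)P_r f\,dr$ together with the Taylor bound $\|(\Delta^\epsilon-\partial_z^2)g\|_\infty \leq C\epsilon\|g'''\|_\infty$ for $g\in C^3(\S^1)$, parabolic smoothing on $\S^1$ giving uniform control of $\|P_r f\|_{C^3}$, and the $L^\infty$-contractivity of $P^\epsilon$. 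Applied with $f=\phi \in C^\infty(\S^1)$, this makes the first term on the right-hand side above $O(\epsilon)$; applied to $f = (\mu_\epsilon - \widetilde{\rho}^{\:\epsilon}_r)\phi^\epsilon_{r,t}$, whose $C^3$-norm is bounded uniformly in $\epsilon$, $r$, $t$ by Lemma \ref{lem:a_priori_est} together with part three of Remark \ref{rmk:start_X_rho}, it handles the second term. The third term is $o(1)$ uniformly by $\mu_\epsilon \to \mu$, the uniform convergence $\widetilde{\rho}^{\:\epsilon}\to\rho$ of part one of Lemma \ref{lem:FKPP_approx_properties}, and the uniform bound on $\phi^\epsilon_{r,t}$ from \eqref{ineq:unif_contr_phi_eps}.

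Combining these estimates yields $\sup_{z\in\S^{1,\epsilon}}|e^\epsilon_{s,t}(z)| \leq \delta(\epsilon) + C\int_s^t \sup_{z\in\S^{1,\epsilon}}|e^\epsilon_{r,t}(z)|\,dr$ with $\delta(\epsilon)\to 0$, uniformly in $0\leq s\leq t\leq T$, and Gronwall's inequality then gives uniform convergence on $\S^{1,\epsilon}$. To extend this to all of $\S^1$, I would use the uniform $C^1$ bounds on $\phi^\epsilon$ supplied by \eqref{ineq:unif_contr_deriv_phi_eps} and the corresponding regularity of $\phi_{s,t}$ (inherited from the smoothness of $\phi$ and $\rho$ via standard parabolic theory), together with the fact that $\S^{1,\epsilon}$ is $O(\epsilon)$-dense in $\S^1$. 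The main obstacle is maintaining uniform $C^3$-control on the integrand $(\mu_\epsilon-\widetilde{\rho}^{\:\epsilon}_r)\phi^\epsilon_{r,t}$ in the consistency argument: this is precisely the reason the stronger $C^3$ interpolation is built into Lemma \ref{lem:a_priori_est} and Remark \ref{rmk:start_X_rho}, where linear interpolation would not have been enough.
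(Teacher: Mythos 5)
Your proof is correct. The route you take is very close to the paper's but organizes the error differently: the paper derives a \emph{single} Duhamel representation for $\phi^\epsilon_{s,t}$ in terms of the continuous heat kernel $G$, by differentiating $r\mapsto G_{r-s}*\phi^\epsilon_{r,t}$ and using the equation, which produces one explicit correction term $\tfrac12\int_s^t G_{r-s}*[(\partial_z^2-\Delta^\epsilon)\phi^\epsilon_{r,t}]\,dr$ (this term subsumes both of your first two terms). You instead keep the natural mild form for each flow, with $P^\epsilon$ for $\phi^\epsilon$ and $P$ for $\phi$, and isolate the semigroup difference $P^\epsilon_\tau-P_\tau$ as a separate consistency lemma proved by Duhamel. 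Both boil down to the same $O(\epsilon)$ Laplacian-consistency estimate $\|(\Delta^\epsilon-\partial_z^2)g\|_\infty \lesssim \epsilon\|g'''\|_\infty$ applied via the $C^3$ bounds from Lemma \ref{lem:a_priori_est} and Remark \ref{rmk:start_X_rho} part three, handle the coefficient mismatch via $\mu_\epsilon\to\mu$ and $\widetilde{\rho}^{\:\epsilon}\to\rho$ uniformly, and close with Gr\"onwall. Your version buys a clean, reusable semigroup-comparison lemma at the cost of one extra integral step; the paper's is slightly shorter because it never introduces $P^\epsilon$ explicitly. (For reference, the displayed mild formula for $\phi^\epsilon$ in the paper's proof carries a sign error in the reaction term; your signs are correct, and in any case the sign is immaterial once one passes to absolute values for Gr\"onwall.)
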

	\begin{proof}
		Let $G_t(z)$, where $t \in [0, T]$ and $z\in \S^1$ denote the continuum Green function which satisfies
		\[
		\begin{cases}
			\partial_t G_t(z) = \frac{1}{2}\partial_z^2 G_t(z), & z \in \S^1
			\\\lim_{t \searrow 0} G_t(z) = \delta_z(z'),& z \in \S^1.
		\end{cases}
		\]
		Denote $*$ the convolution over $\S^1$. We compute
		\[
		\partial_r(G_{r-s}*\phi^\epsilon_{r, t})(z) = \frac{1}{2}\partial_z^2 (G_{r-s}*\phi^\epsilon_{r, t})(z)+G_{r-s}*\left(-\frac{1}{2}\Delta^\epsilon\phi^\epsilon_{r, t}+(\mu_\epsilon-\widetilde{\rho}^{\:\epsilon}_r)\phi^\epsilon_{r, t}\right)(z).
		\]
		for all $r \in [s, t]$. We compute ${\partial_z^2(G_{r-s}*\phi^\epsilon_{r, t})=(G_{r-s}*\partial_z^2\phi^\epsilon_{r, t})}$. Then, integrating the last displayed expression yields
		\[
		\phi^\epsilon_{s, t}(z)= G_{t-s}*\phi(z)-\frac{1}{2}\int_s^t G_{r-s}*[(\partial_z^2-\Delta^\epsilon)\phi^\epsilon_{r, t}](z)dr-\int_s^t G_{r-s}*[(\mu_\epsilon-\widetilde{\rho}^{\:\epsilon}_r)\phi^\epsilon_{r, t}](z)dr.
		\]
		We also write the limiting problem \eqref{eq:lim_phi_eq} as 
		\[
		\phi_{s, t}(z)= G_{t-s}*\phi(z)-\int_s^t G_{r-s}*[(\mu-\rho_r)\phi_{r, t}](z)dr.
		\]
		Taking the difference of the last two expression, we obtain
		\begin{align*}
			\phi^\epsilon_{s, t}(z)-\phi_{s, t}(z)=&-\frac{1}{2}\int_s^t G_{r-s}*[(\partial_z^2-\Delta^\epsilon)\phi^\epsilon_{r, t}](z)dr
			\\-&\int_s^t G_{r-s}*[(\mu_\epsilon-\widetilde{\rho}^{\:\epsilon}_r)\phi^\epsilon_{r, t}-(\mu-\rho_r)\phi_{r, t}](z)dr
			\\=&-\frac{1}{2}\int_s^t G_{r-s}*[(\partial_z^2-\Delta^\epsilon)\phi^\epsilon_{r, t}](z)dr
			\\-&\int_s^t G_{r-s}*[(\mu_\epsilon-\mu-\widetilde{\rho}^{\:\epsilon}_r+\rho_r)\phi^\epsilon_{r, t}](z)dr
			\\-&\int_s^tG_{r-s}*[(\mu-\widetilde{\rho}_r)(\phi^\epsilon_{r, t}-\phi_{r, t})](z)dr.
		\end{align*}
		We control the three terms in turn. First observe using a Taylor expansion of order three and \eqref{ineq:unif_contr_deriv_phi_eps} in Lemma \ref{lem:a_priori_est} that $(\partial_z^2-\Delta^\epsilon)\phi^\epsilon_{r, t}(z)=O(\epsilon)$ uniformly in $r \in [0, t]$ and $z \in \S^1$. It follows that there exists $c=c(T, \phi)>0$ such that
		\[
		\left|-\frac{1}{2}\int_s^t G_{r-s}*[(\partial_z^2-\Delta^\epsilon)\phi^\epsilon_{r, t}](z)dr\right|\leq c \epsilon.
		\]
		To control the second term, we observe that 
		\[
		\left|\int_s^t G_{r-s}*[(\mu_\epsilon-\mu-\widetilde{\rho}^{\:\epsilon}_r+\rho_r)\phi^\epsilon_{r, t}](z)dr\right|\leq T\times\left(|\mu_\epsilon-\mu|+\sup_{r \in [0, t]}|\rho_r-\widetilde{\rho}^{\:\epsilon}_r|\right)\sup_{r \in [0, t]}\|\phi^\epsilon_{r, t}\|_\infty.
		\]
		Denote the right-hand side by $h_\epsilon$. By Lemma \ref{lem:a_priori_est} and part two of Lemma \ref{lem:FKPP_approx_properties}, we have $\lim_{\epsilon \searrow 0} h_\epsilon=0$. It follows that 
		\[
		\|\phi^\epsilon_{s, t}-\phi_{s, t}\|_\infty\leq c\epsilon + h_\epsilon + \sup_{r \in [0, T]}\|\mu-\widetilde{\rho}_r\|_\infty \int_s^t \|\phi^\epsilon_{r, t}-\phi_{r, t}\|_\infty dr.
		\]
		The result follows from Gr\"onwall's inequality.
	\end{proof}

	\subsection{Characterisation of independent Poisson random variables}\label{append:Poisson_aux_lem}
	
	\begin{lem}\label{lem:indep_Poisson_characterisation}
		Fix $n \in \N$ and let $X_1, \cdots, X_n$ be $\N_0$-valued random variables. If there exist $\lambda_1, \cdots, \lambda_n$ positive real numbers such that 
		\begin{equation}\label{eq:mom_poi}
		\E\left[\prod_{i=1}^n Q_{k_i}(X_i)\right] = \prod_{i=1}^n \lambda_i^{k_i}, \quad k_1, \cdots, k_n \in \N_0, 
		\end{equation}
		where $Q_k(n)$ is the falling factorial defined in \eqref{def:ff}, then, $X_1, \cdots, X_n$ are independent, and each $X_i$ is Poisson distributed with intensity $\lambda_i$. Conversely, if $X_1, \cdots, X_n$ are independent Poisson random variables with respective intensities $\lambda_1, \cdots, \lambda_n>0$, then \eqref{eq:mom_poi} holds. 
	\end{lem}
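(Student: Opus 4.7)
The converse direction is a direct computation: if the $X_i$ are independent with $X_i \sim \operatorname{Poi}(\lambda_i)$, then by independence
\[
\E\left[\prod_{i=1}^n Q_{k_i}(X_i)\right] = \prod_{i=1}^n \E[Q_{k_i}(X_i)],
\]
and the standard factorial moment identity for the Poisson distribution,
\[
\E[Q_k(X_i)] = \sum_{m\geq k} Q_k(m) e^{-\lambda_i} \frac{\lambda_i^m}{m!} = e^{-\lambda_i}\lambda_i^k \sum_{j\geq 0} \frac{\lambda_i^j}{j!} = \lambda_i^k,
\]
gives the required equality.

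For the forward direction, the plan is to pass from falling-factorial moments to the joint probability generating function via the binomial identity
\[
\sum_{k=0}^\infty Q_k(m) \frac{z^k}{k!} = (1+z)^m, \quad m \in \N_0, \quad z \in \C.
\]
Multiplying this identity over the coordinates $i=1,\ldots,n$ and evaluating at $m=X_i$ shows that, formally,
\[
\prod_{i=1}^n (1+z_i)^{X_i} = \sum_{k_1,\ldots,k_n \geq 0} \prod_{i=1}^n \frac{z_i^{k_i}}{k_i!} \prod_{i=1}^n Q_{k_i}(X_i).
\]
I would first take $z_i \in [0,\eta)$ for a small $\eta>0$ with $\eta < 1$, so that every term on the right-hand side is non-negative. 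Monotone convergence (applied successively in each summation index) then justifies the interchange of sum and expectation, and using the hypothesis yields
\[
\E\left[\prod_{i=1}^n (1+z_i)^{X_i}\right] = \sum_{k_1,\ldots,k_n \geq 0} \prod_{i=1}^n \frac{(\lambda_i z_i)^{k_i}}{k_i!} = \prod_{i=1}^n e^{\lambda_i z_i}.
\]
Substituting $w_i = 1+z_i$, this says that the joint PGF of $(X_1,\ldots,X_n)$ agrees with the PGF $\prod_i e^{\lambda_i(w_i-1)}$ of independent $\operatorname{Poi}(\lambda_i)$ variables for $w_i \in [1, 1+\eta)$. Since the joint PGF of an $\N_0^n$-valued random vector is uniquely determined by its values on any nonempty open subset of $[0,1]^n$ (equivalently, the coefficients of its power series at the origin are determined), we conclude that $(X_1,\ldots,X_n)$ and $(\operatorname{Poi}(\lambda_1),\ldots,\operatorname{Poi}(\lambda_n))$ have the same distribution, establishing independence and the Poisson marginals.

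The main (mild) obstacle is the legitimacy of the interchange of summation and expectation; this is why I would first restrict to $z_i \geq 0$ so that monotone convergence applies without needing an \emph{a priori} integrability bound beyond $\E[Q_{k_i}(X_i)] = \lambda_i^{k_i} < \infty$, which is already part of the hypothesis. Once the identity of PGFs is established on an open set, uniqueness of the joint distribution from its PGF closes the argument.
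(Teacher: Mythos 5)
Your proof is correct, but it takes a genuinely different and arguably cleaner route than the paper. The paper passes from factorial moments to the \emph{moment} generating function $\E\bigl[\exp\bigl(\sum_i t_i X_i\bigr)\bigr]$: it expands $e^{t_iX_i}$ as a Taylor series, converts the resulting raw moments $X_i^{j}$ to factorial moments via Stirling numbers of the second kind, and then invokes a Dobinski-type identity $\sum_{\ell}\stirling{k}{\ell}\lambda^\ell=e^{-\lambda}\sum_r r^k\lambda^r/r!$ to re-sum and recognise $\prod_i e^{\lambda_i(e^{t_i}-1)}$. You instead go directly to the \emph{probability} generating function via the binomial identity $\sum_{k\geq 0}Q_k(m)z^k/k!=(1+z)^m$, which bypasses the Stirling/Dobinski machinery entirely and makes the interchange of sum and expectation a straightforward application of Tonelli (everything is nonnegative once $z_i\geq 0$); the identity of PGFs then follows on an open set and uniqueness finishes. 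Since $e^{t_iX_i}=(e^{t_i})^{X_i}$, the two transforms are of course the same object under $w_i=e^{t_i}$, but your parametrisation makes the computation shorter. One small imprecision: you establish agreement of the PGFs on $[1,1+\eta)^n$, not on a subset of $[0,1]^n$ as stated in your closing sentence; the correct justification is that your Tonelli computation shows the PGF of $(X_1,\ldots,X_n)$ has radius of convergence at least $1+\eta$ in each variable, so both sides are analytic on a common polydisc containing that open set, and the identity theorem (equivalently, matching Taylor coefficients at the origin) then gives equality of laws. The parenthetical remark you added already gestures at this, so the gap is cosmetic rather than substantive.
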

	\begin{proof}
		Let $t_1, \cdots, t_n \in \R$. Recall the identity $n^j=\sum_{\ell=0}^j\stirling{j}{\ell} Q_\ell(n)$, $n, j \in \N$, where $\stirling{j}{\ell}$, $\ell \in \{0, 1, \cdots, j\}$ denotes the Stirling numbers of the second kind. Using the Taylor series for the exponential function and the monotone convergence theorem, we compute
		\begin{align*}
			\E\left[\exp\left(\sum_{i=1}^n t_i X_i\right)\right]&= \E\left[\prod_{i=1}^n \left(\sum_{j=0}^\infty \frac{t_i^j X_i^j}{j!}\right)\right]
			\\&=\sum_{k_1=0}^\infty \cdots \sum_{k_n=0}^\infty\sum_{\ell_1=0}^{k_1} \cdots \sum_{\ell_n=0}^{k_n}\prod_{i=1}^n \frac{t_i^{k_i}}{k_i !}\stirling{k_i}{\ell_i} \E\left[\prod_{j=1}^n Q_{\ell_j}(X_j)\right]
			\\&=\prod_{i=1}^n\left(\sum_{k=0}^\infty\frac{t_i^{k}}{k!}\sum_{\ell=0}^k\stirling{k}{\ell}\lambda_i^{\ell}\right).
		\end{align*}
		By equation (4.2.16) in \cite{W1994}, and the fact that $\stirling{k}{0}=\mathbbm{1}_{\{0\}}(k)$, we have 
		\[
		\sum_{\ell=0}^k \stirling{k}{\ell} \lambda_i^\ell = e^{-\lambda_i}\sum_{r=0}^\infty \frac{r^k}{r!}\lambda_i^r, \quad i\in \{1, \cdots, n\}. 
		\]
		Thus,
		\begin{align*}
			\E\left[\exp\left(\sum_{i=1}^n t_i X_i\right)\right]&=\prod_{i=1}^n\left(\sum_{k=0}^\infty\frac{t_i^{k}}{k!}e^{-\lambda_i}\sum_{r=0}^\infty \frac{r^k}{r!}\lambda_i^r\right)
			\\&=\prod_{i=1}^n\left(e^{-\lambda_i}\sum_{r=0}^\infty \frac{\lambda_i^r}{r!} \sum_{k=0}^\infty\frac{(t_ir)^{k}}{k!}\right)
			\\&=\prod_{i=1}^n \left(e^{-\lambda_i} \sum_{r=0}^\infty \frac{\left(\lambda_ie^{t_i}\right)^r}{r!}\right)
			\\&= \prod_{i=1}^n e^{\lambda_i\left(e^{t_i}-1\right)}.
		\end{align*}
		The last expression is the moment generating function of a vector of $n$ independent Poisson random variables with respective intensities $\lambda_1, \cdots, \lambda_n$. Hence, the first direction of the lemma is proved. The converse statement follows from independence of the $X_1, \cdots, X_n$, and the observation that if $X$ is Poisson distributed with intensity $\lambda>0$, then $\E[Q_k(X)] = \lambda^k$, for all $k\geq 0$. 
	\end{proof}
	
	\begin{proof}[Proof of Lemma \ref{lem:law_is_prod}.]
		We begin with the forward implication. By Lemma \ref{lem:indep_Poisson_characterisation}, a product of Poisson distributions is uniquely determined by its falling factorial moments. Hence, if 
		\begin{equation}\label{eq:goal_product_Poi}
			\E\left[\prod_{z \in \supp(x)}Q_{n_x(z)}(\epsilon^{-\kappa}X_t^\epsilon(z))\right] = \prod_{z \in \supp(x)} (\epsilon^{-\kappa}\rho_t(z))^{n_{x}(z)}, \quad x \in \X_\epsilon,		\end{equation}
		then the law of $\epsilon^{-\kappa}X_t^\epsilon$ equals that of a product over $z\in \Z_\epsilon$ of Poisson distributions with intensity $\epsilon^{-\kappa}\rho_t(z)$. After scaling both sides by $\epsilon^\kappa$, this shows \eqref{eq:law_is_prod}. We prove \eqref{eq:goal_product_Poi} using (strong) induction on $n_{x}$ the size of $x$, and the fact that 
		\begin{equation}\label{eq:prod_Q_eps}
			Q^\epsilon(x, X_t^\epsilon) \prod_{z \in \supp(x)} \epsilon^{-\kappa n_x(z)}= \prod_{z \in \supp(x)}Q_{n_x(z)}(\epsilon^{-\kappa}X_t^\epsilon(z)), \quad x \in \X_\epsilon.
		\end{equation}
		
		If $n_{x}=1$, then $v_t^\epsilon(x;\rho_t|\nu^\epsilon)=0$ implies $\E[Q^\epsilon(x, X_t^\epsilon)]=\E[X_t^\epsilon(z)]=\rho_t(z)$ where $z \in \Z_\epsilon$ is the only location where $x$ has a particle. Multiplying both sides of the last equality by $\epsilon^{-\kappa}$ gives \eqref{eq:goal_product_Poi} for $n_x=1$. Let $n\geq 1$, and suppose that \eqref{eq:goal_product_Poi} holds for all $x \in \X_\epsilon$ with $n_{x}\leq n$. Consider a test configuration with $n_{x}=n+1$. By the induction hypothesis and \eqref{eq:prod_Q_eps}, we have
		\begin{align*}
			0=v_t^\epsilon(x, \rho_t|\nu^\epsilon)&= \sum_{x'\cleq x}\E[Q^\epsilon(x', X_t^\epsilon)](-1)^{n_x-n_{x'}}\prod_{z \in x\backslash x'} \rho_t(z)
			\\&= \E[Q^\epsilon(x, X_t^\epsilon)] + \sum_{x'\cleq x}\prod_{z' \in \supp(x')} \rho_t(z')^{n_{x'}(z')}(-1)^{n_x-n_{x'}}\prod_{z \in x\backslash x'} \rho_t(z).
		\end{align*}
		Since 
		\[
		\prod_{z' \in \supp(x')} \rho_t(z')^{n_{x'}(z')}\prod_{z \in x\backslash x'} \rho_t(z) = \prod_{z \in \supp(x)} \rho_t(z)^{n_x(z)},
		\]
		does not depend on $x'$, we obtain
		\begin{align*}
			0&=\E[Q^\epsilon(x, X_t^\epsilon)] + \left(\sum_{i=0}^n {n+1 \choose i} (-1)^{n+1-i}\right) \prod_{z \in \supp(x)}\rho_t(z)^{n_x(z)}
			\\&= \E[Q^\epsilon(x, X_t^\epsilon)]-\prod_{z \in \supp(x)}\rho_t(z)^{n_x(z)},
		\end{align*}
		where in the last step, we have used that $\sum_{i=0}^n {n+1 \choose i} (-1)^{n+1-i}=-1+\sum_{i=0}^{n+1} {n+1 \choose i} (-1)^{n+1-i}=-1$. After an application of \eqref{eq:prod_Q_eps}, we get \eqref{eq:goal_product_Poi} for $n_x=n+1$. This proves \eqref{eq:goal_product_Poi} and also \eqref{eq:law_is_prod} as outlined at the beginning of the proof. For the converse, if \eqref{eq:law_is_prod} holds, then \eqref{eq:goal_product_Poi} is also true. This easily implies that $v^\epsilon(x, \rho_t|\nu^\epsilon)=0$ for all $x \in \X_\epsilon$.
	\end{proof}
	
\end{document}